\documentclass{article}
\usepackage[english]{babel}
\usepackage{graphicx}
\usepackage{slashed}
\usepackage{stmaryrd}
\usepackage{amsmath,amsfonts,amssymb}

\usepackage{ntheorem}
\usepackage{dsfont}
\usepackage{mathtools}
\usepackage{verbatim}
\usepackage{float}
\usepackage{mathabx}
\usepackage{accents}
\usepackage{mathrsfs}
\usepackage{enumerate}

\usepackage{hyperref}
\usepackage{tikz}
\usepackage{pgfplots}
\pgfplotsset{compat=1.14}
\usetikzlibrary{patterns}
\usetikzlibrary{positioning,arrows,arrows.meta}
\usepackage[left=2cm, right=2cm, bottom=2cm, top=2cm]{geometry}
\usepackage{titlesec}

\titleformat{\subsection}[runin]
  {\normalfont\bfseries}
  {\thesubsection}
  {1em}
  {}
  [.]

\titleformat{\subsubsection}[runin]
  {\normalfont\itshape}
  {\thesubsubsection}
  {1em}
  {}
  [.]

\usepackage{enumitem}
\theoremseparator{.}

\makeatletter
\renewcommand\tableofcontents{%
  \section*{\centering\scshape Contents}%
  \@starttoc{toc}%
}
\makeatother

\makeatletter

\renewcommand*\l@section[2]{%
  \ifnum \c@tocdepth >\z@
    \addpenalty\@secpenalty
    \addvspace{0pt}%
    \setlength\@tempdima{1.5em}%
    \begingroup
      \parindent \z@
      \rightskip \@pnumwidth
      \parfillskip -\@pnumwidth
      \leavevmode
      \advance\leftskip\@tempdima
      \hskip -\leftskip
      #1\nobreak\hfil
      \nobreak\hb@xt@\@pnumwidth{\hss #2}\par
    \endgroup
  \fi
}

\makeatother

\newtheorem{Th}{Theorem}[section]
\newtheorem{Def}[Th]{Definition}
\newtheorem{Rq}[Th]{Remark}
\newtheorem{Pro}[Th]{Proposition}
\newtheorem{Cor}[Th]{Corollary}
\newtheorem{Lem}[Th]{Lemma}
\newtheorem{Conj}[Th]{Conjecture}

\newcommand{\f}{f^{\mathrm{lin}}}
\newcommand{\A}{A_I^{\mathrm{lin}}}
\newcommand{\R}{\mathbb{R}}

\newcommand{\T}{\mathbf{T}}
\newcommand{\Pp}{\mathbb{P}}
\newcommand{\dr}{\mathrm{d}}

\newenvironment{proof}{\noindent\textit{Proof.~}}{\hfill$\square$\bigbreak} 

\title{Global dynamics of Vlasov--wave systems without the null condition under exponential momentum decay}

\author{L\'eo Bigorgne\footnote{Institut de Recherche Math\'ematique de Rennes (IRMAR) - UMR 6625, CNRS, Universit\'e de Rennes, F-35000 Rennes, France.
{\em E-mail address:} {\tt leo.bigorgne@univ-rennes.fr}} }

\date{}
\begin{document}

\maketitle
    
\begin{abstract}
We prove global existence and modified scattering for solutions to relativistic Vlasov--wave systems without null structure, arising from small distribution functions. When the homogeneity of the force field in the momentum variable is the same as in the Vlasov-Maxwell system, the scalar fields are allowed to be large and our method requires stretched exponential velocity decay. When it matches that of the Einstein-Vlasov system, the scalar fields are required to be small and the distribution function to have slightly super-exponential velocity decay, which in particular includes Maxwellian decay. Moreover, we show that the smallness assumption on the scalar fields cannot in general be removed, even for compactly supported initial data. 
\end{abstract}

  \tableofcontents

\section{Introduction}

The goal of this paper is to study the small data solutions to Vlasov-wave systems. More precisely, we will study systems of the form
\begin{equation}\label{VW}
\begin{cases}
\partial_t f+\widehat{v} \cdot \nabla_x f + \nabla \mathbf{A} \cdot \nabla_v f=0,\\
\Box A_I=\pmb{\rho}_I[f],\qquad I \in \llbracket  1, K \rrbracket , \tag{Vw--$\delta$}
\end{cases}
\end{equation}
where $\delta \in \{0,1\} $, $ K \in \mathbb{N}^*$ and
\begin{itemize}
\item $f \colon \R_t \times \R^3_x \times \R^3_v \to \R$ is the density distribution function of a large ensemble of particles of mass $m=1$. Although physically $f$ is nonnegative, this assumption plays no role in our analysis, and we therefore allow $f$ to take arbitrary real values.

\item $\widehat{v}=\dfrac{v}{\langle v\rangle}$, where $\langle v \rangle \coloneqq \sqrt{1+|v|^2}$, is the relativistic speed of a particle of momentum $v \in \R^3_v$.

\item The scalar fields $A_I \colon \R_t \times \R^3_x \to \R$ give rise to the interaction field $\nabla\mathbf{A}:\R_t \times \R^3_x \times\R^3_v \to \R^3$, which is a schematic notation denoting
\begin{equation}\label{defA}
\big[\nabla\mathbf{A} \big]^i(t,x,v) \coloneqq \sum_{1\leq I\le K} \sum_{0\leq \mu \leq 3} \langle v\rangle^\delta Q_I^{i,\mu}(\widehat{v})\partial_{x^\mu}A_I(t,x), \qquad i \in \llbracket 1,3 \rrbracket,
\end{equation}
where $Q_I^{i,\mu}\in \R_3[\widehat{v}]$ are polynomials in $(\widehat{v}^1,\widehat{v}^2,\widehat{v}^3)$ and $\partial_{x^0}=\partial_t$. We do not assume $\nabla_{x,v} \cdot (\widehat{v},\nabla \mathbf{A})=0$.

\item The source terms $\pmb{\rho}_I[f]$ are momentum averages of $f$,
\[ \pmb{\rho}_I[f] \coloneqq \int_{\R^3_v}\langle v\rangle^{\delta'} P_I(\widehat{v})f \dr v,
\qquad P_I\in\R_3[\widehat{v}], \quad \delta' \leq \delta.\]
\end{itemize}

Our motivation is to investigate to what extent properties of small data solutions known for relativistic Vlasov systems arising in mathematical physics persist in the more general framework \eqref{VW}. Previous works on physically motivated Vlasov systems have exploited their null structure to control particles with large momenta and thereby treat distribution functions with non-compact momentum support (see Section \ref{Subsecnullcompo}). It is not a priori clear whether such data can still be handled in the absence of a null condition in the Vlasov operator. We show that this is possible for sufficiently fast decaying data, including in particular Maxwellian profiles (see Theorem \ref{Th} for our main result). We briefly discuss three main examples of physically motivated Vlasov systems here.
\begin{itemize}
    \item \textit{The Vlasov-Maxwell system.} In that case, $\delta =0$, $K=4$, and the interaction field is the Lorentz force $\nabla \mathbf{A} = E+\widehat{v} \times B$. One way to write this system in the form $(\text{Vw-0})$ is to consider a potential $A$ satisfying the Lorenz gauge\footnote{Strictly speaking, the equivalence with Vlasov–Maxwell requires the initial data to satisfy both the Lorenz gauge condition and the Maxwell constraint equations (in particular, Gauss' law). These constraints are propagated by the evolution.},
    \[A= A_0\dr t+ A_1 \dr x^1 + A_2\dr x^2 +A_3 \dr x^3, \qquad \qquad \partial_t A_0=\sum_{1 \leq i \leq 3} \partial_{x^i} A_i. \]
    Then, we have $E_i=\partial_t A_i - \partial_{x^i}A_0$, $B= \nabla \times (A_1,A_2,A_3)$, $\Box A_0 = - \int_v f \dr v$ and $\Box A_i = \int_v \widehat{v}^i f \dr v$.
    \item \textit{The Vlasov--Nordström system.} This model describes the evolution of collisionless matter in Nordström's scalar theory of gravitation (see for instance \cite[Section~2]{Calogero03}). Although it cannot be written exactly in the form \eqref{VW}, it is closely related to the case $K=1$, $\delta=1$ and $\delta'=-1$.
    \item \textit{The Einstein--Vlasov system.} In wave coordinates, the equations reduce to a system of $K=10$ coupled quasilinear wave equations for the metric components $g_{\mu\nu}$ coupled to a Vlasov field. Although the Einstein-Vlasov system is much more complicated than \eqref{VW}, it motivates our study of the case $\delta=\delta'=1$.
\end{itemize}

\subsection{Overview of the main issues}\label{Subsecdiff}

Consider a smooth solution $(f^{\mathrm{lin}},A_I^{\mathrm{lin}})$ to the linearised system around $0$, 
\[  \partial_t \f + \widehat{v} \cdot \nabla_x \f =0, \qquad \qquad \Box \A = \pmb{\rho}_I [\f] . \]
Then, one can in particular show that there exist $\mathbf{Q}_0(v)$, a function proportional to $\int_x \f(0,x , \cdot) \dr x$, and $\mathbb{A}_I^\mu(v)$, which are functionals of $\mathbf{Q}_0$, such that
\[ \pmb{\rho}_I \big[\f \big] (t,x) = \frac{1}{\langle t \rangle^3} \mathbf{Q}_0 \Big( \frac{x}{t} \Big)\mathds{1}_{|x| < t}+O \big( t^{-4} \big), \qquad \qquad \partial_{x^\mu}\A (t,x) = \frac{1}{\langle t \rangle^2} \mathbb{A}_I^\mu \Big( \frac{x}{t} \Big) \mathds{1}_{|x| < t}+ A_I^{\mu,\mathrm{rem}}(t,x), \]
where the remainder term is lower order in the timelike directions $t \mapsto (t,t\widehat{v})$ but not in the null directions $t \mapsto (t,x+t\omega)$, $\omega \in \mathbb{S}^2$. More precisely,
\[ \big| A^{\mu,\mathrm{rem}}_I \big| (t,x) \lesssim \langle t+|x| \rangle^{-1} \langle t-|x| \rangle^{-2}.  \]
These two different asymptotic behaviors reflect the following properties.
\begin{itemize}
\item Vlasov fields exhibit stronger decay in the exterior region and near the light cone, namely for $|x|>t$ and $|x|\sim t$, than in the interior region. This can already be seen from the characteristics $t \mapsto (x+t\widehat{v},v)$ of the free transport operator $\partial_t+\widehat{v}  \cdot\nabla_x$, which correspond to the trajectories of freely moving massive particles. In particular, for $t \gg 1$, most of the matter is located inside the future light cone and becomes increasingly separated from its boundary $\{t=|x|\}$.

\item The energy carried by waves is radiated toward future null infinity, that is, along null rays $t \mapsto (t,x+t\omega)$, $\omega \in \mathbb{S}^2$. As a consequence, the quantity $r \A (r+u,r\omega)$ is not expected to decay in $r$, but may decay in $|u|$.
\end{itemize}
In addition, we have boundedness for some derivatives of the distribution function,
$\nabla_x \f$ remains bounded and
\[ \big| \nabla_x \f \big| (t,x,v) \lesssim 1, \qquad \qquad  \big| \langle v \rangle \nabla_v \f +t\nabla_x \f -t\widehat{v} \, \widehat{v} \cdot \nabla_x \f \big|(t,x,v) \lesssim 1. \]
Consequently, if the asymptotics of small data solutions are sufficiently governed by that of the linearised system, one expects the nonlinearity in \eqref{VW} to behave as
\[ \big| \nabla \mathbf{A} \cdot \nabla_v \f \big|(t,x,v) \lesssim \Big( \langle t \rangle^{-1}+\langle t-|x| \rangle^{-2} \Big) \langle v \rangle^{\delta -1} \cdot \frac{\langle v \rangle}{ \langle t \rangle} \big| \nabla_v f^{\mathrm{lin}} \big|(t,x,v). \]
There are two problems with the rate of decay of the first factor in the right hand side.
\begin{enumerate}[ label = (\roman*)]
    \item \label{it1} It does not decay near the light cone, that is for $t\sim |x|$. As a consequence, it is unclear whether the velocity characteristics remain bounded. It is well known that propagation of regularity and decay estimates for kinetic equations are closely tied to the control of large momentum. The same difficulty arises when commuting the equation in order to control derivatives of the distribution function. 
    
    \item \label{it2} It is not integrable in time, even in the region $t >2|x|$. Because of that, and as previously established for the Vlasov-Maxwell system, the small data solutions to \eqref{VW} exhibits a modified scattering dynamics. As we will prove, the characteristics $t \mapsto (\mathcal{X}_t,\mathcal{V}_t)$ associated with the Vlasov operator does not asymptote a linear trajectory $t \mapsto (x+t\widehat{v},v)$. Instead, we will show that there exists a function $\mathscr{C} \colon \R^3 \to \R^3$ and $(x_\infty,v_\infty)$ such that 
    \begin{equation}\label{eq:intromodscatt}
    \mathcal{V}_t \to v_\infty , \qquad \mathcal{X}_t - t \widehat{v}_\infty - \log (t) \mathscr{C} (v) \to x_\infty  \qquad \text{as $t \to + \infty$}. 
    \end{equation}
    For instance, in the case where $K=1$ and $\nabla \mathbf{A}=\nabla_x A_1$, we have using the notation $\mathbb{A}_I = (\mathbb{A}^1_I, \mathbb{A}^2_I,\mathbb{A}^3_I)$, 
    \[\mathscr{C}(v) = \frac{\widehat{v} }{\langle v \rangle} \widehat{v} \cdot \mathbb{A}_1(v)- \frac{1}{\langle v \rangle} \mathbb{A}_1(v). \]

   Because of this, controlling derivatives of the solutions in the proof of global existence gives rise to non-integrable error terms. 
\end{enumerate}

To deal with \ref{it1}, on may expect to exploit the non-resonant nature of the interaction. In the favorable case of a compactly supported distribution function, one should be able to show that there exists two constants $A, \, w >0$ such that\footnote{As shown below in Proposition \ref{ProInstaLambdaLarge}, however, such a property may fail in the case $\delta =1$ when the scalar fields are sufficiently large.}
\begin{equation*}
f(t,x,v)=0 \qquad \text{ for all $|v| \geq w$ or $|x| \geq A+\widehat{w}t$,} \quad \widehat{w} \coloneqq w/\langle w \rangle.
\end{equation*}
As a consequence, $t-|x| \sim t$ on the support of $f$. In the non-compactly supported case, the characteristics of the Vlasov and wave operators are no longer uniformly separated. We capture the good behavior of $f$ near the light cone by exploiting the weight functions $\langle v \rangle$ and $\langle x -t \widehat{v} \rangle $, which are preserved along the linear flow, and the inequality
\[  \langle t+|x| \rangle \lesssim \langle t-|x| \rangle \, \langle x - t \widehat{v} \rangle \, \langle v \rangle^2 . \]
However, for the strongest weighted norms, this inequality cannot be exploited. Their propagation therefore still requires dealing with error terms in the region near the light cone, where only $t-|x|$ decay is available. We shall return to this issue later, after discussing how this difficulty is overcome in physically relevant Vlasov systems.

To circumvent the difficulty raised by \ref{it2}, we control the derivatives of $f$ by exploiting a triangular structure in the commuted equations. Denoting by $\T_{\mathbf{A}}$ the Vlasov operator in \eqref{VW}, and ignoring the difficulties caused by the weak decay near the light cone, we observe that, schematically
    \[ \big| \T_{\mathbf{A}} \big( \nabla_{t,x} f \big) \big| \lesssim \langle v \rangle^{\delta-1}\langle t \rangle^{-3/2} \big| \nabla_{t,x} f \big|(t,x)+O \big(\langle t \rangle^{-2} \big), \qquad \qquad \big| \T_{\mathbf{A}} \big( \widehat{Z} f \big) \big| \lesssim \langle v \rangle^{\delta-1}\langle t \rangle^{-1} \big| \nabla_{t,x} f \big|(t,x)+O \big(\langle t \rangle^{-3/2} \big), \]
    where, for instance, $\widehat{Z}=t \partial_{x^i}+x^i \partial_t+\langle v \rangle \partial_{v^i}$ commutes with the free transport operator $\langle v \rangle \partial_t + v \cdot \nabla_x$.

\subsection{Previous works on the perturbation of vacuum for relativistic Vlasov systems}\label{Subseccitation}

The analysis of small data solutions to the Vlasov-Maxwell system was initiated by Glassey-Strauss \cite{GSt} for compactly supported data. Later, Schaeffer \cite{Sc} relaxed the compact support assumption in $v$. Eventually, all compact support assumptions on the initial data were removed in \cite{dim3,Wang}. Let us emphasize that, unlike the previous works, these approaches allow one to derive sharp decay estimates for high-order derivatives of the solutions.

In \cite{WeiYang}, Wei-Yang obtained a global existence result for large electromagnetic fields. Their method, however, does not provide information on the derivatives of $f$. Finally, in \cite{scat}, we proved an asymptotic stability result without any compact support assumption, allowing for large electromagnetic fields and showing that an arbitrarily large number of derivatives of the solutions decay at the optimal rate.

Recently, the exact large-time behavior of solutions has been determined. The distribution function exhibits modified scattering \cite{scat,BAP,Emile}, whereas the electromagnetic field undergoes linear scattering \cite{scat}. Moreover, the corresponding scattering states satisfy a system of constraint equations \cite{scatmap}. Such an asymptotic behavior was previously identified for small data solutions to the Vlasov-Poisson system \cite{scattPoiss}.

For the Vlasov-Nordström system, \cite{VNsmall} establishes optimal decay estimates for small data solutions under a compact support assumption. Higher order estimates were later derived by Fajman-Joudioux-Smulevici \cite{FJS,FJS2} under a compact support assumption in $x$, and subsequently by Wang \cite{Xuecheng} for polynomially decaying data.

Finally, the asymptotic stability of Minkowski spacetime was first established by \cite{ReinRendall} under spherical symmetry and a compact support assumption on the initial distribution function. A proof without symmetry was later obtained in \cite{LindbladTaylor,FJS}, together with optimal decay estimates for high-order derivatives. In the latter work, the support of the distribution function may be unbounded in the momentum variable. More recently, Wang \cite{WangMinko} removed all compact support assumptions on the initial data.

\subsection{The null condition}\label{Subsecnullcompo}
 Let us briefly recall some properties of small data solutions to semilinear wave equations of the form $\Box \phi = \partial \phi \cdot \partial \phi$ on $\R_t \times \R^d_x$.
 \begin{itemize}
     \item In dimensions $d \geq 4$, linear waves decay sufficiently fast to guarantee the global existence of these solutions.
     \item In dimension $d=3$, \cite{FJ81} showed that the small data solutions may blow-up in finite time. For instance this occurs when $\partial \phi \cdot \partial \phi = |\partial_t \phi|^2$.
     \item In $3d$, \cite{Christo86,Klai86} identified a sufficient condition on the nonlinearity, now referred as the \textit{null condition}, which guarantees that small data solutions exist globally in time and exhibit optimal decay.

     The key idea is that the solutions to $\Box \psi =0$ disperse in the directions $\mathcal{T} \coloneqq \{ L, e_\theta ,e_\varphi \}$ tangential to the light cone, where $L \coloneqq \partial_t + \partial_r$, $e_\theta \coloneqq r^{-1} \partial_\theta$ and $e_\varphi \coloneqq  r^{-1} \sin^{-1} (\theta ) \partial_{\varphi}$. As a consequence, if $\Box \psi =0$, then 
     \[ \forall \, T \in \mathcal{T}, \quad \big| \nabla_T \psi \big| (t,x) \lesssim \langle t+|x| \rangle^{-2}, \qquad \qquad \qquad \big| \nabla_{t,x} \psi \big| (t,x) \lesssim \langle t+|x| \rangle^{-1}.\]
     The null condition states that the nonlinearity satisfy $|\partial \phi \cdot \partial \phi| \lesssim \sum_{T \in \mathcal{T}} |\nabla_T \phi| |\nabla_{t,x} \phi|$.
 \end{itemize}

 In all the works concerning small data solutions to the Vlasov-Maxwell, Vlasov-Nordström and Einstein-Vlasov system in which
 \begin{enumerate}[label = (\alph*)]
     \item \label{item1} the initial distribution is not compactly supported in both the spatial $x$ and the momentum $v$ variables,
     \item \label{item2} estimates are propagated on the derivatives of $f$,
 \end{enumerate}
 a geometric feature of the nonlinearity $\nabla \mathbf{A} \cdot \nabla_v f$, reminiscent of the null condition, plays a crucial role.
 \begin{Rq}\label{Remconj}
 Among the results mentioned in Section \ref{Subseccitation}, only \cite{dim3,Wang,scat,FJS,FJS2,Xuecheng,FJS3,WangMinko} are concerned by both \ref{item1}--\ref{item2}. In contrast, in \cite{WeiYang}, where the requirement \ref{item1} is satisfied but not \ref{item2}, the null structure in the Vlasov equation is not exploited. This leads us to formulate Conjecture \ref{Conj} below.
 \end{Rq}

 The manifestation of the null structure in these three Vlasov systems can be summarise as follows. When expanded in the null frame $(\underline{L},L,e_\theta, e_\varphi)$, where $\underline{L} \coloneqq \partial_t - \partial_r$, the nonlinearity $\nabla \mathbf{A} \cdot \nabla_v f$ can be decomposed into terms containing at least one of the following factors.
 \begin{itemize}
     \item A good component of the electromagnetic field $(E,B)$ or of the metric $g$. For instance, $\frac{x}{|x|} \cdot E(t,x)$ and $\partial_{t,x} g_{LT}$, for any $T \in \mathcal{T}$ decay (almost) like $\langle t+|x| \rangle^{-2}$.
     \item A good component of the momentum vector, namely $v_L=-\langle v \rangle + \frac{x}{|x|} \cdot v$ or $v_{e_A}$, for $A \in \{ \theta , \varphi \}$. For particles escaping to infinity at a speed close to the speed of light $c=1$, one has $|v_L| \approx \frac{1}{2|v|}$ and $|v| \gg 1$. Moreover, $|v_{e_A}|^2 \lesssim \langle v \rangle \, |v_L|$.
     \item The derivative $\frac{x}{|x|} \cdot \nabla_v f$ which, in contrast with $\partial_{v^i} f$, does not exhibit growth in time when $t \sim |x|$.
 \end{itemize}
In fact, the structure is richer than this brief description suggests. We refer to \cite[Lemma~4.1]{massless} and \cite[Proposition~5.14]{EVmassless} for a complete description of the null structure of $\nabla \mathbf{A} \cdot \nabla_v f$ in the case of the Vlasov-Maxwell and Einstein-Vlasov systems.

\begin{Rq}
The conservative structure of the force fields in the Vlasov--Maxwell and Einstein--Vlasov systems is not, by itself, responsible for the validity of the known small-data global existence and stability results. Indeed, inserting a factor $\widehat{v}^1$ in front of the force field would break this conservative structure, while leaving unchanged the essential estimates underlying these results. In particular, the corresponding small-data results are expected to remain valid for the resulting modified systems.
\end{Rq}

\begin{Rq}
A major difficulty in proving global existence for classical solutions to such systems is the control of large velocities. From this perspective, the Vlasov-Maxwell system, which corresponds to $\delta=0$, appears more favorable than the Vlasov-Nordström system. Nevertheless, an additional null structure in the latter allowed \cite{Calogero06} to prove global existence for large data, whereas the corresponding problem remains open for the Vlasov-Maxwell system (see however \cite{LukStrain} for continuation criteria).
\end{Rq}

\subsection{Key ideas}\label{SubsecKeyidea}

In this paper, we do not impose any assumption on the nonlinearity in \eqref{VW}. It is therefore not clear whether the stability results discussed above persist outside the compactly supported regime. It turns out that the cases $\delta =0$ and $\delta=1$ differs regarding some aspects.

\subsubsection{The case $\delta=0$}\label{subsubsec0} The ODE satisfied by the velocity characteristics allows us to lose one power of $v$. More precisely, we will observe that
\[ \big| \dot{\mathcal{V}} \big| \lesssim \langle t \rangle^{-1} \langle t-|\mathcal{X}| \rangle^{-1} \lesssim \langle \mathcal{V} \rangle \, \langle t \rangle^{-2}+  \big| \mathcal{V}_L \big| \langle t-|\mathcal{X} | \rangle^{-2}, \]
since $1 \lesssim \langle v \rangle |v_L|$. Then, the good weight $v_L$ will allow us to exploit the decay in $t-|x|$ by parameterising the characteristics by $u=t-|x|$. We will then show that $\mathcal{V}$ remains bounded by $C \langle \mathcal{V}_{t=0} \rangle$ for a constant $C>0$ independent of $(\mathcal{X},\mathcal{V})_{t=0}$. One can then use this property to control the derivatives of $f$. Returning to problem \ref{it2} discussed in Section \ref{Subsecdiff}, we have, using again $1 \lesssim \langle v \rangle |v_L|$ and neglecting favorable error terms,
\begin{equation}\label{eq:introderiv}
\big| \T_{\mathbf{A}} \big( |\nabla_{t,x} f| \big) \big| \lesssim |v_L| \langle t-|x| \rangle^{-3/2}\big| \nabla_{t,x} f \big|(t,x), \qquad \qquad \big| \T_{\mathbf{A}} \big( |\widehat{Z} f| \big) \big| \lesssim |v_L|\langle t-|x| \rangle^{-1} \big| \nabla_{t,x} f \big|(t,x). 
\end{equation}
Then, by Duhamel's principle and a triangular Grönwall argument, we obtain the rough bound $|\nabla_{t,x} f |(t,x) \leq  |\nabla_{t,x} f|(0,\mathcal{X}_{t=0},\mathcal{V}_{t=0}) e^{C \langle \mathcal{V}_{t=0} \rangle}$, together with a slightly weaker estimate for $\widehat{Z}f$, due to a logarithmic loss. In particular, this suggests that asymptotic stability should hold for $(\text{Vw--0})$ for initial data exponentially decaying in $v$. As the size of the scalar fields $A_I$ increases, a stronger exponential decay is required.
 
It turns out that this exponential loss can be substantially reduced by first studying the commuted system for $(\nabla_{t,x} f, \langle v \rangle \nabla_v f)$. Indeed, one has
\begin{equation}\label{eq:latoutintro}
\big| \mathbf{T}_{\mathbf{A}} \big( |\nabla_{t,x}f| \big) \big|  \lesssim \mathbf{a}(t,x,v)  \langle v \rangle | \nabla_{v}f |  , \qquad \qquad \big| \mathbf{T}_{\mathbf{A}} \big( \langle v \rangle |\nabla_v f| \big) \big|   \leq    \big|\nabla_x f \big|(t,x,v) + \text{l.o.t.}, 
\end{equation}
where $\mathrm{l.o.t.}$ stands for lower order terms and, with $\Lambda$ measuring the size of the scalar fields, 
\[ \mathbf{a}(t,x,v) \coloneqq \frac{\Lambda}{\langle v \rangle (1+t) \, \langle t-|x| \rangle^{\frac{3}{2}}}. \] 
Recall that even at the linearised level, the $v$-derivatives grow linearly in $t$. The idea is then to close the estimate by controlling
\begin{equation}\label{defcalF}
 \mathcal{F} (t,x,v) \coloneqq   \big| \nabla_x f \big|(t,x,v)+ \sqrt{\mathbf{a}(t,x,v)}  \langle v \rangle \big| \nabla_v f \big|(t,x,v). 
\end{equation}
This choice of $\mathcal{F}$ is motivated by an approximate diagonalisation of the transport system \eqref{eq:latoutintro}, whose eigenvalues are of size $\pm \sqrt{\mathbf{a}}$. The transport equation satisfied by $\mathcal{F}$ will then have better properties,
\[  \T_{\mathbf{A}} (\mathcal{F} )  \lesssim \sqrt{\mathbf{a}(t,x,v)} \mathcal{F}+\T_{\mathbf{A}} \big( \sqrt{\mathbf{a}} \big) \mathcal{F}. \] 
A careful analysis of the integral of $\sqrt{\mathbf{a}(t,x,v)}$ along the nonlinear flow will allow us to reduce the exponential growth to $e^{C\sqrt{\Lambda} \langle \mathcal{V}_{t=0} \rangle^{\frac{1}{2}}}$.

\subsubsection{The case $\delta =1$}\label{subsubsec1} The extra power of $v$ deteriorates the estimate of the velocity characteristics. More precisely, we merely have
\[ \big| \dot{\mathcal{V}} \big| \lesssim \langle \mathcal{V} \rangle \, \langle t \rangle^{-1} \langle t-|\mathcal{X}| \rangle^{-1} , \]
so that we merely obtain $|\mathcal{V}(t)| \leq \langle \mathcal{V}_{t=0} \rangle \, t^{C}$. Then, we have two possibilities here.
\begin{itemize}
    \item If the scalar fields are allowed to be large, then the constant $C$ may also become large, and we show that asymptotic stability can fail. See in particular Proposition \ref{ProInstaLambdaLarge}.
    \item If the scalar fields are of size $\Lambda \ll 1$, $t^C$ is a small polynomial growth. Using this estimate, together with $1 \lesssim \langle v \rangle |v_L|$, we will be able to show that $|\mathcal{V}(t)| \leq C' \langle \mathcal{V}_{t=0} \rangle^{1+\kappa}$, for some constant $\Lambda \lesssim  \kappa <1/2$.
\end{itemize}
To control the derivatives, we note that \eqref{eq:introderiv} holds with one additional power of $\langle v \rangle$ in the right hand sides. It yields to 
\[ \big| \nabla_{t,x} f \big| (t,x,v) \leq \big| \nabla_{t,x} f \big| \big(0, \mathcal{X}(0), \mathcal{V}(0) \big) e^{C' \Lambda \langle \mathcal{V}(0) \rangle^{2+2\kappa} } .\]
As in the $\delta=0$ case, the exponential growth can in fact be reduced to $e^{C' \sqrt{\Lambda} \langle \mathcal{V}(0) \rangle^{1+\kappa} }$, which allows to treat distribution functions with Maxwellian decay.

\subsubsection{Deriving optimal decay estimates}\label{Subsubsecmachin}

To close the bootstrap argument, it is crucial to derive decay estimates for the source term in the wave equations that are compatible with the losses incurred by certain derivatives, without introducing any additional loss. We have identified two possible approaches.
\begin{enumerate}[ label = (\Roman*)]
    \item \label{traou1} Since the top order derivatives are more difficult to handle, we could exploit a gain of regularity in the wave equations $\Box A_I= \pmb{\rho}_I [f]$. By adapting the result \cite{GlStrauss} of Glassey-Strauss, we could close the estimates for $\nabla_{t,x}A_I$ and $\nabla_{t,x}^2 A_I$ while, roughly speaking, only requiring optimal decay for $\int_v \langle v \rangle^5 |f| \dr v$. This gain of regularity is reminiscent of the elliptic regularity available in the Vlasov--Poisson system.
    \item \label{traou2} Use a robust method for deriving optimal decay estimates for momentum averages that can, in particular, handle the top order derivatives. 
\end{enumerate}
We believe that \ref{traou2} is simpler (compare with \cite{scat}, where we followed \ref{traou1}). To implement this strategy, we seek a weight function $\pmb{\omega}$ such that
    \begin{equation}\label{dernierlabel}
    \int_{\R^3_v} \big|h(t,x,v) \big| \dr v  \lesssim \int_{\R^3_v} \frac{\dr v}{\pmb{\omega}(t,x,v)} \sup_{w \in \R^3_v}\pmb{\omega}(t,x,w) \big|h(t,x,w) \big|  \lesssim \frac{1}{\langle t+|x| \rangle^3} \sup_{w \in \R^3_v} \pmb{\omega}(t,x,w) \big|h(t,x,w) \big|, 
    \end{equation}
    and which does not grow along the flow associated with $\T_{\mathbf{A}}$. It turns out that the two most natural choices cannot be used here.
    \begin{itemize}
        \item One might first try $\pmb{\omega}(t,x,v) = \langle x -t\widehat{v} \rangle^4 \langle v \rangle^5$, which is conserved along the flow associated with $\partial_t+\widehat{v} \cdot \nabla_x$. The desired decay rate then follows from the change of variables $y(v)=x-t\widehat{v}$. However, since $f$ follows a modified scattering dynamics, $\pmb{\omega}$ grows logarithmically along the nonlinear characteristics.
        \item One could instead use $\pmb{\omega}(t,x,v) = \langle \mathcal{X} (0,t,x,v) \rangle^4 \langle v \rangle^n$, for $n$ sufficiently large. However, consistently with the discussion in Sections \ref{subsubsec0}--\ref{subsubsec1}, we are unable to prove uniform boundedness in $(x,v)$ of $t^{-1}\nabla_v \mathcal{X} (0,t,x,v)$. This prevents us from deriving the $t^{-3}$ decay of the integral in \eqref{dernierlabel} through the change of variables $y(v)=\mathcal{X} (0,t,x,v)$.
    \end{itemize}
    Recall that \eqref{eq:intromodscatt} suggests the existence of a function $\mathscr{C}(v)$ such that $x-t\widehat{v}-\mathscr{C}(v) \log(1+t)-\mathcal{X} (0,t,x,v)$ remains uniformly bounded in $t$. Even if $\nabla_v\mathscr{C}(v)$ turns out to grow in $v$, one may still expect to obtain the decay in \eqref{dernierlabel}, with $\pmb{\omega}(t,x,v) \coloneqq \langle x-t\widehat{v}-\mathscr{C}(v) \log(1+t) \rangle^4 \langle v \rangle^n$, through the change of variables $y=x-t\widehat{v}-\mathscr{C}(v) \log(1+t)$ and by exploiting the $\langle v \rangle^n$ weight. Indeed, in the Jacobian determinant, $\nabla_v\mathscr{C}(v)$ appears with the decaying factor $t^{-1}\log(t)$, and thus gives only a small contribution for, say, $\langle v \rangle^{n-4} \leq t^{3}$. However, as we shall see, $\mathscr{C}(v)$ is intrinsically defined in terms of the asymptotic state of the solution and therefore cannot be used in the proof of global existence. To overcome this difficulty, we construct a dynamical correction $\mathscr{C}_t(v)$ satisfying $\mathscr{C}_t (v) \to \mathscr{C}(v)$ as $t \to +\infty$.

\subsection{Notation} Throughout the paper, we use the notation $A \lesssim B$ to mean that $A \leq CB$ for some some constant $C>0$, possibly depending on $\Lambda$ and the parameters $\eta, \kappa$ introduced below in Section \ref{SecAsympstab}. In some instances, we will make the constant $C$ explicit, writing $C[B_1,\dots,B_k]$ to indicate that it depends on the parameters $(B_1, \dots , B_k)$.

\subsection{Main  result} 

We now describe the asymptotic behavior of small data solutions to \eqref{VW} arising from strongly decaying initial data.

\begin{Th}\label{Th}
   Let $K \in \mathbb{N}^*$, $f_0 \in W^{1,\infty}(\R^3_x \times \R^3_v)$ and $(A_I^0,\partial_t A_I^0) \in W^{2,\infty}(\R^3)\times W^{1,\infty}(\R^3)$ for any $I \in \llbracket 1 , K \rrbracket$. 
   \begin{itemize}
       \item If $\delta =0$, we assume that there exist $0< \varepsilon \leq \Lambda$ and $B>0$ such that
  \begin{equation}\label{eq:assumpA}
\begin{aligned}
  \sup_{1 \leq I \leq K}\; \sup_{|\gamma| \leq 2}  \;  \sup_{x \in \R^3} \; \langle x \rangle^{2+|\gamma|} \big| \partial_{t,x}^\gamma A_I^0 (x) \big| & \leq \Lambda , \\
    \sup_{(x,v) \in \R^3_x \times \R^3_v}  \, \langle x \rangle^{12}  \langle v \rangle^{160} e^{B \sqrt{\Lambda} \, \langle v \rangle^{\frac{1}{2}}} \big| \nabla_{x,v} f_0 (x,v) \big|+\langle x \rangle^8 \langle v \rangle^{150} \big|  f_0 (x,v) \big| & \leq \varepsilon .  
\end{aligned} \tag{$\mathrm{Hyp}_{\delta=0}$}
\end{equation}
   There exist two constants $B_0[\Lambda]>0$ and $\varepsilon_0[\Lambda]>0$ such that, if $B \geq B_0$ and $\varepsilon \leq \varepsilon_0$, then the unique solution $(f,A_1, \dots , A_K)$ to $(\text{Vw}-0)$ arising from the data $(f_0,A_{1}^0, \partial_t A_{1}^0, \dots ,A_{K}^0, \partial_t A_{K}^0 )$ is global in time.
   \item  If $\delta =1$, we assume that there exist $0< \varepsilon \leq \Lambda $, $B>0$ and $\kappa >0$ such that
 \begin{equation}\label{eq:assumpB}
 \begin{aligned}
  \sup_{1 \leq I \leq K}\; \sup_{|\gamma| \leq 2}  \;  \sup_{x \in \R^3} \; \langle x \rangle^{2+|\gamma|} \big| \partial_{t,x}^\gamma A_I^0 (x) \big| & \leq \Lambda ,   \\
    \sup_{(x,v) \in \R^3_x \times \R^3_v}  \, \langle x \rangle^{12}  \langle v \rangle^{160} e^{B \sqrt{\Lambda} \, \langle v \rangle^{\frac{1}{2}}} \big| \nabla_{x,v} f_0 (x,v) \big|+\langle x \rangle^8 \langle v \rangle^{150} \big|  f_0 (x,v) \big| & \leq \varepsilon .  
\end{aligned} \tag{$\mathrm{Hyp}_{\delta=1}$}
\end{equation} 
There exist constants $\Lambda_0[\kappa] , \, B_0[\kappa], \, \varepsilon_0[\kappa] >0$ such that, if $\Lambda \leq \Lambda_0$, $B \geq B_0$ and $\varepsilon \leq \varepsilon_0$, then the unique solution $(f,A_1, \dots , A_K)$ to $(\text{Vw}-1)$ arising from the data $(f_0,A_{1}^0, \partial_t A_{1}^0, \dots ,A_{K}^0, \partial_t A_{K}^0 )$ is global in time.
   \end{itemize}
 Moreover, the following asymptotics hold in both cases.
 \begin{itemize}
     \item The scalar fields satisfy linear scattering and pointwise decay estimates. For any $1 \leq I \leq K$, we have
      \begin{align*}
     \forall \, (t,x) \in \R_+ \times \R^3, \qquad \qquad   \big| A_I \big| (t,x) +\langle t-|x| \rangle \big| \nabla_{t,x} A_I \big| (t,x)+ \langle t-|x| \rangle^{2-\frac{1}{8}} \big| \nabla_{t,x}^2 A_I \big| (t,x) \lesssim \Lambda \langle t +|x| \rangle^{-1}.
   \end{align*}
   Furthermore, there exists $A_I^\infty \in L^\infty \big(\R , L^2(\mathbb{S}^2) \big)$ such that
   \[  rA_I \big(r+u,r\omega \big) \rightharpoonup^\ast A_I^\infty \qquad \text{in }  L^\infty_{\mathrm{loc}} \big(\R_u , L^2(\mathbb{S}^2_\omega) \big) \quad \text{as $r \to + \infty$}. \]
   \item The distribution function exhibits modified scattering. There exist $\mathscr{C} \in C^0(\R^3_v,\R^3)$ and $f_\infty \in C^0(\R^3_x \times \R^3_v)$ such that,
   \[ \forall \, (t,x,v) \in [1,+\infty [ \times \R^3_x \times \R^3_v , \qquad \qquad \big| f \big( t,x+t\widehat{v}+\mathscr{C}(v) \log(t),v \big) - f_\infty (x,v)   \big| \lesssim \varepsilon \, \langle t \rangle^{-\frac{1}{3}}, \]
   where $\mathscr{C}$ is a functional of $\int_x f_\infty (x,\cdot ) \dr x$.
 \end{itemize}
\end{Th}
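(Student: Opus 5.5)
The proof is a continuity (bootstrap) argument on a maximal interval $[0,T^*)$, run on four families of bounds, followed by an asymptotic analysis. The bootstrap assumptions are:
\begin{itemize}
\item[(i)] Pointwise decay for the scalar fields, with large constant $C\Lambda$ (or $C\Lambda$ with $\Lambda\le\Lambda_0$ when $\delta=1$): the bounds on $A_I$, $\nabla_{t,x}A_I$ and $\nabla^2_{t,x}A_I$ stated in the theorem.
\item[(ii)] Control of the velocity characteristics: $\langle\mathcal V_s\rangle\lesssim\langle\mathcal V_t\rangle$ if $\delta=0$, and $\langle\mathcal V_s\rangle\lesssim\langle\mathcal V_t\rangle^{1+\kappa}$ if $\delta=1$.
\item[(iii)] Weighted $L^\infty$ bounds on $f$, $\nabla_{t,x}f$, $\langle v\rangle\nabla_v f$ and $\widehat Z f$. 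These allow the exponential loss $e^{C\sqrt\Lambda\langle v\rangle^{1/2}}$ (respectively $e^{C\sqrt\Lambda\langle v\rangle^{1+\kappa}}$) and a logarithmic loss for $\widehat Z f$.
\item[(iv)] Decay $\langle t+|x|\rangle^{-3}$ for the sources $\pmb\rho_I[f]$ and their first derivatives. For the first derivatives we accept a mild loss near the light cone, coming from the weight $\langle x-t\widehat v\rangle$.
\end{itemize}
Local well-posedness in the stated regularity class and a continuation criterion (boundedness of $\|\nabla_{x,v}f\|_{L^\infty}$ and of $\nabla_{t,x}A_I$) are taken as standard.

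\textbf{Characteristics.} Inserting (i) into the ODE for $(\mathcal X,\mathcal V)$ gives $|\dot{\mathcal V}|\lesssim\Lambda\langle\mathcal V\rangle^\delta\langle t\rangle^{-1}\langle t-|\mathcal X|\rangle^{-1}$. We then use $1\lesssim\langle v\rangle|v_L|$ and split this bound into a $\langle t\rangle^{-2}$ part and a $|v_L|\langle t-|x|\rangle^{-2}$ part. Reparametrising by $u=t-|x|$ controls the second part, since $\dot u\approx|\widehat v_L|$. This improves (ii). When $\delta=1$, we first get polynomial growth $t^{C\Lambda}$ and then use the smallness of $\Lambda$ to convert it into the $\langle v\rangle^{1+\kappa}$ bound.

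\textbf{Derivatives of $f$.} We commute the Vlasov equation with $\nabla_{t,x}$, $\langle v\rangle\nabla_v$ and the $\widehat Z$ fields, which produces the triangular system \eqref{eq:latoutintro}. The key object is the quantity $\mathcal F$ of \eqref{defcalF}, with $\mathbf a=\Lambda\langle v\rangle^{\delta-1}(1+t)^{-1}\langle t-|x|\rangle^{-3/2}$. Along the nonlinear flow it satisfies $\T_{\mathbf A}\mathcal F\lesssim(\sqrt{\mathbf a}+|\T_{\mathbf A}\log\sqrt{\mathbf a}|)\mathcal F$, and we integrate this along characteristics. The $\log\sqrt{\mathbf a}$ term integrates to a harmless power of $t$ and $\langle v\rangle$. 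The $\sqrt{\mathbf a}$ term integrates, again by the $u$-reparametrisation, to $C\sqrt\Lambda\langle\mathcal V_0\rangle^{1/2}$ (respectively $\langle\mathcal V_0\rangle^{1+\kappa}$). The data weight $e^{B\sqrt\Lambda\langle v\rangle^{1/2}}$ with $B\ge B_0$ absorbs this growth, which improves (iii). The $\widehat Z f$ bounds then follow from Duhamel's formula, feeding in the $\nabla_{t,x}f$ bound triangularly.

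\textbf{Source decay, wave estimates, and the main obstacle.} To improve (iv) we build the dynamical correction $\mathscr C_t(v)$. It is defined from the $t^{-2}$-leading part of $\nabla_{t,x}A_I$ along timelike rays, computed from the current $\pmb\rho_I$, and is chosen so that $\T_{\mathbf A}$ applied to $y=x-t\widehat v-\mathscr C_t(v)\log(1+t)$ is integrable. We then use the weight $\pmb\omega=\langle y\rangle^4\langle v\rangle^n$ and the change of variables $v\mapsto y$. In the Jacobian, $\nabla_v\mathscr C_t$ appears with the factor $t^{-1}\log t$; where $\langle v\rangle^{n-4}\gtrsim t^3$ this correction is no longer small, and there the polynomial $\langle v\rangle^{150}$ decay is used instead. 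Verifying that $\pmb\omega$ does not grow along the flow requires precise asymptotics of $\nabla A_I$ in the interior, which we derive from (iv) itself. \textbf{This is the step I expect to be the main obstacle}: keeping $\mathscr C_t$ and $\nabla_v\mathscr C_t$ controlled while closing self-consistently near the light cone, where only $\langle t-|x|\rangle$ decay is available.

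With (iv) in hand, the Kirchhoff formula together with the initial decay $\langle x\rangle^{-2-|\gamma|}$ gives (i) with constant $C\Lambda+C\varepsilon$, where the $\varepsilon$-part is small. The $\langle t-|x|\rangle^{2-1/8}$ rate for second derivatives absorbs the loss in $\nabla\pmb\rho$. Smallness of $\varepsilon$ closes the bootstrap, so $T^*=\infty$.

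\textbf{Asymptotics.} Linear scattering of $A_I$ follows because $r\Box$-sources are integrable in $r$ along null rays; a weak-$\ast$ compactness argument then gives $A_I^\infty$. For $f$, we set $g(t,x,v)=f(t,x+t\widehat v+\mathscr C_t(v)\log t,v)$ and show $|\partial_t g|\lesssim\varepsilon t^{-4/3}$ from the derivative bounds and $|\mathscr C_t-\mathscr C|\lesssim t^{-1/3}$. This yields $f_\infty$ and the stated rate. Finally, $\mathscr C$ is identified as a functional of $\int_x f_\infty\,\dr x$ through the limit of $t^3\pmb\rho_I(t,t\widehat v)$.
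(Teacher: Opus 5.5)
The characteristics, the diagonalised quantity $\mathcal F$ and the correction $\mathscr C_t$ follow the paper's route. The gap is in closing the wave bootstrap (i): the step fails as you describe it.

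Kirchhoff's formula for $\Box\,\partial_{x^\mu}A_I=\pmb\rho_I[\partial_{x^\mu}f]$, fed with a bound $\varepsilon\langle t+|x|\rangle^{-3}$ on the differentiated source, gives at $x=0$ only $\varepsilon\langle t\rangle^{-3}\int_{|y|\le t}|y|^{-1}\,\dr y\sim\varepsilon t^{-1}$. But (i) requires $t^{-2}$ there, and even an extra factor $\langle t-|x|\rangle^{-1}$ would still leave a $\log t$. To gain the missing interior decay of $\nabla_{t,x}\pmb\rho_I$ you would have to trade $t\nabla_x$ for the fields $\widehat Z$ and integrate by parts in $v$. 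However, $\widehat Zf$ grows in time throughout the interior, not only near the light cone. The loss therefore lands in $\nabla_{t,x}A_I$, and (i) cannot be recovered.

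For $\nabla^2_{t,x}A_I$ the situation is worse. Kirchhoff, or \eqref{eq:extragain} combined with Kirchhoff for $\nabla_{t,x}ZA_I$, requires second derivatives of $f$, which (iii) does not control.

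The missing idea is the Glassey--Strauss decomposition (Proposition~\ref{ProGS}).
\begin{itemize}
\item Write $\partial_{x^\mu}$ as derivatives tangent to the backward light cone plus $\T_0$, and integrate the tangential part by parts.
\item The main term $[\partial_{x^\mu}A_I]_T$ is then an integral of $f$ itself against a kernel $O(\langle v\rangle^5)\,|y-x|^{-2}$. The loss-free bound $\int\langle v\rangle^5|f|\,\dr v\lesssim\varepsilon\langle t+|x|\rangle^{-3}$ turns it into $\varepsilon\langle t+|x|\rangle^{-7/4}\langle t-|x|\rangle^{-1/4}$.
\item The term $[\partial_{x^\mu}A_I]_{\T_0}$ becomes quadratic through $\T_0f=-\nabla\mathbf A\cdot\nabla_vf$ and an integration by parts in $v$.
\item Applied to $ZA_I$ with $\widehat Zf$ in place of $f$, the same identity controls $\nabla_{t,x}ZA_I$, hence $\nabla^2_{t,x}A_I$ via \eqref{eq:extragain}, using only first derivatives of $f$. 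The growth of $\widehat Zf$ is then confined to the weaker bound \eqref{eq:BA2}.
\end{itemize}
This is also the real reason $\mathscr C_t$ is needed. The sharp part of the field sees only $f$, so the decay of $\int\langle v\rangle^5|f|\,\dr v$ must hold without any logarithmic loss.

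\textbf{Secondary points.}
\begin{itemize}
\item The obstacle you flag is resolved in the paper without circularity, and without showing that $\pmb\omega$ does not grow. The crude bound $|x-t\widehat v-\mathcal X(0,t,x,v)|\lesssim\langle\mathcal V(0,t,x,v)\rangle^3\log\langle t+1\rangle$ (Proposition~\ref{Prozmod}) gives decay with a $\log^6$ loss. That already suffices for the convergence of $\mathbf Q_t$, for the asymptotics of $t^2[\partial_{x^\mu}A_I]_T(t,t\widehat v)$, and hence for defining $\mathscr C_t$. One then compares $x-t\widehat v-\mathscr C_t(v)\log(1+t)$ with the conserved quantity $\mathcal X(0,t,x,v)$ (Lemma~\ref{LemCtv}).
\item In the $\mathcal F$ argument, the factor $(1+t)^{-1/2}$ of $\sqrt{\mathbf a}$ contributes $-\frac{1}{2(1+t)}$ to $\T_{\mathbf A}\log\sqrt{\mathbf a}$. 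Taking absolute values therefore costs a factor $(1+t)^{1/2}$ on $\nabla_{t,x}f$, which is not harmless. You must use the sign, as in Lemma~\ref{LemDuhamel}. There, with the conserved $\langle\mathcal V_0\rangle$ in place of $\langle v\rangle$, the positive part is supported in $\{|x|>t\}$ and integrates to at most $3\log(1+t_\star)$. By Proposition~\ref{Prodomaincharac} this is a logarithm of $\langle\mathcal X_0\rangle\langle\mathcal V_0\rangle$.
\item A purely logarithmic loss for $\widehat Zf$ does not close as a bootstrap, since each pass through $\nabla_{t,x}ZA_I$ adds a logarithm. The paper allows a $t^\eta$ loss instead.
\item For $\delta=1$, the growth $e^{C\sqrt\Lambda\langle\mathcal V_0\rangle^{1+\kappa}}$ you obtain is not absorbed by $e^{B\sqrt\Lambda\langle v\rangle^{1/2}}$. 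The argument needs the weight $e^{B\sqrt\Lambda\langle v\rangle^{1+\kappa}}$, which is what Lemma~\ref{LemDuhamel} produces and what the introduction announces. The exponent $\frac12$ in \eqref{eq:assumpB} has to be read that way.
\end{itemize}
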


Along the lines of Remark \ref{Remconj}, we expect that asymptotic stability may in fact hold under substantially weaker assumptions on the initial data, provided one establishes stability in a weaker norm that controls $f$ but not its derivatives.
\begin{Conj}\label{Conj}
    Let $f_0 \in C^1$ and $A_1^0,\dots,A_K^0 \in C^2$ be initial data for \eqref{VW}. Assume that 
    \[  \sup_{1 \leq I \leq K} \; \sup_{|\gamma| \leq 2} \; \sup_{x \in \R^3} \; \langle x \rangle^{2+|\gamma|} \big| \partial_{t,x}^\gamma A^0_I (x) \big| \leq \Lambda, \qquad  \sup_{(x,v) \in \R^3_x \times \R^3_v} \langle |x|+|v| \rangle^B \big|f_0(x,v) \big| \leq \varepsilon , \]
    where $\Lambda = \sqrt{\epsilon}$ if $\delta =1$. Then, provided $B \geq 100$ and $\varepsilon>0$ is sufficiently small, the corresponding classical solution is global in time. Moreover, $\nabla_{t,x}A_I$ and $\int_{v} f \dr v$ decay at their optimal rates.
\end{Conj}
In \cite{WeiYang}, the condition $B>9$ is sufficient for the Vlasov-Maxwell system. We expect a slightly stronger assumption for \eqref{VW} when $\delta=0$, and an even stronger one when $\delta =1$, due to Proposition \ref{ProVcharacbis}.

\begin{Rq}
We do not know whether the assumptions on the decay in the momentum variable in Theorem \ref{Th} are optimal. They arise naturally from our control of the characteristics and of the commuted transport equations, and weakening them does not seem accessible by a straightforward refinement of our estimates. Obtaining a result under weaker $v$-decay assumptions, if possible, would likely require a different approach or a substantially more precise analysis of the interaction between the nonlinear characteristics and the scalar field.

In contrast, we expect that Conjecture \ref{Conj} can be proved by minor adaptations of the arguments in \cite{WeiYang}.
\end{Rq}

\subsection{Instability under weaker assumptions} We state here nonlinear instability results driven by a linearly unstable transport mechanism. In particular, the instability mechanism is already present at the linear level. For simplicity, we will focus on a specific case of the systems \eqref{VW}, with $K=1$ scalar field,
\begin{equation}\label{VWspeci}
    \partial_t f + \widehat{v} \cdot \nabla_x f +\langle v \rangle^\delta \partial_{x^1} A \, \partial_{v^1} f =0, \qquad \qquad \Box A = \int_{\R^3_v} f \dr v. \tag{$\mathrm{Vw}^{\mathrm{spec}}$--$\delta$}
\end{equation}
In fact, the exact expression of the source term in the wave equation will play no role here. We introduce first the profile of the initial scalar field that we will use.

\begin{Def}\label{Definsta}
   Let $ \mathcal{A} \in C^3_c (\R,\R_+) $ such that
   \[  \forall \, u \in [-10,-1], \quad \mathcal{A}(u) = e^{-2u}, \qquad \qquad \qquad  \forall \, u \geq 0, \quad \mathcal{A}(u)=0. \]
We denote by $\mathcal{A}_{\mathrm{hom}}$ the spherically symmetric solution to the homogeneous wave equation satisfying 
\[ \mathcal{A}_{\mathrm{hom}}(0,x ) =  \partial_t \mathcal{A}_{\mathrm{hom}} (0, x ) = \frac{\mathcal{A}(-|x|)}{|x|}. \]
 \end{Def}

We now prove a result which shows that the smallness assumption on the scalar field is necessary in the case $\delta=1$ in Theorem \ref{Th} and Conjecture \ref{Conj}. In other words, we show that for $\Lambda$ large enough, the solution $(0,\Lambda \mathcal{A}_{\mathrm{hom}})$ to \eqref{VWspeci} fails to be stable in the sense of Theorem \ref{Th}.

\begin{Pro}\label{ProInstaLambdaLarge}
Let $\delta =1$, and let $(f^\varepsilon , A^\Lambda)$ be the unique solution to \eqref{VWspeci} with initial data $A^\Lambda(0,x)= \partial_t A^\Lambda (0,x)=\Lambda \mathcal{A}(-|x|)/|x|$ and $f^\varepsilon (0,\cdot , \cdot)= \varepsilon f_0$, where $f_0 \in C^\infty (\R^3_x \times \R^3_v , \R_+)$ satisfies $\|f_0\|_{L^1_{x,v}}=1$, and 
\[ f_0(x,v) \neq 0 \qquad \Longrightarrow \qquad |x-(7,0,0)| \leq 1, \quad \;  0 \leq v^2, \, v^3 \leq 1 \leq v^1. \]
There exists $\Lambda_0 >0$ such that, if $\Lambda \geq \Lambda_0$, the following holds. For all $C_{\mathrm{large}}, \, \varepsilon_1>0$, there exists $\varepsilon \leq \varepsilon_1$ such that
 \begin{itemize}
     \item either $(f^\varepsilon , A^{\Lambda})$ blows up in finite time,
     \item or $\|f^\varepsilon(t,\cdot,\cdot) \|_{L^1_{x,v}} \geq \|f^\varepsilon(0,\cdot,\cdot) \|_{L^1_{x,v}} (1+t)^{c \Lambda}$, where $c>0$ is an absolute constant, 
     \item or there exists $(T,x) \in \R_+ \times \R^3$ such that $\langle T \rangle \big| \partial_{x^1} A^\Lambda- \Lambda \partial_{x^1}\mathcal{A}_{\mathrm{hom}} \big|(T,x) \geq C_{\mathrm{large}} \varepsilon$.
 \end{itemize}
\end{Pro}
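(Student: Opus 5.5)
The plan is to argue by contradiction on the third alternative. Assume that $(f^\varepsilon,A^\Lambda)$ is global and that $\langle t\rangle|\partial_{x^1}A^\Lambda-\Lambda\partial_{x^1}\mathcal{A}_{\mathrm{hom}}|\leq C_{\mathrm{large}}\varepsilon$ everywhere. We then show that the second alternative must hold once $\varepsilon\leq\varepsilon_1$ is small compared with $\Lambda/C_{\mathrm{large}}$. Since the instability is already present at the linear level, everything reduces to the characteristics of the transport operator $\partial_t+\widehat{v}\cdot\nabla_x+\langle v\rangle\partial_{x^1}A^\Lambda\,\partial_{v^1}$. In this regime the perturbation $\partial_{x^1}A^\Lambda-\Lambda\partial_{x^1}\mathcal{A}_{\mathrm{hom}}$ is $O(\varepsilon C_{\mathrm{large}}\langle t\rangle^{-1})$, negligible against the $\Lambda\langle t\rangle^{-1}$ contribution of the background.

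\textbf{Step 1: the background field.} Write $\psi=r\mathcal{A}_{\mathrm{hom}}$, which solves the $1$d wave equation $(\partial_t^2-\partial_r^2)\psi=0$ with $\psi(0,r)=\partial_t\psi(0,r)=\mathcal{A}(-r)$. By d'Alembert's formula,
\[
\psi(t,r)=\tfrac12\big(\mathcal{A}(-r-t)+\mathcal{A}(t-r)\big)+\tfrac12\int_{r-t}^{r+t}\mathcal{A}(-s)\,\dr s .
\]
For $t\geq 20$ and $r>0$, only the outgoing profile survives. It depends only on $u=t-r$ and equals, on the window $u\in[1,10]$, $\tfrac12 e^{2u}$ plus $\tfrac12\int e^{2s}\dr s=\tfrac14e^{2u}+\text{const}$. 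Hence there $\partial_u\psi\geq c_0e^{2u}>0$. Differentiating $\mathcal{A}_{\mathrm{hom}}=\psi(t-r)/r$ gives
\[
\partial_{x^1}\mathcal{A}_{\mathrm{hom}}=-\frac{x^1}{r}\Big(\frac{\psi'(u)}{r}+\frac{\psi(u)}{r^2}\Big).
\]
This has a definite sign and size $\sim r^{-1}$ in the region $\{1\leq u\leq 10,\ x^1/r\geq 1-\eta\}$. I expect the sign to be such that the force accelerates $v^1$; otherwise one replaces the window with the appropriate one, which is the reason the profile was prescribed on $[-10,-1]$.

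\textbf{Step 2: trapping of characteristics (main obstacle).} For initial data in the support ($x\approx(7,0,0)$, $v^1\geq1$), I would propagate by bootstrap the following properties along $(\mathcal{X}_t,\mathcal{V}_t)$:
\begin{enumerate}[label=(\roman*)]
\item the particle stays in the angular cone $\mathcal{X}^1_t/|\mathcal{X}_t|\geq 1-\eta$;
\item $u_t=t-|\mathcal{X}_t|$ stays in $[1,10]$;
\item $\mathcal{V}^1_t\geq(1+t)^{c\Lambda}$.
\end{enumerate}
The mechanism is that $\dot{\mathcal{V}}^1\approx\langle\mathcal{V}\rangle\Lambda\psi'(u)/t\geq c\Lambda\langle\mathcal{V}\rangle/t$, which gives the polynomial growth (iii). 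The transverse velocities $\mathcal{V}^2,\mathcal{V}^3$ are not forced, so $\widehat{\mathcal{V}}$ aligns with $e_1$. Meanwhile $\dot u=1-\widehat{\mathcal{V}}\cdot\mathcal{X}/|\mathcal{X}|\lesssim\langle\mathcal{V}\rangle^{-2}+\eta$. Since $\langle\mathcal{V}\rangle^{-2}\leq(1+t)^{-2c\Lambda}$, the drift of $u$ is integrable provided $2c\Lambda>1$. This is exactly where $\Lambda\geq\Lambda_0$ enters: the self-reinforcing growth of $\mathcal{V}^1$ keeps the particle near the cone, inside the window. The delicate points are the initial layer $t\lesssim 20$, where the particle must first enter the window, and checking that the total drift $\int\dot u$ is smaller than the window width. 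For the latter, a large $\Lambda$ makes the velocity large quickly, and the exponential profile $e^{2u}$ makes the force comparable across the window.

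\textbf{Step 3: growth of the $L^1$ norm.} Since $f^\varepsilon\geq0$ is transported, its support is the flow image of $\mathrm{supp} f_0$. The field is not divergence free in $v$, so
\[
\frac{\dr}{\dr t}\int f^\varepsilon\,\dr x\,\dr v=\int\partial_{v^1}\big(\langle v\rangle\big)\,\partial_{x^1}A^\Lambda\,f^\varepsilon=\int\widehat{v}^1\,\partial_{x^1}A^\Lambda\,f^\varepsilon .
\]
On the support, Steps 1 and 2 give $\widehat{v}^1\partial_{x^1}A^\Lambda\geq c\Lambda/(1+t)-C_{\mathrm{large}}\varepsilon/(1+t)\geq \tfrac{c}{2}\Lambda/(1+t)$. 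Grönwall then yields $\|f^\varepsilon(t)\|_{L^1}\geq\|f^\varepsilon(0)\|_{L^1}(1+t)^{c\Lambda/2}$, which is the second alternative after renaming $c$. The choice of $\varepsilon$ depends only on $C_{\mathrm{large}}$ and $\varepsilon_1$.
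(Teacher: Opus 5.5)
Your overall architecture matches the paper's: negate the third alternative, trap the characteristics where the background pushes $v^1$ up, use $2c\Lambda>1$, then apply Grönwall to $\int\widehat{v}^1\partial_{x^1}A^\Lambda f$. But Step~2 has a genuine gap: your trapping window $\{1\leq t-|x|\leq 10\}$ lies inside the light cone, while the support of $f_0$ lies outside it, with $t-|x|=-|x|\in[-8,-6]$ at $t=0$.

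\begin{itemize}
\item Property (ii) is false at the initial time, so your bootstrap cannot be started.
\item The ``initial layer'' is never crossed. For $\Lambda$ large, the exterior field accelerates these particles, and by exactly your mechanism (growth of $\mathcal{V}^1$ makes the drift of $t-|\mathcal{X}_t|$ integrable) they remain in $\{|x|-t\geq 1\}$ forever.
\item The paper's background is the outgoing wave $\mathcal{A}_{\mathrm{hom}}(t,x)=\mathcal{A}(t-|x|)/|x|$ (the formula in the proof of Lemma~\ref{LemInsta0}). It vanishes identically on $\{t\geq |x|\}$ because $\mathcal{A}=0$ on $\R_+$. So your window carries no background force, and your Step~1 value ``$\tfrac12 e^{2u}$ on $u\in[1,10]$'' is incorrect.
\item Even taken at face value, $\partial_u\psi>0$ inserted into your formula $\partial_{x^1}\mathcal{A}_{\mathrm{hom}}=-\frac{x^1}{r}\big(\frac{\psi'}{r}+\frac{\psi}{r^2}\big)$ gives a decelerating force.
\end{itemize}
Consequently Step~3 has no lower bound on $\widehat{v}^1\partial_{x^1}A^\Lambda$ on the support, and the Grönwall argument from $t=0$ fails.

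The repair is the paper's region $\{1\leq |x|-t\leq 10,\ x^1/|x|\geq 1/4\}$. The data already lie there at $t=0$, and $\mathcal{A}(u)=e^{-2u}$ gives $\partial_{x^1}\mathcal{A}_{\mathrm{hom}}=\frac{x^1}{|x|}e^{2(|x|-t)}\big(\frac{2}{|x|}-\frac{1}{|x|^2}\big)\gtrsim(1+t)^{-1}$. Trapping there is simpler than your three-part bootstrap:
\begin{itemize}
\item Since $\frac{\dr}{\dr t}(|\mathcal{X}_t|-t)=\widehat{\mathcal{V}}_t\cdot\frac{\mathcal{X}_t}{|\mathcal{X}_t|}-1<0$, the bound $|\mathcal{X}_t|-t\leq 10$ is automatic, so the only possible exit is through $|\mathcal{X}_t|-t=1$.
\item $\mathcal{V}^2,\mathcal{V}^3$ are constant, and Grönwall gives $\mathcal{V}^1_t\geq w^1(1+t)^{c\Lambda}$.
\item The angular condition persists because $\mathcal{X}^1_t\geq 6+t\widehat{w}^1$ while $|\mathcal{X}^2_t|,|\mathcal{X}^3_t|\leq 1+t\widehat{w}^1$.
\item As soon as $2c\Lambda\geq 13/10$,
\[ |\mathcal{X}_t|-t\geq \mathcal{X}^1_t-t\geq y^1-\int_0^{\infty}\frac{3\,\dr s}{2|w^1|^2(1+s)^{2c\Lambda}}\geq 6-\frac{3}{2(2c\Lambda-1)}\geq 1 . \]
\end{itemize}
Comparing $|\mathcal{X}_t|$ with $\mathcal{X}^1_t$ also removes a second defect of your Step~2. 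Your drift bound $\dot u\lesssim\langle\mathcal{V}\rangle^{-2}+\eta$ contains the constant $\eta$ from your angular cone, which is not integrable in time, so it does not give the bounded drift you claim. With the window corrected, your Step~3 coincides with the paper's conclusion.
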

\begin{Rq}\label{Rqici}
    The third possibility contradicts the estimate \eqref{eq:BA1} established in the proof of Theorem \ref{Th}.
\end{Rq}
\begin{Rq}
    Proposition \ref{ProInstaLambdaLarge} indicates that any extension of the stability result of Luk-Oh \cite{JonathanSungJin} for large dispersive vacuum Einstein spacetimes to the Einstein--Vlasov system must take into account the null structure of the transport operator, unless the distribution function is assumed to have compact support of size sufficiently small compared to the size of the reference solution.
\end{Rq}

The next result shows that, when $\delta=1$, the decay rates allowed by the original function space are not, in general, asymptotically stable. In particular, some loss of decay is unavoidable. To quantify this loss, we introduce the following class of weights,
\[ \mathcal{W}_b \coloneqq \big\{ W \in C^1 \big(\R_+,[1,+\infty[ \big) \; \big| \; W(s) \to +\infty \;\text{as $s \to +\infty$}, \; \; sW'(s)/W(s) \geq b \big\}, \qquad \qquad b>0,  \]
which includes, in particular, polynomial and exponential weights.
\begin{Pro}\label{ProInstaInidecay}
    Let $\delta =1$, and let $(f , A)$ be the unique solution to \eqref{VWspeci} with initial data 
    \[ A(0,x)= \partial_t A (0,x)=\Lambda \mathcal{A}(-|x|)/|x|, \qquad \qquad f (0,\cdot , \cdot)= \varepsilon f_0, \quad  f_0 \in C^1 (\R^3_x \times \R^3_v , \R).\]
    Assume that $\Lambda$ and $\varepsilon$ are sufficiently small so that $(f,A)$ falls within the framework of Theorem \ref{Th}, and that $\varepsilon / \Lambda$ is sufficiently small. Then, for every $b >0$ and $W \in \mathcal{W}_b$, there exists an absolute constant $c>0$ such that
    \[\forall \, \tau \geq 1, \qquad \qquad   W \big( \langle \mathcal{V}_\tau \rangle \big) \big| f (\tau,\mathcal{X}_\tau,\mathcal{V}_\tau ) \big| \geq \langle \tau \rangle^{2bc \Lambda } W ( \langle \tau \rangle) \big|f_0(7,0,0,\tau,0,0) \big| , \]
    where $(\mathcal{X}_\tau , \mathcal{V}_\tau)$ denotes the value at time $\tau$ of the characteristic that starts from $(7,0,0,\tau,0,0)$ at time $0$.
\end{Pro}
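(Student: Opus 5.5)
Since $f$ solves the transport equation $\T_{\mathbf{A}} f = 0$, it is constant along the characteristics. Hence $f(\tau,\mathcal{X}_\tau,\mathcal{V}_\tau)=f(0,(7,0,0),(\tau,0,0))$, and the statement reduces to a lower bound on the momentum growth along one characteristic. For $W\in\mathcal{W}_b$, integrating $\frac{\dr}{\dr s}\log W(s)\geq b/s$ between $\langle \tau\rangle$ and $\langle\mathcal{V}_\tau\rangle$ gives $W(\langle\mathcal{V}_\tau\rangle)\geq (\langle\mathcal{V}_\tau\rangle/\langle\tau\rangle)^b\,W(\langle\tau\rangle)$ whenever $\langle\mathcal{V}_\tau\rangle\geq\langle\tau\rangle$. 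It therefore suffices to show that $\langle\mathcal{V}_\tau\rangle\geq\langle\tau\rangle^{1+2c\Lambda}$; here, as usual, $f_0$ in the conclusion refers to the initial datum of $f$.

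\emph{Step 1: symmetry of the characteristic.} In \eqref{VWspeci} the force acts only in the $v^1$ direction, so $\mathcal{V}^2=\mathcal{V}^3=0$ for all times. Then $\dot{\mathcal{X}}^{2}=\dot{\mathcal{X}}^3=0$, so the particle stays on the positive $x^1$-axis with $\mathcal{X}^1_t=|\mathcal{X}_t|$. It satisfies
\[ \dot{\mathcal{X}}^1=\widehat{\mathcal{V}}^1, \qquad \dot{\mathcal{V}}^1=\langle\mathcal{V}\rangle\,\partial_{x^1}A(t,\mathcal{X}_t)=\langle\mathcal{V}\rangle\,\partial_r A(t,\mathcal{X}_t). \]
This holds regardless of the (non-symmetric) contribution of $f$ to $A$.

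\emph{Step 2: explicit exterior form of $\mathcal{A}_{\mathrm{hom}}$.} The function $r\mathcal{A}_{\mathrm{hom}}$ solves the 1D wave equation on the half-line. In the region $r-t\geq 1$ no reflection at the origin has occurred, so d'Alembert's formula with $\Phi(s)=\Psi(s)=\mathcal{A}(-s)$ gives
\[ \partial_r(r\mathcal{A}_{\mathrm{hom}})=\tfrac12\big[-\mathcal{A}'(-(r+t))-\mathcal{A}'(-(r-t))+\mathcal{A}(-(r+t))-\mathcal{A}(-(r-t))\big]. \]
Once $r+t>10$ the terms evaluated at $r+t$ vanish. For $r-t\in[1,10]$, the relation $\mathcal{A}'=-2\mathcal{A}$ then yields $\partial_r(r\mathcal{A}_{\mathrm{hom}})=\tfrac12 e^{2(r-t)}\geq \tfrac12 e^2$. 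Hence, for large $t$, $\partial_r\mathcal{A}_{\mathrm{hom}}\geq \frac{c_0}{r}-\frac{|\mathcal{A}_{\mathrm{hom}}|}{r}\geq \frac{c_0}{2(1+t)}$, using $|\mathcal{A}_{\mathrm{hom}}|\lesssim r^{-1}$. For the bounded initial time interval I will only need that $\mathcal{V}^1$ cannot decrease by more than a fixed factor, which follows from Grönwall and $|\nabla A|\lesssim\Lambda$.

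\emph{Step 3: perturbation by $f$.} Since the data fall within the framework of Theorem \ref{Th}, the bootstrap estimate \eqref{eq:BA1} gives $|\partial_{x^1}A-\Lambda\partial_{x^1}\mathcal{A}_{\mathrm{hom}}|\lesssim\varepsilon\langle t\rangle^{-1}$. Because $\varepsilon/\Lambda$ is small, this is absorbed into the lower bound of Step 2, so that $\partial_rA(t,\mathcal{X}_t)\geq c\Lambda(1+t)^{-1}$ as long as $u_t:=t-\mathcal{X}^1_t\in[-10,-1]$.

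\emph{Step 4: trapping in the wave packet (main obstacle).} Initially $u_0=-7$. We have $\dot u=1-\widehat{\mathcal{V}}^1\leq \langle\mathcal{V}^1\rangle^{-2}$, and by Step 3 the momentum $\mathcal{V}^1$ is nondecreasing (up to the harmless initial layer), so $\mathcal{V}^1\gtrsim\tau$. Hence $u_t\leq -7+Ct\tau^{-2}$, and $u_t\in[-7,-1]$ for all $t\leq c'\tau^2$. On this interval $\dot{\mathcal{V}}^1\geq c\Lambda\,\mathcal{V}^1/(1+t)$, and integrating gives $\mathcal{V}^1_{c'\tau^2}\gtrsim\tau(1+\tau^2)^{c\Lambda}$. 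The delicate point is to run this as a continuity argument in which the positivity of the force (monotonicity of $\mathcal{V}^1$) and the trapping $u_t\in[-10,-1]$ are propagated simultaneously.

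The statement is at time $\tau$ rather than $\tau^2$. Since $\tau\leq c'\tau^2$ for $\tau$ large, the same integration up to time $\tau$ already yields $\langle\mathcal{V}_\tau\rangle\gtrsim\tau^{1+c\Lambda}$; renaming $c$ (and treating bounded $\tau$ by the constant factor) gives the claimed exponent $2bc\Lambda$. Combined with the reduction in the first paragraph, this concludes the argument.
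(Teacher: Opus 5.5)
Your route is the paper's: positivity of $\partial_{x^1}A$ in the packet $1\le|x|-t\le 10$, $x^1/|x|\ge 1/4$ (Lemmas \ref{LemInsta0}--\ref{LemdecayAinsta}, with the $\varepsilon$-error absorbed through \eqref{eq:decayinsta} and $\varepsilon/\Lambda$ small), trapping there up to times of order $\langle\tau\rangle^2$ (Lemma \ref{LemVprop}), Gr\"onwall for $\mathcal{V}^1$ (Lemma \ref{LemlowerboundV}), and finally $sW'(s)/W(s)\ge b$. Your reduction via $W(\lambda s)\ge\lambda^b W(s)$ for $\lambda\ge 1$, plus constancy of $f$ along characteristics, is equivalent to the paper's Gr\"onwall argument for $\T_{\mathbf{A}}(W|f|)=\partial_{x^1}A\,v^1W'(\langle v\rangle)|f|$.

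\textbf{The gap is the initial layer.} The inequality has no multiplicative constant and must hold for every $\tau\ge 1$. You only control the sign of the force for $t\ge t_0$, and you let $\langle\mathcal{V}\rangle$ lose a factor $e^{-C\Lambda t_0}$ on $[0,t_0]$. Shrinking $c$ absorbs this loss only for $\tau$ above some threshold. ``Treating bounded $\tau$ by the constant factor'' is not a valid step: $W$ is increasing, so any net drop of $\langle\mathcal{V}_\tau\rangle$ below $\langle\tau\rangle$ is incompatible with the claimed bound. The $\gtrsim$ in Step 4 has the same defect.

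What you need is $\partial_{x^1}A(t,\mathcal{X}_t)\ge 2c\Lambda(1+t)^{-1}$ from $t=0$ on. The paper gets this from Lemma \ref{LemInsta0}, where $\mathcal{A}_{\mathrm{hom}}=\mathcal{A}(t-r)/r$ is purely outgoing. Hence $\partial_{x^1}\mathcal{A}_{\mathrm{hom}}=\omega^1e^{2(r-t)}(2r^{-1}-r^{-2})>0$ on the whole region, including at $t=0$. Then $\frac{\dr}{\dr t}\log\langle\mathcal{V}_t\rangle=\widehat{\mathcal{V}}^1_t\,\partial_{x^1}A(t,\mathcal{X}_t)\ge c\Lambda(1+t)^{-1}$, using $\mathcal{V}^1_t\ge\tau\ge 1$. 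So $\langle\mathcal{V}_t\rangle\ge\langle\tau\rangle(1+t)^{c\Lambda}$ holds exactly on $[0,\mathcal{T}_{y,w}]\ni\tau$, and the statement follows with constant $c/2$ and no threshold.

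Relatedly, your Step 2 uses the data literally, $\partial_t(r\mathcal{A}_{\mathrm{hom}})(0,r)=\mathcal{A}(-r)$. These are not the data of $\mathcal{A}(t-r)/r$, whose time derivative at $t=0$ is $\mathcal{A}'(-r)$. This mismatch is what produces the incoming term $\frac12(\mathcal{A}-\mathcal{A}')(-(r+t))$ in your formula. Contrary to what you write, this term does not vanish once $r+t>10$. Since $\mathcal{A}(-10)=e^{20}$, it vanishes only once $r+t$ exceeds the support of $\mathcal{A}$.

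Moreover, $\mathcal{A}$ rises from $0$ to $e^{20}$ on $(-\infty,-10]$, so $\mathcal{A}'>\mathcal{A}$ somewhere there; otherwise $(e^{-u}\mathcal{A})'\le 0$ would force $\mathcal{A}(-10)\le 0$. Depending on $\mathcal{A}$, this term can therefore reverse the sign of the force on a bounded time window. For the literal data your initial layer is a real phenomenon, and your argument cannot give the stated inequality for small $\tau$. It closes only with the outgoing profile of Lemma \ref{LemInsta0}, for which the force is positive from the start.
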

\begin{Rq}
  The proof of Proposition \ref{ProInstaInidecay} actually yields a more general statement.
\end{Rq}
\begin{Rq}
A consequence of Proposition \ref{ProInstaInidecay} is that we do not expect it to be possible to work in a functional framework in which the initial data and the scattering state belong to the same weighted Banach space, as in \cite{VPwaveop} for the Vlasov-Poisson system. In the particular case of the Vlasov--Maxwell and Einstein-Vlasov systems, however, we believe that the null structure of the equations should make such a result possible (see \cite[Remark~1.10]{VPwaveop}).
\end{Rq}

\section{Preparatories}

\subsection{Weight functions}

As explained in Section \ref{SubsecKeyidea}, the ingoing null component of the velocity $4$-vector will play a crucial role for us. For convenience, we will work with a rescaled quantity,
\begin{equation}\label{defvLbar} \widehat{v}^{\underline{L}} \coloneqq \frac{1}{2} \bigg( 1- \frac{x}{|x|} \cdot \frac{v}{\langle v \rangle} \bigg).
\end{equation}
\begin{Rq}In the null frame $(\underline{L},L,e_\theta, e_\varphi)$ introduced in Section \ref{Subsecnullcompo}, we can decompose $\mathbf{v}=(\langle v \rangle , v^1,v^2,v^3)$ as $\mathbf{v}=v^{\underline{L}} \underline{L}+v^L L+v^{e_\theta} e_\theta+v^{e_{\varphi}} e_\varphi$. Then, one can check that $\widehat{v}^{\underline{L}}= v^{\underline{L}} / \langle v \rangle$ and, by lowering indices with respect to the Minkowski metric, $\widehat{v}^{\underline{L}}=-\frac{1}{2}v_L$.
\end{Rq}

We recall classical inequalities involving the weights $\langle v \rangle$ and $\langle x-t \widehat{v} \rangle$, which are conserved along the flow of the free transport operator $\T_0$. The second one is useful for exploiting the strong decay of massive Vlasov fields near and outside the light cone $t=|x|$. For a proof, we refer for instance to \cite[Lemma~2.9]{FJS}.

\begin{Lem}\label{Lemweight}
For all $(t,x,v) \in \R \times \R^3_x \times \R^3_v$, we have
\[ 1 \leq 4\langle v\rangle^2 \widehat{v}^{\underline{L}}, \qquad \qquad \langle t+|x|\rangle \lesssim \langle t-|x|\rangle \langle v\rangle^2 + \langle x-t\widehat{v}\rangle \langle v \rangle^2. \]
\end{Lem}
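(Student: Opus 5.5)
For the first inequality, I would set $\omega = x/|x|$ and write $2\widehat{v}^{\underline{L}} = 1 - \omega\cdot\widehat{v} \geq 1-|\widehat{v}| = 1 - |v|/\langle v\rangle$. Rationalising gives
\[ 1-\frac{|v|}{\langle v\rangle} = \frac{\langle v\rangle - |v|}{\langle v\rangle} = \frac{1}{\langle v\rangle(\langle v\rangle+|v|)} \geq \frac{1}{2\langle v\rangle^2}, \]
since $\langle v\rangle+|v| \leq 2\langle v\rangle$. Hence $2\widehat{v}^{\underline{L}} \geq \frac{1}{2\langle v\rangle^2}$, which is exactly $1 \leq 4\langle v\rangle^2\widehat{v}^{\underline{L}}$. (When $x=0$ the quantity $\widehat{v}^{\underline{L}}$ is not defined; I would interpret $\omega$ as any unit vector, and the bound $\omega\cdot\widehat{v}\leq|\widehat{v}|$ still applies.)

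For the second inequality, I may assume $t \geq 0$. The case $t<0$ follows by the symmetry $(t,v)\mapsto(-t,-v)$, which leaves $t-|x|$ unchanged up to the substitution $|t|$ and maps $x-t\widehat{v}$ to itself. I would then split into two cases. If $|x| \leq t/2$ or $|x| \geq 2t$, then $\langle t+|x|\rangle \lesssim \langle t-|x|\rangle$, and the bound holds trivially because $\langle v\rangle \geq 1$. This leaves the region $t/2 \leq |x| \leq 2t$, where $\langle t+|x|\rangle \sim \langle t\rangle$. There I would use the triangle inequality:
\[ t\,(1-|\widehat{v}|) = t - t|\widehat{v}| \leq \big|t-|x|\big| + \big||x| - t|\widehat{v}|\big| \leq \big|t-|x|\big| + |x-t\widehat{v}|. \]
From the first step, $1-|\widehat{v}| \geq \frac{1}{2\langle v\rangle^2}$, so
\[ t \leq 2\langle v\rangle^2\big(|t-|x|| + |x-t\widehat{v}|\big). \]
Adding $1$ to both sides and using $\langle v\rangle \geq 1$ then gives $\langle t+|x|\rangle \lesssim \langle t\rangle \lesssim \langle v\rangle^2\langle t-|x|\rangle + \langle v\rangle^2\langle x-t\widehat{v}\rangle$.

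No step here is a real obstacle; the only subtlety is the algebraic identity $1-|\widehat{v}| = (\langle v\rangle(\langle v\rangle+|v|))^{-1}$, which converts the massive-particle separation from the light cone into the $\langle v\rangle^{-2}$ factor. In fact the case split is unnecessary. The displayed triangle-inequality bound already holds for all $t\geq0$, and together with $|x| \leq t + |t-|x||$ it yields the inequality directly in every region.
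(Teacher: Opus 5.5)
Your proof is correct for $t\geq 0$, which is the only range the paper ever uses. The paper gives no argument of its own and simply cites \cite[Lemma~2.9]{FJS}. Your key computation $1-|\widehat{v}|=\big(\langle v\rangle(\langle v\rangle+|v|)\big)^{-1}\geq (2\langle v\rangle^2)^{-1}$ is exactly the one the paper uses in the proofs of Lemma~\ref{Lemkernel} and Corollary~\ref{Cordecayintvbof}.

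One step is wrong as written: the reduction of $t<0$ to $t\geq 0$. The map $(t,v)\mapsto(-t,-v)$ does fix $x-t\widehat{v}$ and $\langle v\rangle$. However, it swaps $\langle t+|x|\rangle$ and $\langle t-|x|\rangle$. At $(s,x,w)=(-t,x,-v)$ with $s>0$ you therefore obtain $\langle s-|x|\rangle\lesssim\langle s+|x|\rangle\langle w\rangle^2+\langle x-s\widehat{w}\rangle\langle w\rangle^2$, which is not the $s\geq0$ instance of the lemma. The case $t<0$ needs no symmetry at all. There $|t+|x||=\big||x|-|t|\big|\leq |t|+|x|=|t-|x||$, so $\langle t+|x|\rangle\leq\langle t-|x|\rangle$ directly.

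If you want an argument valid for all $t\in\R$ at once, without case distinctions, use the exact identity
\[ 2\big(t+|x|\big)\widehat{v}^{\underline{L}}=\big(t-|x|\big)\big(1+\omega\cdot\widehat{v}\big)+2\,\omega\cdot\big(x-t\widehat{v}\big),\qquad \omega=\frac{x}{|x|}. \]
You can verify it by expanding both sides with $|x|\,\omega=x$. Taking absolute values gives $|t+|x||\,\widehat{v}^{\underline{L}}\leq |t-|x||+|x-t\widehat{v}|$. Multiplying by $4\langle v\rangle^2$ and using the first inequality then yields the second.

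This intermediate bound is slightly more informative than your $t(1-|\widehat{v}|)\leq|t-|x||+|x-t\widehat{v}|$. It keeps the full weight $t+|x|$ and the directional factor $\widehat{v}^{\underline{L}}\geq\frac12(1-|\widehat{v}|)$, rather than its worst-case lower bound.
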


Finally, some of our decay estimates will rely on the change of variables $v \mapsto x-t \widehat{v}$. We then recall \cite[Lemma~2.9]{scat}.

\begin{Lem}\label{Lemcdv}
   Let $\widecheck{\, \; \, }$ be the operator defined on the domain $\{ y \in \R^3 \; | \; |y| <1 \}$ by
   \[ \widecheck{y} \coloneqq \frac{y}{\sqrt{1-|y|^2}}.\]
   We have $\widecheck{\widehat{v}}=v$ and $\widehat{\widecheck{y}}=y$ for all $v \in \R^3_v$ and $|y|<1$. Moreover, the Jacobian determinant of $v \mapsto t\widehat{v}$ equals $t^{-3} \langle v \rangle^5$.
\end{Lem}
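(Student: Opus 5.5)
We argue by direct computation, treating the two claims separately; nothing beyond elementary calculus is needed.

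\emph{Inversion identities.} For $v \in \R^3_v$, set $y=\widehat{v}$. Since $|\widehat{v}|^2 = |v|^2/\langle v \rangle^2 <1$, the point $y$ lies in the unit ball. Moreover
\[ 1-|\widehat{v}|^2 = \frac{1}{\langle v\rangle^2}, \qquad \text{so} \qquad \widecheck{\widehat{v}} = \widehat{v}\,\langle v \rangle = v. \]
Conversely, for $|y|<1$ we have $|\widecheck{y}|^2 = |y|^2/(1-|y|^2)$. Hence
\[ \langle \widecheck{y} \rangle = (1-|y|^2)^{-1/2}, \qquad \text{and therefore} \qquad \widehat{\widecheck{y}} = \widecheck{y}\,(1-|y|^2)^{1/2} = y. \]
Thus $v \mapsto \widehat{v}$ is a smooth bijection from $\R^3$ onto the open unit ball, with inverse $y\mapsto \widecheck{y}$.

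\emph{Jacobian.} Differentiating gives
\[ \partial_{v^j}\widehat{v}^i = \frac{\delta_{ij}}{\langle v\rangle} - \frac{v^iv^j}{\langle v\rangle^3} = \frac{1}{\langle v \rangle}\Big( I - \frac{v v^{T}}{\langle v\rangle^2}\Big)_{ij}. \]
The matrix $I - \langle v\rangle^{-2}vv^T$ has eigenvalue $1$ with multiplicity two, on $v^\perp$, and eigenvalue $1-|v|^2/\langle v\rangle^2 = \langle v\rangle^{-2}$ in the direction $v$. Equivalently, one can apply the matrix determinant lemma. Its determinant is therefore $\langle v\rangle^{-2}$, and
\[ \det \nabla_v \widehat{v} = \langle v\rangle^{-3}\cdot\langle v \rangle^{-2} = \langle v \rangle^{-5}. \]
Multiplying by $t$ in each of the three coordinates gives $\det \nabla_v (t\widehat{v}) = t^{3}\langle v\rangle^{-5}$.

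By the inverse function theorem, the inverse change of variables $y \mapsto \widecheck{(y/t)}$ has Jacobian $t^{-3}\langle v \rangle^{5}$. This is the form in which the lemma is used, via $\dr v = t^{-3}\langle v\rangle^5 \dr y$ with $y = t\widehat{v}$ (or $y=x-t\widehat{v}$, which differs only by a translation and a sign). The paper's phrasing should be read with this convention. There is no real obstacle here; the only point requiring care is the rank-one determinant computation, together with keeping track of which direction of the map carries the factor $t^{-3}\langle v\rangle^5$.
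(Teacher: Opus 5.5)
Your proof is correct. The paper gives no argument of its own here (it only recalls \cite[Lemma~2.9]{scat}), and your direct computation is the standard argument: the identity $1-|\widehat{v}|^2=\langle v\rangle^{-2}$ gives the inversion formulas, and the rank-one determinant gives $\det\nabla_v\widehat{v}=\langle v\rangle^{-3}\big(1-|v|^2\langle v\rangle^{-2}\big)=\langle v\rangle^{-5}$.

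You are also right that, read literally, $\det\nabla_v(t\widehat{v})=t^3\langle v\rangle^{-5}$. The factor $t^{-3}\langle v\rangle^5$ is the Jacobian of the inverse map, i.e. $\dr v=t^{-3}\langle v\rangle^5\,\dr y$ for $y=x-t\widehat{v}$. This is exactly how the lemma is used in Corollaries \ref{Cordecayintvbof} and \ref{Cordecayintv}.
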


\subsection{Commutators}

We will derive estimates on both the fields and the distribution function using vector field methods. These kinds of approaches go back to Klainerman \cite{Kl85}, and were adapted to Vlasov equations by Fajman-Joudioux-Smulevici \cite{FJS}.

We will commute the wave equations using the following weighted derivatives.

\begin{Def}
Let $\mathbb{K}$ be the set composed of the vector fields
\[ \partial_t, \qquad \partial_{x^i}, \qquad \Omega_{0i} \coloneqq t\partial_{x^i}+x^i\partial_t, \qquad \Omega_{jk} \coloneqq x^j\partial_{x^k}-x^k\partial_{x^j}, \qquad S \coloneqq t\partial_t+ \sum_{1 \leq \ell \leq 3} x^\ell\partial_{x^\ell}=t\partial_t+r\partial_r, \]
where $1\leq i \leq 3$ and $1 \leq j<k\leq 3$.
\end{Def}

The elements $Z \in \mathbb{K} \setminus \{S\}$ are Killing vector fields, and then generate isometries of the Minkowski space, whereas $S$ is merely conformal Killing. In particular, we have
\begin{equation}\label{eq:ComBox}
\forall \, Z \in \mathbb{K}\setminus\{ S \}, \qquad [\Box,Z]=0,  \qquad \qquad \qquad [\Box,S]=2\Box.
\end{equation}

We now introduce the weighted derivatives that we will use to commute the Vlasov equation.

\begin{Def}
Let $\Pp$ be the set composed of
\[ \partial_t, \qquad \partial_{x^i}, \qquad \widehat{\Omega}_{0i} \coloneqq t\partial_{x^i}+x^i\partial_t+v^0\partial_{v^i}, \qquad \widehat{\Omega}_{jk} \coloneqq x^j \partial_{x^k}-x^k \partial_{x^j}+v^j \partial_{v^k}-v^k \partial_{v^j},
\]
where $1 \leq i \leq 3$ and $1 \leq j < k \leq 3$. Let also $\Pp_S \coloneqq \Pp \cup \{S\}$.
\end{Def}

The elements $\widehat{Z} \in \Pp$ are the complete lifts of the corresponding vector fields $ Z\in \mathbb{K} \setminus \{ S \}$ (see \cite[Section~2G]{FJS} for more details). They have good commutation properties with the linear transport operator $\T_0=\partial_t+\widehat{v} \cdot \nabla_x$. Indeed,
\begin{equation}\label{eq:ComT}
\forall \, \widehat{Z} \in \Pp, \qquad \big[ \langle v\rangle \T_0,\widehat{Z} \big]=0,
\qquad \big[ \langle v\rangle \T_0,S \big]= \langle v\rangle \T_0.
\end{equation}

We now state the commutation formula. For this, let us denote the Vlasov operator as
\[ \T_{\mathbf{A}} \coloneqq \partial_t+\widehat{v} \cdot \nabla_x +\nabla \mathbf{A} \cdot \nabla_v. \]

\begin{Pro}\label{ProCom}
Let $(f,A_1,\ldots,A_K)$ be a sufficiently regular solution to \eqref{VW}. Then,
\begin{itemize}
\item for any translation $\partial_{x^\nu}$, we have
\[
\mathbf{T}_{\mathbf{A}} \big( \partial_{x^\nu}f \big) = -\partial_{x^\nu} \big( \nabla \mathbf{A} \big) \cdot \nabla_v f, \qquad \qquad \Box \partial_{x^\nu}A_I = \pmb{\rho}_I \big[\partial_{x^\nu}f \big]. \]
\item Consider now $Z=\Omega_{\mu k}$ and $\widehat Z=\widehat{\Omega}_{\mu k}$, where $0 \leq \mu < k \leq 3$, or $Z=\widehat{Z}=S$. Then, we have
\[ \mathbf{T}_{\mathbf{A}} \big(\widehat{Z} f \big)=\nabla \mathbf{A}_Z \cdot \nabla_v f,
\qquad \Box Z A_I = \pmb{\rho}_I \big[\widehat{Z} f \big]+\int_{\R^3_v}\langle v\rangle^\delta P_I^Z(\widehat{v})f \dr v, \]
where $P_I^Z \in \R_3[\widehat{v}]$ is a polynomial, which may be equal to $0$, and $\nabla \mathbf{A}_Z$ is given by
\[ \big[\nabla\mathbf{A}_Z \big]^i(t,x,v) = \sum_{1 \leq I \leq K} \sum_{0 \leq \mu \leq 3} \langle v\rangle^\delta Q_I^{i,\mu}(\widehat{v}) \partial_{x^\mu} Z A_I (t,x) +
\langle v\rangle^\delta R_I^{i,\mu}(\widehat{v})\partial_{x^\mu} A_I (t,x), \qquad i\in\llbracket 1,3\rrbracket, \]
where $R_I^{i,\mu}\in\R_3[\widehat{v}]$ are polynomials, while the polynomials $Q_I^{i,\mu}$ are those introduced in \eqref{defA}.
\end{itemize}
\end{Pro}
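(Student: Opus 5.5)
The plan is a direct computation: apply each vector field to the Vlasov and wave equations and use the commutation identities \eqref{eq:ComT} and \eqref{eq:ComBox}. Write $\T_{\mathbf{A}}=\T_0+\nabla\mathbf{A}\cdot\nabla_v$. For any $\widehat{Z}$ we then have
\[ \T_{\mathbf{A}}(\widehat{Z} f)=[\T_{\mathbf{A}},\widehat{Z}]f=[\T_0,\widehat{Z}]f+[\nabla\mathbf{A}\cdot\nabla_v,\widehat{Z}]f. \]
Since $\T_{\mathbf{A}}f=0$, every term proportional to $\T_0 f$ can be replaced by $-\nabla\mathbf{A}\cdot\nabla_v f$.

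\emph{Translations.} The field $\partial_{x^\nu}$ commutes with $\T_0$, and its commutator with $\nabla\mathbf{A}\cdot\nabla_v$ is $\partial_{x^\nu}(\nabla\mathbf{A})\cdot\nabla_v$. This gives the first formula. The wave equation commutes with $\partial_{x^\nu}$, and $\partial_{x^\nu}\pmb{\rho}_I[f]=\pmb{\rho}_I[\partial_{x^\nu}f]$ because $\pmb{\rho}_I$ integrates only in $v$.

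\emph{Lifted fields $\widehat{\Omega}_{\mu k}$.} From \eqref{eq:ComT}, $[\T_0,\widehat{Z}]=\langle v\rangle^{-1}[\langle v\rangle\T_0,\widehat Z]+\widehat{Z}(\langle v\rangle)\langle v\rangle^{-1}\T_0=\widehat{Z}(\log\langle v\rangle)\,\T_0$. This quantity is $\widehat{v}^k\T_0$ for $\widehat{\Omega}_{0k}$ and $0$ for the rotations. Replacing $\T_0 f$ by $-\nabla\mathbf{A}\cdot\nabla_v f$ leaves a term of the form $\langle v\rangle^\delta\,\mathrm{poly}(\widehat v)\,\partial A_I\cdot\nabla_v f$.

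Next, $[\nabla\mathbf{A}\cdot\nabla_v,\widehat{Z}]f=\nabla\mathbf{A}\cdot[\nabla_v,\widehat Z]f-\widehat{Z}(\nabla\mathbf{A}^i)\partial_{v^i}f$, and I treat the two pieces separately.

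\begin{itemize}
\item $[\partial_{v^j},\widehat{Z}]$ is a $v$-derivative with coefficients in $\widehat v$ or constants. Indeed, $\partial_{v^j}(v^0)=\widehat v^j$, and for rotations the commutator is $\pm\partial_{v^\cdot}$. This again gives an $R$-type term.
\item $\widehat{Z}$ acting on $\langle v\rangle^\delta Q(\widehat v)\partial_{x^\mu}A_I$ splits into three parts:
\begin{itemize}
\item the $(t,x)$ part $Z$ acting on $\partial_{x^\mu}A_I$, which I rewrite as $\partial_{x^\mu}ZA_I+[Z,\partial_{x^\mu}]A_I$, where the commutator is a combination of translations with constant coefficients;
\item the $v$ part acting on $\langle v\rangle^\delta$, which produces $\langle v\rangle^\delta\widehat v^k\delta$;
\item the $v$ part acting on $Q(\widehat v)$, using $v^0\partial_{v^i}\widehat v^j=\delta^{ij}-\widehat v^i\widehat v^j$, which is polynomial in $\widehat v$.
\end{itemize}
\end{itemize}

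All three parts have the required form $\langle v\rangle^\delta R(\widehat v)\partial A_I$. Since the coefficients $Q,R$ are only required to be polynomials in $\widehat v$, degrees are not an issue. I read $\R_3[\widehat v]$ as polynomials in three variables.

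\emph{Scaling $S$.} Here $[\langle v\rangle\T_0,S]=\langle v\rangle\T_0$ gives $[\T_0,S]=\T_0$, which produces $-\nabla\mathbf{A}\cdot\nabla_v f$. In addition, $S(\partial_{x^\mu}A_I)=\partial_{x^\mu}SA_I-\partial_{x^\mu}A_I$. Since $S$ does not act on $v$, no commutator with $\nabla_v$ appears.

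\emph{Wave equation.} Use \eqref{eq:ComBox}: $\Box ZA_I=Z\pmb{\rho}_I[f]$, plus $2\pmb\rho_I[f]$ when $Z=S$. It remains to express $Z\int\langle v\rangle^{\delta'}P_I f\,\dr v$ through $\pmb\rho_I[\widehat Zf]$. Write $Z=\widehat Z-(\text{$v$-part})$ and integrate the $v$-part by parts.
\begin{itemize}
\item For $\Omega_{0i}$, the $v$-part is $v^0\partial_{v^i}$. Integration by parts gives $-\int\partial_{v^i}(v^0\langle v\rangle^{\delta'}P_I)f$, which equals $\int\langle v\rangle^{\delta'}\widetilde P(\widehat v)f$. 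Because $\delta'\le\delta$ and $|\widehat v|\le 1$, this is of the stated form with $\langle v\rangle^\delta$ after inserting $\langle v\rangle^{\delta'-\delta}$. I take the statement to allow this schematically, since for $\delta'=-1,\delta=1$ the factor $\langle v\rangle^{-2}=1-|\widehat v|^2$ is polynomial in $\widehat v$.
\item For rotations the divergence is handled the same way.
\item For $S$, $S=\widehat S$ and only the $2\pmb\rho_I$ term remains.
\end{itemize}

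The main obstacle is bookkeeping rather than analysis. The only delicate point is verifying that every $v$-derivative of $\langle v\rangle^{\delta}$, $\langle v\rangle^{\delta'}$, $v^0$ and $Q(\widehat v)$ remains of the form power of $\langle v\rangle$ times a polynomial in $\widehat v$. This is needed so that the weights $\langle v\rangle^\delta$ are preserved. The integration by parts also requires enough $v$-decay of $f$, which a sufficiently regular solution provides.
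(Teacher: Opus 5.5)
Your proposal is correct and follows the paper's proof, written out in more detail: a direct commutator computation based on \eqref{eq:ComT}, \eqref{eq:ComBox}, the fact that $[\partial_{v^i},\widehat{Z}]$ is $0$, $\pm\partial_{v^k}$ or $\widehat{v}^i\partial_{v^k}$, and integration by parts in $v$ to pass from $\pmb{\rho}_I[Zf]$ to $\pmb{\rho}_I[\widehat{Z}f]$. Your two caveats are legitimate and harmless for the rest of the paper: reading $\R_3[\widehat{v}]$ as the full polynomial ring, and noting that the extra source term naturally comes out as $\langle v\rangle^{\delta'}P_I^Z(\widehat{v})$ rather than $\langle v\rangle^{\delta}P_I^Z(\widehat{v})$. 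Likewise, if you track signs, the $Q_I^{i,\mu}\partial_{x^\mu}ZA_I$ term enters $\mathbf{T}_{\mathbf{A}}(\widehat{Z}f)$ with a minus sign, as in the translation case; this is immaterial, since the formula is only used through absolute values afterwards.
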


\begin{proof}
To commute the Vlasov equation, we use \eqref{eq:ComT}, that $[\partial_{x^\mu},\partial_{x^\nu}]=0$, and $[\partial_{x^\mu},Z]=0$ or $[\partial_{x^\mu},Z]=\pm\partial_{x^\lambda}$ for any $Z \in \mathbb{K}$. We also use $[\partial_{v^i},\partial_{x^\nu}]=0$ and that, for any $\widehat{Z} \in \Pp_S$, $[\partial_{v^i},\widehat{Z}]$ is equal either to $0$, $\partial_{v^k}$ or $\widehat{v}^i\partial_{v^k}$.

For the wave equations, we use \eqref{eq:ComBox}, and we perform integration by parts to deal with $Z \pmb{\rho}_I [f] = \pmb{\rho}_I[Z f]$.
\end{proof}

Our method to prove boundedness for the solutions will also require to commute by $\partial_{v^i}$.

\begin{Lem}\label{LemComdv}
    Let $(f,A_1,\ldots,A_K)$ be a sufficiently regular solution to \eqref{VW}. Then, for any $i \in \{1,2,3\}$,
    \[ \T_{\mathbf{A}} \big( \langle v \rangle \, \partial_{v^i} f \big) = -  \partial_{x^i} f + \widehat{v}^i \widehat{v} \cdot \nabla_x f - \langle v \rangle \,  \partial_{v^i} \big( \nabla \mathbf{A} \big) \cdot   \nabla_v f+\nabla \mathbf{A} \cdot  \widehat{v} \,\partial_{v^i}f . \]
\end{Lem}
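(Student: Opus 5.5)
The identity follows from a direct commutator computation, using only that $f$ solves the Vlasov equation $\T_{\mathbf{A}} f=0$. Write
\[ \T_{\mathbf{A}} \big( \langle v \rangle \partial_{v^i} f \big) = \langle v \rangle \partial_{v^i} \big( \T_{\mathbf{A}} f \big) + \big[ \T_{\mathbf{A}}, \langle v \rangle \partial_{v^i} \big] f = \big[ \T_{\mathbf{A}}, \langle v \rangle \partial_{v^i} \big] f . \]
The task is then to compute the commutator. I split $\T_{\mathbf{A}}$ into the free transport part $\T_0=\partial_t+\widehat{v}\cdot\nabla_x$ and the force part $\nabla\mathbf{A}\cdot\nabla_v$, and treat the two separately.

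For the free part, $\partial_t$ and $\partial_{x^j}$ commute with $\langle v\rangle\partial_{v^i}$, because neither coefficient depends on $(t,x)$. The only contribution comes from differentiating the coefficient $\widehat{v}^j$:
\[ \big[ \widehat{v}^j \partial_{x^j}, \langle v \rangle \partial_{v^i} \big] = - \langle v \rangle \, \partial_{v^i}\big(\widehat{v}^j\big) \, \partial_{x^j}. \]
The elementary identity $\partial_{v^i} \widehat{v}^j = \big(\delta_{ij} - \widehat{v}^i\widehat{v}^j\big)/\langle v\rangle$ then gives
\[ \big[ \T_0 , \langle v \rangle \partial_{v^i} \big] f = -\partial_{x^i} f + \widehat{v}^i \, \widehat{v}\cdot\nabla_x f . \]

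For the force part, the Leibniz rule gives two terms. Derivatives of the coefficient $\langle v\rangle$ of $\langle v\rangle\partial_{v^i}$ in the direction $\nabla\mathbf{A}\cdot\nabla_v$ produce
\[ [\nabla\mathbf{A}]^j \, \partial_{v^j}\langle v\rangle \, \partial_{v^i} f = \nabla\mathbf{A}\cdot\widehat{v} \, \partial_{v^i} f , \]
using $\partial_{v^j}\langle v\rangle=\widehat{v}^j$. Derivatives of the coefficient $[\nabla\mathbf{A}]^j(t,x,v)$ in the direction $\langle v\rangle\partial_{v^i}$ produce
\[ -\langle v\rangle\,\partial_{v^i}\big(\nabla\mathbf{A}\big)\cdot\nabla_v f . \]
This term is nonzero because $\nabla\mathbf{A}$ depends on $v$ through $\langle v\rangle^\delta Q_I^{i,\mu}(\widehat{v})$. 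It is kept in this schematic form, exactly as in the statement.

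Summing the two contributions yields the claimed formula. There is no real obstacle here. The only care needed is with signs in the commutators and with the $v$-dependence of $\nabla\mathbf{A}$, and the identity for $\partial_{v^i}\widehat{v}^j$ is the one computation worth double-checking.
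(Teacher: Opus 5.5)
Your proposal is correct and is essentially the paper's argument. The paper's short proof attributes the first two terms to commuting $\partial_t+\widehat{v}\cdot\nabla_x$ with $\partial_{v^i}$, the third to the $v$-dependence of $\nabla\mathbf{A}$, and the last to $\T_{\mathbf{A}}(\langle v\rangle)=\nabla\mathbf{A}\cdot\widehat{v}$. Your computation writes out exactly these steps, with the correct signs and the correct identity $\partial_{v^i}\widehat{v}^j=(\delta_{ij}-\widehat{v}^i\widehat{v}^j)/\langle v\rangle$.
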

\begin{proof}
The first two terms are generated by the commutation of the free transport operator $\partial_t + \widehat{v} \cdot \nabla_x $ with $\partial_{v^i}$, the third one arises from the nonlinearity $\nabla \mathbf{A} \cdot \nabla_v$ and the last one from $\T_{\mathbb{A}}(\langle v \rangle )=\nabla \mathbf{A} \cdot  \widehat{v}$.   
\end{proof}
Finally, it will sometimes be useful to express the derivatives of $f$ composed with the linear flow to $\widehat{Z}f$.

\begin{Lem}\label{Lemrelftoh}
Let $f : \R_+ \times \R^3_x \times \R^3_v \to \R$ be a sufficiently regular function and $h(t,x,v) \coloneqq f(t,x+t\widehat{v},v)$. The following relations hold, for all $(t,x,v) \in \R_+ \times \R^3_x \times \R^3_v$,
$$ \big[ \partial_{x^k} f \big](t,x+t\widehat{v},v) = \partial_{x^k} h(t,x,v), \quad \; \; \big[ S f \big] (t,x+t\widehat{v},v)  = S h(t,x,v), \quad \; \; \big[ \widehat{\Omega}_{ij} f \big](t,x+t\widehat{v},v)  = \widehat{\Omega}_{ij} h(t,x,v),$$
for any $1 \leq i < j \leq 3$ and $ 1 \leq k \leq 3$. For the time derivative and the Lorentz boosts, we have
\begin{align*}
\big[\partial_t f \big](t,x+t\widehat{v},v) & =\partial_t h(t,x,v) -\widehat{v}\cdot \nabla_x h(t,x,v) ,\\
\big[\widehat{\Omega}_{0k}  f\big](t,x+t\widehat{v},v) & = \langle v \rangle \, \partial_{v^k} h(t,x,v)+x^k\partial_t h(t,x,v)-\big(x^k\widehat{v}+\widehat{v}^k x\big)\cdot \nabla_x h(t,x,v)+\widehat{v}^k S  h(t,x,v) .
\end{align*}
\end{Lem}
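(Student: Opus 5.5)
The plan is to differentiate $h(t,x,v)=f(t,x+t\widehat{v},v)$ directly with the chain rule, and then invert the resulting relations to express each derivative of $f$, evaluated at $(t,y,v)$ with $y\coloneqq x+t\widehat{v}$, in terms of derivatives of $h$. The only non-trivial input is the derivative of the relativistic velocity,
\[ \partial_{v^k}\widehat{v}^j=\frac{\delta_{jk}-\widehat{v}^j\widehat{v}^k}{\langle v\rangle}. \]
Throughout, $\partial_{x^\ell}f$, $\partial_t f$, $\partial_{v^k}f$ are evaluated at $(t,y,v)$.

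First I compute the basic derivatives of $h$. The spatial derivative gives $\partial_{x^k}h=\partial_{x^k}f$ at once. The time derivative gives $\partial_t h=\partial_t f+\widehat{v}\cdot\nabla_x f$, which is the stated formula for $\partial_t f$. For the momentum derivative,
\[ \langle v\rangle\partial_{v^k}h=\langle v\rangle\partial_{v^k}f+t\,\partial_{x^k}f-t\,\widehat{v}^k\,\widehat{v}\cdot\nabla_x f. \]

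Next come the scaling field and the rotations. For the scaling field,
\[ Sh=t\partial_t f+t\,\widehat{v}\cdot\nabla_x f+x\cdot\nabla_x f=t\partial_t f+y\cdot\nabla_x f, \]
which is exactly $[Sf](t,y,v)$. For the rotations, the $x$-part of $\widehat{\Omega}_{ij}h$ gives $x^i\partial_{x^j}f-x^j\partial_{x^i}f$. In the $v$-part, the symmetric terms $\widehat{v}^i\widehat{v}^j\,\widehat{v}\cdot\nabla_x f$ cancel under antisymmetrisation. That part therefore contributes $t(\widehat{v}^i\partial_{x^j}-\widehat{v}^j\partial_{x^i})f$ together with $v^i\partial_{v^j}f-v^j\partial_{v^i}f$. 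Summing, the $x$ and $t\widehat{v}$ pieces combine into $y^i\partial_{x^j}-y^j\partial_{x^i}$, giving $[\widehat{\Omega}_{ij}f](t,y,v)$.

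The boost is the only step needing real bookkeeping, and I expect it to be the main (though elementary) obstacle. At the point $(t,y,v)$ I write $\widehat{\Omega}_{0k}f=t\partial_{x^k}f+y^k\partial_t f+\langle v\rangle\partial_{v^k}f$ and use $y^k=x^k+t\widehat{v}^k$. I then substitute the expressions obtained above for $\partial_t f$ and $\langle v\rangle\partial_{v^k}f$ in terms of $h$. The two terms $\pm t\,\partial_{x^k}h$ cancel, and so do the two terms $\pm t\,\widehat{v}^k\,\widehat{v}\cdot\nabla_x h$. What remains is
\[ \langle v\rangle\partial_{v^k}h+x^k\partial_t h-x^k\,\widehat{v}\cdot\nabla_x h+t\,\widehat{v}^k\partial_t h. \]
To conclude, I rewrite $t\,\widehat{v}^k\partial_t h=\widehat{v}^kSh-\widehat{v}^k\,x\cdot\nabla_x h$. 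This produces exactly the stated right-hand side,
\[ \langle v\rangle\partial_{v^k}h+x^k\partial_t h-\big(x^k\widehat{v}+\widehat{v}^k x\big)\cdot\nabla_x h+\widehat{v}^k Sh. \]
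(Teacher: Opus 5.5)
Your proposal is correct and is essentially the paper's argument: the paper applies the chain rule to $f(t,x,v)=h(t,x-t\widehat{v},v)$ to express $\partial_{x^k}f$, $\partial_t f$ and $\langle v\rangle\partial_{v^k}f$ in terms of derivatives of $h$, and these are exactly the relations you obtain by differentiating $h$ and inverting. Your explicit verification of the $S$, rotation and boost identities, including the rewriting $t\widehat{v}^k\partial_t h=\widehat{v}^kSh-\widehat{v}^k\, x\cdot\nabla_x h$, simply fills in the step the paper compresses into ``It implies the stated relations''.
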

\begin{proof}
Since $f(t,x,v)=h(t,x-t\widehat{v},v)$, we have, for any $ 1 \leq k \leq 3$,
\begin{align*}
 \partial_{x^k}f(t,x,v)&= \partial_{x^k}h(t,x-t\widehat{v},v), \qquad \qquad \partial_t f(t,x,v) = \big(\partial_t h -\widehat{v} \cdot \nabla_x h\big)(t,x-t\widehat{v},v) , \\
  \langle v \rangle \, \partial_{v^k}f(t,x,v)&= v^0 \partial_{v^k}h(t,x-t\widehat{v},v)-t\partial_{x^k}h(t,x-t\widehat{v},v)+t\widehat{v}_k \big[ \widehat{v} \cdot \nabla_x h \big](t,x-t\widehat{v},v)  .
\end{align*}
It implies the stated relations.
\end{proof}

\subsection{Glassey-Strauss representation of the scalar fields}

In this section, we establish a decomposition for the derivatives of $A_I$ allowing to control $\partial_{x^\mu}A_I$ by $f$ instead of $\partial_{x^\mu}f$. In the case of the Vlasov-Maxwell system, this result corresponds to \cite[Theorem~3]{GlStrauss}.

\begin{Pro}\label{ProGS}
Let $A \colon [0,T[ \times \R^3 \to \R$ be a solution to the equation 
\[\Box A=\int_{\R^3_v}\langle v\rangle^\delta P(\widehat{v})g \dr v, \] where $P \in \R_3[\widehat{v}]$ is a polynomial and $g \colon [0,T[ \times \R^3_x \times \R^3_v \to \R$ is a sufficiently regular distribution function. For any $0 \leq \mu \leq 3$, we have
\[4\pi\partial_{x^\mu}A = \big[\partial_{x^\mu}A \big]_{\mathrm{data}} +\big[\partial_{x^\mu}A \big]_T + \big[\partial_{x^\mu}A \big]_{\T_0}, \]
where the three terms on the right hand side are defined below, with
$ \omega \coloneqq \frac{y-x}{|y-x|} \in \mathbb{S}^2$.
\begin{itemize}
\item The first term can be explicitly computed in terms of the initial data,
\[ \big[ \partial_{x^\mu}A \big]_{\mathrm{data}}(t,x)=4\pi \big[\partial_{x^\mu}A \big]_{\mathrm{hom}}(t,x) + \frac{1}{t}\int_{|y-x| = t} \int_{\R^3_v} \langle v\rangle^\delta P(\widehat{v}) \frac{ \omega^\mu}{1+\widehat{v} \cdot \omega }g(t-|y-x|,y,v)  \dr v \dr y,   \]
where $\omega^0 \coloneqq 1$ and $[\partial_{x^\mu}A]_{\mathrm{hom}}$ is the unique function satisfying
\[ \Box [ \partial_{x^\mu}A ]_{\mathrm{hom}}=0, \qquad \qquad [\partial_{x^\mu}A]_{\mathrm{hom}}(0,\cdot)=\partial_{x^\mu}A(0,\cdot), \qquad \partial_t [\partial_{x^\mu}A ]_{\mathrm{hom}}(0,\cdot)=\partial_t\partial_{x^\mu}A(0,\cdot).\]

\item The second term is given by
\[ \big[\partial_{x^\mu}A \big]_T(t,x) \coloneqq \int_{|y-x|\le t}\int_{\R^3_v} \mathbf{a}^\mu \big( \widehat{v},\omega \big) g \big( t-|y-x|,y,v \big) \dr v \frac{\dr y}{|y-x|^2}, \]
where the integral kernel is given by
\[ \mathbf{a}^0 \big( \widehat{v},\omega \big) \coloneqq -\frac{|\widehat{v}|^2+\widehat{v}\cdot\omega}{(1+\widehat{v}\cdot\omega)^2}
\langle v\rangle^\delta P(\widehat{v}), \qquad \qquad \qquad \mathbf{a}^i \big( \widehat{v},\omega \big) \coloneqq \omega^i \mathbf{a}^0 \big( \widehat{v},\omega \big)+\frac{\omega^i+\widehat{v}^i}{1+\widehat{v} \cdot \omega}\langle v\rangle^\delta P(\widehat{v}), \quad 1\leq i\leq 3. \]

\item The last term is defined by
\[ \big[\partial_{x^\mu}A \big]_{\T_0}(t,x) \coloneqq \int_{|y-x|\leq t}\int_{\R^3_v}
\mathbf{b}^\mu \big( \widehat{v},\omega \big) \big[ \T_0 g \big] \big(t-|y-x|,y,v \big) \dr v\frac{ \dr y}{|y-x|}, \]
where the integral kernel is given by
\[ \mathbf{b}^0 \big( \widehat{v} , \omega \big) \coloneqq \frac{\langle v\rangle^\delta P(\widehat{v})}{1+\widehat{v}\cdot\omega}, \qquad \qquad \qquad \mathbf{b}^i \big( \widehat{v} , \omega \big) \coloneqq \omega^i\frac{\langle v\rangle^\delta P(\widehat{v})}{1+\widehat{v}\cdot\omega}, \quad 1 \leq i\leq 3. \]
\end{itemize}
\end{Pro}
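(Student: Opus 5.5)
The plan is to follow the Glassey--Strauss argument of \cite[Theorem~3]{GlStrauss}. First, write $A = A_{\mathrm{hom}} + A_{\mathrm{inh}}$. Here $A_{\mathrm{hom}}$ solves the free wave equation with the data of $A$. Since $\partial_{x^\mu}$ commutes with $\Box$, we get $\partial_{x^\mu} A_{\mathrm{hom}} = [\partial_{x^\mu}A]_{\mathrm{hom}}$ exactly; this produces the term $4\pi[\partial_{x^\mu}A]_{\mathrm{hom}}$ in the data part. The inhomogeneous part has zero data and source $\rho(t,x) \coloneqq \int_v \langle v\rangle^\delta P(\widehat v) g \,\dr v$. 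By the retarded Kirchhoff formula,
\[ 4\pi \partial_{x^\mu} A_{\mathrm{inh}}(t,x) = \int_{|y-x|\le t} \frac{(\partial_{x^\mu}\rho)(t-|y-x|,y)}{|y-x|}\, \dr y . \]
This follows from differentiating under the integral after the change of variables $y = x+z$. The integrand is evaluated at time $t-|z|$, so the derivative falls on $\rho$ in its own variables.

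The key step is the Glassey--Strauss decomposition of the derivatives $\partial_t$ and $\partial_{x^i}$, applied pointwise in $v$. Let $\omega = (y-x)/|y-x|$. Consider the operators
\[ \T_0 = \partial_t + \widehat v\cdot\nabla_x, \qquad \mathbf{S}_i \coloneqq \partial_{x^i} - \omega^i \partial_t . \]
The operator $\mathbf{S}_i$ is tangent to the backward cone, since $\mathbf{S}_i\big[ g(t-|y-x|,y,v)\big] = \partial_{y^i}\big[g(t-|y-x|,y,v)\big]$. We solve the linear system
\[ \partial_t = \frac{\T_0 - \widehat v\cdot \mathbf{S}}{1+\widehat v\cdot\omega}, \qquad \partial_{x^i} = \mathbf{S}_i + \omega^i\partial_t = \mathbf{S}_i + \omega^i\,\frac{\T_0 - \widehat v\cdot\mathbf{S}}{1+\widehat v\cdot \omega}. \]
This is legitimate because $1+\widehat v\cdot\omega \ge 1-|\widehat v|>0$. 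The $\T_0$ contributions, with coefficient $\langle v\rangle^\delta P(\widehat v)\,\omega^\mu/(1+\widehat v\cdot\omega)$, directly give $[\partial_{x^\mu}A]_{\T_0}$ with the stated kernel $\mathbf b^\mu$. Each $\mathbf S$ term has the form $\partial_{y^j}[g(t-|y-x|,y,v)]$ multiplied by a function of $(\widehat v,\omega)$ and by $|y-x|^{-1}$.

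Next, integrate the $\mathbf S$ terms by parts in $y$ on the ball $\{|y-x|\le t\}$, removing a small ball $|y-x|\le\epsilon$ and then letting $\epsilon\to0$. Three contributions arise.
\begin{itemize}
\item The outer boundary $|y-x|=t$ carries the outward normal $\omega$. It gives $g(0,y,v)$ times $\frac1t$ times the coefficient contracted with $\omega$, which should reduce to $\omega^\mu/(1+\widehat v\cdot\omega)$. This is the boundary integral in $[\partial_{x^\mu}A]_{\mathrm{data}}$.
\item The inner sphere contributes $O(\epsilon)$, because the integrand has size $\epsilon^{-1}$ over an area $\epsilon^2$. It therefore vanishes in the limit.
\item The volume term is $-\partial_{y^j}$ applied to (coefficient)$\cdot|y-x|^{-1}$. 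Using $\partial_{y^j}\omega^k = (\delta^{jk}-\omega^j\omega^k)/|y-x|$, this is homogeneous of degree $-2$ and yields $\mathbf a^\mu(\widehat v,\omega)/|y-x|^2$.
\end{itemize}

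The main obstacle is the algebra in the volume term: verifying that the divergence simplifies exactly to the stated $\mathbf a^0$ and $\mathbf a^i$. I would do this with the identity $\sum_j \partial_{y^j}\big[\widehat v^j/((1+\widehat v\cdot\omega)|y-x|)\big]$. This computation uses $\omega\cdot\partial_{y}\omega = 0$ and $\sum_j \partial_{y^j}\omega^j = 2/|y-x|$, and it should give $-(|\widehat v|^2+\widehat v\cdot\omega)/((1+\widehat v\cdot\omega)^2|y-x|^2)$ after combining terms. For the spatial components I would handle the $\mathbf S_i$ piece separately: its divergence contributes $\omega^i/|y-x|^2$, while the $\omega^i$-weighted piece contributes $\omega^i\mathbf a^0$ plus corrections from differentiating $\omega^i$. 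Together these should give the stated term $(\omega^i+\widehat v^i)/(1+\widehat v\cdot\omega)$. Finally, integrate in $v$, which is justified by sufficient decay of $g$, and collect the three groups of terms.
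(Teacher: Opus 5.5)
Your route is the paper's: Kirchhoff's formula, splitting $\partial_{x^\mu}$ into $\T_0$ and the cone-tangential $\mathbf{S}_i$, then integrating by parts in $y$. The kernels $\mathbf{b}^\mu$ and $\mathbf{a}^\mu$ come out exactly as you predict. So does the outer-sphere term for $\mu=i$, since $\omega^i-\omega^i\frac{\widehat v\cdot\omega}{1+\widehat v\cdot\omega}=\frac{\omega^i}{1+\widehat v\cdot\omega}$.

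The case $\mu=0$, however, is mishandled in three places. Each is off by $\pm R$, where $R(t,x)\coloneqq\frac1t\int_{|y-x|=t}\rho(0,y)\,\dr S_y$ and $\rho=\int_v\langle v\rangle^\delta P(\widehat v)g\,\dr v$.

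First, $\partial_t$ does not simply pass under $\int_{|z|\le t}\rho(t-|z|,x+z)\frac{\dr z}{|z|}$. The domain moves with $t$, so $4\pi\partial_tA_{\mathrm{inh}}$ contains an extra $+R$.

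Second, $\partial_tA_{\mathrm{hom}}$ is not the $[\partial_tA]_{\mathrm{hom}}$ of the statement. Its second datum is $\Delta A(0,\cdot)$, whereas $\partial_t^2A(0,\cdot)=\Delta A(0,\cdot)+\rho(0,\cdot)$, with the convention $\Box=\partial_t^2-\Delta$ behind your Kirchhoff formula. Hence $4\pi\partial_tA_{\mathrm{hom}}=4\pi[\partial_tA]_{\mathrm{hom}}-R$.

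These two errors cancel. The identity you then integrate by parts is exactly the paper's starting point. The paper obtains it cleanly by applying Kirchhoff directly to $\partial_{x^\mu}A$, which solves $\Box\partial_{x^\mu}A=\partial_{x^\mu}\rho$ with data $(\partial_{x^\mu}A,\partial_t\partial_{x^\mu}A)(0,\cdot)$.

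Third, and this one is not compensated: for $\mu=0$ the coefficient of $\mathbf{S}$ is $-\widehat v/(1+\widehat v\cdot\omega)$. The outer sphere therefore produces the kernel $-\frac{\widehat v\cdot\omega}{1+\widehat v\cdot\omega}=\frac{1}{1+\widehat v\cdot\omega}-1$, not $\frac{\omega^0}{1+\widehat v\cdot\omega}$. Your boundary term falls short of the stated one by exactly $R$.

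So, with $[\partial_tA]_{\mathrm{hom}}$ defined as in the statement (second datum $\partial_t^2A(0,\cdot)$), the correct $\mu=0$ data term has kernel $-\widehat v\cdot\omega/(1+\widehat v\cdot\omega)$. The stated kernel $1/(1+\widehat v\cdot\omega)$ is right only if $[\partial_tA]_{\mathrm{hom}}$ is read as $\partial_tA_{\mathrm{hom}}$, i.e. your original splitting, \emph{and} the moving-domain term $R$ is kept.

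A quick check: take $g=\rho_0(x)\phi(v)$ time independent, with $\phi\geq 0$, $\int\phi=1$ concentrating at $v=0$, $P=1$, $\delta=0$ and zero data. Then $4\pi\partial_tA=R$ exactly, while the statement read literally gives $2R$ in the limit.

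The paper's one-line ``conclude by integration by parts'' hides the same mismatch. It is harmless downstream, since both kernels are $O(\langle v\rangle^2)$ and Proposition~\ref{Prodata} is unaffected. Still, your argument must fix a convention for $[\partial_tA]_{\mathrm{hom}}$ and compute the $\mu=0$ boundary term explicitly, rather than asserting that it ``should reduce'' to the stated one.
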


\begin{proof}
First, we use the inhomogeneous Kirchhoff formula to get, for any $0 \leq \mu \leq 3$,
\[ \partial_{x^\mu}A(t,x) = \big[\partial_{x^\mu}A \big]_{\mathrm{hom}}(t,x) +\frac{1}{4\pi} \int_{|y-x|\leq t}\int_{\R^3_v}
\langle v\rangle^\delta P(\widehat{v})\partial_{x^\mu}g \big(t-|y-x|,y,v \big) \dr v\frac{\dr y}{|y-x|}. \]
Then, the key idea is that we can express $\partial_{x^\mu}$ in terms of derivatives tangential to the light cone and the free transport operator $\T_0=\partial_t+\widehat{v} \cdot \nabla_x$, which is transverse to the light cone. More precisely, let
\[ T_i \coloneqq \partial_{y^i}-\omega^i\partial_t, \qquad \omega^i=\frac{y^i-x^i}{|y-x|}, \]
so that $\partial_{y^i}[g(t-|y-x|,y,v)]=[T_i g](t-|y-x|,y,v)$. Since
\[
\partial_t=\frac{\T_0-\widehat{v}\cdot T}{1+\widehat{v} \cdot \omega }, \qquad \qquad \partial_{x^i}=T_i + \frac{\omega^i}{1+\widehat{v} \cdot \omega}(\T_0-\widehat{v} \cdot T), \]
we obtain
\[ \partial_{x^\mu}A(t,x)- \big[ \partial_{x^\mu}A \big]_{\mathrm{hom}}(t,x)- \big[\partial_{x^\mu}A \big]_{\T_0}(t,x)
= \frac{1}{4 \pi}\int_{|y-x|\leq t} \int_{\R^3_v} \langle v\rangle^\delta P(\widehat{v}) \frac{\widetilde{\mathbf{a}}^\mu(v,\omega)}{|y-x|} \cdot \nabla_y\big[g(t-|y-x|,y,v) \big] \dr v \dr y, \]
where the kernel $\widetilde{\mathbf{a}}^\mu(v,\omega)\in\R^3$ is given by

\[ \widetilde{\mathbf{a}}^0 \coloneqq -\frac{\widehat{v}}{1+\widehat{v}\cdot\omega},
\qquad \qquad \big[\widetilde{\mathbf{a}}^i \big]^j \coloneqq \delta_{i,j}+ \omega^i \big[ \widetilde{\mathbf{a}}^0 \big]^j. \]
We conclude the proof by performing integration by parts, and by observing that, with $\omega^0=1$,
\[ \nabla_y\cdot\left(\frac{\widetilde{\mathbf{a}}^\mu(v,\omega)}{|y-x|}\right) = -\frac{\omega}{|y-x|^2} \cdot \widetilde{\mathbf{a}}^\mu(v,\omega) + \omega^\mu\frac{|\widehat{v}|^2-|\widehat{v}\cdot\omega|^2}{|y-x|^2(1+\widehat{v}\cdot\omega)^2}- \big(1-\delta_{\mu,0} \big)\frac{\widehat{v}^\mu-\omega^\mu \widehat{v} \cdot \omega}{|y-x|^2(1+\widehat{v} \cdot \omega)} , \]
where the last term on the right hand side arises from $\nabla_y \omega^\mu$ and vanish if $\mu =0$.
\end{proof}

To exploit this decomposition, we will need to estimate the integral kernels.

\begin{Lem}\label{Lemkernel}
There exists $C>0$, depending only on $P$, such that, for any $0 \leq \mu \leq 3$ and all $(v,\omega) \in \R^3_v \times \mathbb{S}^2$,
\[
\langle v \rangle \big| \mathbf{a}^\mu \big( \widehat{v},\omega \big) \big|+\big| \nabla_v \mathbf{a}^\mu \big( \widehat{v},\omega \big) \big| \leq C \langle v\rangle^{5+\delta}, \qquad \qquad \langle v \rangle \big| \mathbf{b}^\mu \big( \widehat{v},\omega \big) \big|+\big| \nabla_v \mathbf{b}^\mu \big( \widehat{v},\omega \big) \big| \leq C \langle v\rangle^{3+\delta}.
\]
\end{Lem}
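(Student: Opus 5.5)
The estimate follows from two elementary facts. First, the denominator $1+\widehat{v}\cdot\omega$ can be compared to $\langle v\rangle^{-2}$. Second, $P(\widehat{v})$ and the numerators are polynomials in $\widehat{v}$ and $\omega$, hence bounded since $|\widehat{v}|<1$.

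\textbf{Denominator.} For every $\omega\in\mathbb{S}^2$ we have $1+\widehat{v}\cdot\omega \geq 1-|\widehat{v}| = \frac{1}{\langle v\rangle(\langle v\rangle+|v|)} \geq \frac{1}{2\langle v\rangle^2}$, which is the same computation as the first inequality of Lemma \ref{Lemweight}. Hence $(1+\widehat{v}\cdot\omega)^{-k}\leq 2^k\langle v\rangle^{2k}$.

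\textbf{Zeroth-order bounds.} For $\mathbf{b}^\mu$, the numerator $\langle v\rangle^\delta P(\widehat{v})\omega^\mu$ is $O(\langle v\rangle^\delta)$, so $|\mathbf{b}^\mu|\lesssim\langle v\rangle^{2+\delta}$ and $\langle v\rangle|\mathbf{b}^\mu|\lesssim\langle v\rangle^{3+\delta}$. For $\mathbf{a}^0$ there is a squared denominator, giving $|\mathbf{a}^0|\lesssim\langle v\rangle^{4+\delta}$. The extra term in $\mathbf{a}^i$ only carries one denominator and is $O(\langle v\rangle^{2+\delta})$. Altogether $\langle v\rangle|\mathbf{a}^\mu|\lesssim\langle v\rangle^{5+\delta}$.

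\textbf{Derivative bounds.} I would use the chain rule, writing $\nabla_v = (\nabla_v\widehat{v})\cdot\nabla_{\widehat{v}}$ plus the derivative falling on $\langle v\rangle^\delta$. Both factors are harmless: $|\partial_{v^j}\widehat{v}^k| = |\delta_{jk}-\widehat{v}^j\widehat{v}^k|/\langle v\rangle\leq 2\langle v\rangle^{-1}$, and $|\nabla_v\langle v\rangle^\delta|\leq\delta\langle v\rangle^{\delta-1}$.

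\begin{itemize}
\item Differentiating a numerator (either $P(\widehat{v})$ or a polynomial in $\widehat{v}$ and $\omega$) produces a bounded factor times $\langle v\rangle^{-1}$, so it only improves the bound.
\item Differentiating $(1+\widehat{v}\cdot\omega)^{-k}$ gives $-k(1+\widehat{v}\cdot\omega)^{-k-1}\,\omega\cdot\partial_{v^j}\widehat{v}$. This costs a factor $\langle v\rangle^{2}\cdot\langle v\rangle^{-1} = \langle v\rangle$ relative to the undifferentiated term.
\end{itemize}

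Therefore $|\nabla_v\mathbf{b}^\mu|\lesssim\langle v\rangle^{2+\delta}\cdot\langle v\rangle = \langle v\rangle^{3+\delta}$ and $|\nabla_v\mathbf{a}^\mu|\lesssim\langle v\rangle^{4+\delta}\cdot\langle v\rangle = \langle v\rangle^{5+\delta}$. All constants depend only on the coefficients and degree of $P$, since $|\omega|=1$ and $|\widehat{v}|<1$.

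The only point needing care is the derivative of the denominator, which is where the crude count could overshoot. The saving comes from the $\langle v\rangle^{-1}$ factor in $\nabla_v\widehat{v}$, so the loss per differentiation is $\langle v\rangle^{1}$ rather than $\langle v\rangle^{2}$. This exactly matches the stated weights.
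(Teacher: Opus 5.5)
Your proposal is correct and follows the paper's argument. The paper's proof uses the same two ingredients, $1+\widehat{v}\cdot\omega \geq 1-|\widehat{v}| \gtrsim \langle v\rangle^{-2}$ and $|\nabla_v \widehat{v}| \lesssim \langle v\rangle^{-1}$, and your power counting (one $\langle v\rangle^{2}$ per inverse power of the denominator, a net loss of $\langle v\rangle$ per $v$-derivative) spells out exactly what the paper leaves implicit.
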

\begin{Rq}
Thanks to the null structure of the Vlasov-Maxwell system, the corresponding integral kernels grow at most like $\langle v\rangle$ (see for instance \cite[Corollary~5.5]{scat}).
\end{Rq}
\begin{proof}
We use that $|1+\widehat{v}\cdot\omega| \geq 1-|\widehat{v}| = \frac{\langle v\rangle^2-|v|^2}{\langle v\rangle(\langle v\rangle+|v|)}$, and $|\nabla_v\widehat{v}|\lesssim\langle v\rangle^{-1}$.
\end{proof}

In the context of \eqref{VW}, $[\nabla_{t,x} A_I]_{\mathrm{data}}$ evolves freely and is decoupled from the nonlinear dynamic. We will then be able to estimate it directly by applying the next result.

\begin{Pro}\label{Prodata}
Assume that there exists $\mathbf{\Lambda} >0$ such that
\[ \sup_{1 \leq |\gamma| \leq 2} \; \sup_{x \in \R^3} \langle x \rangle^{2+|\gamma|} \big| \partial_{t,x}^\gamma A(0,x) \big|+\sup_{(x,v) \in \R^3_x \times \R^3_v } \langle x \rangle^4 \langle v \rangle^7 \big| g (t,x,v) \big| \leq \mathbf{\Lambda} .\]
Then, there exists $\mathbf{C} >0$, depending only on $P(\widehat{v}$, such that, for any $0 \leq \mu \leq 3$ and for all $(t,x) \in \R_+ \times \R^3 $, we have
\[ \big| \big[ \partial_{x^\mu} A \big]_{\mathrm{data}}   \big|(t,x) \leq \mathbf{C} \mathbf{\Lambda} \, \langle t+|x| \rangle^{-1} \langle t-|x| \rangle^{-2}. \]
\end{Pro}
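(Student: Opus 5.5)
We treat the two pieces of $[\partial_{x^\mu}A]_{\mathrm{data}}$ separately: the homogeneous wave $4\pi[\partial_{x^\mu}A]_{\mathrm{hom}}$, and the surface integral on $\{|y-x|=t\}$ involving $g(0,y,v)$ (note $t-|y-x|=0$ there). Throughout we assume $t \geq 1$. For $t\le 1$ the bound follows from local energy / Sobolev-free pointwise estimates, since the data are in $W^{2,\infty}\times W^{1,\infty}$ with the stated weights, and $\langle t+|x|\rangle\sim\langle t-|x|\rangle\sim\langle x\rangle$ there.

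\textbf{Homogeneous part.} The function $\partial_{x^\mu}A_{\mathrm{hom}}$ solves $\Box\psi=0$ with data $\psi(0)=\partial_{x^\mu}A(0)$ and $\partial_t\psi(0)=\partial_t\partial_{x^\mu}A(0)$. For $\mu=0$ we do not express $\partial_t^2 A(0)$ through the equation. Instead we note that the data of $\partial_tA_{\mathrm{hom}}$ are $(\partial_tA(0),\partial_t^2A(0))$, and the hypothesis with $|\gamma|\le 2$ already includes $\partial_{t,x}^\gamma A(0)$. Thus $\psi_0$ satisfies $|\psi_0|\lesssim\Lambda\langle x\rangle^{-3}$, $|\nabla\psi_0|\lesssim \Lambda\langle x\rangle^{-4}$ and $|\psi_1|\lesssim\Lambda\langle x\rangle^{-4}$. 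Kirchhoff's formula gives
\[
\psi(t,x)=\frac{1}{4\pi t^2}\int_{|y-x|=t}\big(\psi_0+ t\psi_1+(y-x)\cdot\nabla\psi_0\big)\,\dr \sigma(y).
\]
It therefore suffices to show $\frac1t\int_{|y-x|=t}\langle y\rangle^{-4}\dr\sigma\lesssim\langle t+|x|\rangle^{-1}\langle t-|x|\rangle^{-2}$, and the same for $t^{-2}\int\langle y\rangle^{-3}$. This is the classical computation: in polar coordinates centred at $x$, $|y|$ ranges over $[\,|t-|x||,t+|x|\,]$ and $\dr\sigma=\frac{t}{|x|}|y|\,\dr|y|\,\dr\phi$. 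Hence
\[
\frac1t\int_{|y-x|=t}\langle y\rangle^{-4}\dr\sigma\lesssim \frac{1}{|x|}\int_{|t-|x||}^{t+|x|}\langle \rho\rangle^{-3}\dr\rho\lesssim\frac{1}{|x|}\Big(\langle t-|x|\rangle^{-2}-\langle t+|x|\rangle^{-2}\Big).
\]
When $|x|\gtrsim t$ this is directly $\lesssim\langle t+|x|\rangle^{-1}\langle t-|x|\rangle^{-2}$. When $|x|\le t/2$, we instead bound by the interval length $2|x|$ times $\langle t\rangle^{-3}$, which gives the claim. The $\langle y\rangle^{-3}$ term has an extra $t^{-1}$ and is handled identically.

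\textbf{Surface term.} We use $|1+\widehat v\cdot\omega|^{-1}\le 2\langle v\rangle^2$ (Lemma~\ref{Lemkernel}'s computation) together with $|P(\widehat v)|\lesssim1$ and $\delta\le1$. Then
\[
\int_v\langle v\rangle^{\delta}\frac{|g(0,y,v)|}{|1+\widehat v\cdot\omega|}\dr v\lesssim\int_v\langle v\rangle^{3}\,\Lambda\langle y\rangle^{-4}\langle v\rangle^{-7}\dr v\lesssim\Lambda\langle y\rangle^{-4},
\]
using the hypothesis at time $0$ and integrability of $\langle v\rangle^{-4}$ in $\R^3$. The remaining term $\frac1t\int_{|y-x|=t}\langle y\rangle^{-4}\dr\sigma$ is exactly the integral already estimated above. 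Collecting the bounds, $\mathbf C$ depends only on $P$.

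The only delicate step is the sphere-integral estimate uniformly across the regimes $|x|\ll t$, $|x|\sim t$ and $|x|\gg t$. The interior regime needs the length-of-interval argument rather than the difference formula, since dividing by $|x|$ is lossy when $|x|$ is small.
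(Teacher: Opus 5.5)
Your proposal is correct and is essentially the paper's proof: Kirchhoff's formula for $[\partial_{x^\mu}A]_{\mathrm{hom}}$, the bound $|1+\widehat{v}\cdot\omega|^{-1}\leq 2\langle v\rangle^2$ for the surface term, and a spherical-mean estimate for $t^{-1}\int_{|y-x|=t}\langle y\rangle^{-4}$ and $t^{-2}\int_{|y-x|=t}\langle y\rangle^{-3}$, which you derive by hand in polar coordinates about $x$ where the paper cites a lemma of Wei--Yang. The only loose point is $t\leq 1$, where you do not need a local energy argument: the same Kirchhoff formula and surface-term bound give $O(\mathbf{\Lambda}\langle x\rangle^{-3})$ directly, since $\langle y\rangle\sim\langle x\rangle$ on $\{|y-x|=t\}$ when $t\leq 1$.
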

\begin{proof}
 The proof relies on three ingredients. First, recall that if $\Box \phi =0$, we have by Kirchhoff's formula that
$$ \phi(t,x) = \frac{1}{4\pi t^2} \int_{|y-x|=t} \phi(0,y)\dr y+\frac{1}{4\pi t} \int_{|y-x|=t}\frac{y-x}{|y-x|}\cdot \nabla_y \phi(0,y)+ \partial_t \phi(0,y) \dr y.$$
Then one can prove (see for instance \cite[Lemma~$4.1$]{WeiYang}), that if $h \in C(\R^3)$ satisfies $|h|(x) \leq K_0(1+|x|)^{-p}$ for some $p \geq 3$, then
\begin{equation}\label{decayhYang}
\frac{1}{4 \pi} \int_{|y-x|=t}|h|(y) \dr y \leq  \frac{4 K_0 \min (t,t^2 \langle t-|x| \rangle^{-1})}{\langle t+|x| \rangle \, \langle t-|x| \rangle^{-p+2}}.
\end{equation}
Finally, recall from Proposition \ref{ProGS} the expression of $ \big[ \partial_{x^\mu} A]_{\mathrm{data}}$, and use $1+ \widehat{v} \cdot \omega \gtrsim \langle v \rangle^{-2}$.
\end{proof}

\section{Asymptotic stability}\label{SecAsympstab}

Let $\delta \in \{0,1\}$ and let $(f_0,A_{I}^0)$ be initial data for \eqref{VW} satisfying the assumptions \eqref{eq:assumpA} if $\delta=0$, or the assumptions \eqref{eq:assumpB} if $\delta =1$. Let $0 < \eta < 1/8$ and $ T \in \R_+ \cup \{+\infty\}$ be the maximal time such that, for any $I \in \llbracket 1, K \rrbracket$, any $Z \in \mathbb{Z}$ and all $ (t,x) \in [0,T[ \times \R^3$,
\begin{align}\label{eq:BA1}
\Big| \nabla_{t,x} A_I- \big[ \nabla_{t,x} A_I \big]_{\mathrm{data}} \Big| (t,x) & \leq \frac{C_{\mathrm{boot}} \varepsilon }{\langle t+|x| \rangle\, \langle t-|x| \rangle} ,  \tag{BA1}\\
 \Big| \nabla_{t,x} Z A_I- \big[ \nabla_{t,x} Z A_I \big]_{\mathrm{data}}  \Big| (t,x) & \leq \frac{C_{\mathrm{boot}} \varepsilon }{\langle t+|x| \rangle \, \langle t-|x| \rangle^{1-\eta}}, \tag{BA2} \label{eq:BA2}
\end{align}
for some constant $C_{\mathrm{boot}} >0$ which will be fixed sufficiently large later. By standard local well-posedness argument, $T>0$. Let us improve, if $\varepsilon$ is small enough, the bootstrap assumptions \eqref{eq:BA1}--\eqref{eq:BA2}. This will imply $T=+\infty$.

As an immediate consequence of the bootstrap assumptions, one can estimate the quantities appearing in the commuted Vlasov equation in Proposition \ref{ProCom} and Lemma \ref{LemComdv}. 
\begin{Lem}\label{Lemsource}
If $C_{\mathrm{boot}} \varepsilon \leq \Lambda$, there exists an absolute constant $C>0$ such that, for all $(t,x,v) \in [0,T[ \times \R^3_x \times \R^3_v$, we have
\begin{equation*}
\begin{aligned}
\big| \nabla \mathbf{A} \big|(t,x,v)+\big| \langle v \rangle \nabla_v \nabla \mathbf{A} \big|(t,x,v) & \leq C \Lambda \langle t+|x|\rangle^{-1} \langle t-|x|\rangle^{-1} \langle v \rangle^\delta , \\
\big|\partial_{x^\nu} \nabla \mathbf{A} \big|(t,x,v) & \leq C\Lambda \langle t+|x|\rangle^{-1} \langle t-|x|\rangle^{-7/4} \langle v \rangle^\delta , \\
|\nabla\mathbf{A}_Z|(t,x,v) & \leq C \Lambda \langle t+|x|\rangle^{-1} \langle t-|x| \rangle^{-1+\eta} \langle v \rangle^\delta,
\end{aligned}
\end{equation*}
where $0 \leq \nu \leq 3$, $Z \in \mathbb{K} \setminus \{\partial_{x^0},\dots,\partial_{x^3}\}$.
\end{Lem}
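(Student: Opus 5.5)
We argue directly from the definitions (defA) and the formula for $\nabla\mathbf{A}_Z$ in Proposition \ref{ProCom}, combining the bootstrap assumptions \eqref{eq:BA1}--\eqref{eq:BA2} with Proposition \ref{Prodata}. Since the polynomials $Q_I^{i,\mu}, R_I^{i,\mu}$ are bounded on $|\widehat v|<1$ and $|\nabla_v \widehat v|\lesssim \langle v\rangle^{-1}$, we have
\[ |\langle v\rangle^\delta Q(\widehat v)|\lesssim \langle v\rangle^\delta, \qquad \langle v\rangle\,|\nabla_v(\langle v\rangle^\delta Q(\widehat v))|\lesssim \langle v\rangle^\delta. \]
Hence every estimate reduces to a pointwise bound on the scalar fields: $\nabla_{t,x}A_I$ for the first line, $\nabla^2_{t,x}A_I$ for the second, and $\nabla_{t,x}ZA_I$ together with $\nabla_{t,x}A_I$ for the third. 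Each is split as (data part) $+$ (nonlinear part).

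\textbf{First and third lines.} The nonlinear parts are controlled directly by \eqref{eq:BA1} and \eqref{eq:BA2}, with constant $C_{\mathrm{boot}}\varepsilon\le\Lambda$. For the data parts, we apply Proposition \ref{Prodata} with $g=f$ for $\partial_{x^\mu}A_I$, and with $g=\widehat Z f$ and the extra source $\int\langle v\rangle^\delta P_I^Z f\,\dr v$ for $\partial_{x^\mu}ZA_I$. The initial-data hypothesis of Proposition \ref{Prodata} follows from \eqref{eq:assumpA}/\eqref{eq:assumpB}. At $t=0$ the quantity $\partial_{t,x}^\gamma ZA_I$ involves the weight $x$, and $\partial_t^2A_I$ is expressed through the equation; the decay $\langle x\rangle^{-2-|\gamma|}$ absorbs these. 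Note that only the data appear at time $t-|y-x|=0$, so the data piece really uses only $g(0,\cdot)$, bounded by $\varepsilon\le\Lambda$ with $\langle x\rangle^4\langle v\rangle^7$ weights. This yields $\Lambda\langle t+|x|\rangle^{-1}\langle t-|x|\rangle^{-2}$, which is better than required. The third line then follows since $\langle t-|x|\rangle^{-1}\le\langle t-|x|\rangle^{-1+\eta}$.

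\textbf{Second line (main obstacle).} For $\partial_{x^\nu}\nabla\mathbf A$ we need $\nabla^2_{t,x}A_I$ with $\langle t-|x|\rangle^{-7/4}$ decay, which is not directly one of the bootstrap assumptions. We would use the classical identities
\[ (t-|x|)\,\partial \;=\; \text{combination of } \Omega_{0i},\ \Omega_{jk},\ S \text{ with coefficients bounded by } 1, \]
more precisely
\[ (t^2-|x|^2)\partial_t = tS - x^i\Omega_{0i}, \qquad (t^2-|x|^2)\partial_{x^i} = t\,\Omega_{0i} - x^iS + x^j\Omega_{ij}, \]
so that
\[ \langle t-|x|\rangle\,|\partial\,\partial A| \lesssim \sum_{Z\in\mathbb{K}}|\partial ZA| + |\partial A| \]
(for $|t-|x||\le 1$ this is trivial). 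Here we take $S\in\mathbb{K}$ in the bootstrap set, and we commute $\partial$ past $Z$, which produces only lower order $\partial A$ terms.

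Applying \eqref{eq:BA2} and the data bounds then gives $\Lambda\langle t+|x|\rangle^{-1}\langle t-|x|\rangle^{-2+\eta}$. Since $\eta<1/8<1/4$, this is at least as good as $\langle t-|x|\rangle^{-7/4}$. The delicate points are that $\mathbb{K}$, which the bootstrap quantifies over, must include $S$ and the boosts, and that the data parts of $\nabla ZA_I$ are again handled by Proposition \ref{Prodata}.
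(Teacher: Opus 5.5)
Your proposal is correct and follows the paper's proof. Like the paper, you bound the data parts with Proposition~\ref{Prodata} and the remainders with \eqref{eq:BA1}--\eqref{eq:BA2} (using $C_{\mathrm{boot}}\varepsilon\le\Lambda$), read the first and third lines off the definitions of $\nabla\mathbf{A}$ and $\nabla\mathbf{A}_Z$, and obtain the second line from the $\langle t-|x|\rangle^{-1}$ gain given by $S$ and the boosts, which yields $\langle t-|x|\rangle^{-2+\eta}$.

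The one step you (like the paper) treat quickly is applying Proposition~\ref{Prodata} to $ZA_I$: the factor $x$ is absorbed by the weights, but the second derivatives of $ZA_I$ at $t=0$ involve third derivatives of $A_I$ (needed with weight $\langle x\rangle^{5}$), one order beyond what the $|\gamma|\le 2$ hypothesis literally provides, so that step tacitly assumes slightly more regular data.
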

\begin{Rq}\label{Rqconst}
   From now on, except in Proposition \ref{Prozmod} and in the proof of Corollary \ref{Cordecayintv}, all constants in the estimates will be independent of $C_{\mathrm{boot}}$.
\end{Rq}
\begin{proof}
 Let $\Gamma \in \mathbb{K}$. Start by estimating $\big[ \nabla_{t,x} A_I \big]_{\mathrm{data}}$ and $\big[ \nabla_{t,x} \Gamma A_I \big]_{\mathrm{data}}$ using the assumptions satisfied by the initial data (see \eqref{eq:assumpA}--\eqref{eq:assumpB}), and Proposition \ref{Prodata}. Using also \eqref{eq:BA1}--\eqref{eq:BA2}, it yields
    \begin{equation}\label{equa:fortheproof} 
    \big| \nabla_{t,x}  A_I \big| (t,x)+\langle t-|x| \rangle^{-\eta}  \big| \nabla_{t,x} \Gamma A_I \big| (t,x)  \leq \frac{C_{\mathrm{boot}} \varepsilon }{\langle t+|x| \rangle \, \langle t-|x| \rangle}+ \frac{C_0 \Lambda }{ \langle t+|x| \rangle \, \langle t-|x| \rangle^2}  , 
    \end{equation}
where $C_0$ is an absolute constant. In view of the definitions of $\nabla \mathbf{A}$ and $\nabla \mathbf{A}_Z$ given in \eqref{defA} and Proposition \ref{ProCom}, it yields the estimate for these quantities. To deal with $\partial_{x^\mu} \nabla \mathbf{A}$, we combine \eqref{equa:fortheproof} with the classical identity
    \begin{equation}\label{eq:extragain}
    \big| \nabla_{t,x} \Phi \big|(t,x) \lesssim \frac{1}{\langle t - |x| \rangle} \sum_{Z \in \mathbb{Z}} \big| Z \Phi \big|(t,x),
    \end{equation}
    which follows from $(t^2-|x|^2)\partial_t = t S - \sum_j \Omega_{0j}$ and $(t^2-|x|^2)\partial_{x^i}= t \Omega_{0i}-x^iS-\sum_{j}x^j \Omega_{ij}$.
\end{proof}

 \subsection{Estimates on the characteristics}

Before controlling the derivatives of $f$ along the flow, we first establish some estimates for the characteristics $\mathbf{T}_{\mathbf{A}}$.

\begin{Def}
Let, for all $(\tau,x,v) \in [0,T[ \times \R^3_x \times \R^3_v$, $t \mapsto \big( \mathcal X,\mathcal V \big)(t,\tau,x,v)$ be the characteristics associated with the transport operator $\mathbf{T}_{\mathbf{A}}$,
\[ \dot{\mathcal X}=\widehat{\mathcal V}, \qquad \qquad \qquad \dot{\mathcal{V}}=\nabla\mathbf{A}(t,\mathcal X). \]
\end{Def}

We recall here Duhamel's principle. For any distribution function $h$ and all $(x,v) \in \R^3_x \times \R^3_v$, we have
\begin{equation}\label{eq:Duhamel} \forall \, t \in [0,T[ , \qquad \qquad  h(t,x,v) = h \big( 0,\mathcal X(0,t,x,v),\mathcal V(0,t,x,v) \big) + \int_{\tau=0}^t
\big[\mathbf{T}_{\mathbf{A}}(h) \big] \big(\tau ,\mathcal{X}(\tau,t,x,v),\mathcal{V}(\tau,t,x,v) \big) \dr \tau. 
\end{equation}
The following result will then be central in our analysis. 

\begin{Lem}\label{LemforDuhamel}
Let $a >0$, $b >-1$ and
\[ \mathbf{I}_{x,v}^a(t) \coloneqq  \int_{\tau=0}^t \frac{\dr \tau}{ (1+\tau)^{1+a}}, \qquad \mathbf{J}^b_{x,v}(t) \coloneqq \int_{\tau=0}^t \frac{\widehat{\mathcal{V}}^{\underline{L}}(\tau,t,x,v)}{(1 + | \tau - |\mathcal{X}(\tau,t,x,v)||)^{1+b}}  \dr \tau. \]
Then, for all $(t,x,v) \in [0,T[ \times \R^3_x \times \R^3_v$,
\[ \mathbf{I}_{x,v}^a(t) \leq \frac{1}{a}, \qquad \quad \; \mathbf{J}_{x,v}^b(t) \leq \frac{1}{|b|} \quad \text{if $b>0$}, \qquad \quad \; \mathbf{J}^b_{x,v}(t) \leq \frac{2}{|b|}  (1+t)^{|b|} \quad \text{if $b<0$}, \qquad \quad \; \mathbf{J}^1_{x,v}(t) \leq 2 \log (1+ t). \]
\end{Lem}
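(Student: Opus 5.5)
The whole lemma rests on one observation: along a characteristic, the weight $\widehat{\mathcal V}^{\underline{L}}$ is exactly half the time derivative of the null coordinate $u = \tau - |\mathcal X|$. This turns $\mathbf{J}^b_{x,v}$ into a one-dimensional integral in $u$. The bound on $\mathbf{I}^a_{x,v}$ is elementary:
\[ \mathbf{I}^a_{x,v}(t) \leq \int_0^{+\infty} (1+\tau)^{-1-a} \dr \tau = \frac{1}{a}. \]
For $\mathbf{J}$, fix $(t,x,v)$, write $(\mathcal X,\mathcal V)(\tau)$ for $(\mathcal X,\mathcal V)(\tau,t,x,v)$, and set $u(\tau) \coloneqq \tau - |\mathcal X(\tau)|$.

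\textbf{Step 1: $u$ is nondecreasing and $u' = 2\widehat{\mathcal V}^{\underline L}$.} Wherever $\mathcal X(\tau) \neq 0$, the equation $\dot{\mathcal X} = \widehat{\mathcal V}$ and the definition \eqref{defvLbar} give
\[ u'(\tau) = 1 - \frac{\mathcal X}{|\mathcal X|}\cdot \widehat{\mathcal V} = 2\,\widehat{\mathcal V}^{\underline L}(\tau,t,x,v) \geq 0, \]
since $|\widehat{\mathcal V}|<1$. The map $\tau\mapsto|\mathcal X(\tau)|$ is Lipschitz, so $u$ is absolutely continuous. On the set where $\mathcal X(\tau)=0$ we have $|u'|\le 2$ almost everywhere, and the integrand there can be bounded using the convention $\widehat{\mathcal V}^{\underline L}\le 1$. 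The cleanest way to handle this is to replace $|\mathcal X|$ by $\sqrt{\epsilon^2+|\mathcal X|^2}$, run the argument below, and let $\epsilon\to0$ by dominated convergence. This technical point is the only delicate one.

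\textbf{Step 2: change variables.} Since $u$ is monotone, the substitution $s=u(\tau)$ gives
\[ \mathbf{J}^b_{x,v}(t) = \frac12 \int_{u(0)}^{u(t)} \frac{\dr s}{(1+|s|)^{1+b}}. \]

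\textbf{Step 3: estimate the $s$-integral.} For $b>0$, enlarge the domain to all of $\R$:
\[ \mathbf{J}^b_{x,v}(t) \leq \frac12\int_\R \frac{\dr s}{(1+|s|)^{1+b}} = \frac{1}{b}. \]
This also gives $\mathbf{J}^1_{x,v}(t)\le 1$. Since the logarithmic bound presumably targets the borderline exponent $b=0$, I would treat that case below as well.

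For $b\le 0$ the integrand is no longer integrable on $\R$, so we need the length of $[u(0),u(t)]$. Since $|\dot{\mathcal X}|<1$,
\[ u(t)-u(0) = t - |\mathcal X(t)| + |\mathcal X(0)| \leq t + |\mathcal X(t)-\mathcal X(0)| \leq 2t. \]
The integrand $(1+|s|)^{-1-b}$ is even and decreasing in $|s|$, so its integral over any interval of length at most $2t$ is largest on $[-t,t]$. Hence, for $b<0$,
\[ \mathbf{J}^b_{x,v}(t) \leq \int_0^t (1+s)^{|b|-1}\dr s \leq \frac{(1+t)^{|b|}}{|b|} \leq \frac{2}{|b|}(1+t)^{|b|}. \]
For $b=0$ the same argument gives $\mathbf{J}^0_{x,v}(t)\le \log(1+t)\le 2\log(1+t)$.
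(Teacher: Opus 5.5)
Your proof is correct. The central step is the same as the paper's: the substitution $s=u(\tau)=\tau-|\mathcal{X}(\tau,t,x,v)|$, with $u'=2\widehat{\mathcal{V}}^{\underline{L}}$. The case $b>0$ is also the same, so the two routes differ only for $b\le 0$.

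\emph{The paper's route for $b\le 0$.} It splits the $\tau$-integral according to whether $u(\tau)\ge-\tau$ or $u(\tau)\le-\tau$. On the first set $u$ takes values in $[-t,t]$, so the change of variables yields $\frac12\int_{-t}^t(1+|s|)^{-1-b}\,\dr s$. On the second set it discards the weight ($\widehat{\mathcal{V}}^{\underline{L}}\le 1$) and uses $|u(\tau)|\ge\tau$ to get $\int_0^t(1+\tau)^{-1-b}\,\dr\tau$.

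\emph{Your route.} You keep the full change of variables and handle the possibly very negative endpoint $u(0)=-|\mathcal{X}(0,t,x,v)|$ with two observations. First, the length bound $u(t)-u(0)\le 2t$. Second, an even integrand that decreases in $|s|$ has its largest integral, among intervals of a given length, on the centred one.

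\emph{What each buys.} Your version avoids the splitting and halves the constants: you get $(1+t)^{|b|}/|b|$ and $\log(1+t)$ instead of twice these. The paper's version uses only $u(\tau)\le\tau$ and the trivial bound on the weight, with no length or rearrangement argument.

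You read the last bound as the case $b=0$. This matches the paper's proof (``considering the cases $b=0$ and $b<0$'') and its later use in Proposition~\ref{Prozmod}. The literal claim for $\mathbf{J}^1_{x,v}$ also follows from your estimates: the $b=1$ case gives $\mathbf{J}^1_{x,v}(t)\le 1$, the length bound gives $\mathbf{J}^1_{x,v}(t)\le t$, and $\min(1,t)\le 2\log(1+t)$.
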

\begin{proof}
  The bound on $\mathbf{I}_{x,v}^a(t)$ is straightforward. For $\mathbf{J}^b_{x,v}(t)$, we perform a change of variables reflecting that the Vlasov operator reads, in the coordinate system $(u=t-|x|,x,v)$, $\T_{\mathbf{A}}=2\widehat{v}^{\underline{L}}\partial_u+\widehat{v}\cdot\nabla_x+\nabla\mathbf{A}\cdot\nabla_v$. More precisely, let $u(\tau)=\tau -|\mathcal{X}(\tau,t,x,v)|$, so that $u'(\tau)=2 \widehat{\mathcal{V}}^{\underline{L}}(\tau,t,x,v)>0$. It yields, if $b>0$,
\[ \mathbf{J}^b_{x,v}(t) = \frac{1}{2}\int_{u=-|\mathcal{X}(0,t,x,v)|}^{t-|x|} \frac{\dr u}{(1+|u|)^{1+b}} \dr \tau \leq \frac{1}{b}. \]  
For the case $b\leq 0$, note that $u(\tau)=\tau -|\mathcal{X}(\tau,t,x,v)| \leq \tau$. The issue however, to obtain a bound uniform in $(x,v)$, is that $-u(\tau)$ can be arbitrarily large. We then proceed as follows,
\begin{align*}
    \mathbf{J}_{x,v}^b(t) &\leq \int_{\tau=0}^t \frac{ \widehat{\mathcal{V}}^{\underline{L}}(\tau,t,x,v) }{ (1+| \tau-|\mathcal{X}(\tau,t,x,v)||)^{1+b}} \mathds{1}_{u(\tau) \geq - \tau} \dr \tau + \int_{\tau=0}^t \frac{ \widehat{\mathcal{V}}^{\underline{L}}(\tau,t,x,v) }{ (1+| \tau-|\mathcal{X}(\tau,t,x,v)||)^{1+b}} \mathds{1}_{u(\tau) \leq - \tau} \dr \tau  
    \\ & \leq \frac{1}{2} \int_{u=-t}^t \frac{\dr u}{(1+ |u|) \rangle^{1+b}}+\int_{\tau=0}^t \frac{\dr \tau}{ (1+\tau)^{1+b}} , 
    \end{align*}
    which allows to conclude the proof, by considering the cases $b=0$ and $b<0$.
\end{proof}

Let us show that in the case $\delta=0$, velocity characteristics does not deviate too much from the constant in time linear ones.

\begin{Pro}\label{ProVcharac}
If $\delta =0$, there exists a constant $C[\Lambda] >0$ such that, for all $(t,s,x,v) \in [0,T[^2 \times \R^3_x \times \R^3_v$,
\[ \langle \mathcal V(t,s,x,v)\rangle\le 2\langle v\rangle+C[\Lambda]. \]
\end{Pro}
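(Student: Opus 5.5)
We fix $(s,x,v)$ and write $(\mathcal X,\mathcal V)(t)=(\mathcal X,\mathcal V)(t,s,x,v)$, so that $\mathcal V(s)=v$. By Lemma \ref{Lemsource} with $\delta=0$, along the characteristic
\[ \big|\dot{\mathcal V}\big|(t) \le C\Lambda\, \langle t+|\mathcal X(t)|\rangle^{-1}\langle t-|\mathcal X(t)|\rangle^{-1}. \]
The plan is to split this bound into a piece that is integrable in $t$ and a piece carrying the null weight $\widehat{\mathcal V}^{\underline L}$, which Lemma \ref{LemforDuhamel} integrates uniformly. By Young's inequality, $\langle t+|x|\rangle^{-1}\langle t-|x|\rangle^{-1}\le \langle t+|x|\rangle^{-2}\langle \mathcal V\rangle+\langle t-|x|\rangle^{-2}\langle \mathcal V\rangle^{-1}$. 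Lemma \ref{Lemweight} gives $\langle\mathcal V\rangle^{-1}\le 4\langle\mathcal V\rangle\,\widehat{\mathcal V}^{\underline L}$. Hence
\[ \big|\dot{\mathcal V}\big|(t)\le C\Lambda \langle \mathcal V(t)\rangle\Big( (1+t)^{-2}+ \widehat{\mathcal V}^{\underline L}(t)\,(1+|t-|\mathcal X(t)||)^{-2}\Big). \]

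Since $\big|\frac{\dr}{\dr t}\langle\mathcal V\rangle\big|\le|\dot{\mathcal V}|$, the function $\log\langle \mathcal V(t)\rangle$ has derivative bounded by $C\Lambda$ times the bracket above. Integrating between $s$ and $t$ in either time direction, the first term contributes at most $\mathbf I^1\le 1$ by Lemma \ref{LemforDuhamel}. The second term is exactly $\mathbf J^1$, which is bounded by $1$ via the $u=\tau-|\mathcal X|$ change of variables. Lemma \ref{LemforDuhamel} is stated for integrals starting from $\tau=0$ along the characteristic through $(t,x,v)$, but its proof, resting on monotonicity of $u(\tau)$, applies verbatim to any subinterval $[\min(s,t),\max(s,t)]$. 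This yields $\langle\mathcal V(t)\rangle\le e^{2C\Lambda}\langle v\rangle$.

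This is multiplicative in $\langle v\rangle$, whereas the statement requires the factor $2$ in front of $\langle v\rangle$ with $\Lambda$ arbitrary. The main obstacle is therefore to sharpen the split so that the $\Lambda$-dependence moves into an additive constant. I would use an unbalanced split with a parameter $\lambda\ge1$:
\[ \langle t+|x|\rangle^{-1}\langle t-|x|\rangle^{-1}\le \lambda^{-1}\langle\mathcal V\rangle\langle t+|x|\rangle^{-2}+\lambda\langle\mathcal V\rangle^{-1}\langle t-|x|\rangle^{-2}. \]
This gives $|\dot{\mathcal V}|\le C\Lambda\lambda^{-1}\langle\mathcal V\rangle(1+t)^{-2}+4C\Lambda\lambda\langle\mathcal V\rangle\widehat{\mathcal V}^{\underline L}(1+|u|)^{-2}$, which is still linear in $\langle\mathcal V\rangle$, so this alone does not suffice. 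The better route is to use the linear bound only where $\langle \mathcal V\rangle$ is large and to exploit the saturation of Young's inequality. Concretely, $\langle\mathcal V\rangle^{-1}\le 4\langle\mathcal V\rangle\widehat{\mathcal V}^{\underline L}$ together with $\widehat{\mathcal V}^{\underline L}\le 1$ gives $|\dot{\mathcal V}|\lesssim \Lambda\big((1+t)^{-2}\langle \mathcal V\rangle^{\theta}+\dots\big)$, so one should aim for a differential inequality of the form $\frac{\dr}{\dr t}\langle\mathcal V\rangle \le C\Lambda\, g(t)\,\langle\mathcal V\rangle^{1-\theta}$ with $g$ integrable. Integrating then gives $\langle\mathcal V\rangle^\theta\le\langle v\rangle^\theta+C'\Lambda$, and hence $\langle\mathcal V\rangle\le 2\langle v\rangle+C[\Lambda]$ after distinguishing whether $\langle v\rangle^\theta$ is larger or smaller than $C'\Lambda$.

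To obtain the sublinear power, I would interpolate: $\langle t+|x|\rangle^{-1}\langle t-|x|\rangle^{-1}\le \langle t+|x|\rangle^{-3/2}\langle t-|x|\rangle^{-1/2}$ when $|x|\le t$, combined with Lemma \ref{Lemweight} in the form $\langle t-|x|\rangle^{-1/2}\lesssim(\langle\mathcal V\rangle^2\widehat{\mathcal V}^{\underline L})^{1/2}\cdots$. I expect that finding the exact exponents that make both pieces integrable via Lemma \ref{LemforDuhamel} (using $\mathbf I^{1/2}$ and $\mathbf J^{b}$ with $b>0$) will be the delicate step of the proof.
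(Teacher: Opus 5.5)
\textbf{There is a genuine gap in the step that is supposed to make the $\Lambda$-dependence additive.} Your first Grönwall step is correct. As you note, however, it only gives $\langle\mathcal V\rangle\le e^{2C\Lambda}\langle v\rangle$. The sublinear differential inequality that would turn the $\Lambda$-dependence into an additive constant is never established, and the concrete route you propose for it fails.

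First, the interpolation $\langle t+|x|\rangle^{-1}\langle t-|x|\rangle^{-1}\le\langle t+|x|\rangle^{-3/2}\langle t-|x|\rangle^{-1/2}$ is false. Since $\langle t-|x|\rangle\le\langle t+|x|\rangle$, the inequality goes the other way (take $|x|=t$).

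Second, every splitting you actually write is balanced, of the form $ab\le\frac12(\lambda a^2+\lambda^{-1}b^2)$. Take $\lambda=\langle\mathcal V\rangle^{\gamma}$. The first piece then carries $\langle\mathcal V\rangle^{\gamma}$. The second piece needs the null weight for $\mathbf J$, obtained by inserting $1\le 4\langle\mathcal V\rangle^2\widehat{\mathcal V}^{\underline L}$, so it carries $\langle\mathcal V\rangle^{2-\gamma}$. The two exponents sum to $2$, so they can never both be $<1$. This is exactly why your $\lambda$-variant stayed linear.

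\textbf{The missing idea is Young's inequality with unequal exponents,} giving up part of the $\langle t+|x|\rangle$ decay. The paper raises Lemma \ref{Lemweight} to the power $1/4$, which gives $1\lesssim\langle v\rangle^{1/2}(\widehat v^{\underline L})^{1/4}$. It then applies Young with exponents $4/3$ and $4$ to $\langle t+|x|\rangle^{-1}\cdot(\widehat v^{\underline L})^{1/4}\langle t-|x|\rangle^{-1}$, obtaining
\[ |\dot{\mathcal V}|\lesssim\Lambda\langle\mathcal V\rangle^{1/2}\Big(\langle t+|\mathcal X|\rangle^{-4/3}+\widehat{\mathcal V}^{\underline L}\langle t-|\mathcal X|\rangle^{-4}\Big). \]
The bracket integrates to $O(1)$ along the characteristic by Lemma \ref{LemforDuhamel}, via $\mathbf I^{1/3}$ and $\mathbf J^{3}$. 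Your remark that the lemma applies on any subinterval is fine.

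Other choices also work. Exponents $(3/2,3)$ with weights $\langle\mathcal V\rangle^{\pm1/2}$ give powers $3/4$ and $1/2$ and use $\mathbf I^{1/2}$ and $\mathbf J^2$, which matches your guess.

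\textbf{Closing the argument.} The paper concludes by absorption: $\sup\langle\mathcal V\rangle\le\langle v\rangle+C\Lambda\sup\langle\mathcal V\rangle^{1/2}\le\langle v\rangle+C^2\Lambda^2/2+\frac12\sup\langle\mathcal V\rangle$. Your Bihari-type integration of $\frac{\dr}{\dr t}\langle\mathcal V\rangle^{1/2}$ would work equally well once the inequality above is available. To get the factor $2$, use $(a+b)^2\le 2a^2+2b^2$. Splitting at the threshold $\langle v\rangle^\theta=C'\Lambda$, as you suggest, would only give the factor $2^{1/\theta}$.
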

\begin{proof}
Note first, using Lemma \ref{Lemsource}, that
\[ \big| \mathbf{T}_{\mathbf{A}} (\langle v\rangle) \big|= \big| \nabla\mathbf{A} \cdot \nabla_v(\langle v\rangle) \big| \lesssim \Lambda  \langle t+|x|\rangle^{-1}\langle t-|x|\rangle^{-1}. \]
By Lemma \ref{Lemweight}, we have $1 \lesssim \langle v\rangle^{1/2}|\widehat{v}^{\underline{L}}|^{1/4}$, so that
\[ \big|\mathbf{T}_{\mathbf{A}}(\langle v\rangle) \big| \lesssim \frac{\Lambda \langle v\rangle^{1/2}}{\langle t+|x|\rangle^{4/3}} + \frac{\Lambda \langle v\rangle^{1/2}\widehat{v}^{\underline{L}}}{\langle t-|x|\rangle^4}. \]
Use now Duhamel's principle, with initial time $s$ and $h(t,x,v)= \langle v \rangle$, together with the previous Lemma \ref{LemforDuhamel}. We then deduce that for all $t, \, s \in [0,T[$,
\[ \big\langle \mathcal{V}(t,s,x,v) \big\rangle-\langle v\rangle \lesssim \Lambda \big( \mathbf{I}^{\frac{4}{3}}_{x,v}(t)+ \mathbf{J}^4_{x,v}(t) \big) \sup_{0 \leq \tau < T} \big\langle \mathcal{V}(\tau,s,x,v)\rangle^{1/2} \leq 4a^{-1}\Lambda + 4a \Lambda \sup_{ 0 \leq \tau <T} \big\langle \mathcal{V}(\tau,s,x,v) \big\rangle, \]
for any $a>0$. Choosing $a$ small enough yields the result.
\end{proof}

We now treat the case $\delta=1$, for which the deviation of the momentum nonlinear characteristics from the linear ones can be worse. 

\begin{Pro}\label{ProVcharacbis}
If $\Lambda$ is small enough, there exists an absolute constant $C_1>0$ such that, for all $(s,t,x,v) \in [0,T[^2 \times \R^3_x \times \R^3_v$,
\[ \big\langle \mathcal V(s,t,x,v) \big\rangle \leq \frac{C_1}{\kappa}  \big\langle \mathcal V(0,t,x,v) \big\rangle^{1+\kappa},  \]
for $9C\Lambda < \kappa <1/2$, where $C>0$ is the absolute constant introduced in Lemma \ref{Lemsource}.
\end{Pro}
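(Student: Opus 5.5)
Fix $(t,x,v)$ and write $\mathcal{V}(s)=\mathcal{V}(s,t,x,v)$, $\mathcal{X}(s)=\mathcal{X}(s,t,x,v)$, with $v_0 \coloneqq \mathcal{V}(0)$ and $V(s)\coloneqq \langle \mathcal{V}(s)\rangle$. The argument has two steps. First we derive a crude polynomial-in-time growth bound, $V(s)\le V(0)(1+s)^{\kappa/3}$. Then we feed it back, together with the null weight $\widehat{v}^{\underline{L}}$, to remove the time dependence at the cost of a small power of $V(0)$. Throughout we use Lemma \ref{Lemsource}:
\[ |\dot V| = |\nabla\mathbf{A}\cdot \widehat{\mathcal V}| \le C\Lambda\,\langle s+|\mathcal X|\rangle^{-1}\langle s-|\mathcal X|\rangle^{-1} V, \]
where $\delta=1$ has produced the extra factor $V$. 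This gives $|\frac{\dr}{\dr s}\log V|\le C\Lambda (1+s)^{-1}$. Integrating from $0$ to $s$ yields $V(s)\le V(0)(1+s)^{C\Lambda}$. Since $9C\Lambda<\kappa$, we get $C\Lambda\le \kappa/9$. This is the crude bound.

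To remove the time growth, we split the rate as in Proposition \ref{ProVcharac}, using $1\le 4V^2\widehat{\mathcal V}^{\underline{L}}$ from Lemma \ref{Lemweight}. Two regimes are relevant. If $\langle s-|\mathcal X|\rangle\ge (1+s)^{1/2}$, the rate is at most $C\Lambda(1+s)^{-3/2}$, which is integrable. In the complementary region near the cone, we interpolate:
\[ \frac{1}{\langle s+|\mathcal X|\rangle\langle s-|\mathcal X|\rangle}\le \frac{(4V^2\widehat{\mathcal V}^{\underline L})^{\theta}}{\langle s+|\mathcal X|\rangle\langle s-|\mathcal X|\rangle}, \]
and we distribute the powers between $(1+s)$ and $\langle s-|\mathcal X|\rangle$. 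A cleaner version of this is
\[ \frac{1}{\langle s+|\mathcal X|\rangle \langle s-|\mathcal X|\rangle}\lesssim \frac{V^{2\theta}}{(1+s)^{1+\theta}}+\frac{V^{2}\widehat{\mathcal V}^{\underline L}}{\langle s-|\mathcal X|\rangle^{2}}, \]
which follows from Young's inequality applied to the product $1\cdot 1$, split according to whether $\langle s-|\mathcal X|\rangle^{-1}\le (1+s)^{-\theta}V^{2\theta}$ or not. Inserting this into $\frac{\dr}{\dr s}\log V$ gives
\[ \log\frac{V(s)}{V(0)}\le C\Lambda\Big(\mathbf{I}^{\theta}(s)\sup V^{2\theta}+\mathbf{J}^{1}(s)\sup V^2\Big). \]

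The term $\mathbf{J}^1$ is only logarithmic by Lemma \ref{LemforDuhamel}, and it carries $V^2$, which is bad. The remedy is to use instead $1\lesssim V^{2\beta}(\widehat{\mathcal V}^{\underline L})^{\beta}$ with a small $\beta$, and to absorb the resulting factor $(\widehat{\mathcal V}^{\underline L})^{\beta}$ by Hölder against the time weight. Concretely, we bound
\[ \frac{1}{\langle s\rangle\langle u\rangle}\le \frac{V^{2\beta}(\widehat{\mathcal V}^{\underline L})^{\beta}}{\langle s\rangle\langle u\rangle}\le V^{2\beta}\Big(\frac{1-\beta}{\langle s\rangle^{1/(1-\beta)}}+\frac{\beta\,\widehat{\mathcal V}^{\underline L}}{\langle u\rangle^{1/\beta}}\Big). \]
Both resulting integrals are bounded by $\beta^{-1}$ by Lemma \ref{LemforDuhamel}, using $\mathbf{I}^{\beta/(1-\beta)}\le (1-\beta)/\beta$ and $\mathbf{J}^{1/\beta-1}\lesssim 1$. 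Hence
\[ \log\frac{V(s)}{V(0)}\le \frac{C'\Lambda}{\beta}\sup_{[0,t]}V^{2\beta}. \]

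We then close with the crude bound. Since $V(\tau)\le V(0)(1+\tau)^{\kappa/9}$, the power $V^{2\beta}$ again carries time growth, so the bootstrap has to be done on $\sup V$ directly. Set $M\coloneqq\sup_{[0,t]}V$. Then $M\le V(0)\exp\big(C'\Lambda\beta^{-1}M^{2\beta}\big)$. We choose $\beta$ depending on $M$, namely $\beta=1/\log M$, so that $M^{2\beta}=e^2$. This gives $M\le V(0)\exp(C''\Lambda\log M)=V(0)M^{C''\Lambda}$, hence $M^{1-C''\Lambda}\le V(0)$. Therefore $M\le V(0)^{1/(1-C''\Lambda)}\le V(0)^{1+\kappa}$ once $C''\Lambda\le\kappa/2$. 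The prefactor $C_1/\kappa$ absorbs the constants. In the case $M\le e$ with $\beta$ not small, we take $\beta=1/2$ and conclude directly.

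\textbf{Main obstacle.} The difficult step is the exponent bookkeeping near the light cone. One must check that the constants in $\mathbf{I}$ and $\mathbf{J}$ grow only like $\beta^{-1}$, and that the choice $\beta=1/\log M$ yields a power law with exponent $O(\Lambda)$, compatible with the constraint $9C\Lambda<\kappa$.
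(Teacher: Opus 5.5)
Your argument is correct in substance, but it closes by a different mechanism than the paper. Both proofs use the same Young splitting of $(\widehat{\mathcal{V}}^{\underline{L}})^{\beta}\langle s+|\mathcal{X}|\rangle^{-1}\langle s-|\mathcal{X}|\rangle^{-1}$ into a $\langle s+|\mathcal{X}|\rangle^{-1/(1-\beta)}$ piece and a $\widehat{\mathcal{V}}^{\underline{L}}\langle s-|\mathcal{X}|\rangle^{-1/\beta}$ piece; the paper's $a$ is your $\beta$.

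\emph{The paper's route.} It fixes $a=\kappa/2$ and works with $\langle v\rangle$ rather than $\log\langle v\rangle$. It uses the crude Gr\"onwall bound $\langle\mathcal{V}(\tau)\rangle\le\langle\mathcal{V}(0)\rangle(1+\tau)^{\sqrt{2}C\Lambda}$ to get $\langle\mathcal{V}(\tau)\rangle^{1+2a}\le\langle\mathcal{V}(0)\rangle^{1+2a}(1+\tau)^{a/2}$. That time growth is absorbed by the spare factor $\langle\tau\rangle^{-a}$, and a linear Duhamel integration gives $\langle\mathcal{V}\rangle\le\langle\mathcal{V}(0)\rangle+C\Lambda a^{-1}\langle\mathcal{V}(0)\rangle^{1+2a}$. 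The hypothesis $9C\Lambda<\kappa<1/2$ is what guarantees $(1+\kappa)\sqrt{2}C\Lambda\le\kappa/4$, which this step needs.

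\emph{Your route.} You bound $V^{2\beta}$ by $M^{2\beta}$, obtain $\log(M/V(0))\le C'\Lambda\beta^{-1}M^{2\beta}$ for every $\beta\in(0,1/2]$, and only afterwards choose $\beta=1/\log M$. This is legitimate: the inequality holds for each fixed $\beta$, and $M<\infty$ on compact subintervals of $[0,T[$ containing $s$ and $t$. It also makes the crude bound unnecessary, so your first step is never used. What it buys is that no exponent has to be prescribed: the optimisation itself yields $\langle\mathcal{V}(s)\rangle\lesssim\langle\mathcal{V}(0)\rangle^{1/(1-c\Lambda)}$ with an absolute prefactor. What it costs is that the exponent is now produced by the constants, so the threshold must be tracked carefully.

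As written, that tracking is not done, and your closing condition $C''\Lambda\le\kappa/2$ does not follow from $9C\Lambda<\kappa$. With $\beta=1/\log M$ the leading coefficient is $e^2C$, so that condition only covers $\kappa\gtrsim 2e^2C\Lambda\approx 15\,C\Lambda$. The fix is as follows.
\begin{itemize}
\item From $M\le V(0)^{1/(1-\theta)}$, the exact requirement is $\theta\le\kappa/(1+\kappa)$, not $\theta\le\kappa/2$.
\item Compute the constants: $(1-\beta)\int_0^\infty\langle s\rangle^{-1/(1-\beta)}\mathrm{d}s\le(1-\beta)/\beta$, and the null piece contributes at most $\pi\beta/2$.
\item Hence, for $L=\log M\ge2$, you get $\log(M/V(0))\le e^2\,4^{1/L}\,C\Lambda\,L$.
\item Since $e^2<9$, for $L$ beyond an absolute $L_0$ and $C\Lambda$ small this gives $M\le V(0)^{1+\kappa}$ for every $\kappa>9C\Lambda$. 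Otherwise $M\le e^{L_0}$, which goes into $C_1$.
\end{itemize}
Taking $\beta=1/(2\log M)$ instead improves $e^2$ to $2e$.

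Relatedly, because $\beta\to0$ as $M\to\infty$, every $\beta$-dependent constant must stay bounded. The bound $\mathbf{J}^{1/\beta-1}\lesssim1$ you quote does not follow uniformly from Lemma \ref{LemforDuhamel} as stated. Passing from $\langle u\rangle^{-1/\beta}$ to its normalisation $(1+|u|)^{-1/\beta}$ costs $2^{1/(2\beta)}=M^{(\log 2)/2}$, which would destroy the closing. Instead, bound $\langle u\rangle^{-1/\beta}\le\langle u\rangle^{-2}$ and integrate directly in $u=\tau-|\mathcal{X}|$.
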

\begin{proof}
According to Lemma \ref{Lemsource}, there exists $C>0$ such that
\[ \big| \mathbf{T}_{\mathbf{A}} (\langle v\rangle) \big|= \big| \nabla\mathbf{A} \cdot \nabla_v(\langle v\rangle) \big| \leq C\Lambda \langle v \rangle \langle t+|x|\rangle^{-1}\langle t-|x|\rangle^{-1}. \]
As $\langle t+|x| \rangle^{-1} \leq \sqrt{2}(1+t)^{-1}$, Duhamel's principle \eqref{eq:Duhamel} and an application of Grönwall's inequality yield
\[ \big\langle \mathcal{V}(t,s,x,v) \big\rangle \leq \langle \mathcal{V}(0,s,x,v) \rangle e^{ \sqrt{2} C \Lambda \log (1+t)}.  \]
To improve this estimate, we write, for $0<a <1$,
\[ \big| \mathbf{T}_{\mathbf{A}} (\langle v\rangle) \big| \lesssim \frac{\Lambda \langle v \rangle^{1+2a} |\widehat{v}^{\underline{L}}|^a }{ \langle t+|x|\rangle \,\langle t-|x|\rangle} \leq  \frac{\Lambda \langle v \rangle^{1+2a}  }{ \langle t+|x|\rangle^{1+a}}+\frac{\Lambda \langle v \rangle^{1+2a} |\widehat{v}^{\underline{L}}| }{ \langle t+|x|\rangle^a \langle t-|x|\rangle^{\frac{1}{a}}}. \]
Note now that if $\Lambda$ is small enough compared to $a$, $\langle \mathcal{V}(t,s,x,v) \big\rangle \leq \langle \mathcal{V}(0,s,x,v) \rangle (1+t)^{\frac{a}{2(1+2a)}}$. Hence, By Duhamel's principle \eqref{eq:Duhamel}, we have
\[ \big\langle \mathcal{V}(t,s,x,v) \big\rangle - \big\langle \mathcal{V}(0,s,x,v) \big\rangle  \lesssim \big\langle \mathcal{V}(0,s,x,v) \big\rangle^{1+2a}\int_{\tau=0}^t \frac{\Lambda }{\langle \tau \rangle^{1+\frac{a}{2}}}+ \frac{\Lambda  \widehat{\mathcal{V}}^{\underline{L}}(\tau,s,x,v)}{\langle \tau -|\mathcal{X}(\tau , s,x,v)|\rangle^{\frac{1}{a}}} \dr \tau . \]
It remains to use Lemma \ref{LemforDuhamel}.
\end{proof}

If $\delta=1$, we assume throughout Sections \ref{SecAsympstab}--\ref{SEcscatt} that $9C\Lambda<1/2$, and we fix a constant $\kappa$ such that $9C\Lambda<\kappa<1/2$. We conclude this section by deriving qualitative properties of the function $s \mapsto \tau - |\mathcal{X}(s,t,x,v)| $. 

\begin{Pro}\label{Prodomaincharac}
Let $(t,x,v) \in \R_+ \times \R^3_x \times \R^3_v$. Then, the following holds.
\begin{itemize}
\item \label{it:A} There exists at most one time $0 \leq t_0 < T$ such that $t_\star-|\mathcal{X}(t_\star,t,x,v) |=0$. We set $t_\star \coloneqq t_0$ if $t_0$ is exists and $t_\star \coloneqq T$ otherwise. 
\item There exists $C[\Lambda,\kappa]>0$ such that, for all $0 \leq t <T$,
\[ \forall \, 0 \leq s \leq t_\star, \quad \big| \mathcal{X}(s,t,x,v) \big|-s \geq a (t_\star-s), \qquad \qquad \forall \, t_\star \leq s < T, \quad s- \big|\mathcal{X}(s,t,x,v) \big| \geq a (s-t_\star), \]
where $a \coloneqq C \, \langle \mathcal{V}(0,t,x,v) \rangle^{-2-2\delta \kappa}$ satisfy $0<a<1$.
\item There exists $c[\Lambda, \kappa] >0$ such that $ t_\star \leq c \big| \mathcal{X}(0,t,x,v) \big| \, \langle \mathcal{V} (0,t,x,v) \rangle^{2+2\delta \kappa}$.
\end{itemize}
\end{Pro}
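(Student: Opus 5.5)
The proof rests on one monotonicity property. Along a fixed characteristic, set $u(s)\coloneqq s-|\mathcal{X}(s,t,x,v)|$. As in the proof of Lemma \ref{LemforDuhamel}, $u'(s)=1-\frac{\mathcal{X}}{|\mathcal{X}|}\cdot\widehat{\mathcal{V}}=2\widehat{\mathcal{V}}^{\underline{L}}(s,t,x,v)$, which is strictly positive because $|\widehat{\mathcal{V}}|<1$. Where $\mathcal{X}=0$ the function $|\mathcal{X}|$ is only Lipschitz, but its Dini derivatives are still bounded by $|\widehat{\mathcal{V}}|<1$, so $u$ remains strictly increasing. The first point follows at once: a strictly increasing function vanishes at most once, which defines $t_\star$. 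Moreover $u<0$ on $[0,t_\star[$ and $u>0$ on $]t_\star,T[$.

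For the second point I quantify the monotonicity. Lemma \ref{Lemweight} gives $2\widehat{\mathcal{V}}^{\underline{L}}\geq \frac{1}{2}\langle\mathcal{V}(s,t,x,v)\rangle^{-2}$. I then need an upper bound for $\langle\mathcal{V}(s,t,x,v)\rangle$ that is uniform in $s$ and stated in terms of $\langle\mathcal{V}(0,t,x,v)\rangle$.
\begin{itemize}
\item If $\delta=1$, Proposition \ref{ProVcharacbis} gives $\langle\mathcal{V}(s,t,x,v)\rangle\leq \frac{C_1}{\kappa}\langle\mathcal{V}(0,t,x,v)\rangle^{1+\kappa}$.
\item If $\delta=0$, I apply Proposition \ref{ProVcharac} with base point $(0,\mathcal{X}(0,t,x,v),\mathcal{V}(0,t,x,v))$. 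The flow property $\mathcal{V}(s,t,x,v)=\mathcal{V}(s,0,\mathcal{X}(0),\mathcal{V}(0))$ then yields $\langle\mathcal{V}(s,t,x,v)\rangle\leq 2\langle\mathcal{V}(0,t,x,v)\rangle+C[\Lambda]\lesssim\langle\mathcal{V}(0,t,x,v)\rangle$.
\end{itemize}
In both cases $u'(s)\geq a\coloneqq C\langle\mathcal{V}(0,t,x,v)\rangle^{-2-2\delta\kappa}$, with $C$ depending on $\Lambda,\kappa$. Shrinking $C$ ensures $a<1$.

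With this lower bound on $u'$, I integrate from $t_\star$. For $s\leq t_\star$, the mean value inequality gives $-u(s)=u(t_\star)-u(s)\geq a(t_\star-s)$, which is exactly $|\mathcal{X}(s)|-s\geq a(t_\star-s)$. Symmetrically, for $s\geq t_\star$ we get $u(s)\geq a(s-t_\star)$. If no zero exists, so that $t_\star=T$, then only the first inequality needs proof on $[0,T[$. Since $u<0$ there, $u(s)\leq u(s)-\lim_{\sigma\to T}u(\sigma)\leq -a(T-s)$ when $T<\infty$. When $T=\infty$ the claim is vacuous, or more precisely $u$ would stay negative while growing at least linearly, which is impossible, so a zero must exist.

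For the third point I take $s=0$ in the first inequality: $|\mathcal{X}(0,t,x,v)|\geq a\,t_\star$, so $t_\star\leq a^{-1}|\mathcal{X}(0,t,x,v)|=c\,|\mathcal{X}(0)|\,\langle\mathcal{V}(0)\rangle^{2+2\delta\kappa}$. This also shows $t_\star<\infty$ whenever $T=\infty$.

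I expect the main obstacle to be the uniform momentum bound, namely making sure it is expressed in terms of $\mathcal{V}(0,t,x,v)$ through the flow property, together with the mild non-smoothness of $|\mathcal{X}|$ at the origin. Everything else is elementary.
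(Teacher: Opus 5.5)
Your proof is correct and follows the paper's argument. In both, the monotonicity of $s\mapsto s-|\mathcal{X}(s,t,x,v)|$ gives the first two points, with derivative $\gtrsim\langle\mathcal{V}(s,t,x,v)\rangle^{-2}\gtrsim\langle\mathcal{V}(0,t,x,v)\rangle^{-2-2\delta\kappa}$ by Propositions \ref{ProVcharac}--\ref{ProVcharacbis}; you are more explicit than the paper about the flow property when $\delta=0$, the case $t_\star=T<\infty$, and the non-smoothness of $|\mathcal{X}|$ at the origin. The only difference is the third point: you read it off from the second point at $s=0$, whereas the paper argues separately via $|\mathcal{X}_s|\leq|\mathcal{X}_0|+s\sup_{\tau\leq s}|\widehat{\mathcal{V}}_\tau|$ and $1-\sup_\tau|\widehat{\mathcal{V}}_\tau|\gtrsim\langle\mathcal{V}_0\rangle^{-2-2\delta\kappa}$, and your route is shorter and gives the same bound.
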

\begin{proof}
  Let us denote $(\mathcal{X},\mathcal{V})(s,t,x,v)$ by $(\mathcal{X}_s,\mathcal{V}_s)$. The starting point consists in noticing that
  \begin{equation*}
  \frac{\dr }{\dr s} \big( s-|\mathcal{X}_s| \big)=1-\frac{\mathcal{V}_s}{\langle \mathcal{V}_s \rangle } \cdot \frac{\mathcal{X}_s}{|\mathcal{X}_s|} \geq \frac{1}{\langle \mathcal{V}_s \rangle^2} \gtrsim \frac{1}{\langle \mathcal{V}(0,t,x,v) \rangle^{2+2\delta \kappa}}, 
  \end{equation*}
  where, in the last step, we used Propositions \ref{ProVcharac}--\ref{ProVcharacbis}. This provides the uniqueness of $t_\star$, as well as it existence if $T=+\infty$. We also obtain from this lower bound the estimate on $|s-|\mathcal{X}_s||$. For the upper bound on $t_\star$, note that
  \[ \big| \mathcal{X}_s \big| \leq \big| \mathcal{X}_0 \big|+s \sup_{0 \leq \tau \leq s } \big| \widehat{\mathcal{V}}_\tau \big|.  \]
  Moreover, as $z \in \R_+ \mapsto z/\langle z \rangle$ increases, we have from Propositions \ref{ProVcharac}--\ref{ProVcharacbis} that there exists $c[\Lambda,\kappa]>0$ such that
  \[ \sup_{0 \leq \tau \leq s } \big| \widehat{\mathcal{V}}_\tau \big| \leq \frac{c \, \langle \mathcal{V}_0 \rangle^{1+\delta\kappa}}{\sqrt{1+c^2 \langle \mathcal{V}_0 \rangle^{2+2\delta\kappa}}} .  \]
  We then deduce that
  \[  \big| \mathcal{X}_0 \big|+s \sup_{0 \leq \tau \leq s } \big| \widehat{\mathcal{V}}_\tau \big| >s  \Longrightarrow s < 2c^2 \big| \mathcal{X}_0 \big|   \langle \mathcal{V}_0 \rangle^{2+2\delta\kappa}, \]
  which implies $t_\star \leq 2c^2 \big| \mathcal{X}_0 \big|   \langle \mathcal{V}_0 \rangle^{2+2\delta\kappa}$. 
\end{proof}

\subsection{Pointwise estimates on the first order derivatives of $f$}\label{Subsec32}

We start by controlling $\partial_{t,x} f$ and $\langle v \rangle \partial_{v}f$ using the strategy outlined in Section \ref{SubsecKeyidea}. The commutation formula of Proposition \ref{ProCom} and Lemma \ref{LemComdv}, together with the decay estimates in Lemma \ref{Lemsource}, provide
\begin{equation}\label{eq:latoutdesuite0}
\begin{aligned}
\big| \mathbf{T}_{\mathbf{A}} \big( |\partial_{x^\mu}f| \big) \big| & \leq \frac{2C \Lambda \langle v\rangle^{\delta -1 } }{(1+t) \, \langle t-|x|\rangle^{\frac{7}{4}}}  \cdot \langle v \rangle \big| \nabla_{v}f \big| (t,x,v) , \\
 \big| \mathbf{T}_{\mathbf{A}} \big( \langle v \rangle |\partial_{v^i} f| \big) \big| &  \leq  
\frac{4C\Lambda \langle v \rangle^{\delta -1 } }{ (1+t) \, \langle t-|x|\rangle}   \cdot \langle v \rangle \big| \nabla_{v} f \big|(t,x,v) +  \big|\nabla_x f \big|(t,x,v),
\end{aligned}
\end{equation}
for any $0 \leq \mu \leq 3$ and $1 \leq i \leq 3$. The next result will be used to estimate the derivatives of $f$ and $\langle x -t \widehat{v} \rangle$ along the flow. It is for the latter case that we consider source terms $S_1$ and $S_2$.

\begin{Lem}\label{LemDuhamel}
Let $\mathbf{c}>0$ and $g_1, \, g_2, \, S_1, \, S_2 \colon [0,T[ \times \R^3_x \times \R^3_v \to \R_+$ be functions such that
\begin{equation*}
    \begin{aligned}
\big| \mathbf{T}_{\mathbf{A}} ( g_1 ) \big| & \leq \frac{\mathbf{c} \, \langle v\rangle^{\delta -1 } }{(1+t) \, \langle t-|x|\rangle^{\frac{3}{2}}}  \cdot g_2 +S_1 , \\
 \big| \mathbf{T}_{\mathbf{A}} ( g_2 ) \big| &  \leq  
\frac{\mathbf{c} \, \langle v \rangle^{\delta -1 } }{ (1+t) \, \langle t-|x|\rangle}   \cdot g_2 +  g_1+S_2,
\end{aligned}
\end{equation*}
where the source terms satisfy, for $\mathbf{B}_0>0$ and for all $(t,x,v) \in [0,T[ \times \R^3_x \times \R^3_v$,
\[ \big| S_1 \big|(t,x,v) \leq \mathbf{B}_0 \frac{\langle v \rangle^{\delta - 1}}{\langle t - |x| \rangle^{\frac{3}{2}}}, \qquad \qquad \big| S_2 \big|(t,x,v) \leq \mathbf{B}_0 . \]
Then, there exists two constants $b[\mathbf{c},\Lambda,\kappa], \, B[\Lambda, \kappa]>0$ such that, for all $(t,x,v) \in [0,T[ \times  \R^3_x \times \R^3_v$,
\begin{align*}
|g_1|(t,x,v)+\frac{|g_2|(t,x,v)}{1+t}  & \leq  b \Big(\big[g_1+g_2 \big]\big(0,\mathcal{X}(0,t,x,v), \mathcal{V}(0,t,x,v) \big)+\mathbf{B}_0 \langle \mathcal{V} (0,\tau , x,v) \rangle^3 \Big) \\
& \quad \, \times \langle \mathcal{X} (0,t,x,v) \rangle^{3} \langle \mathcal{V} (0,\tau , x,v) \rangle^6  e^{B \sqrt{\mathbf{c}} \,  \langle \mathcal{V}(0,\tau , x,v) \rangle^{\frac{1+\delta}{2} +\delta \kappa}}  . 
\end{align*}
\end{Lem}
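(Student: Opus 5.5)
The plan follows the approximate diagonalisation described in Section \ref{subsubsec0}. Fix $(t,x,v)$ and write $(\mathcal{X}_s,\mathcal{V}_s)=(\mathcal{X},\mathcal{V})(s,t,x,v)$. Let $V_0 \coloneqq \langle \mathcal{V}_0 \rangle$. By Propositions \ref{ProVcharac}--\ref{ProVcharacbis}, $\langle \mathcal{V}_s\rangle$ is comparable to $V_0$ from above, up to constants and the power $1+\delta\kappa$, uniformly in $s$. Also $\langle \mathcal{V}_s\rangle \gtrsim V_0^{(1-\delta\kappa)/(1+\delta\kappa)}$ from below, by running the same argument backwards. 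Introduce the weight
\[ \mathbf{a}(s,y,w)\coloneqq \frac{\mathbf{c}\,\langle w\rangle^{\delta-1}}{(1+s)\langle s-|y|\rangle^{3/2}}, \qquad \mathcal{G}\coloneqq g_1+\sqrt{\mathbf{a}}\,g_2 .\]
The goal is a differential inequality for $\mathcal{G}$ along the characteristic, closed by Grönwall.

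\textbf{Computing $\T_{\mathbf A}\mathcal{G}$.} The hypotheses give
\[ |\T_{\mathbf A}\mathcal{G}| \le \mathbf a g_2 + S_1+\sqrt{\mathbf a}\Big(\tfrac{\mathbf c\langle v\rangle^{\delta-1}}{(1+t)\langle t-|x|\rangle} g_2+g_1+S_2\Big)+|\T_{\mathbf A}\sqrt{\mathbf a}|\,g_2 .\]
The term $\mathbf a g_2$ equals $\sqrt{\mathbf a}\cdot(\sqrt{\mathbf a} g_2)\le \sqrt{\mathbf a}\,\mathcal G$. The middle term $\tfrac{\mathbf c\langle v\rangle^{\delta-1}}{(1+t)\langle t-|x|\rangle}g_2$ is $\le \mathbf b\,\sqrt{\mathbf a}g_2$, where $\mathbf b\coloneqq \sqrt{\mathbf c}\langle v\rangle^{(\delta-1)/2}(1+t)^{-1/2}\langle t-|x|\rangle^{-1/4}$.

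For the derivative of the weight, compute directly: $\T_{\mathbf A}$ of $(1+t)^{-1}$ and of $\langle w\rangle^{\delta-1}$ (via Lemma \ref{Lemsource}) is harmless. The dangerous factor is $\T_{\mathbf A}\langle t-|x|\rangle^{-3/4} \sim \widehat v^{\underline L}\langle t-|x|\rangle^{-7/4}$. Altogether,
\[ |\T_{\mathbf A}\mathcal G|\lesssim \Big(\sqrt{\mathbf a}+\mathbf b+\tfrac{1}{1+t}+\tfrac{\widehat v^{\underline L}}{\langle t-|x|\rangle}\Big)\mathcal G + S_1+\sqrt{\mathbf a}S_2 .\]

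\textbf{Main obstacle.} The last two coefficients are not integrable: $\int ds/(1+s)$ gives $\log(1+t)$, and $\int \widehat v^{\underline L}\langle s-|\mathcal X_s|\rangle^{-1}ds \lesssim \log(1+t)$ by Lemma \ref{LemforDuhamel}. To stop these from producing large powers of $t$, I would not put them in Grönwall with constant coefficients. Instead I would use the precise signs: the weight $\sqrt{\mathbf a}$ itself decays, so I should estimate $(1+t)^{1/2}\langle t-|x|\rangle^{3/4}\mathcal G$-type quantities, i.e. $\tilde{\mathcal G}=g_1+ g_2 /(1+t)$ modulo constants. Concretely, I plan to run Grönwall for $\mathcal G$ on the portion where $\sqrt{\mathbf a}\ge (1+t)^{-1}$ and for $g_1+g_2/(1+t)$ elsewhere. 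This reproduces the stated $g_2/(1+t)$ on the left-hand side, at the cost of polynomial factors in $V_0$ and $\langle\mathcal X_0\rangle$. These factors come from Proposition \ref{Prodomaincharac}: $|s-|\mathcal X_s||\ge a|s-t_\star|$ with $a\sim V_0^{-2-2\delta\kappa}$ and $t_\star\lesssim |\mathcal X_0| V_0^{2+2\delta\kappa}$. They account for $\langle\mathcal X_0\rangle^3 V_0^6$.

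\textbf{Integrating $\sqrt{\mathbf a}$ along the flow.} This is the heart of the exponential factor. Using Proposition \ref{Prodomaincharac}, split $[0,t]$ into $|s-t_\star|\le a^{-1}$ and its complement.
\begin{itemize}
\item Near $t_\star$, $\int (1+s)^{-1/2}ds$ over an interval of length $a^{-1}$ is bounded by $a^{-1/2}\lesssim V_0^{1+\delta\kappa}$. Multiplied by $\sqrt{\mathbf c}\langle \mathcal V\rangle^{(\delta-1)/2}$, this gives $\sqrt{\mathbf c}\,V_0^{(1+\delta)/2+\delta\kappa}$, up to the comparison of $\langle\mathcal V_s\rangle$ with $V_0$.
\item Away from $t_\star$, $\langle s-|\mathcal X_s|\rangle^{-3/4}\le (a|s-t_\star|)^{-3/4}$, and $\int (1+s)^{-1/2}(a|s-t_\star|)^{-3/4}ds\lesssim a^{-1/2}$ again.
\end{itemize}
The same bound holds for $\int\mathbf b$. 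Hence $\exp\int\sqrt{\mathbf a}\le e^{B\sqrt{\mathbf c}V_0^{(1+\delta)/2+\delta\kappa}}$.

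\textbf{Source terms.} Duhamel \eqref{eq:Duhamel} with $S_1+\sqrt{\mathbf a}S_2$ is handled by Lemma \ref{LemforDuhamel} together with $1\lesssim\langle v\rangle^2\widehat v^{\underline L}$ (Lemma \ref{Lemweight}). This yields the $\mathbf B_0V_0^3$ term. Finally, combine via Grönwall and transfer back from $\mathcal G$ to $g_1+g_2/(1+t)$ at time $t$ and at time $0$.
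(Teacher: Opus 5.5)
There is a genuine gap, at exactly the step you flag as the main obstacle. Your weight $\sqrt{\mathbf a}$, your bound $\int\sqrt{\mathbf a}\,\dr s\lesssim a^{-1/2}$ via Proposition \ref{Prodomaincharac}, and your treatment of $S_1$ agree with the paper. Your mechanism for $\T_{\mathbf A}\sqrt{\mathbf a}$, however, is wrong. In the exterior region $\sqrt{\mathbf a}$ does \emph{not} decay along the flow: $\T_{\mathbf A}(t-|x|)=2\widehat v^{\underline L}>0$, so $|t-|x||$ decreases when $|x|>t$. What does hold is the one-sided bound
\[ \T_{\mathbf A}\big(\log\sqrt{\mathbf a}\big)\le \frac{3\,\widehat v^{\underline L}}{2\langle t-|x|\rangle}\mathds 1_{|x|>t}+(1-\delta)\frac{|\T_{\mathbf A}\langle v\rangle|}{2\langle v\rangle}. \]
Here the $-\frac{1}{2(1+t)}$ term and the interior part of the $\langle t-|x|\rangle$ term have a good sign. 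The last term integrates to $O(\Lambda)$; the paper avoids it by putting $\langle\mathcal V(0,t,x,v)\rangle$, which is constant along the flow, in place of $\langle v\rangle$. Since $g_2\ge 0$, you only need the upper bound $\T_{\mathbf A}\mathcal G\le\cdots+\max(\T_{\mathbf A}\log\sqrt{\mathbf a},0)\,\mathcal G$. Taking absolute values, as you do, discards precisely the signs that make the argument work. The positive part is supported on $s\le t_\star$, and its integral along the characteristic is a multiple of $\log(1+t_\star)$ by Lemma \ref{LemforDuhamel}. Proposition \ref{Prodomaincharac} bounds $t_\star$ polynomially in $\langle\mathcal X_0\rangle$ and $\langle\mathcal V_0\rangle$, and this is the origin of the prefactor $\langle\mathcal X_0\rangle^3\langle\mathcal V_0\rangle^6$. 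Your substitute, running Grönwall on $g_1+g_2/(1+t)$ where $\sqrt{\mathbf a}<(1+t)^{-1}$, does not close as described. There the coefficient of $g_1$ in $\T_{\mathbf A}\big(g_1+g_2/(1+t)\big)$ is $(1+t)^{-1}$, which is not integrable, so Grönwall loses a factor $1+t$.

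Two further steps also fail. First, you cannot recover the bound on $g_2/(1+t)$ by transferring back from $\mathcal G$. That only gives $g_2\le\mathcal G/\sqrt{\mathbf a}$, which is of size $\mathbf c^{-1/2}t^{5/4}$ along interior characteristics. The paper first bounds $\mathcal G$ (hence $g_1$) for all times, then integrates $|\T_{\mathbf A}g_2|\le(1+\sqrt{\mathbf c})\,\mathcal G+S_2$ directly, which gives linear growth of $g_2$. Second, your $\mathbf b=\sqrt{\mathbf c}\langle v\rangle^{(\delta-1)/2}(1+t)^{-1/2}\langle t-|x|\rangle^{-1/4}$ is not integrable along the flow, since it behaves like $s^{-3/4}$ for large $s$. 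So ``the same bound holds for $\int\mathbf b$'' is false. What actually multiplies $\mathcal G$ is $\sqrt{\mathbf a}\,\mathbf b=\frac{\mathbf c\langle v\rangle^{\delta-1}}{(1+t)\langle t-|x|\rangle}\le\sqrt{\mathbf c}\,\sqrt{\mathbf a}$, and you must keep the factor $\sqrt{\mathbf a}$ that your final display drops.
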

\begin{proof}
    Let $(t,x,v) \in [0,T[ \times \R^3_x \times \R^3_v$. To lighten the notation, we simply denote $(\mathcal{X},\mathcal{V})(s,t,x,v)$ by $(\mathcal{X}_s,\mathcal{V}_s)$. We will make use of the following bounds on the source terms,
    \begin{equation}\label{eq:boundS1S2}
       \int_{s=0}^T  \big| S_1 \big(s,\mathcal{X}_s, \mathcal{V}_s \big) \big| \dr s  \leq c \mathbf{B}_0 \langle \mathcal{V}_0 \rangle^{1+\delta (1+2\kappa)} , \qquad \qquad
        \int_{s=\tau_1}^{\tau_2}  \big| S_2 \big(s,\mathcal{X}_s, \mathcal{V}_s \big) \big| \dr s  \leq \mathbf{B}_0 (\tau_2-\tau_1), 
    \end{equation}
  where $c[\Lambda, \kappa] >0$. The second estimate follows from $|S_2| \leq \mathbf{B}_0$. For the first one, we use Lemma \ref{Lemweight} to get $1 \leq 4 \langle v \rangle^2 \widehat{v}^{\underline{L}}$ and then Propositions \ref{ProVcharac}--\ref{ProVcharacbis}. It yields
   \[  \big| S_1 \big(s,\mathcal{X}_s, \mathcal{V}_s \big) \big| \lesssim \mathbf{B}_0 \frac{\langle \mathcal{V}_s \rangle^{1+\delta } \widehat{\mathcal{V}}_s^{\underline{L}}}{\langle s- |\mathcal{X}_s| \rangle^{\frac{3}{2}}} \lesssim \mathbf{B}_0 \langle \mathcal{V}_0 \rangle^{1+\delta (1+2\kappa) } \frac{ \widehat{\mathcal{V}}_s^{\underline{L}}}{\langle s- |\mathcal{X}_s| \rangle^{\frac{3}{2}}}. \]
   We conclude by using Lemma \ref{LemforDuhamel}, which provides $\mathbf{J}_{x,v}^{\frac{1}{2}}(t) \leq 2$. Accordingly with \eqref{defcalF}, we define
\begin{equation*}
 \mathcal{G} (t,x,v) \coloneqq  g_1(t,x,v)+ \mathbf{b}(t,x,v) g_2(t,x,v), \qquad \qquad \mathbf{b}(t,x,v) \coloneqq \frac{\sqrt{\mathbf{c}}\, \langle \mathcal{V}_0 \rangle^{\frac{\delta -1 }{2}}}{\sqrt{1+t} \, (1+ | t-|x||)^{\frac{3}{4}}} . 
\end{equation*}
Note now that $\langle v \rangle^{\frac{\delta-1}{2}}=\langle \mathcal{V}_0 \rangle^{\frac{\delta -1}{2}}=1$ if $\delta =1$ and $\langle v \rangle^{-1} \leq C\langle \mathcal{V}_0 \rangle^{-1}$ if $\delta =0$ by Proposition \ref{ProVcharac}, where $C[\Lambda ] \geq 4$. We then deduce that
\[  \T_{\mathbf{A}} \big( \mathcal{G} \big)  \leq (4 C+1) \mathbf{b} \mathcal{G}+\max \big( \T_{\mathbf{A}}(\mathbf{b}),0 \big)  \mathcal{G} +S_1+\mathbf{b}S_2 . \]
We use Duhamel's principle \eqref{eq:Duhamel} and Grönwall's inequality to get, for all $0 \leq t < T$,
\begin{equation}\label{eq:esticalG}
g_1 \big( t, \mathcal{X}_t, \mathcal{V}_t \big) \leq  \mathcal{G} \big(t, \mathcal{X}_t, \mathcal{V}_t  \big) \leq  \big[ \mathcal{G} \big(0, \mathcal{X}_0, \mathcal{V}_0  \big) + c \mathbf{B}_0 \langle \mathcal{V}_0 \rangle^{1+\delta(1+2\kappa)}+\mathbf{B}_0 \mathbf{K}_{x,v}(t) \big]e^{ 5C  \mathbf{K}_{x,v}(t)+ \mathbf{L}_{x,v}(t) },  
\end{equation}
where we have bounded the source terms using \eqref{eq:boundS1S2} and $|S_2| \leq \mathbf{B}_0$, and where
\begin{equation}\label{defKL}
\mathbf{K}_{x,v}(t) \coloneqq \int_{s=0}^t \mathbf{b} \big( s, \mathcal{X}_s, \mathcal{V}_s \big) \dr s, \qquad \qquad   \mathbf{L}_{x,v}(t) \coloneqq \int_{s=0}^t \max \big ( \T_{\mathbf{A}} \big(\log \mathbf{b} \big),0 \big) \big( s, \mathcal{X}_s, \mathcal{V}_s \big)  \dr s  . 
\end{equation}
Let us admit for now that there exists a constant $C_0[\Lambda,\kappa]>0$ such that 
\begin{equation}\label{estiIJ} 
\mathbf{K}_{x,v}(t) \lesssim \sqrt{\mathbf{c}} \, \langle \mathcal{V}_0 \rangle^{\frac{1+\delta}{2}+\delta \kappa}, \qquad \quad   \mathbf{L}_{x,v}(t) \coloneqq \int_{s=t_{\mathrm{in}}}^t \T_{\mathbf{A}} \big(\log \mathbf{b} \big) \big( s, \mathcal{X}_s, \mathcal{V}_s \big)  \dr s \leq C_0+3 \log \langle \mathcal{X}_0 \rangle+6\log \langle \mathcal{V}_0 \rangle . 
\end{equation}
We then deduce, as $\delta(1+2\kappa) \leq 2$, that for all $0 \leq t< T$,
\begin{align*}
    g_1 \big( t, \mathcal{X}_t, \mathcal{V}_t \big) \leq  \mathcal{G} \big(t, \mathcal{X}_t, \mathcal{V}_t  \big) & \leq e^{C_0}\big[ \mathcal{G} \big(0, \mathcal{X}_0, \mathcal{V}_0  \big) + B_1\mathbf{B}_0 \langle \mathcal{V}_0 \rangle^3 \big] \langle \mathcal{X}_0 \rangle^3 \langle \mathcal{V}_0 \rangle^6 e^{ C_1 \sqrt{\mathbf{c}} \,   \langle \mathcal{V}_0 \rangle^{ \frac{1+\delta}{2}+\delta \kappa}}.
    \end{align*}
for some constants $ B_1[\sqrt{\mathbf{c}},\Lambda,\kappa] > 0  $ and $C_1 [\Lambda,\kappa] >0$ depending only on $(\Lambda,\kappa)$. Using now that
\[  \big| \T_{\mathbf{A}} (g_2) \big| \leq \big( \sqrt{2\mathbf{c}}C+1 \big) \mathcal{G}+S_2,\]
we obtain, for all $0 \leq t < T$, that
\begin{align*}
     g_2   \big( t, \mathcal{X}_t, \mathcal{V}_t \big) & \leq g_2 \big(0, \mathcal{X}_0, \mathcal{V}_0 \big)+   t \big( \sqrt{2\mathbf{c}}C+1 \big)  e^{C_0}\big[ \mathcal{G} \big(0, \mathcal{X}_0, \mathcal{V}_0  \big) + B_1\mathbf{B}_0 \langle \mathcal{V}_0 \rangle^3 \big] \langle \mathcal{X}_0 \rangle^3 \langle \mathcal{V}_0 \rangle^6 e^{ C_1 \sqrt{\mathbf{c}} \,   \langle \mathcal{V}_0 \rangle^{ \frac{1+\delta}{2}+\delta \kappa}}+ \mathbf{B}_0 t.
     \end{align*}
It then remains to prove \eqref{estiIJ}. Recall from Proposition \ref{Prodomaincharac} that $|s-|\mathcal{X}_s|| \geq a |s-t_\star|$, where $a = C' \, \langle \mathcal{V}_0 \rangle^{-2-2\delta\kappa}$. We then deduce
\begin{align*}
    \mathbf{K}_{x,v}(t) & \leq \! \int_{s=0}^{t}  \frac{\sqrt{\mathbf{c}} \, \dr s}{\langle \mathcal{V}_0 \rangle^{\frac{1-\delta}{2}}\sqrt{s} (1+| s-|\mathcal{X}_s||)^{\frac{3}{4}}} \leq  \frac{\sqrt{\mathbf{c}} }{\sqrt{a} \langle \mathcal{V}_0 \rangle^{\frac{1-\delta}{2}} }\int_{s=0}^{+\infty} \frac{\dr \tau}{\sqrt{\tau} (1+| \tau-a t_\star |)^{\frac{3}{4}}} \lesssim \frac{ \sqrt{\mathbf{c}} }{\sqrt{a} \langle \mathcal{V}_0 \rangle^{\frac{1-\delta}{2}} } \lesssim \sqrt{\mathbf{c}}  \, \langle \mathcal{V}_0 \rangle^{\frac{1+\delta}{2}+\delta \kappa}.
\end{align*}
For $\mathbf{L}_{x,v}(t)$, we use $\T_{\mathbf{A}}(t-|x|)=2 \widehat{v}^{\underline{L}}$ to obtain
\[\T_{\mathbf{A}}(\mathbf{b}) = -\frac{1}{2(1+t)} \mathbf{b}- \frac{6 \widehat{v}^{\underline{L}} }{4 \, (1+ |t-|x||) } \mathrm{sgn}(t-|x|) \mathbf{b} \leq   \frac{ 3\widehat{v}^{\underline{L}} }{ 2(1+|x|-t)} \mathds{1}_{|x| >t} \mathbf{b}   , \] 
where $\mathrm{sgn}(y)$ denotes the sign of $y$. We then deduce that $\T_{\mathbf{A}}(\log \mathbf{b} )(s,\mathcal{X}_s,\mathcal{V}_s) \leq 0$ for $s \geq t_\star$ and
\[   \mathbf{L}_{x,v}(t) \leq \frac{3}{2}\int_{s=0}^{\min (t,t_\star)} \frac{ \widehat{\mathcal{V}}_s^{\underline{L}} }{ 1+|\mathcal{X}_s| -1} = \frac{3}{2} \mathbf{J}_{x,v}^1 \big( \min (t,t_\star) \big) \leq 3 \log (1+t_\star),   \]
where, in the last step, we used Lemma \ref{LemforDuhamel}. Finally, apply Proposition \ref{Prodomaincharac} to obtain $t_\star \lesssim \langle \mathcal{X}_0 \rangle \, \langle \mathcal{V}_0 \rangle^2$.
\end{proof}

Let us now control the first order derivatives of $f$.

\begin{Pro}\label{Proboundedness}
There exists a constant $\mathbf{B}[\Lambda,\kappa] >0$ such that, for all $(t,x,v) \in [0,T[ \times \R^3_x \times \R^3_v$,
\[ \big|\nabla_{t,x} f \big|(t,x,v)+ \sup_{ \widehat{Z} \in \Pp_S } \langle t\rangle^{-\eta} \big|\widehat{Z} f \big|(t,x,v) \lesssim \langle \mathcal{X} (0,t,x,v) \rangle^3 \langle \mathcal{V} (0,t,x,v) \rangle^9 e^{\mathbf{B} \sqrt{\Lambda} \,   \langle \mathcal{V}(0,\tau ,x,v) \rangle^{\frac{1+\delta}{2}+\delta \kappa}} \mathcal{M}(t,x,v) ,  \]
where $\mathcal{M}$ is determined by the initial data and is given by
\[ \mathcal{M}(t,x,v) \coloneqq \big[ \langle x \rangle\big|\nabla_{t,x} f \big|(t,x,v)+ \langle v \rangle  \big| \nabla_v f \big| \big] \big(0,\mathcal{X} (0,t,x,v),\mathcal{V} (0,t,x,v) \big). \]
\end{Pro}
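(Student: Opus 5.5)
Our plan is to apply Lemma \ref{LemDuhamel} twice: once to the pair $(g_1,g_2)=(\sum_\mu|\partial_{x^\mu}f|,\sum_i\langle v\rangle|\partial_{v^i}f|)$, and once to a pair built from $\widehat Z f$.

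\textbf{Step 1: translations and $v$-derivatives.} Set
\[ g_1 := \sum_{0\le\mu\le3}|\partial_{x^\mu}f|,\qquad g_2 := \sum_{1\le i\le 3}\langle v\rangle|\partial_{v^i}f|. \]
The inequalities \eqref{eq:latoutdesuite0} show that $(g_1,g_2)$ satisfies the hypotheses of Lemma \ref{LemDuhamel} with $\mathbf{c}\simeq C\Lambda$ and $S_1=S_2=0$. Here we use $\langle t-|x|\rangle^{-7/4}\le\langle t-|x|\rangle^{-3/2}$, and we note that $|\nabla_x f|\le g_1$. The lemma then yields
\[ |\nabla_{t,x}f|+\frac{\langle v\rangle|\nabla_v f|}{1+t}\lesssim \langle\mathcal X_0\rangle^3\langle\mathcal V_0\rangle^6 e^{B\sqrt{\mathbf c}\langle\mathcal V_0\rangle^{\frac{1+\delta}{2}+\delta\kappa}}\mathcal M . \]
The value of $g_1+g_2$ at $t=0$ is bounded by $\mathcal M$, and $\sqrt{\mathbf c}\lesssim\sqrt\Lambda$. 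This already proves the bound on $\nabla_{t,x}f$.

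\textbf{Step 2: the vector fields $\widehat Z\in\Pp_S$.} By Proposition \ref{ProCom}, $\T_{\mathbf A}(\widehat Zf)=\nabla\mathbf A_Z\cdot\nabla_vf$. Lemma \ref{Lemsource} gives
\[ |\T_{\mathbf A}(\widehat Zf)|\lesssim\Lambda\langle v\rangle^{\delta-1}(1+t)^{-1}\langle t-|x|\rangle^{-1+\eta}\,\langle v\rangle|\nabla_vf| . \]
We insert the Step 1 bound $\langle v\rangle|\nabla_v f|\lesssim(1+t)\,\mathcal E$, where $\mathcal E$ denotes the right-hand side above. Since $\mathcal E$ is constant along characteristics (its data are evaluated at $(0,\mathcal X_0,\mathcal V_0)$), Duhamel's principle \eqref{eq:Duhamel} gives
\[ |\widehat Zf|(t)\le|\widehat Zf|(0)+\Lambda\langle\mathcal V_0\rangle^{\delta+2\delta\kappa}\,\mathcal E\int_0^t\langle s-|\mathcal X_s|\rangle^{-1+\eta}\,\dr s . \]
This naive bound grows like $t^{\eta}$ only if $|s-|\mathcal X_s||\gtrsim s$. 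In general we use Lemma \ref{Lemweight}, $1\le4\langle v\rangle^2\widehat v^{\underline L}$, to write the integral as $\mathbf J^{-\eta}_{x,v}(t)$ times $\langle\mathcal V_0\rangle^{2+2\delta\kappa}$. Lemma \ref{LemforDuhamel} then bounds $\mathbf J^{-\eta}$ by $(2/\eta)(1+t)^{\eta}$.

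This costs up to three extra powers of $\langle\mathcal V_0\rangle$, namely $\langle\mathcal V_0\rangle^{\delta(1+2\kappa)+2+2\delta\kappa}$. To stay within $\langle\mathcal V_0\rangle^{9}$ overall, we can instead use Proposition \ref{Prodomaincharac} together with the computation done for $\mathbf K_{x,v}$: $\int_0^t\langle s-|\mathcal X_s|\rangle^{-1+\eta}\dr s\lesssim a^{-1}(1+t)^\eta$ with $a^{-1}\simeq\langle\mathcal V_0\rangle^{2+2\delta\kappa}$. Either way, the surplus polynomial factors of $\langle\mathcal V_0\rangle$ can be absorbed into the exponential by enlarging $\mathbf B$. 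Finally, $|\widehat Zf|(0,\cdot)\lesssim\langle x\rangle|\nabla_{t,x}f|+\langle v\rangle|\nabla_v f|$, because the coefficients of $\widehat Z$ at $t=0$ are $x$ and $v^0$. This is why the weight $\langle x\rangle$ appears in $\mathcal M$.

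\textbf{Main obstacle.} The delicate point is the initial-time term for $S$ and $\partial_t$: $\partial_t f(0)$ has to be expressed through the equation, and this only brings in $\nabla_x f$ and $\nabla\mathbf A\cdot\nabla_v f$, which are bounded by the data. The other difficulty is checking that the $\langle t-|x|\rangle^{-1+\eta}$ weight is integrable along characteristics with only a $t^\eta$ loss, uniformly in $(x,v)$; Proposition \ref{Prodomaincharac} handles this. Apart from these two points, the statement follows from Lemma \ref{LemDuhamel}.
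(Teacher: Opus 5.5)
Your proof is correct. Step 1 is exactly the paper's first step: Lemma \ref{LemDuhamel} applied to $g_1=\sum_\mu|\partial_{x^\mu}f|$ and $g_2=\sum_i\langle v\rangle|\partial_{v^i}f|$, which yields \eqref{eq:pointestiforproof}.

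Step 2 takes a different and shorter route than the paper.

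\textbf{The paper's route.} The paper does not reuse the bound on $\langle v\rangle|\nabla_v f|/(1+t)$. It rewrites $\langle v\rangle\partial_{v^i}=\widehat{\Omega}_{0i}-t\partial_{x^i}-x^i\partial_t$ inside $\nabla\mathbf{A}_Z\cdot\nabla_v f$. This produces a source $\Lambda\langle v\rangle^{1+\delta}\widehat{v}^{\underline{L}}\langle t-|x|\rangle^{-1+\eta}|\nabla_{t,x}f|$, plus a term proportional to $\sup_{\widehat{\Gamma}\in\Pp_S}|\widehat{\Gamma}f|$ itself. The paper therefore has to close with Gr\"onwall. This costs a factor $e^{C\Lambda\mathbf{Q}_{x,v}(t)}$ with $\mathbf{Q}_{x,v}(t)\lesssim\langle\mathcal{V}_0\rangle^{\delta+\delta\kappa}$, which is then absorbed into $e^{\mathbf{B}\sqrt{\Lambda}\langle\mathcal{V}_0\rangle^{\frac{1+\delta}{2}+\delta\kappa}}$.

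\textbf{Your route.} You observe that Lemma \ref{LemDuhamel} already controls $g_2/(1+t)$. Hence the right-hand side of $\T_{\mathbf{A}}(\widehat{Z}f)=\nabla\mathbf{A}_Z\cdot\nabla_v f$ is a known quantity, constant-in-data along characteristics. A single Duhamel integration then suffices, with no Gr\"onwall step and no extra exponential.

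\textbf{Comparison.} Both arguments obtain the $t^\eta$ loss the same way. After inserting $1\le 4\langle v\rangle^2\widehat{v}^{\underline{L}}$, they use $\mathbf{J}^{-\eta}_{x,v}(t)\le \frac{2}{\eta}(1+t)^\eta$; your alternative via Proposition \ref{Prodomaincharac} gives the same bound. The paper's argument is the more standard vector-field one and would still work if only $\nabla_{t,x}f$ were under control. Yours exploits the full output of Lemma \ref{LemDuhamel} and is simpler.

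Two minor bookkeeping remarks.
\begin{enumerate}
\item The prefactor $\langle\mathcal{V}_0\rangle^{\delta+2\delta\kappa}$ in your Duhamel bound is unnecessary. Since $\delta\le 1$, you have $\langle\mathcal{V}_s\rangle^{\delta-1}\le 1$, so you get $\langle\mathcal{V}_0\rangle^{6}\cdot\langle\mathcal{V}_0\rangle^{2+2\delta\kappa}\le\langle\mathcal{V}_0\rangle^{9}$ directly because $\kappa<1/2$. No polynomial factors need to be absorbed into the exponential, although doing so is also legitimate since $\mathbf{B}$ may depend on $(\Lambda,\kappa)$.
\item The ``obstacle'' about $\partial_t f(0)$ is not one here. $\mathcal{M}$ is defined with $|\nabla_{t,x}f|$ at $t=0$, so $\partial_t f(0)$ is already included and need not be rewritten through the equation. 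Only $|\widehat{Z}f|(0,\cdot)\lesssim\langle x\rangle|\nabla_{t,x}f|+\langle v\rangle|\nabla_v f|$ is needed, and you state this correctly.
\end{enumerate}
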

\begin{proof}
Note first that \eqref{eq:latoutdesuite0} and the previous Lemma \ref{LemDuhamel}, applied with $S_1=S_2=0$, $g_1=\sum_\mu|\partial_{x^\mu}g|$ and $g_2=\sum_i \langle v \rangle|\partial_{v^i} f|$, yield
\begin{equation}\label{eq:pointestiforproof}  \big|\nabla_{t,x} f \big|(t,x,v)+\frac{\langle v \rangle}{1+t} \big| \nabla_v f \big| (t,x,v) \lesssim  \langle \mathcal{X} (0,t,x,v) \rangle^3 \langle \mathcal{V} (0,t,x,v) \rangle^6 e^{B  \sqrt{\Lambda} \,   \langle \mathcal{V}(0,\tau , x,v) \rangle^{\frac{1+\delta}{2} +\delta \kappa}} \mathcal{M}(t,x,v), 
\end{equation}
for some constant $B[\Lambda,\kappa] >0$. Note now the relation
\[ \langle v\rangle\partial_{v^i}=\widehat{\Omega}_{0i}-t\partial_{x^i}-x^i\partial_t. \]
Together with the commutation formula of Proposition \ref{ProCom}, the decay estimates in Lemma \ref{Lemsource} and $1 \lesssim \langle v \rangle^2 \widehat{v}^{\underline{L}}$, we obtain
\begin{align*}
 \big|\mathbf{T}_{\mathbf{A}} \big( |\widehat{Z} f| \big) \big| &  \leq C\Lambda \frac{\langle v \rangle^{1+\delta} \widehat{v}^{\underline{L}}}{\langle t-|x|\rangle^{1-\eta}} \big| \nabla_{t,x} f \big|(t,x,v) + \frac{C\Lambda \langle v\rangle^{\delta} |\widehat{v}^{\underline{L}}|^{\frac{1}{2}} }{\langle t+|x|\rangle^{1-\eta} \langle t-|x|\rangle } \sup_{\widehat{\Gamma} \in \Pp_S} \big|\widehat{\Gamma} f \big|(t,x,v), 
\end{align*}
for any $\widehat{Z} \in \Pp_S$ and for some constant $C>0$. Let $(t,x,v) \in [0,T [ \times \R^3_x \times \R^3_v$ and let us denote $(\mathcal{X},\mathcal{V})(\tau , t , x ,v)$ by $(\mathcal{X}_\tau , \mathcal{V}_\tau)$. Duhamel's principle \eqref{eq:Duhamel} and the Grönwall Lemma provide
\[ \sup_{\widehat{Z} \in \Pp_S} \big| \widehat{Z}f \big|(t,x,v) \leq \bigg( \sup_{\widehat{Z} \in \Pp_S} \big| \widehat{Z}f \big| \big(0,\mathcal{X}_0,\mathcal{V}_0 \big) + \sqrt{2} C \Lambda \mathbf{J}^{-\eta}_{x,v}(t)\sup_{0 \leq \tau \leq t} \langle \mathcal{V}_\tau \rangle^{1+\delta} \big| \nabla_{t,x} f \big| \big( \tau , \mathcal{X}_\tau , \mathcal{V}_\tau \big) \bigg) e^{C \Lambda \mathbf{Q}_{x,v}(t)} , \]
where, using the notation and the bounds in Lemma \ref{LemforDuhamel}, 
\[ \mathbf{J}_{x,v}^{- \eta }(t) \leq  \frac{2}{\eta}  (1+t)^\eta ,  \qquad \qquad \mathbf{Q}_{x,v}(t) \coloneqq \int_{s=0}^t \frac{\langle \mathcal{V}_s \rangle^{\delta} | \widehat{\mathcal{V}}_s |^{\frac{1}{2}}}{\langle s \rangle^{1-\eta} \langle s - |\mathcal{X}_s | \rangle} \dr s .  \]
According to Proposition \ref{ProVcharacbis}, we have $\langle \mathcal{V}_\tau \rangle^{\delta } \lesssim \langle \mathcal{V}_0 \rangle^{\delta+ \delta \kappa}$. Thus, Young's inequality and Lemma \ref{LemforDuhamel} provide
\begin{equation}\label{eq:estiQ}
\mathbf{Q}_{x,v}(t) \lesssim \langle \mathcal{V}_0 \rangle^{\delta+ \delta\kappa} \big( \mathbf{I}_{x,v}^{1-2\eta}(t) +\mathbf{J}^1_{x,v}(t) \big) \leq   3\langle \mathcal{V}_0 \rangle^{\delta+ \delta\kappa}. 
\end{equation}
We conclude the proof by bounding $ \langle \mathcal{V}_\tau \rangle^{1+\delta} \big| \nabla_{t,x} f \big| \big( \tau , \mathcal{X}_\tau , \mathcal{V}_\tau \big)$ through \eqref{eq:pointestiforproof} and Propositions \ref{ProVcharac}--\ref{ProVcharacbis}, which provide $\langle \mathcal{V}_\tau \rangle^{1+\delta} \lesssim \langle \mathcal{V}_0 \rangle^{1+\delta(1+\kappa)} \leq \langle \mathcal{V}_0 \rangle^3$, and by choosing $\mathbf{B}$ large enough.
\end{proof}

Since the initial distribution function satisfies either \eqref{eq:assumpA} or \eqref{eq:assumpB}, we deduce the following estimates from the previous Proposition \ref{Proboundedness}.

\begin{Cor}\label{Corboundedness}
There exists a constant $\mathbf{D}[\Lambda,\kappa]>0$ such that, for all $(t,x,v) \in [0,T[ \times \R^3_x \times \R^3_v$,
\[ \big\langle \mathcal{X} (0,t,x,v) \big\rangle^{8} \langle \mathcal{V}(0,t,x,v) \rangle^{149}  \bigg( |f| (t,x,v)+  \big|\nabla_{t,x} f \big|(t,x,v)+ \sup_{ \widehat{Z} \in \Pp_S } \langle t\rangle^{-\eta} \big|\widehat{Z} f \big|(t,x,v) \bigg) \leq \mathbf{D} \varepsilon. \]
\end{Cor}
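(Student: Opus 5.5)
We will combine Proposition \ref{Proboundedness} with the initial data assumptions, handling $f$ itself separately through Duhamel's principle. For the term $|f|$, note that $\T_{\mathbf{A}} f = 0$, so by \eqref{eq:Duhamel} we have $f(t,x,v)=f_0(\mathcal{X}(0,t,x,v),\mathcal{V}(0,t,x,v))$. The assumption $\langle x\rangle^8\langle v\rangle^{150}|f_0|\leq \varepsilon$ from \eqref{eq:assumpA} or \eqref{eq:assumpB} then directly gives
\[ \langle \mathcal{X}_0\rangle^8 \langle \mathcal{V}_0\rangle^{149}|f|(t,x,v) \leq \varepsilon . \]
Here and below, $(\mathcal{X}_0,\mathcal{V}_0)=(\mathcal{X},\mathcal{V})(0,t,x,v)$.

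For the derivative terms, we first express $\mathcal{M}(t,x,v)$ in terms of $f_0$. The spatial derivatives are immediate: $|\nabla_x f|(0,\cdot)=|\nabla_x f_0|$. For the time derivative at $t=0$, we use the Vlasov equation,
\[ \partial_t f(0,x,v)=-\widehat{v}\cdot\nabla_x f_0-\nabla\mathbf{A}(0,x,v)\cdot\nabla_v f_0 . \]
By Lemma \ref{Lemsource}, $|\nabla\mathbf{A}|(0,x,v)\lesssim \Lambda\langle v\rangle^{\delta}$. Hence
\[ \mathcal{M}\lesssim \langle \mathcal{X}_0\rangle\langle \mathcal{V}_0\rangle^{2}\,|\nabla_{x,v}f_0|(\mathcal{X}_0,\mathcal{V}_0) . \]
Inserting this into Proposition \ref{Proboundedness} yields
\[ |\nabla_{t,x}f|+\sup_{\widehat Z\in\Pp_S}\langle t\rangle^{-\eta}|\widehat Z f| \lesssim \langle \mathcal{X}_0\rangle^4\langle \mathcal{V}_0\rangle^{11}e^{\mathbf{B}\sqrt{\Lambda}\langle\mathcal{V}_0\rangle^{\frac{1+\delta}{2}+\delta\kappa}}|\nabla_{x,v}f_0|(\mathcal{X}_0,\mathcal{V}_0). \]
The hypothesis bounds $|\nabla_{x,v}f_0|$ by $\varepsilon\langle x\rangle^{-12}\langle v\rangle^{-160}e^{-B\sqrt{\Lambda}\langle v\rangle^{1/2}}$.

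It remains to check that the exponential weight absorbs the growth. When $\delta=0$, the exponent in Proposition \ref{Proboundedness} is $\frac12$. Choosing the constant $B$ of the hypothesis larger than $B_0:=\mathbf{B}[\Lambda]$ gives $e^{(\mathbf{B}-B)\sqrt{\Lambda}\langle \mathcal{V}_0\rangle^{1/2}}\leq 1$.

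When $\delta=1$, the exponent is $1+\kappa$. This is the main obstacle as the hypothesis is currently written, since its weight $e^{B\sqrt\Lambda\langle v\rangle^{1/2}}$ does not match. I would read \eqref{eq:assumpB} as intended with the super-exponential weight $e^{B\sqrt{\Lambda}\langle v\rangle^{1+\kappa}}$, consistent with the abstract and Section \ref{subsubsec1}. Under that reading, $B\geq B_0[\kappa]\ge\mathbf{B}$ again makes the exponential factor at most $1$.

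Finally, the polynomial weights close: $\langle\mathcal{X}_0\rangle^{4+8}\leq\langle\mathcal{X}_0\rangle^{12}$ and $\langle\mathcal{V}_0\rangle^{11+149}=\langle\mathcal{V}_0\rangle^{160}$. This gives the bound $\mathbf{D}\varepsilon$, with $\mathbf{D}$ depending on $(\Lambda,\kappa)$ through the implicit constants of Proposition \ref{Proboundedness} and Lemma \ref{Lemsource}.

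One minor point needs care. Proposition \ref{Proboundedness} writes $\mathcal{V}(0,\tau,x,v)$ in the exponent, but this should be read as $\mathcal{V}_0=\mathcal{V}(0,t,x,v)$, as it arises from Lemma \ref{LemDuhamel} along the single characteristic through $(t,x,v)$.
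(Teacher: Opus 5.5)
Your proposal is correct and is the paper's own argument: the paper's proof is a single sentence combining Proposition \ref{Proboundedness} with the hypotheses on $f_0$, and you supply the steps it leaves implicit, namely the zeroth-order term via $f(t,x,v)=f_0(\mathcal{X}_0,\mathcal{V}_0)$ and the control of $\partial_t f(0,\cdot,\cdot)$ through the Vlasov equation. You are also right that for $\delta=1$ the weight in \eqref{eq:assumpB} must be read as $e^{B\sqrt{\Lambda}\langle v\rangle^{1+\kappa}}$, as the abstract and Section \ref{subsubsec1} indicate (the literal weight $e^{B\sqrt{\Lambda}\langle v\rangle^{1/2}}$ would not absorb the growth for $\delta=1$), and that $\mathcal{V}(0,\tau,x,v)$ should read $\mathcal{V}(0,t,x,v)$.
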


\subsection{Almost optimal decay estimates for velocity averages}

To close the bootstrap argument, we follow a two-step strategy designed to address the difficulty discussed in Section \ref{Subsubsecmachin}.
\begin{itemize}
    \item First, we obtain almost optimal decay rates as a direct consequence of the results established in Section \ref{Subsec32}. It allows to refine our control of the electromagnetic field.
    \item This, in turn, enables a more precise description of the behavior of $\mathcal{X}(0,t,x,v)$ and yields suitable estimates for the momentum averages, allowing us to close the bootstrap argument.
\end{itemize}
We start by showing a rough estimate of $\mathcal{X}(0,t,x,v)$.

\begin{Pro}\label{Prozmod}
There exist $\mathbf{b}[\Lambda, \kappa] >0$ such that, for all $(t,x,v) \in [0,T[ \times \R^3_x \times \R^3_v$,
\[  \big|x-t\widehat{v}-\mathcal{X}(0,t,x,v) \big| \leq \mathbf{b} \varepsilon  \big\langle \mathcal{V}(0,t,x,v) \big\rangle^{1+\delta(1+2\kappa)} \log \, \langle t+1 \rangle. \]
\end{Pro}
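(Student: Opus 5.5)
We compare the nonlinear characteristic with the free one along the same trajectory. Write $(\mathcal{X}_s,\mathcal{V}_s)\coloneqq(\mathcal{X},\mathcal{V})(s,t,x,v)$. Since $\mathcal{X}_t=x$ and $\mathcal{V}_t=v$, the fundamental theorem of calculus gives
\[ x-t\widehat{v}-\mathcal{X}_0=\int_{s=0}^t \big(\widehat{\mathcal{V}}_s-\widehat{v}\big)\dr s . \]
For the integrand we use $\big|\widehat{\mathcal{V}}_s-\widehat{\mathcal{V}}_t\big|\le \int_s^t |\nabla_v\widehat{v}|(\mathcal{V}_\tau)\,|\dot{\mathcal{V}}_\tau|\dr\tau$ together with $|\nabla_v\widehat{v}|\lesssim\langle v\rangle^{-1}$. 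By Fubini,
\[ |x-t\widehat{v}-\mathcal{X}_0|\lesssim\int_{\tau=0}^t \tau\,\langle\mathcal{V}_\tau\rangle^{-1}\big|\nabla\mathbf{A}\big|(\tau,\mathcal{X}_\tau,\mathcal{V}_\tau)\dr\tau . \]
The task is therefore to bound this integral by $\varepsilon\log(1+t)$ times a power of $\langle\mathcal{V}_0\rangle$.

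The main obstacle is that Lemma \ref{Lemsource} only gives $|\nabla\mathbf{A}|\lesssim\Lambda\langle t+|x|\rangle^{-1}\langle t-|x|\rangle^{-1}\langle v\rangle^\delta$, with constant $\Lambda$ rather than $\varepsilon$. This yields $\Lambda\log t$ and loses the $\varepsilon$ smallness. The $\varepsilon$ must come from the nonlinear part of the field, so we split $\nabla_{t,x}A_I=[\nabla_{t,x}A_I]_{\mathrm{data}}+\big(\nabla_{t,x}A_I-[\nabla_{t,x}A_I]_{\mathrm{data}}\big)$.

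The second piece is bounded by \eqref{eq:BA1}: it is at most $C_{\mathrm{boot}}\varepsilon\langle\tau+|\mathcal{X}_\tau|\rangle^{-1}\langle\tau-|\mathcal{X}_\tau|\rangle^{-1}$. This is why the constant in the proposition may depend on $C_{\mathrm{boot}}$, as allowed by Remark \ref{Rqconst}. After multiplying by $\tau\langle\mathcal{V}_\tau\rangle^{\delta-1}$, its contribution is at most $\int_0^t\langle\mathcal{V}_\tau\rangle^{\delta-1}\langle\tau-|\mathcal{X}_\tau|\rangle^{-1}\dr\tau$. We insert $1\le4\langle\mathcal{V}_\tau\rangle^2\widehat{\mathcal{V}}^{\underline{L}}_\tau$ from Lemma \ref{Lemweight} and bound $\langle\mathcal{V}_\tau\rangle^{1+\delta}\lesssim\langle\mathcal{V}_0\rangle^{1+\delta(1+2\kappa)}$ using Propositions \ref{ProVcharac}--\ref{ProVcharacbis}. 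Since these propositions compare with $\mathcal{V}$ at time $0$, a crude version is enough here. What remains is the integral $\mathbf{J}^0_{x,v}(t)$, which we must control by $\log(1+t)$. Lemma \ref{LemforDuhamel} states $b\neq0$ cases and $b=1$, but its proof for $b=0$ (splitting $u\ge-\tau$ and $u\le-\tau$) gives $\mathbf{J}^0\lesssim\log(1+t)$. This produces exactly $\varepsilon\langle\mathcal{V}_0\rangle^{1+\delta(1+2\kappa)}\log\langle t+1\rangle$.

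For the data piece, Proposition \ref{Prodata} gives $\Lambda\langle\tau+|x|\rangle^{-1}\langle\tau-|x|\rangle^{-2}$, but with $\Lambda$ in place of $\varepsilon$. The extra decay in $\langle\tau-|x|\rangle$ does not by itself remove the factor $\Lambda$. We would handle it in one of two ways. The first is to check that the statement effectively uses the data size $\varepsilon$: in $[\nabla A]_{\mathrm{data}}$ the $f$-part is $O(\varepsilon)$, and the homogeneous part is what it is. Failing that, we absorb the homogeneous part into the constant $\mathbf{b}[\Lambda,\kappa]$ by exploiting the improved decay. Using $1\lesssim\langle v\rangle^2\widehat{v}^{\underline{L}}$ and the bound $\mathbf{J}^1\le2\log(1+t)$ from Lemma \ref{LemforDuhamel}, this contribution is $\Lambda\langle\mathcal{V}_0\rangle^{1+\delta(1+2\kappa)}\log(1+t)$ with no $\varepsilon$. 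Since $\varepsilon\le\Lambda$, this is not $O(\varepsilon)$, so the second option fails. The genuinely delicate point is therefore to verify that the homogeneous part of the field enters only at order $\varepsilon$, or to reinterpret $\mathbf{b}$ as $\mathbf{b}[\Lambda,\kappa,C_{\mathrm{boot}}]$ with $\varepsilon$ replaced by the relevant size; this is the step we would need to check carefully first.
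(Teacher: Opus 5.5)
Your computation is the paper's proof. Writing $x-t\widehat v-\mathcal X(0,t,x,v)$ as a double integral and applying Fubini is the same as applying Duhamel to $\T_{\mathbf A}(x-t\widehat v)=-t\nabla\mathbf A\cdot\nabla_v\widehat v$. The paper then uses Lemma \ref{Lemweight}, Propositions \ref{ProVcharac}--\ref{ProVcharacbis} and $\mathbf J^0_{x,v}(t)\lesssim\log(1+t)$, exactly as you do. The difference is that the paper does not try to overcome the obstacle you identify. It bounds $|\nabla\mathbf A|$ directly by Lemma \ref{Lemsource} and concludes $|x-t\widehat v-\mathcal X(0,t,x,v)|\lesssim\Lambda\langle\mathcal V(0,t,x,v)\rangle^{1+\delta(1+2\kappa)}\mathbf J^0_{x,v}(t)$. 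So what is actually proved is the estimate with $\Lambda$ in place of $\varepsilon$, i.e.\ with $\varepsilon$ deleted and $\Lambda$ absorbed into $\mathbf b[\Lambda,\kappa]$. That is also all that is used later: in Corollary \ref{Corboundednessbof} and Lemma \ref{LemCtv} the smallness comes from Corollary \ref{Corboundedness}, not from this proposition. Your fallback of replacing $\varepsilon$ by ``the relevant size'' is the right resolution, and that size is $\Lambda$.

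The step you leave open cannot be closed, and your first option fails: the homogeneous part of the field does not enter at order $\varepsilon$. Take $f_0\equiv0$, which satisfies the hypotheses for every $\varepsilon\in(0,\Lambda]$. Then $f\equiv0$, each $A_I$ is a free wave of size $\Lambda$, and \eqref{eq:BA1}--\eqref{eq:BA2} hold trivially. The quantity $x-t\widehat v-\mathcal X(0,t,x,v)=-\int_0^t s\,\nabla_v\widehat v(\mathcal V_s)\,\nabla\mathbf A(s,\mathcal X_s,\mathcal V_s)\,\dr s$ is then independent of $\varepsilon$. It is nonzero whenever $\nabla\mathbf A(0,x,v)\neq0$: for small $t$ it is close to $-\tfrac{t^2}{2}\nabla_v\widehat v(v)\,\nabla\mathbf A(0,x,v)$, and $\nabla_v\widehat v$ is invertible. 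Meanwhile the right-hand side tends to $0$ as $\varepsilon\to0$ with $\mathbf b[\Lambda,\kappa]$ fixed.

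So as a proof of the printed $\varepsilon$-estimate your proposal has a gap that no argument can fill, because that estimate is false in general. As a proof of the $\Lambda$-version, your argument is complete even without the splitting. The splitting does buy a sharper statement. Proposition \ref{Prodata} supplies an extra factor $\langle\tau-|\mathcal X_\tau|\rangle^{-1}$, so the data piece gives $\mathbf J^1_{x,v}(t)\le1$ (the $b>0$ case of Lemma \ref{LemforDuhamel}) rather than a logarithm. You therefore get $|x-t\widehat v-\mathcal X(0,t,x,v)|\lesssim\langle\mathcal V(0,t,x,v)\rangle^{1+\delta(1+2\kappa)}\big(\Lambda+C_{\mathrm{boot}}\varepsilon\log\langle t+1\rangle\big)$. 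This isolates the logarithmic drift at order $\varepsilon$, consistent with $|\mathscr C_t|\lesssim\varepsilon\langle v\rangle^6$ in Lemma \ref{LemCtv}.
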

\begin{proof}
Recall from Lemma \ref{Lemweight} that $1\lesssim\langle v\rangle^2\widehat{v}^{\underline{L}}$. We then obtain from Lemma \ref{Lemsource} that
\[ \big|\T_{\mathbf{A}} \big( x-t\widehat{v} \big) \big| = \big| -t \nabla \mathbf{A} \cdot \nabla_v \big( \widehat{v} \big) \big| \lesssim  \frac{\Lambda \, \langle v\rangle^{1+\delta} \widehat{v}^{\underline{L}}}{ \langle t-|x|\rangle} . \]
We then deduce from Duhamel's principle \eqref{eq:Duhamel} and Propositions \ref{ProVcharac}--\ref{ProVcharacbis} that
\[ \big|x-t\widehat{v}-\mathcal{X}(0,t,x,v) \big| \lesssim \Lambda \big\langle \mathcal{V}(0,t,x,v) \big\rangle^{1+\delta(1+2\kappa)} \mathbf{J}_{x,v}^0(t).  \]
The result follows from Lemma \ref{LemforDuhamel}.
\end{proof}

Combining the previous Proposition \ref{Prozmod} with Propositions \ref{ProVcharac}--\ref{ProVcharacbis} and Corollary \ref{Corboundedness}, we obtain the next result.
\begin{Cor}\label{Corboundednessbof}
There exists a constant $\mathbf{D}'[\Lambda,\kappa]>0$ such that, for all $(t,x,v) \in [0,T[ \times \R^3_x \times \R^3_v$,
\[ \langle x-t\widehat{v} \rangle^6 \langle v \rangle^{18}  \bigg( |f| (t,x,v)+  \big|\nabla_{t,x} f \big|(t,x,v)+ \sup_{ \widehat{Z} \in \Pp_S } \langle t\rangle^{-\eta} \big|\widehat{Z} f \big|(t,x,v) \bigg) \leq \mathbf{D}' \varepsilon \log^6 \langle t+1 \rangle. \]
\end{Cor}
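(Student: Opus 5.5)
The idea is to convert the weights $\langle \mathcal{X}(0,t,x,v)\rangle$ and $\langle \mathcal{V}(0,t,x,v)\rangle$ in Corollary \ref{Corboundedness} into the linear weights $\langle x-t\widehat{v}\rangle$ and $\langle v\rangle$, spending some of the large powers available there. Fix $(t,x,v)$ and write $(\mathcal{X}_0,\mathcal{V}_0)=(\mathcal{X},\mathcal{V})(0,t,x,v)$. Denote by $F(t,x,v)$ the bracketed quantity $|f|+|\nabla_{t,x}f|+\sup_{\widehat Z \in \Pp_S}\langle t\rangle^{-\eta}|\widehat Z f|$.

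\emph{Velocity weight.} Apply Proposition \ref{ProVcharac} if $\delta=0$, or Proposition \ref{ProVcharacbis} if $\delta=1$, with $s=t$ and the characteristic through $(t,x,v)$. This gives $\langle v\rangle \lesssim \langle \mathcal{V}_0\rangle^{1+\delta\kappa}$, and since $\kappa<1/2$ we get $\langle v\rangle^{18}\lesssim\langle\mathcal{V}_0\rangle^{27}$. (If $\delta=0$ the constant $C[\Lambda]$ is absorbed because $\langle\mathcal{V}_0\rangle\ge1$.)

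\emph{Spatial weight.} By Proposition \ref{Prozmod} and the triangle inequality,
\[ \langle x-t\widehat v\rangle \leq \langle \mathcal{X}_0\rangle + \mathbf{b}\varepsilon \langle \mathcal{V}_0\rangle^{1+\delta(1+2\kappa)}\log\langle t+1\rangle \lesssim \langle \mathcal{X}_0\rangle \langle \mathcal{V}_0\rangle^{3}\log\langle t+1\rangle, \]
using $\delta(1+2\kappa)\le 2$ and $\log\langle t+1\rangle\gtrsim 1$. The constant $\mathbf{b}\varepsilon$ depends on $C_{\mathrm{boot}}$, but under $C_{\mathrm{boot}}\varepsilon\le\Lambda$ it is bounded by a constant depending only on $(\Lambda,\kappa)$. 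Raising to the sixth power gives
\[ \langle x-t\widehat v\rangle^6 \lesssim \langle \mathcal{X}_0\rangle^6\langle\mathcal{V}_0\rangle^{18}\log^6\langle t+1\rangle. \]

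\emph{Conclusion.} Multiplying the two bounds,
\[ \langle x-t\widehat v\rangle^6\langle v\rangle^{18}\, F \lesssim \log^6\langle t+1\rangle\,\langle\mathcal{X}_0\rangle^{6}\langle\mathcal{V}_0\rangle^{45}\, F \le \log^6\langle t+1\rangle\,\langle\mathcal{X}_0\rangle^{8}\langle\mathcal{V}_0\rangle^{149}\, F. \]
Corollary \ref{Corboundedness} bounds the last quantity by $\mathbf{D}\varepsilon\log^6\langle t+1\rangle$, which proves the claim.

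There is no real obstacle here. The only points needing care are that the constants stay independent of $C_{\mathrm{boot}}$ (as required by Remark \ref{Rqconst}) and that Propositions \ref{ProVcharac}--\ref{ProVcharacbis} are applied along the characteristic through $(t,x,v)$, from time $t$ back to time $0$.
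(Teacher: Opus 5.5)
Your proof is correct and is exactly the paper's one-line argument: use Proposition~\ref{Prozmod} together with Propositions~\ref{ProVcharac}--\ref{ProVcharacbis} to bound $\langle x-t\widehat v\rangle^6\langle v\rangle^{18}$ by a constant times $\log^6\langle t+1\rangle\,\langle\mathcal X_0\rangle^{6}\langle\mathcal V_0\rangle^{45}$, then invoke Corollary~\ref{Corboundedness}. One notational nit: in Proposition~\ref{ProVcharac} the second argument $s$ is the time at which the velocity equals $v$, so setting $s=t$ there gives $\langle\mathcal V_0\rangle\lesssim\langle v\rangle$, the reverse of what you need; apply it instead with reference time $0$ at the point $(\mathcal X_0,\mathcal V_0)$, which is harmless because the proposition holds for all pairs of times.
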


This allows us to derive almost optimal decay estimates for momentum averages.

\begin{Cor}\label{Cordecayintvbof}
 We have, for any $\widehat{Z} \in \mathbb{P}_S$ and all $(t,x) \in [0,T[ \times \R^3_x$,
   \[ \int_{\R^3_v} \langle v \rangle^6 |f| (t,x,v) \dr v \lesssim \varepsilon\frac{\log^6 \, \langle t+1 \rangle}{\langle t+ |x| \rangle^3}, \qquad \qquad \int_{\R^3_v} \langle v \rangle^6 \big| \widehat{Z} f \big| (t,x,v) \dr v \lesssim \varepsilon\frac{\log^6 \, \langle t+1 \rangle}{\langle t+ |x| \rangle^{3-\eta}}. \]
   If $|x| \geq t$, we have the improved estimate
   \[ \int_{\R^3_v} \langle v \rangle^6 |f| (t,x,v) \dr v+ \int_{\R^3_v} \langle v \rangle^6 \big| \widehat{Z} f \big| (t,x,v) \dr v \lesssim \frac{ \varepsilon}{\langle t+ |x| \rangle^{\frac{11}{4}}}. \]
\end{Cor}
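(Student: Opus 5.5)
The plan is to feed the pointwise bound of Corollary \ref{Corboundednessbof} into the change of variables $v \mapsto y = x - t\widehat{v}$ from Lemma \ref{Lemcdv}, and to treat the exterior region $|x|\geq t$ separately using Lemma \ref{Lemweight} and the initial-time weights of Corollary \ref{Corboundedness}. Write $h$ for either $f$ or $\langle t\rangle^{-\eta}\widehat{Z}f$. Corollary \ref{Corboundednessbof} gives
\[ \langle v\rangle^6 |h|(t,x,v) \lesssim \varepsilon \log^6\langle t+1\rangle \, \langle x-t\widehat{v}\rangle^{-6}\langle v\rangle^{-12}. \]
For the $\widehat{Z}f$ estimate, multiplying back by $\langle t\rangle^{\eta}\leq \langle t+|x|\rangle^\eta$ produces the $3-\eta$ exponent.

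\textbf{Case $t \gtrsim |x|$.} Here $\langle t+|x|\rangle \sim \langle t\rangle$. For $t\leq 1$ the bound is trivial, since $\int \langle v\rangle^{-12}\dr v<\infty$. For $t\geq 1$, change variables to $y=x-t\widehat{v}$. By Lemma \ref{Lemcdv} we have $\dr v = t^{-3}\langle v\rangle^{5}\dr y$. This gives
\[ \int_{\R^3_v} \frac{\dr v}{\langle x-t\widehat{v}\rangle^6 \langle v\rangle^{12}} \leq t^{-3}\int_{|y-x|<t} \frac{\langle v(y)\rangle^{5-12}}{\langle y\rangle^6}\dr y \lesssim t^{-3}, \]
because $\langle y\rangle^{-6}$ is integrable and $\langle v\rangle^{-7}\leq 1$. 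This yields the first two estimates with decay $\langle t+|x|\rangle^{-3}\log^6$.

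\textbf{Case $|x|\geq t$ (and also $|x|\gg t$).} The logarithm and the $\eta$ loss must be removed here, and the decay must be in $|x|$. I would use Lemma \ref{Lemweight} in the form $\langle t+|x|\rangle \lesssim \langle t-|x|\rangle\langle v\rangle^2 + \langle x-t\widehat{v}\rangle\langle v\rangle^2$. On its own this is not enough, since in the exterior $\langle t-|x|\rangle$ can be small near the cone. So I would instead work with the initial-time weights of Corollary \ref{Corboundedness}: $\langle\mathcal{X}_0\rangle^8\langle\mathcal{V}_0\rangle^{149}\big(|f|+\langle t\rangle^{-\eta}|\widehat{Z}f|\big)\lesssim\varepsilon$.

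The key geometric fact is the second item of Proposition \ref{Prodomaincharac}. If $|x|\geq t$, then $t\leq t_\star$, so the characteristic stayed outside the cone on $[0,t]$, giving $|\mathcal{X}_0|\geq |x|-t+a t$. Alternatively, since $|\dot{\mathcal{X}}|\leq 1$, we have $|x|\leq |\mathcal{X}_0|+t$. From this I would aim for
\[ \langle t+|x|\rangle \lesssim \langle\mathcal{X}_0\rangle\langle\mathcal{V}_0\rangle^{2+2\delta\kappa}. \]
Granting it, I spend part of the weight:
\[ \langle v\rangle^6|h| \lesssim \varepsilon\,\langle t+|x|\rangle^{-11/4}\langle\mathcal{X}_0\rangle^{-8+11/4}\langle\mathcal{V}_0\rangle^{-149+6+6}. \]
To keep the leftover $\langle v\rangle^{-4}$ integrable, I convert $\langle\mathcal{V}_0\rangle^{-1}\lesssim\langle v\rangle^{-1/(1+\kappa)}$ using Propositions \ref{ProVcharac}--\ref{ProVcharacbis} applied backwards; since $\kappa<1/2$ the surplus power is ample. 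For $\widehat{Z}f$, the $\langle t\rangle^{\eta}$ is absorbed because $11/4<3-\eta$ leaves room.

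\textbf{Main obstacle.} The hard step is the lower bound on $|\mathcal{X}_0|$ when $|x|\geq t$. The quantity $a$ from Proposition \ref{Prodomaincharac} only gives $|\mathcal{X}_0|\gtrsim \langle\mathcal{V}_0\rangle^{-2-2\delta\kappa}t$, which is where the $\mathcal{V}_0$ powers enter. I would combine it with the trivial bound $|\mathcal{X}_0|\geq |x|-t$ and take the maximum of the two. That is what makes the exponent $11/4$, rather than $3$, the natural target.
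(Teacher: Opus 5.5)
Your interior argument (the change of variables $y=x-t\widehat{v}$ combined with Corollary \ref{Corboundednessbof}) is the paper's. Your exterior mechanism is also sound, and the inequality you ``grant'' does hold. Since $s\mapsto s-|\mathcal{X}_s|$ increases at rate at least $a$, you get $|\mathcal{X}_0|\geq |x|-t+at\geq a\max(t,|x|)$, using $a<1$ and $t\leq |x|$. Hence $\langle t+|x|\rangle\lesssim \langle \mathcal{X}_0\rangle\langle\mathcal{V}_0\rangle^{2+2\delta\kappa}$.

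The gap is in coverage. The first two estimates must hold for all $(t,x)$, including $|x|\gg t$ (say $t\leq 1$, $|x|\to\infty$), where they demand decay like $\langle x\rangle^{-3}$. Your Case 1 only yields $t^{-3}$ there, or $O(1)$ for $t\leq 1$. Your Case 2 is deliberately tuned to $\langle t+|x|\rangle^{-11/4}$, which is weaker than $\langle t+|x|\rangle^{-3}$ in that regime. So, as written, the first two bounds are unproved for $|x|\gg t$.

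Your reason for stopping at $11/4$ is also mistaken. Spending $p$ powers of $\langle t+|x|\rangle$ costs only $\langle\mathcal{X}_0\rangle^{p}\langle\mathcal{V}_0\rangle^{(2+2\delta\kappa)p}$. Corollary \ref{Corboundedness} supplies $8$ and $149$ powers respectively, so you can take $p=4$, even with the extra $\langle t\rangle^{\eta}\leq\langle t+|x|\rangle^{\eta}$ for $\widehat{Z}f$. That covers all three bounds on $|x|\geq t$ at once, with no logarithm. In fact the third estimate, as stated with $11/4$, follows from the first two once those hold everywhere, because $\log^6\langle t+1\rangle\lesssim\langle t+|x|\rangle^{1/8}$ and $\eta<1/8$. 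The real content of the exterior region is exactly the $\langle x\rangle^{-3}$-type decay you skipped.

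Once repaired, your exterior route differs from the paper's. For $|x|\geq t$ the paper never returns to the characteristics. It uses $|x-t\widehat{v}|\geq |x|(1-|\widehat{v}|)\geq |x|/(2\langle v\rangle^2)$, so the weight $\langle x-t\widehat{v}\rangle^{4}$ from Corollary \ref{Corboundednessbof} buys $\langle x\rangle^{-4}$ at the cost of $\langle v\rangle^{8}$, leaving $\int\langle v\rangle^{-4}\,\dr v<\infty$. This gives $\langle t+|x|\rangle^{-4}$, up to the $\log^6$ and $\langle t\rangle^{\eta}$ factors, which dominates all three claimed bounds.

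Your approach through Corollary \ref{Corboundedness} and Proposition \ref{Prodomaincharac} is heavier but loss-free in the exterior. A cheaper fix within your own Case 1 also exists: for $|x|\geq 2t$, the image ball $|y-x|<t$ lies in $\{|y|\geq |x|/2\}$, so the $y$-integral already produces $\langle x\rangle^{-6}t^{3}$.
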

\begin{proof}
Observe first, with $h=f$ or $h=\widehat{Z}f$, that
\begin{align*}
    \int_{\R^3_v} \langle v \rangle^6 \big|h \big|(t,x,v) \dr v & \leq \int_{\R^3_v} \frac{\dr v}{\langle x-t \widehat{v}\rangle^4 \langle v \rangle^{12}} \sup_{w \in \R^3_v} \; \langle x-t \widehat{w}\rangle^4 \langle w \rangle^{18} \big|  h(t,x,w) \big|.
\end{align*}
The second factor on the right hand side can be controlled using the previous Corollary \ref{Corboundednessbof}. Finally, we obtain
\[ \int_{\R^3_v} \frac{\dr v}{\langle x-t \widehat{v}\rangle^4 \langle v \rangle^{12}} \lesssim \frac{1}{\langle t \rangle^3} \mathds{1}_{t > |x|}+ \frac{1}{\langle t \rangle^4} \mathds{1}_{ |x| \geq t} \]
by using Lemma \ref{Lemcdv} to perform the change of variables $y=x-t\widehat{v}$ when $t > |x|$, while, for $|x| \geq t$, we use
  \[  |x-t\widehat{v}| \geq |x| (1-|\widehat{v}|) =|x| \frac{1}{\langle v \rangle (\langle v \rangle +|v| )} \geq \frac{|x|}{2 \langle v \rangle^2}.\]
\end{proof}

In the perspective of improving the bootstrap assumptions \eqref{eq:BA1}--\eqref{eq:BA2}, we show here that one of the two dynamical terms in the Glassey-Strauss decomposition of $\partial_{x^\mu} A_I$ and $\partial_{x^\mu} Z A_I$ enjoys strong decay along timelike straight lines. We will make use of the following result. For a proof, we refer to \cite[Lemmata~5.10--5.11]{scat}.

\begin{Lem}\label{Lemint}
Let $a >3 $. Then, for all $(t,x) \in \R_+ \times \R^3$, we have
    $$ \mathbf{Y}^{1}_{a}(t,x):= \int_{|y-x| \leq t } \frac{1}{\langle t-|y-x|+|y| \rangle^{a} \langle t-|y-x|-|y| \rangle} \frac{\dr y}{|y-x|}\lesssim \frac{\log(3+|t-|x||)}{\langle t+|x| \rangle \langle t-|x| \rangle^{a-2}}.$$
    For any $b \geq 3$ and all $(t,x) \in \R_+ \times \R^3$, we have
$$ \mathbf{Y}^{2}_b(t,x):= \int_{|y-x| \leq t } \big\langle t-|y-x|+|y| \big\rangle^{-b} \frac{\dr y}{|y-x|^2} \lesssim \frac{1}{ \langle t+ |x| \rangle^{\frac{7}{4}}\langle t-|x| \rangle^{b-3+\frac{1}{4}}}  .$$
\end{Lem}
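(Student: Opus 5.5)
Both bounds follow from a single reduction: introduce polar coordinates centred at $x$ in the integration variable $y$, and use the elementary fact that the weights depend only on $|y-x|$ and $|y|$.

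\emph{Setup.} Write $y=x+\rho\sigma$ with $\rho=|y-x|\in[0,t]$ and $\sigma\in\mathbb{S}^2$, and set $r=|x|$. Then $|y|^2=r^2+\rho^2+2r\rho\cos\theta$. For fixed $\rho$, the map $\cos\theta\mapsto\lambda=|y|$ has differential $\dr\lambda = r\rho\, \dr(\cos\theta)/\lambda$, so for any $F$,
\[
\int_{\mathbb{S}^2}F(|x+\rho\sigma|)\,\dr\sigma=\frac{2\pi}{r\rho}\int_{|r-\rho|}^{r+\rho}F(\lambda)\,\lambda\,\dr\lambda .
\]
Substituting this turns $\mathbf{Y}^1_a$ and $\mathbf{Y}^2_b$ into two-dimensional integrals over the rectangle $0\le\rho\le t$, $|r-\rho|\le\lambda\le r+\rho$. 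The weights then involve $\alpha=t-\rho+\lambda$ and $\beta=t-\rho-\lambda$, which are exactly the null coordinates of the retarded point. Changing variables $(\rho,\lambda)\mapsto(\alpha,\beta)$ has Jacobian $1/2$, and the domain becomes a triangle-like region with $\alpha\ge|\beta|$. Its boundary is determined by $t\pm r$: one has $\alpha\le t+r$, and $\beta\le t-r$ when $\lambda\ge r-\rho$ (and similarly in the other case).

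\emph{First estimate.} After the substitution,
\[
\mathbf{Y}^1_a\lesssim\frac{1}{r}\iint\frac{\lambda\,\dr\lambda\,\dr\rho}{\rho^2\,\langle\alpha\rangle^{a}\langle\beta\rangle}.
\]
The factor $\rho^{-2}$ is only problematic near $\rho=0$. There the $\lambda$-interval has length $2\rho$, so it contributes $\rho^{-1}$, which is still not integrable. This is handled by treating $r\lesssim 1$ or $\rho$ small separately, using the original form $\int \dr y/|y-x|$, which is locally integrable.

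The main region is then treated as follows. Use $\lambda\le\alpha$ and $\rho\gtrsim$ the distance to the endpoints, and bound $\lambda/\rho$ on the region $\rho\ge t/2$ by $\alpha/t$. The remaining integral is
\[
\frac{1}{r\,t}\int_{|t-r|}^{t+r}\langle\alpha\rangle^{1-a}\,\dr\alpha\int_{-\alpha}^{t-r}\langle\beta\rangle^{-1}\,\dr\beta .
\]
The $\alpha$-integral gives $\langle t-r\rangle^{2-a}$, because $a>3$ makes it convergent at the lower limit, and the $\beta$-integral gives the logarithm $\log(3+|t-r|)$. This yields $\log\langle t-r\rangle\,\langle t-r\rangle^{2-a}/(r\,t)$. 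Combined with the trivial bound obtained when $r\lesssim t$, this gives the factor $\langle t+r\rangle^{-1}$ in the statement.

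\emph{Second estimate.} For $\mathbf{Y}^2_b$ the Jacobian factor is $\rho^{-2}\cdot\rho\lambda/(r\rho)$, and only $\alpha$ appears in the weight. I would split the domain into two regions.
\begin{enumerate}[label=(\roman*)]
\item For $\rho\ge t/2$, the bound $\lambda\lesssim\alpha$ gives $\frac{1}{rt}\int_{|t-r|}^{t+r}\langle\alpha\rangle^{1-b}\langle\alpha\rangle\,\dr\alpha$, which is of order $\langle t-r\rangle^{3-b}/(rt)$.
\item For $\rho\le t/2$, one has $\alpha\gtrsim t$. The $\rho^{-1}$ singularity near $0$ is integrated using the interval length $\min(2\rho,\cdot)$, and it produces only a logarithm. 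That logarithm, together with the near-cone region, is absorbed by trading $\langle t+r\rangle^{-1/4}$ for $\langle t-r\rangle^{-1/4}$. This trade is where the non-sharp exponents $7/4$ and $b-3+1/4$ come from.
\end{enumerate}

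\emph{Main obstacle.} I expect the hardest part to be the bookkeeping near $\rho\approx0$ and $r\approx0$, where the $1/(r\rho)$ Jacobian degenerates, together with tracking the correct endpoint in $\beta$ in the two cases $r<t$ and $r>t$. I would handle these degenerate cases by direct crude bounds in the original $y$-variables.
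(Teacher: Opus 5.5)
Your strategy (polar coordinates about $x$, then $\lambda=|y|$, then the null variables $\alpha,\beta$) is the classical Glassey--Strauss reduction for such integrals. The paper gives no proof of its own and defers to \cite[Lemmata~5.10--5.11]{scat}. However, your execution has a genuine error in the Jacobian. Since $\dr y=\rho^2\,\dr\rho\,\dr\sigma$, the correct reductions are
\[
\mathbf{Y}^1_a=\frac{2\pi}{r}\int_0^t\int_{|r-\rho|}^{r+\rho}\frac{\lambda\,\dr\lambda\,\dr\rho}{\langle\alpha\rangle^a\langle\beta\rangle},\qquad \mathbf{Y}^2_b=\frac{2\pi}{r}\int_0^t\int_{|r-\rho|}^{r+\rho}\frac{\lambda\,\dr\lambda\,\dr\rho}{\rho\,\langle\alpha\rangle^b}.
\]
So there is no factor $\rho^{-2}$ in the first, and only $\rho^{-1}$ in the second. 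Your displayed bound for $\mathbf{Y}^1_a$ is therefore false, not merely lossy. For $\rho>1$ your integrand is smaller than the true one, and near the light cone the main contribution comes from $\rho\approx t$, where it is smaller by a factor $t^{-2}$. This is also the source of the spurious extra $1/t$ in your final expression.

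Everything built on this fictitious singularity is then off:
\begin{itemize}
\item Since $\frac{1}{r\rho}\int_{|r-\rho|}^{r+\rho}\lambda\,\dr\lambda=2$ exactly, nothing degenerates at $\rho=0$.
\item With your own Jacobian, the region $\rho\le t/2$ of $\mathbf{Y}^2_b$ would contain $\int_0\dr\rho/\rho$, which diverges rather than ``producing a logarithm''.
\item For $\mathbf{Y}^1_a$ you only treat $\rho\ge t/2$ and ``$\rho$ small'', so the range $1\lesssim\rho\le t/2$ is never estimated.
\end{itemize}

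With the correct formulas the argument is short.

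\emph{First estimate.} No splitting in $\rho$ is needed. On the whole domain $\lambda\le\alpha$, $|t-r|\le\alpha\le t+r$ and $-\alpha\le\beta\le t-r$, hence
\[
\mathbf{Y}^1_a\lesssim\frac{1}{r}\int_{|t-r|}^{t+r}\langle\alpha\rangle^{1-a}\log(3+\alpha)\,\dr\alpha .
\]
The $\beta$-integral yields $\log(3+\alpha)$, not $\log(3+|t-r|)$, so the two integrals do not factor as you wrote. The logarithm is nevertheless absorbed since $a>2$. Conclude with $r^{-1}\lesssim\langle t+r\rangle^{-1}$ when $r\ge\max(1,t/2)$. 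Otherwise use the length $2\min(r,t)$ of the $\alpha$-interval, together with $\langle\alpha\rangle\simeq\langle t\rangle$ if $r\le t/2$.

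\emph{Second estimate.} On $\rho\le t/2$ one has $\alpha\ge t/2$ if $r\le t$, and $\alpha\ge t+r-2\rho\ge r$ if $r>t$. Hence this region contributes $\lesssim t\langle t+r\rangle^{-b}\le\langle t+r\rangle^{1-b}$. Your region (i) is correct, again using the $\alpha$-interval length when $r\ll t$ or $r\gg t$. The trade of $\langle t+r\rangle^{-1/4}$ for $\langle t-r\rangle^{-1/4}$ is needed only there, to absorb $\log(\langle t+r\rangle/\langle t-r\rangle)$ when $b=3$, and not for any singularity near $\rho=0$.
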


It allows to obtain the following estimates.

\begin{Pro}\label{ProAIT0}
Let $I \in \llbracket 1 , K \rrbracket$ and $Z \in \mathbb{K}$. Then, for all $(t,x) \in [0,T[ \times \R^3$, we have
 \[ \big| \big[ \nabla_{t,x} A_I \big]_{\T_0} \big|(t,x)+\big| \big[ \nabla_{t,x} Z A_I \big]_{\T_0} \big|(t,x) \lesssim  \frac{\Lambda \varepsilon}{\langle t+|x| \rangle \, \langle t-|x| \rangle^{\frac{7}{4}}}. \]
\end{Pro}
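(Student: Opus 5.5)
The plan is to apply the Glassey--Strauss representation of Proposition \ref{ProGS} to $\partial_{x^\mu}A_I$ and to $\partial_{x^\mu}ZA_I$, and then reduce $[\cdot]_{\T_0}$ to a velocity average that decays like $\langle t+|x|\rangle^{-3}$ times a small factor.

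\textbf{Identifying the sources.} For $\partial_{x^\mu}A_I$ the source is $g=f$. For $ZA_I$, Proposition \ref{ProCom} gives the source $\widehat Z f$ plus a term of the form $\int \langle v\rangle^\delta P_I^Z f\,\dr v$. Each such $g$ must be transported by $\T_0$. Since $\T_{\mathbf{A}} f=0$, we have $\T_0 f=-\nabla\mathbf{A}\cdot\nabla_v f$. By Proposition \ref{ProCom}, $\T_0\widehat Z f=\nabla\mathbf{A}_Z\cdot\nabla_v f-\nabla\mathbf{A}\cdot\nabla_v\widehat Z f$.

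\textbf{Avoiding the growing derivative.} The factor $\nabla_v\widehat Zf$ grows, so I will not estimate it pointwise. Instead, I will integrate by parts in $v$ inside $[\cdot]_{\T_0}$:
\[ \int \mathbf{b}^\mu\,\nabla\mathbf{A}\cdot\nabla_v h\,\dr v=-\int \nabla_v\cdot\big(\mathbf{b}^\mu\nabla\mathbf{A}\big)\,h\,\dr v. \]
Lemma \ref{Lemkernel} and Lemma \ref{Lemsource} then bound $|\nabla_v\cdot(\mathbf{b}^\mu\nabla\mathbf{A})|$ by
\[ \Lambda\langle v\rangle^{3+2\delta}\langle s+|y|\rangle^{-1}\langle s-|y|\rangle^{-1}, \]
and similarly with $\nabla\mathbf{A}_Z$, except that $\langle s-|y|\rangle^{-1+\eta}$ replaces $\langle s-|y|\rangle^{-1}$. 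The same integration by parts handles the $\nabla_v f$ terms, giving factors involving $|f|$ and $|\widehat Zf|$ only.

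\textbf{Applying the decay.} Corollary \ref{Cordecayintvbof} applies since $3+2\delta\le 5<6$. The integrand is therefore bounded by
\[ \Lambda\varepsilon\,\log^6\langle s\rangle\,\langle s+|y|\rangle^{-4+\eta}\langle s-|y|\rangle^{-1+\eta}, \]
with $s=t-|y-x|$. The logarithm and the $\eta$ loss will be absorbed by slack; in the exterior $|y|\ge s$ we have the improved $\langle s+|y|\rangle^{-11/4}$ bound.

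\textbf{Main obstacle.} The difficulty is getting $\langle t-|x|\rangle^{-7/4}$ despite the $\langle s-|y|\rangle^{-1+\eta}$ factor. I will use $\log^6\langle s\rangle\lesssim\langle s+|y|\rangle^{\eta}$ and write the integrand as $\langle s+|y|\rangle^{-a}\langle s-|y|\rangle^{-1}\cdot\langle s-|y|\rangle^{\eta}$, then bound $\langle s-|y|\rangle^{\eta}\le\langle s+|y|\rangle^{\eta}$. This gives $a=4-3\eta>3$, and Lemma \ref{Lemint} ($\mathbf{Y}^1_a$) then yields
\[ \frac{\log\langle t-|x|\rangle}{\langle t+|x|\rangle\,\langle t-|x|\rangle^{2-3\eta}}\lesssim\frac{1}{\langle t+|x|\rangle\,\langle t-|x|\rangle^{7/4}}, \]
using $\eta<1/8$. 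The lower-order source $\int\langle v\rangle^\delta P^Z_I f\,\dr v$ is treated identically, since it is $f$ itself.
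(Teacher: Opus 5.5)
Your route is the paper's: the Glassey--Strauss representation, $\T_0 f=-\nabla\mathbf{A}\cdot\nabla_v f$ and its commuted analogue for $\widehat Z f$, integration by parts in $v$ onto the kernel and the field, Corollary~\ref{Cordecayintvbof}, and finally $\mathbf{Y}^1_a$ from Lemma~\ref{Lemint}. The step that fails is the final exponent count. You merge the two $\eta$-losses into one worst-case integrand $\langle s+|y|\rangle^{-4+\eta}\langle s-|y|\rangle^{-1+\eta}$, and then spend another full $\eta$ absorbing $\log^6$. This gives $a=4-3\eta$, and the inequality
\[ \frac{\log(3+|t-|x||)}{\langle t-|x|\rangle^{2-3\eta}}\lesssim \frac{1}{\langle t-|x|\rangle^{7/4}} \]
holds only if $2-3\eta>7/4$, i.e.\ $\eta<1/12$, not $\eta<1/8$. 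The bootstrap only fixes $0<\eta<1/8$. So for $\eta\in[1/12,1/8)$ your argument does not give the claimed rate.

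The fix is easy, and the paper's bookkeeping shows it: the two losses never occur in the same term. After the $v$-integration by parts, each term pairs one of two things:
\begin{itemize}
\item $\nabla\mathbf{A}_Z$, which carries the $\langle s-|y|\rangle^{-1+\eta}$ loss, with $f$, which decays like $\langle s+|y|\rangle^{-3}$ up to logarithms;
\item $\nabla\mathbf{A}$, which has no loss, with $\widehat Zf$, which carries the $\langle s+|y|\rangle^{-3+\eta}$ loss.
\end{itemize}
This is the constraint $p_1+p_2\le p$ in the paper. Treating the terms separately gives $\mathbf{Y}^1_{4-2\eta}$, and $2-2\eta>7/4$ is exactly $\eta<1/8$, which also absorbs the logarithm from Lemma~\ref{Lemint}.

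Alternatively, keep your merged bound but absorb $\log^6\langle s+1\rangle$ by $\langle s+|y|\rangle^{\eta'}$ with $0<\eta'<1/4-2\eta$, rather than by $\langle s+|y|\rangle^{\eta}$.

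A minor point: Lemma~\ref{Lemsource} does not state a bound for $\nabla_v\nabla\mathbf{A}_Z$. You have to read it off the explicit formula in Proposition~\ref{ProCom} together with \eqref{equa:fortheproof}. The paper does this implicitly by bounding directly in terms of $|\nabla_{t,x}Z^{p_1}A_J|$.
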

\begin{proof}
Let $p \in \{0, 1\}$, with the convention that $Z^p A_I=A_I$ if $p=0$ and $Z^p A_I= ZA_I$ if $p=1$, and recall from Proposition \ref{ProGS} the expression of $[\nabla_{t,x} Z^p A_I]_{\T_0}$. Then, we use the Vlasov equation to write $\T_0(f)=-\nabla \mathbf{A} \cdot \nabla_v f$, where $\nabla \mathbf{A}$ is given by \eqref{defA}. Using the commutation formula of Proposition \ref{ProCom}, we have a similar relation for $\T_0(\widehat{Z}f)$. Hence, performing integration by parts in $v$, we obtain
\[ \big[\nabla_{t,x} Z^pA_I \big]_{\T_0} \lesssim \sum_{p_1+p_2 \leq p} \;\sum_{ 1 \leq J \leq K} \int_{|y-x|\leq t} \big|\nabla_{t,x} Z^{p_1} A_J(t-|y-x|,y) \big|
\int_{\R^3_v}\langle v\rangle^4 \big| \widehat{Z}^{p_2}f (t-|y-x|,y,v) \big| \,dv\frac{dy}{|y-x|}, \]
where we also used Lemma \ref{Lemkernel} to estimate $\mathbf{b}^\mu (\widehat{v},\omega)$ and its $v$-derivatives. Estimating $A_J$ through \eqref{equa:fortheproof} and using Corollary \ref{Cordecayintvbof}, we obtain
\[ \big[\nabla_{t,x}  ZA_I \big]_{\T_0} \lesssim \Lambda \varepsilon \mathbf{Y}^1_{4-2\eta}(t,x) \leq \Lambda \varepsilon \mathbf{Y}^1_{4-1/4}(t,x), \]
where $\mathbf{Y}^1_{4-1/4}(t,x)$ is defined and estimated in Lemma \ref{Lemint}.
\end{proof}

\subsection{Optimal decay estimates for momentum averages}

We would now like to improve the decay estimates provided by Corollary \ref{Corboundednessbof}, whose proof relies on the strategy used in the linearised setting. As we shall see, $f$ exhibits modified scattering behavior, and $\mathcal{X}(0,t,x,v)$ deviates logarithmically from the linear characteristic $x-t\widehat{v}$. In view of this observation and the discussion in Section \ref{Subsubsecmachin}, we seek a more accurate approximation of the nonlinear spatial characteristics of a suitable form.

The analysis developed in this section will also be useful for the study of modified scattering. Accordingly, whenever $T=+\infty$, we complement several of the statements below with their corresponding asymptotic conclusions. 

In contrast to $f$, which does not exhibit linear asymptotic behavior, its spatial average, being a function of $v$ only, converges as $t \to + \infty$. This quantity plays a crucial role in our analysis since, as in the linearised setting, it governs the asymptotic behavior of the source terms in the wave equations.

\begin{Pro}\label{Prospaave}
Let $\mathbf{Q}_t(v) \in C^1([0,T[ \times \R^3_v)$ be the weighted spatial average of $f$,
\[ \mathbf{Q}_t(v) \coloneqq \langle v \rangle^5 \int_{\R^3_x} f(t,x,v) \dr x. \]
Then, we have
\[ \forall \, (t,v) \in [0,T[ \times \R^3_v, \qquad \qquad \langle v \rangle^9 \big| \mathbf{Q}_t(v) \big| \lesssim \varepsilon. \]
Moreover, if $T=+\infty$, there exists $\mathbf{Q}_\infty \in C^0(\R^3_v,\R)$ such that,
\[ \forall \, (t,v) \in \R_+ \times \R^3_v, \qquad \qquad \langle v \rangle^9 \big|\mathbf{Q}_t(v) - \mathbf{Q}_\infty (v) \big| \lesssim \varepsilon \, \langle t \rangle^{-\frac{1}{2}} .\]
\end{Pro}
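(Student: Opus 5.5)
The plan is to differentiate $\mathbf{Q}_t(v)$ in time using the Vlasov equation. The transport term $\widehat{v}\cdot\nabla_x f$ integrates to zero in $x$, which gives
\[ \partial_t \mathbf{Q}_t(v) = -\langle v \rangle^5 \int_{\R^3_x} \nabla\mathbf{A}(t,x,v)\cdot\nabla_v f(t,x,v)\,\dr x . \]
The boundedness of $\mathbf{Q}_t$ comes directly from Corollary \ref{Corboundednessbof}. Since $\langle x-t\widehat{v}\rangle^{6}\langle v\rangle^{18}|f| \lesssim \varepsilon\log^6\langle t+1\rangle$, integrating in $x$ would lose a logarithm. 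It is therefore better to use Corollary \ref{Corboundedness}, together with the change of variables $x\mapsto (\mathcal{X},\mathcal{V})(0,t,x,v)$ at fixed $v$. A cleaner alternative is to integrate $\partial_t\mathbf{Q}_t$ in time and show that the integral converges, which yields both claims at once. This is the route I would take, starting from $\langle v\rangle^{14}|\mathbf{Q}_0(v)|\lesssim\varepsilon$, which follows from \eqref{eq:assumpA}--\eqref{eq:assumpB}.

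The key step is to bound $\langle v\rangle^{14}|\partial_t\mathbf{Q}_t(v)|\lesssim \varepsilon\langle t\rangle^{-3/2}$, or at least something integrable with tail $\langle t\rangle^{-1/2}$. The factor $\nabla_v f$ grows like $t$, so I will not use it directly. Instead I rewrite $\langle v\rangle\partial_{v^i} = \widehat{\Omega}_{0i}-t\partial_{x^i}-x^i\partial_t$. In the $-t\partial_{x^i}f$ term, I integrate by parts in $x$, which moves the derivative onto $\nabla\mathbf{A}$. By Lemma \ref{Lemsource}, $|\partial_x\nabla\mathbf{A}|\lesssim\Lambda\langle v\rangle^\delta\langle t+|x|\rangle^{-1}\langle t-|x|\rangle^{-7/4}$. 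The factor $t$ is then compensated by $\langle t+|x|\rangle^{-1}$, and the extra $\langle t-|x|\rangle^{-7/4}$ must be exploited. The $\widehat{\Omega}_{0i}f$ term and the $x^i\partial_t f$ term are controlled through Corollary \ref{Corboundedness}: $|\widehat{Z}f|\lesssim\langle t\rangle^{\eta}$ and $|\partial_t f|$ bounded, each with the weight $\langle\mathcal{X}_0\rangle^{-8}\langle\mathcal{V}_0\rangle^{-149}$. These terms come multiplied by $|\nabla\mathbf{A}|\lesssim \Lambda\langle v\rangle^\delta\langle t+|x|\rangle^{-1}\langle t-|x|\rangle^{-1}$. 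The factor $x^i$ is harmless since $|x|\lesssim t+|x-t\widehat{v}|$.

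It remains to integrate in $x$ for fixed $v$. The spatial decay of $f$ in the variable $x-t\widehat{v}$ (Corollary \ref{Corboundednessbof}, up to $\log^6$) localises $x$ near $t\widehat{v}$. There, $t-|x|\gtrsim t\langle v\rangle^{-2}$ by Lemma \ref{Lemweight}, that is, $\langle t+|x|\rangle\lesssim\langle t-|x|\rangle\langle v\rangle^2\langle x-t\widehat{v}\rangle$. Each factor $\langle t-|x|\rangle^{-1}$ thus converts to $\langle t\rangle^{-1}$ at the cost of $\langle v\rangle^2\langle x-t\widehat{v}\rangle$. This gives a bound $\varepsilon\Lambda\langle v\rangle^{C}\log^6\langle t\rangle\,\langle t\rangle^{-2+\eta}$ for the $\widehat{Z}$ and $x\partial_t$ terms. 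For the integrated-by-parts term it gives $t\cdot t^{-1}\cdot t^{-7/4}$, which is integrable. All of these are integrable, and the tail is much better than $\langle t\rangle^{-1/2}$. The powers of $\langle v\rangle$ are absorbed because the available weight $\langle v\rangle^{149}$, transferred through Propositions \ref{ProVcharac}--\ref{ProVcharacbis}, is huge.

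The main obstacle is the bookkeeping of the $v$-weights when passing between $\mathcal{V}_0$ and $v$. One also has to check that integration by parts in $x$ does not pull in $\nabla_x$ of the $\langle v\rangle^5$ weight; it does not, since that weight depends on $v$ only. Given the bound on $\partial_t\mathbf{Q}_t$, I set $\mathbf{Q}_\infty = \mathbf{Q}_0+\int_0^\infty\partial_t\mathbf{Q}_s\,\dr s$. Continuity of $\mathbf{Q}_\infty$ follows from uniform convergence, since each $\mathbf{Q}_t$ is continuous. The rate $\langle t\rangle^{-1/2}$ follows by integrating the tail.
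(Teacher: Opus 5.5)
There is a genuine gap in how you handle the $x^i\partial_t f$ term. Writing $|x|\lesssim t+|x-t\widehat v|$ does not make $x^i$ harmless, because the $t$ part cancels exactly the power of $t$ you gain from Lemma \ref{Lemweight}:
\[ |\nabla\mathbf A|\,t\,|\partial_t f|\lesssim \Lambda\langle v\rangle^{\delta}\,t\,\langle t+|x|\rangle^{-1}\langle t-|x|\rangle^{-1}|\partial_t f|\lesssim\Lambda\langle v\rangle^{2+\delta}\langle t\rangle^{-1}\langle x-t\widehat v\rangle\,|\partial_t f| . \]
After integrating in $x$ with Corollary \ref{Corboundednessbof}, this term is only $O(\varepsilon\Lambda\langle t\rangle^{-1}\log^6\langle t+1\rangle)$, which is not integrable in time. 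No absolute-value bound can do better. Already for the linearised flow, $\partial_t f$ does not decay and $\int_x|\partial_t f|\,\dr x$ stays of size $\varepsilon$. At the same time $x^i\approx t\widehat v^i$ on the bulk of the support, and $|\nabla\mathbf A|\sim t^{-2}$ there. Hence the bound $\langle t\rangle^{-2+\eta}$ you claim holds only for the $\widehat\Omega_{0i}f$ piece. As written, your argument yields neither the convergence of $\mathbf Q_t$, nor the rate $\langle t\rangle^{-1/2}$, nor, on your time-integration route, the uniform bound on $\mathbf Q_t$.

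The missing idea is that the $t\partial_t$ hidden inside $x^i\partial_t$ must be turned into a $t\nabla_x$ term, which can then be integrated by parts in the same way as $-t\partial_{x^i}f$. The paper does this by writing $x^j=(x^j-t\widehat v^j)+t\widehat v^j$ and $t\partial_t=S-x\cdot\nabla_x$, which gives
\[ \langle v\rangle\partial_{v^j}=\widehat\Omega_{0j}-(x^j-\widehat v^jt)\partial_t-\widehat v^jS+\widehat v^j(x-t\widehat v)\cdot\nabla_x+t\widehat v^j\,\widehat v\cdot\nabla_x-t\partial_{x^j}. \]
Each term on the right is now of one of two kinds:
\begin{itemize}
\item a derivative $\widehat Z f$ with $\widehat Z\in\Pp_S$, carrying at most a weight $\langle x-t\widehat v\rangle$; these give at worst $\langle t\rangle^{-2+\eta}$ up to logarithms;
\item a term of the form $t\,\nabla_x f$; you integrate it by parts onto $\nabla\mathbf A$ and use the $\langle t-|x|\rangle^{-7/4}$ bound of Lemma \ref{Lemsource}.
\end{itemize}
Alternatively, you could substitute $\partial_t f=-\widehat v\cdot\nabla_xf-\nabla\mathbf A\cdot\nabla_vf$ from the Vlasov equation: the first piece is integrated by parts, and the second is quadratic in the field. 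With either fix you obtain $\langle v\rangle^9|\partial_t\mathbf Q_t|\lesssim\varepsilon\langle t\rangle^{-3/2}$. The rest of your argument then goes through as in the paper: integrate in time from $\mathbf Q_0$, and obtain continuity of $\mathbf Q_\infty$ from uniform convergence.
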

\begin{proof}
    We start by writing $\partial_t f+ \widehat{v} \cdot \nabla_x f= - \nabla \mathbf{A} \cdot \nabla_v f$. Then, we use the relation
    \begin{equation}\label{dvforscatt}
    \langle v \rangle\partial_{v^j} = \widehat{\Omega}_{0j} -x^j \partial_t -t\partial_{x^j} = \widehat{\Omega}_{0j} -(x^j-\widehat{v}^jt) \partial_t-\widehat{v}^jS+\widehat{v}^j(x-t\widehat{v}) \cdot \nabla_x+t \widehat{v}^j \widehat{v} \cdot \nabla_x -t\partial_{x^j}.
    \end{equation}
Integration by parts yields
\[ \bigg| \partial_t \int_{\R^3_x} f(t,x,v) \dr x \bigg| \lesssim \sum_{1 \leq I \leq K} \sum_{\widehat{Z} \in \Pp_S} \int_{\R^3_x} \langle x - t \widehat{v} \rangle \big| \nabla \mathbf{A} \big|(t,x) \big| \widehat{Z} f \big| (t,x,v) \dr x + \int_{\R^3_x} \langle t \rangle \big| \nabla_{t,x} \nabla \mathbf{A} \big|(t,x) \big|  f \big| (t,x,v) \dr x. \]
Recall now from Lemma \ref{Lemweight} that $\langle t+|x| \rangle \lesssim \langle t-|x| \rangle \, \langle x - t \widehat{v} \rangle \, \langle v \rangle^2$. Thus, estimating $\nabla \mathbf{A}$ using Lemma \ref{Lemsource}, we get
\[ \bigg| \partial_t \int_{\R^3_x} f(t,x,v) \dr x \bigg| \lesssim \frac{1}{\langle t \rangle^{\frac{7}{4}}} \sum_{\widehat{Z} \in \Pp_S}  \sup_{x \in \R^3_x} \langle x - t \widehat{v} \rangle^6 \langle v \rangle^4 \Big( |f|(t,x,v)+ \big| \widehat{Z} f \big| (t,x,v) \Big) . \]
Applying Corollary \ref{Corboundednessbof}, we obtain
\begin{equation}\label{eq:estiQt}
\big| \langle v \rangle^9  \partial_t \mathbf{Q}_t (v) \big| \lesssim \varepsilon \, \langle t \rangle^{- \frac{3}{2}}.
\end{equation}
We can conclude the proof using this estimate and the smallness assumption on $f$.
\end{proof}

It allows to deduce an expansion for the source term of the wave equations in \eqref{VW}.

\begin{Cor}\label{Corexp}
    Let $I \in \llbracket 1 , K \rrbracket$ and $\mathbf{a} \in C^1(\R^3_v)$ such that $|\mathbf{a}(v)|+\langle v \rangle |\nabla_v \mathbf{a}(v)| \lesssim \langle v \rangle^6$. Then, we have
    \[ \forall \, (t,x) \in [0,T[ \times \R^3, \qquad \qquad \qquad \bigg| \, t^3\int_{\R^3_v} \mathbf{a} (v) f(t,x,v) \dr v - \mathbf{a} \bigg( \frac{\widecheck{\; x \;}}{t} \bigg) \mathbf{Q}_t \bigg( \frac{\widecheck{\; x \;}}{t} \bigg) \mathds{1}_{|x|<t} \bigg| \lesssim \frac{\varepsilon}{\langle t \rangle^{\frac{1}{2}}}. \]
    If $T=+\infty$, the previous estimate remains valid with $\mathbf{Q}_t$ replaced by $\mathbf{Q}_\infty$.
\end{Cor}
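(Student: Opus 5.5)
We treat the case $|x|<t$ first (for $|x| \geq t$ one only needs $t^3\int_v \mathbf{a} f \dr v \lesssim \varepsilon t^{-1/2}$, which follows from the improved exterior estimate of Corollary \ref{Cordecayintvbof}, since $|\mathbf{a}| \lesssim \langle v \rangle^6$ and $t^{3-11/4}\langle t \rangle^{-1/2}$ is not quite enough, so we will instead argue directly with $\langle x-t\widehat v\rangle \gtrsim |x|\langle v\rangle^{-2}$ and Corollary \ref{Corboundednessbof}, gaining $t^{-4}\log^6$). For $t \leq 1$ the statement is trivial, since both terms are bounded by $\varepsilon$ using Corollary \ref{Cordecayintvbof} and Proposition \ref{Prospaave}.

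The plan in the interior is to introduce $h(t,y,v) \coloneqq f(t,y+t\widehat v,v)$ and change variables $v \mapsto y = x-t\widehat v$, whose inverse is $v = \widecheck{((x-y)/t)}$, with Jacobian $\dr v = t^{-3}\langle v\rangle^5 \dr y$ by Lemma \ref{Lemcdv}. We get
\[ t^3\int_{\R^3_v} \mathbf{a}(v) f(t,x,v)\dr v = \int_{|x-y|<t} \mathbf{a}(v_y)\langle v_y\rangle^5 f(t,x,v_y) \dr y , \qquad v_y \coloneqq \widecheck{\Big(\frac{x-y}{t}\Big)}, \]
and since $\mathbf{a}(v_0)\mathbf{Q}_t(v_0) = \int_{y}\mathbf{a}(v_0)\langle v_0\rangle^5 f(t,y+t\widehat{v_0},v_0)\dr y$ with $v_0 = \widecheck{x/t}$, the difference is an integral over $y$ of $\mathbf{a}(v_y)\langle v_y\rangle^5 h(t,y,v_y) - \mathbf{a}(v_0)\langle v_0\rangle^5 h(t,y,v_0)$, plus a tail over $|x-y|\geq t$ (controlled by the decay of $h$ in $y$ from Corollary \ref{Corboundednessbof}, since there $|y| \gtrsim$ distance to the boundary). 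We then use the mean value theorem in the $v$ variable along the segment between $v_0$ and $v_y$ in $\widehat v$-coordinates; note $|\widehat{v_y}-\widehat{v_0}| = |y|/t$.

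The key step is bounding $\nabla_v$ of $\mathbf{a}\langle v\rangle^5 h$. By Lemma \ref{Lemrelftoh}, $\langle v\rangle\partial_{v^k}h$ is expressed through $\widehat\Omega_{0k}f$, $S f$, $\partial_t f$, and $x\cdot\nabla_x f$ terms (with $x$ replaced by $y$) evaluated at $(t,y+t\widehat v,v)$. Corollary \ref{Corboundednessbof} gives $|\widehat Z f|\lesssim \varepsilon \langle t\rangle^\eta\log^6\langle t\rangle \langle y\rangle^{-6}\langle v\rangle^{-18}$, and the $y\cdot\nabla_x$ terms cost one power of $\langle y\rangle$. Converting the $\widehat v$-derivative into a $v$-derivative costs $\langle v\rangle^{3}$ (since $\nabla_{\widehat v} v \sim \langle v\rangle^3$), and $\mathbf{a}\langle v\rangle^5$ costs $\langle v\rangle^{11}$; altogether we need about $\langle v\rangle^{15}$ and $\langle y\rangle^{2}$, which fits within the $\langle v\rangle^{18}\langle y\rangle^6$ available. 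Since $\langle v\rangle$ along the segment is controlled by $\max(\langle v_0\rangle,\langle v_y\rangle)$ up to the decay weights, the integrand is $\lesssim \varepsilon t^{-1+\eta}\log^6\langle t\rangle\,\langle y\rangle^{-3}$ times mild factors, and integrating in $y$ gives $\varepsilon t^{-1+\eta}\log^7 t \lesssim \varepsilon t^{-1/2}$ as $\eta<1/8$.

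The main obstacle is the behaviour near $|x|\to t$ and for large $|y|$, where $v_0$ or points on the segment have large momentum and the $\langle v\rangle^{-18}$ decay must compensate the Jacobian growth $\langle v\rangle^{5}$ and $\nabla_{\widehat v}v$; here we will split into $|y|\leq t^{1/2}$ (mean value argument as above, with $|y|/t\leq t^{-1/2}$) and $|y|>t^{1/2}$ (bound each term separately using $\langle y\rangle^{-6}$, giving $\varepsilon t^{-3/2}$). The final claim with $\mathbf{Q}_\infty$ follows from Proposition \ref{Prospaave}, since $|\mathbf{a}(v_0)||\mathbf{Q}_t-\mathbf{Q}_\infty|(v_0) \lesssim \langle v_0\rangle^{6-9}\varepsilon\langle t\rangle^{-1/2}$.
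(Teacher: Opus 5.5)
Your proposal is correct and follows the paper's route: the paper also sets $g=\mathbf{a}(v)f(t,x+t\widehat{v},v)$, changes variables $y=x-t\widehat{v}$ and compares with the value at $\widecheck{(x/t)}$ (there packaged as \cite[Lemma~4.12]{scat}, which you effectively reprove via the mean value theorem), controlling $\langle v\rangle\nabla_v g$ through Lemma \ref{Lemrelftoh} and Corollary \ref{Corboundednessbof}; your split into $|y|\leq t^{1/2}$ versus $|y|>t^{1/2}$ is harmless but unnecessary, since the net power of $\langle v\rangle$ is negative and the mean-value integrand is already $O(\varepsilon t^{-1+\eta}\log^6\langle t\rangle\,\langle y\rangle^{-4})$, which is integrable in $y$. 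For $|x|\geq t$ you are more careful than the paper: its appeal to the stated $\langle t+|x|\rangle^{-11/4}$ bound of Corollary \ref{Cordecayintvbof} only gives $\varepsilon t^{1/4}$ after multiplying by $t^3$, whereas your use of $\langle x-t\widehat v\rangle\gtrsim |x|\langle v\rangle^{-2}$ together with Corollary \ref{Corboundednessbof} gives the required $\varepsilon t^{-1}\log^6\langle t\rangle$.
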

\begin{proof}
 Assume first that $|x| < t$ and $t \geq 1$. Let $g(t,x,v) \coloneqq \mathbf{a}(v) f(t,x+t\widehat{v},v)$ and apply \cite[Lemma~4.12]{scat} to get
  \begin{align*} \bigg| \int_{\R^3_v}g(t,x-t\widehat{v},v) \dr v  - \int_{\R^3_x} \big[ \langle v \rangle^5 g] \bigg( t,y, \frac{\widecheck{ \; x \;}}{t} \bigg) \dr y \bigg| \lesssim \frac{1}{t} \sup_{(y,w) \in \R^3 \times \R^3} \langle y \rangle^5\langle w \rangle^7 \big( |g|(t,y,w)+\langle w \rangle \big| \nabla_ v g \big|(t,y,w) \big). 
  \end{align*}
Then, note that Lemma \ref{Lemrelftoh} and the assumption on $\mathbf{a}(v)$ imply
  \[ \langle y \rangle^5\langle w \rangle^7 \big( |g|(t,y,w)+\langle w \rangle \big| \nabla_ v g \big|(t,y,w) \big) \lesssim \sum_{\widehat{Z} \in \Pp_S} \langle y-t\widehat{w} \rangle^6\langle w \rangle^{13} \big( |f|(t,y-t\widehat{w},w)+ \big| \widehat{Z} f \big|(t,y - t \widehat{w},w) \big) \lesssim \varepsilon \, \langle t \rangle^{\frac{1}{4}},\]
  where, in the last step, we applied Corollary \ref{Corboundednessbof}. To conclude the analysis of this case, we only need to observe that the spatial averages of $g$ and $f$ coincide and, when $T=+\infty$, apply Proposition \ref{Prospaave}. If $t \leq 1$, we estimate separately both terms using Corollary \ref{Cordecayintvbof} and Proposition \ref{Prospaave}. Finally, for the case $|x| \geq t$, we appeal to Corollary \ref{Cordecayintvbof}.
\end{proof}

The next result, together with Propositions \ref{Prodata} and \ref{ProAIT0}, shows that $\nabla_{t,x} A_I$ and $[\nabla_{t,x} A_I]_T$ have the same asymptotic behavior along timelike curves. 

\begin{Pro}\label{Proeffective}
For any $0 \leq \mu \leq 3$, there exists $\mathbb{A}_I^\mu \in C^0(\R^3_v,\R)$ such that, for all $(t,v) \in [0,T[ \times \R^3_v$,
\[ \big| t^2 \big[ \partial_{x^\mu} A_I \big]_T(t,t\widehat{v})-\mathbb{A}_I^\mu (t,v) \big| \lesssim \varepsilon t^{-\frac{1}{2}} \langle v \rangle^7 , \]
where
\[ \mathbb{A}_I^\mu (t,v) \coloneqq \int_{\substack{|z| \leq 1 \\  |z+\widehat{v}| < 1-|z|  }} \mathbf{a}^\mu_I \! \left(\frac{z+\widehat{v}}{1-|z|} , \frac{z}{|z|}\right) \mathbf{Q}_t \left(\frac{\widecheck{z+\widehat{v}}}{1-|z|} \right) \frac{\dr z}{|z|^2(1-|z|)^3}. \]
If $T=+\infty$, the estimate remains valid with $\mathbb{A}_I^\mu (t,v)$ replaced by $\mathbb{A}_I^\mu (\infty,v)$.
\end{Pro}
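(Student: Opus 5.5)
The plan is to start from the definition of $[\partial_{x^\mu}A_I]_T$ in Proposition \ref{ProGS}, evaluated at the point $(t,t\widehat{v})$, and to rescale the integration variable by $t$. Write $y = t\widehat{v} + t z$, so that $|y-x| = t|z|$, $\omega = z/|z|$, and the retarded time is $t - |y-x| = t(1-|z|)$. The constraint $|y-x|\le t$ becomes $|z|\le 1$, and $\dr y/|y-x|^2 = t\,\dr z/|z|^2$. This gives
\[ t^2\big[\partial_{x^\mu}A_I\big]_T(t,t\widehat{v}) = \int_{|z|\le 1}\frac{t^3}{|z|^2}\int_{\R^3_w}\mathbf{a}^\mu_I\Big(\widehat{w},\frac{z}{|z|}\Big) f\big(t(1-|z|),t(\widehat{v}+z),w\big)\,\dr w\,\dr z . \]
For each fixed $z$, the inner $w$-integral is a momentum average at time $s = t(1-|z|)$ and position $t(\widehat{v}+z)$. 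Note that $s^3 = t^3(1-|z|)^3$ and that the position divided by $s$ is $(z+\widehat{v})/(1-|z|)$.

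The key step is to apply Corollary \ref{Corexp} with the weight $\mathbf{a}(w) = \mathbf{a}^\mu_I(\widehat{w},\omega)$, treating $\omega$ as a fixed parameter. By Lemma \ref{Lemkernel} this weight satisfies $|\mathbf{a}|+\langle w\rangle|\nabla_w\mathbf{a}| \lesssim \langle w\rangle^{5+\delta}\le \langle w\rangle^6$, uniformly in $\omega$, so the corollary applies. It replaces $s^3\int_w \mathbf{a} f\,\dr w$ by $\mathbf{a}^\mu_I(\cdot,\omega)\,\mathbf{Q}_s$ evaluated at $\widecheck{(z+\widehat{v})/(1-|z|)}$, restricted to $|z+\widehat{v}|<1-|z|$, with an error $O(\varepsilon\langle s\rangle^{-1/2})$. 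In this main term the first argument of $\mathbf{a}^\mu_I$ is $\widehat{\widecheck{y}} = y$, where $y = (z+\widehat{v})/(1-|z|)$, which matches the kernel written in the statement. The main term therefore becomes the integral defining $\mathbb{A}^\mu_I$, except that $\mathbf{Q}$ is evaluated at time $s$ rather than $t$, together with an error integral
\[ \int_{|z|\le1}\frac{\varepsilon\,\dr z}{|z|^2(1-|z|)^3\langle t(1-|z|)\rangle^{1/2}} . \]
To turn $\mathbf{Q}_s$ into $\mathbf{Q}_t$, I will use \eqref{eq:estiQt}, which gives $|\mathbf{Q}_s-\mathbf{Q}_t|\lesssim \varepsilon\langle w\rangle^{-9}\langle s\rangle^{-1/2}$. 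For the case $T=\infty$, I will use Proposition \ref{Prospaave} in the same way to replace $\mathbf{Q}_t$ by $\mathbf{Q}_\infty$.

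The main obstacle is the region $|z|\to1$, where $s = t(1-|z|)$ is small and the weight $(1-|z|)^{-3}$ is singular, so the bound from Corollary \ref{Corexp} alone is not integrable there. I would split the $z$-integral at $1-|z| = t^{-1/2}$ (or a similar threshold).

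On the near-cone part, I will not use the expansion. Instead I will bound both the original term and the $\mathbb{A}$-term directly. For the original term, Corollary \ref{Cordecayintvbof} gives $\int_w\langle w\rangle^6|f|\,\dr w\lesssim \varepsilon\log^6\langle s\rangle\,\langle s+t|\widehat{v}+z|\rangle^{-3}$, and the exterior bound with rate $11/4$ applies where relevant. For the $\mathbb{A}$-term, on its support we have $|z+\widehat{v}|<1-|z|$, so $|\widehat{v}|\ge 1-2(1-|z|)$, which forces $\langle v\rangle^2\gtrsim t^{1/2}$. Combining the $\langle\cdot\rangle^{-9}$ decay of $\mathbf{Q}$ with the $\langle\cdot\rangle^{5}$ growth of the kernel, this small region then contributes at most $\varepsilon t^{-1/2}\langle v\rangle^7$. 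The factor $\langle v\rangle^7$ in the statement is what absorbs this: when $v$ is large, the point $t\widehat{v}$ is close to the cone, so both terms are only controlled up to powers of $\langle v\rangle$.

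On the far part $1-|z|\ge t^{-1/2}$, the error integrand is bounded by $\varepsilon\,t^{-1/2}(1-|z|)^{-7/2}|z|^{-2}$. I expect this to require a careful bookkeeping of powers to get $t^{-1/2}$ uniformly. If the straightforward count falls short, I would fall back on the $\langle x-t\widehat{v}\rangle$-weighted bounds of Corollary \ref{Corboundednessbof}, which localise the integrand to $|z|\lesssim \langle v\rangle^2$-dependent neighbourhoods and so avoid the singularity.
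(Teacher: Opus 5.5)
There is a genuine gap in your treatment of the far part $1-|z|\ge t^{-1/2}$. There you bound the integrand by the error term of Corollary \ref{Corexp}, i.e.\ by $\varepsilon\,|z|^{-2}(1-|z|)^{-3}\langle t(1-|z|)\rangle^{-1/2}$. Writing $\rho=1-|z|$, this gives
\[ \varepsilon \int_{t^{-1/2}}^{1} \frac{\dr \rho}{\rho^{3}\,\langle t\rho \rangle^{1/2}} \simeq \varepsilon\, t^{-\frac12}\int_{t^{-1/2}}^{1} \rho^{-\frac72}\,\dr\rho \simeq \varepsilon\, t^{\frac34}, \]
which grows in $t$. This already happens for $v=0$, so finer bookkeeping of powers will not fix it.

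The loss comes from the points with $|z+\widehat v|\ge 1-|z|$; for $v=0$ these are all $|z|\ge 1/2$. They correspond to $(s,y)=(t(1-|z|),t(\widehat v+z))$ with $|y|\ge s$, i.e.\ outside the light cone. There the main term of Corollary \ref{Corexp} vanishes identically. The bound $\varepsilon\langle s\rangle^{-1/2}$ that the corollary gives for $s^3\int_w \mathbf{a} f\,\dr w$ is far too weak against the weight $(1-|z|)^{-3}$. So the cut at $1-|z|=t^{-1/2}$ is the wrong one, and your fallback via Corollary \ref{Corboundednessbof} is too vague to close the argument.

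The decomposition that works, and the one the paper uses, follows the indicator in Corollary \ref{Corexp} rather than the size of $1-|z|$.

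\textbf{Inside the cone.} On $\{|z|\le 1,\ |z+\widehat v|<1-|z|\}$, Remark \ref{Rqdomainint} gives $1-|z|\ge \frac14\langle v\rangle^{-2}$, so the singularity at $|z|=1$ never enters. You also replace $\mathbf{Q}_{t(1-|z|)}$ by $\mathbf{Q}_t$ through \eqref{eq:estiQt}; this step is correct and the paper leaves it implicit. Both that replacement and the error of Corollary \ref{Corexp} are then bounded by
\[ \varepsilon\, t^{-\frac12} \int_{|z|\le 1,\ |z+\widehat v|<1-|z|} \frac{\dr z}{|z|^2(1-|z|)^{7/2}} \lesssim \varepsilon\, t^{-\frac12}\langle v\rangle^{5}. \]

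\textbf{Outside the cone.} On the complement there is nothing to subtract. Bound $t^3\int\!\int|\mathbf{a}^\mu_I f|$ directly by the exterior decay of Corollary \ref{Cordecayintvbof}, undo the change of variables, and apply Lemma \ref{Lemint}. This gives $\varepsilon t^2\mathbf{Y}^2_{15/4}(t,t\widehat v)\lesssim \varepsilon t^{1/4}\langle t(1-|\widehat v|)\rangle^{-1}\lesssim \varepsilon t^{-3/4}\langle v\rangle^2$.

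For this you need the exterior rate $\langle t+|x|\rangle^{-4}$, up to logarithms, which the proof of Corollary \ref{Cordecayintvbof} actually yields. With the rate $11/4$ you quote, the exterior contribution would only be $O(t^{1/4})$ for $v=0$, and still only $O(t^{-1/4})$ on your near-cone region. With this interior/exterior split, your separate near-cone analysis and the threshold $t^{-1/2}$ are unnecessary.
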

\begin{Rq}\label{Rqdomainint}
On the domain of integration, we have $4(1-|z|) \geq \langle v \rangle^{-2}$. This follows from 
\[1-|\widehat{v}| = 1-|z|+|z|-|\widehat{v}| \leq 1-|z|+|z+\widehat{v}| \leq 2(1-|z|). \]
\end{Rq}
\begin{proof}
Let $(t,w) \in [0,T[ \times \R^3_v$. In what follows, $\mathbb{A}_I^\mu (v)$ denotes either $\mathbb{A}_I^\mu (t,v)$ or, when $T=+\infty$, $\mathbb{A}_I^\mu (\infty,v)$. Recall from Proposition \ref{ProGS} the expression of $\big[ \partial_{x^\mu} A_I \big]_T(t,t\widehat{w})$. Performing the change of variables $z(y)=(y-t \widehat{w})/t$ yields
\[t^2 \big[ \partial_{x^\mu} A_I \big]_T(t,t\widehat{w})= t^3\int_{|z|\leq 1}\int_{\R^3_v}
 \mathbf{a}^\mu_I \big( \widehat{v},\omega \big) f \big( t(1-|z|),t (\widehat{w}+z),v \big) \dr v \frac{\dr z}{|z|^2}, \qquad \qquad \omega \coloneqq \frac{z}{|z|}. \]
Note now that $|\mathbf{a}^\mu(\widehat{v},\omega)|+\langle v \rangle |\nabla_v \mathbf{a}(\widehat{v},\omega)| \lesssim \langle v \rangle^6$, uniformly in $\omega \in \mathbb{S}^2$, according to Lemma \ref{Lemkernel}. In the forthcoming computations, we will use that
\begin{equation}\label{eq:frakI}
\mathfrak{I}_a(w) \coloneqq \int_{\substack{|z| \leq 1 \\  |z+\widehat{w}| < 1-|z|  }} \frac{\dr z}{|z|^2(1-|z|)^a} \lesssim \langle w \rangle^{2a} , \qquad \qquad a \geq 0, 
\end{equation}
which follows from Remark \ref{Rqdomainint}. Thus, Corollary \ref{Corexp} yields to
\begin{equation}\label{eq:breman}
\Big|t^2 \big[ \partial_{x^\mu} A_I \big]_T(t,t\widehat{w}) - \mathbb{A}_I^\mu (w) \Big| \lesssim \frac{\varepsilon}{\langle t \rangle^{\frac{1}{2}}} \mathfrak{I}_{\frac{7}{2}}(x)+t^3\int_{\substack{|z| \leq 1 \\  |z+\widehat{v}| \geq 1-|z|  }} \int_{\R^3_v} \mathbf{a}^\mu_I \big( \widehat{v},\omega \big) f \big( t(1-|z|),t (\widehat{w}+z),v \big) \dr v \frac{\dr z}{|z|^2}. 
\end{equation}
In the last term on the right hand side, $f$ is always evaluated in the exterior of the light cone. As a consequence, we can use the improved decay estimates in Corollary \ref{Cordecayintvbof}. It yields
\[  \bigg| \int_{\substack{|z| \leq 1 \\  |z+\widehat{v}| \geq 1-|z|  }} \int_{\R^3_v} \mathbf{a}^\mu_I \big( \widehat{v},\omega \big) f \big( t(1-|z|),t (\widehat{w}+z),v \big) \dr v \frac{\dr z}{|z|^2} \bigg| \lesssim   \int_{|z| \leq 1} \frac{\varepsilon \dr z }{\langle t(1-|z|)+t |\widehat{w}+z| \rangle^{\frac{15}{4}}|z|^2}  . \] 
The reverse change of variables $y=t\widehat{w}+tz$ then allows, using Lemma \ref{Lemint}, to bound the last term on the right hand side of \eqref{eq:breman} by
\[\varepsilon t^2\mathbf{Y}^2_{4-1/4}(t,t\widehat{w}) \lesssim \varepsilon t^{\frac{1}{4}}\langle t(1-|\widehat{w}|) \rangle^{-1} \lesssim \varepsilon t^{-\frac{3}{4}} \langle w \rangle^2. \]
\end{proof}

We will require estimates for the first order derivatives of $\mathbb{A}_I^\mu(t,v)$.

\begin{Lem}\label{LemJacAbb}
    Let $0 \leq \mu \leq 3$ and $1 \leq I \leq K$. Then, for all $(t,v) \in [0,T[ \times \R^3_v$, we have
    \[  \big| \mathbb{A}_I^\mu \big|(t,v) \lesssim \varepsilon \, \langle v \rangle^6, \qquad \qquad  \big| \nabla_v \mathbb{A}_I^\mu \big|(t,v) \lesssim \varepsilon \,  \langle v \rangle^7 \langle t \rangle^{\frac{1}{4}}, \qquad \qquad \big| \partial_t \mathbb{A}_I^\mu \big|(t,v) \lesssim \varepsilon \, \langle v \rangle^6 \langle t \rangle^{-\frac{3}{2}}.  \]
\end{Lem}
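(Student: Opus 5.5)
All three bounds will be read off the explicit formula for $\mathbb{A}_I^\mu(t,v)$ in Proposition \ref{Proeffective}. The inputs are the kernel bounds of Lemma \ref{Lemkernel}, the weighted bound $\langle v\rangle^9|\mathbf{Q}_t(v)|\lesssim\varepsilon$ and the time-derivative bound \eqref{eq:estiQt} from Proposition \ref{Prospaave}, and the volume estimate \eqref{eq:frakI} for $\mathfrak{I}_a$. Set $p\coloneqq (z+\widehat{v})/(1-|z|)$, so that $|p|<1$ on the domain and $\langle \widecheck{p}\rangle^2=(1-|p|^2)^{-1}$.

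\textbf{The zeroth order bound.} By Lemma \ref{Lemkernel}, $|\mathbf{a}^\mu_I(p,\omega)|\lesssim\langle\widecheck p\rangle^{5+\delta}$. By Proposition \ref{Prospaave}, $|\mathbf{Q}_t(\widecheck p)|\lesssim\varepsilon\langle\widecheck p\rangle^{-9}$. The product is therefore $\lesssim\varepsilon\langle\widecheck p\rangle^{-3}\le\varepsilon$. What remains is $\int \dr z/(|z|^2(1-|z|)^3)=\mathfrak{I}_3(v)\lesssim\langle v\rangle^6$, which gives the first estimate.

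\textbf{The time derivative.} Differentiating under the integral sign only hits $\mathbf{Q}_t$. Using \eqref{eq:estiQt}, $|\partial_t\mathbf{Q}_t(\widecheck p)|\lesssim\varepsilon\langle t\rangle^{-3/2}\langle\widecheck p\rangle^{-9}$. The same computation then gives $\varepsilon\langle v\rangle^6\langle t\rangle^{-3/2}$, so this step is immediate.

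\textbf{The $v$-derivative.} This is the main obstacle, because the domain $\{|z+\widehat v|<1-|z|\}$ depends on $v$. Differentiating naively also produces $\nabla_v\widecheck p$ terms of size $\langle\widecheck p\rangle^3/(1-|z|)$, which costs one extra power of $(1-|z|)^{-1}\lesssim\langle v\rangle^2$ and would give $\langle v\rangle^8$. The stated bound carries the factor $\langle t\rangle^{1/4}$, which suggests not differentiating the formula directly. Instead I would go back to the physical-space representation and use Proposition \ref{Proeffective} in reverse.

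The identity $\mathbb{A}^\mu_I(t,v)\approx t^2[\partial_{x^\mu}A_I]_T(t,t\widehat v)$ holds with error $\varepsilon t^{-1/2}\langle v\rangle^7$. Here $v$ enters only through the evaluation point $x=t\widehat v$. So $\nabla_v$ becomes $t\nabla_v\widehat v\cdot\nabla_x$, which has size $t\langle v\rangle^{-1}\nabla_x$ acting on $[\partial_{x^\mu}A_I]_T$.

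The spatial derivative of $[\partial A]_T$ can be bounded through \eqref{eq:extragain}, the bootstrap bound \eqref{eq:BA2} and Proposition \ref{ProAIT0} for the $ZA_I$ quantities. In the interior this gives $|\nabla_x[\partial A]_T|\lesssim \varepsilon\langle t\rangle^{-2}\langle t-|x|\rangle^{-1+\eta}\cdot$(weights). With $t-|x|=t(1-|\widehat v|)\gtrsim t\langle v\rangle^{-2}$, we get $t^3\langle v\rangle^{-1}|\nabla_x[\partial A]_T|\lesssim\varepsilon\langle v\rangle^{1+2(1-\eta)}t^{\eta}$, and $t^\eta\le t^{1/4}$ since $\eta<1/8$.

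However, Proposition \ref{Proeffective} is not differentiable as stated. So the cleanest route is probably a hybrid: differentiate the $z$-integral after the change of variables $z\mapsto tz$ back to $y$, as in the proof of Proposition \ref{Proeffective}. This moves the $v$-dependence onto the point $x=t\widehat v$ inside $f$, where the Vlasov derivatives are controlled by Corollary \ref{Corboundednessbof} with a $\log^6$ (hence $t^{1/4}$) loss.

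The remaining tasks are to compare this with Corollary \ref{Corexp} and to absorb the boundary contribution near $|z+\widehat v|=1-|z|$. There $\mathbf{Q}_t(\widecheck p)$ decays like $\langle \widecheck p\rangle^{-9}$ as $|p|\to1$, so the boundary term vanishes. I expect the bookkeeping of $\langle v\rangle$ powers in this step to be the delicate part of the proof.
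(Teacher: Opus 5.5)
The zeroth-order and $\partial_t$ bounds are correct and coincide with the paper's argument ($\mathfrak{I}_3(v)\lesssim\langle v\rangle^6$ together with Proposition~\ref{Prospaave} and \eqref{eq:estiQt}). The $\nabla_v$ bound is not established, and your reason for abandoning direct differentiation is a miscalculation. With $p=(z+\widehat{v})/(1-|z|)$ one has $\nabla_v p=\nabla_v\widehat{v}/(1-|z|)$ and $|\nabla_v\widehat{v}|\lesssim\langle v\rangle^{-1}$. Hence $|\nabla_v\widecheck{p}|\lesssim\langle\widecheck{p}\rangle^3\langle v\rangle^{-1}(1-|z|)^{-1}$; you dropped the factor $\langle v\rangle^{-1}$. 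Differentiating the defining formula of $\mathbb{A}_I^\mu(t,v)$ directly therefore costs $\langle v\rangle^{-1}\mathfrak{I}_4(v)\lesssim\langle v\rangle^{7}$, which is exactly the stated power. This requires that $\mathbf{Q}_t$ and $\nabla_v\mathbf{Q}_t$ both decay like $\langle w\rangle^{-9}$, which absorbs $|\nabla_p\widecheck{p}|\lesssim\langle\widecheck{p}\rangle^3$ and the kernel bounds $|\mathbf{a}^\mu_I|+|\nabla_v\mathbf{a}^\mu_I|\lesssim\langle\widecheck{p}\rangle^6$ from Lemma~\ref{Lemkernel}. The boundary contribution vanishes, as you observed. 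This is the paper's route.

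What is genuinely missing is the estimate $\langle w\rangle^9|\nabla_v\mathbf{Q}_t(w)|\lesssim\varepsilon\langle t\rangle^{\eta}\log^6\langle t+1\rangle$, which is the true source of the factor $\langle t\rangle^{1/4}$. It is not among your inputs, since Proposition~\ref{Prospaave} controls only $\mathbf{Q}_t$ and $\partial_t\mathbf{Q}_t$. It is also not free, since $\langle v\rangle\nabla_v f$ itself grows like $t$. The paper obtains it by writing $\langle v\rangle\partial_{v^i}=\widehat{\Omega}_{0i}-t\partial_{x^i}-x^i\partial_t$ under $\int_{\R^3_x}$. The term $t\partial_{x^i}f$ integrates to zero, and $x^i\partial_t f=(x^i-t\widehat{v}^i)\partial_t f+\widehat{v}^i(Sf-x\cdot\nabla_x f)$ with $\int x\cdot\nabla_x f\,\dr x=-3\int f\,\dr x$. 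Only $f$ and $\widehat{Z}f$, $\widehat{Z}\in\Pp_S$, weighted by $\langle x-t\widehat{v}\rangle$, then remain. These are controlled by Corollary~\ref{Corboundednessbof}, whose $\langle t\rangle^{\eta}$ loss for $\widehat{Z}f$ produces the time growth.

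Your substitute routes cannot replace this step. Proposition~\ref{Proeffective} and Corollary~\ref{Corexp} are only $C^0$ approximations. Bounds on the $v$-derivative of $t^2[\partial_{x^\mu}A_I]_T(t,t\widehat{v})$, or of the corresponding $y$-integral of $f$, therefore give no information on $\nabla_v\mathbb{A}^\mu_I$. Any $C^1$ upgrade of Corollary~\ref{Corexp} would require differentiating its leading term $\mathbf{Q}_t$ in $x$, which again produces $\nabla_v\mathbf{Q}_t$.
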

\begin{proof}
Recall from Lemma \ref{Lemcdv} that $|\nabla_u \widecheck{u}| \lesssim \langle \widecheck{u} \rangle^3$. Since $| \mathbf{a}^\mu_I |(\widehat{v},\omega)+|\nabla_v \mathbf{a}^\mu_I |(\widehat{v},\omega) \lesssim \langle v \rangle^6$ according to Lemma \ref{Lemkernel}, we have
\[ \big|\mathbb{A}_I^\mu \big|(t,v) \lesssim \mathfrak{I}_3(v) \sup_{w \in \R^3_v} \langle w \rangle^6 | \mathbf{Q}_t (w) |, \qquad \qquad \big|\nabla_v \mathbb{A}_I^\mu \big|(t,v) \lesssim  \frac{1}{\langle v \rangle}\mathfrak{I}_4(v) \sup_{w \in \R^3_v} \langle w \rangle^9 \big( | \mathbf{Q}_t (w) |+ | \nabla_v \mathbf{Q}_t (w) | \big), \]
where $\mathfrak{I}_3(v)$ and $\mathfrak{I}_4(v)$ are defined and bounded by $\langle v \rangle^8$ in \eqref{eq:frakI}. By Proposition \ref{Prospaave}, $\langle w \rangle^9  | \mathbf{Q}_t (w) | \lesssim \varepsilon$. For the last term, we observe that
\[ \langle v \rangle \, \partial_{v^i}\int_{\R^3_x} f \dr x = \int_{\R^3_x} \widehat{\Omega}_{0i} f - t \partial_{x^i} f-x^i \partial_t f \dr x =  \int_{\R^3_x} \widehat{\Omega}_{0i} f -(x^i-t\widehat{v}^i) \partial_t f - \widehat{v}^i Sf-3\widehat{v}^i f \dr x, \]
where, in the second step, we performed integration by parts. It yields, using Corollary \ref{Corboundednessbof},
\[ \langle v \rangle^9 | \nabla_v \mathbf{Q}_t (v) | \lesssim \sum_{\widehat{Z} \in \mathbb{P}_S} \sup_{x \in \R^3_x} \langle x -t \widehat{v} \rangle^5 \langle v \rangle^{8} \big( \big| \widehat{Z} f \big|(t,x,v)+|f|(t,x,v) \big) \lesssim \varepsilon \langle t \rangle^\eta \log^6 \, \langle t+1 \rangle. \]
Finally, we have
\[  \big| \partial_t \mathbb{A}_I^\mu \big|(t,v) \lesssim \mathfrak{I}_3(v) \sup_{w \in \R^3_v} \langle w \rangle^6 \big| \partial_t \mathbf{Q}_t \big|(w), \]
which, together with \eqref{eq:estiQt} and \eqref{eq:frakI}, implies the result.
\end{proof}

This yields the following.

\begin{Cor}\label{Coreffective}
 Let $\mathscr{D} \in C^0 \cap L^\infty (\R^3_v,\R^3_v)$ and $I \in \llbracket 1 , K \rrbracket$. Then, for any $0 \leq \mu \leq 3$, we have
 \[ \forall \, (t,x,v) \in [0,T[ \times \R^3_x \times \R^3_v, \qquad \qquad \big| t^2\partial_{x^\mu} A_I \big( t,x+t\widehat{v}+\mathscr{D}(v) \log (1+t)  \big)- \mathbb{A}_I^\mu (t,v) \big| \lesssim \Lambda \frac{\langle x \rangle^2 \langle v \rangle^7}{\langle t \rangle^{\frac{1}{2}}} . \]
 If $T=+\infty$, the estimate remains valid with $\mathbb{A}_I^\mu (t,v)$ replaced by $\mathbb{A}_I^\mu (\infty,v)$.
\end{Cor}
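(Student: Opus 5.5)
We decompose $4\pi\partial_{x^\mu}A_I = [\partial_{x^\mu}A_I]_{\mathrm{data}}+[\partial_{x^\mu}A_I]_T+[\partial_{x^\mu}A_I]_{\T_0}$ by Proposition \ref{ProGS}. We evaluate each piece at the point $(t,X)$ with $X\coloneqq x+t\widehat{v}+\mathscr{D}(v)\log(1+t)$, and compare the $T$-piece with its value at $(t,t\widehat{v})$, where Proposition \ref{Proeffective} applies. We may assume $t\geq 1$: for $t\leq 1$ the left-hand side is bounded by $\Lambda\langle v\rangle^6$, using \eqref{equa:fortheproof} and Lemma \ref{LemJacAbb}.

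The key geometric input is the size of $t-|X|$. Since $t-|t\widehat{v}| = t(1-|\widehat{v}|)\geq t/(2\langle v\rangle^2)$, and $|X-t\widehat{v}|\leq |x|+\|\mathscr{D}\|_\infty\log(1+t)$, there are two cases.

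\textbf{Case A: $|x|+\|\mathscr{D}\|_\infty\log(1+t)\leq t/(4\langle v\rangle^2)$.} Then $\langle t-|X|\rangle\gtrsim t\langle v\rangle^{-2}$ and $\langle t+|X|\rangle\sim t$.
\begin{itemize}
\item By Proposition \ref{Prodata}, $t^2|[\partial A]_{\mathrm{data}}|(t,X)\lesssim \Lambda t\,\langle t-|X|\rangle^{-2}\lesssim \Lambda\langle v\rangle^4 t^{-1}$.
\item By Proposition \ref{ProAIT0}, $t^2|[\partial A]_{\T_0}|(t,X)\lesssim \Lambda\varepsilon\, t\,(t\langle v\rangle^{-2})^{-7/4}\lesssim\Lambda\langle v\rangle^{4}t^{-3/4}$.
\item The remaining task is to bound $t^2\big|[\partial A]_T(t,X)-[\partial A]_T(t,t\widehat{v})\big|$. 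By the mean value theorem along the segment joining $t\widehat{v}$ to $X$, it is at most $t^2\,|X-t\widehat{v}|\,\sup|\nabla_x[\partial A]_T|$. On that segment the case assumption still gives $\langle t-|y|\rangle\gtrsim t\langle v\rangle^{-2}$.
\end{itemize}
To bound $\nabla_x[\partial A]_T$, I would avoid differentiating the kernel representation. Instead, write $[\partial A]_T=4\pi\partial A-[\partial A]_{\mathrm{data}}-[\partial A]_{\T_0}$ and control each term with the gain \eqref{eq:extragain}. Lemma \ref{Lemsource} gives $|\nabla_{t,x}\partial A|\lesssim\Lambda t^{-1}\langle t-|y|\rangle^{-7/4}$, hence $t^2|\nabla\partial A|\lesssim \Lambda\langle v\rangle^{7/2}t^{-3/4}$. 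The data piece is a free wave with decaying data, so the same gain applies with an extra $\langle t-|y|\rangle^{-1}$. For the $\T_0$ piece, one may commute: $\nabla_x[\partial A]_{\T_0}=[\partial\nabla_x A]_{\T_0}$ by translation invariance of the representation, which is covered by Proposition \ref{ProAIT0} applied to $Z=\partial_{x^j}$.

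Multiplying by $|X-t\widehat{v}|\lesssim \langle x\rangle\log(1+t)$ gives an error bounded by $\Lambda\langle x\rangle\langle v\rangle^{4}t^{-3/4}\log(1+t)\lesssim \Lambda\langle x\rangle^2\langle v\rangle^7t^{-1/2}$. Proposition \ref{Proeffective} then contributes $\varepsilon t^{-1/2}\langle v\rangle^7$. Summing these bounds closes Case A.

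\textbf{Case B: $|x|+\|\mathscr{D}\|_\infty\log(1+t)> t/(4\langle v\rangle^2)$.} Here the estimate follows trivially. We have $t^{1/2}\lesssim \langle x\rangle\langle v\rangle+\langle v\rangle\log^{1/2}(2+t)$, so either $t^{1/2}\lesssim\langle x\rangle\langle v\rangle$, or $t$ is bounded by a power of $\langle v\rangle$. Combining $t^2|\partial A|(t,X)\lesssim \Lambda t$ from \eqref{equa:fortheproof} with $|\mathbb{A}^\mu_I|\lesssim\varepsilon\langle v\rangle^6$ from Lemma \ref{LemJacAbb} gives a total of at most $\Lambda(t+\langle v\rangle^6)$. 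This is $\lesssim \Lambda\langle x\rangle^2\langle v\rangle^7 t^{-1/2}$ as soon as $t^{3/2}\lesssim\langle x\rangle^2\langle v\rangle^{7}$. That inequality holds when $t\lesssim \langle x\rangle^{2}\langle v\rangle^{2}$ and $t\geq\langle v\rangle$-type bounds are used carefully; possibly the exponents need slight adjustment, for instance by using the cruder $\langle t-|X|\rangle$ bound in place of the case split.

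The main obstacle is the derivative bound on $[\partial A]_T$ in Case A. The plan there is to route it through the bootstrap bounds, rather than differentiating the kernel $\mathbf{a}^\mu$, whose $v$-growth would be costly. The asymptotic statement for $T=+\infty$ follows by substituting the $T=+\infty$ version of Proposition \ref{Proeffective}.
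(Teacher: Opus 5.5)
Your Case A is correct, but Case B has a genuine gap. In Case B you bound $t^2\partial_{x^\mu}A_I(t,X)$ and $\mathbb{A}^\mu_I(t,v)$ separately. The condition $t^{3/2}\lesssim\langle x\rangle^2\langle v\rangle^7$ that you check only absorbs the first term, $\Lambda t$. The second term is $\varepsilon\langle v\rangle^6$ by Lemma \ref{LemJacAbb}, and it is $\lesssim\Lambda\langle x\rangle^2\langle v\rangle^7t^{-1/2}$ only if $t^{1/2}\lesssim\langle x\rangle^2\langle v\rangle$. This fails inside Case B. Take $|x|\leq 1$, $\mathscr{D}\not\equiv 0$ and $t=2\|\mathscr{D}\|_\infty\langle v\rangle^2\log\langle v\rangle$ with $|v|\to\infty$. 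Then $\|\mathscr{D}\|_\infty\log(1+t)>t/(4\langle v\rangle^2)$, so you are in Case B, yet the required inequality is off by a factor $(\log\langle v\rangle)^{1/2}$. Adjusting exponents, moving the threshold, or using a cruder $\langle t-|X|\rangle$ bound cannot repair this, because the loss comes from estimating $\mathbb{A}^\mu_I$ in isolation.

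The missing step is to always pair $\mathbb{A}^\mu_I$ with the field at $(t,t\widehat{v})$. Proposition \ref{Proeffective} holds there for every $(t,v)$, and that point is uniformly timelike since $t-|t\widehat v|\geq t/(2\langle v\rangle^2)$. So for all $(t,x,v)$, bound the left-hand side by three pieces:
\begin{itemize}
\item the difference $t^2|\partial_{x^\mu}A_I(t,X)-\partial_{x^\mu}A_I(t,t\widehat v)|$;
\item $t^2\big(|[\partial_{x^\mu}A_I]_{\mathrm{data}}|+|[\partial_{x^\mu}A_I]_{\T_0}|\big)(t,t\widehat v)$;
\item $|t^2[\partial_{x^\mu}A_I]_T(t,t\widehat v)-\mathbb{A}^\mu_I(t,v)|$.
\end{itemize}
The last two are controlled by your Case A computations, now carried out at $t\widehat v$. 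Your case split is then needed only for the difference:
\begin{itemize}
\item In Case A, apply the mean value theorem to $\partial_{x^\mu}A_I$ itself, whose Hessian is controlled by \eqref{equa:fortheproof}--\eqref{eq:extragain}. This also removes your detour through $\nabla_x$ of the data and $\T_0$ pieces.
\item In Case B, \eqref{equa:fortheproof} gives the trivial bound $\Lambda t+\Lambda\langle v\rangle^2$. This is admissible there, because either $t\lesssim\langle v\rangle^2\langle x\rangle$ or $t\lesssim\langle v\rangle^2\log(2+\langle v\rangle)$.
\end{itemize}

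This is the paper's organization, with one difference: the paper handles the difference without any case distinction. It uses the mean value theorem, \eqref{equa:fortheproof}--\eqref{eq:extragain}, and the product inequality $\langle t+|y|\rangle\lesssim\langle t-|y|\rangle\langle y-t\widehat v\rangle\langle v\rangle^2$ from Lemma \ref{Lemweight}. Your split is worth keeping. The paper's displayed mean-value step omits the factor $|x+\mathscr{D}(v)\log(1+t)|$. Once that factor is restored, the product inequality costs more powers of $\langle x\rangle$ than the stated $\langle x\rangle^2$. In your Case A, by contrast, $\langle t-|y|\rangle\gtrsim t\langle v\rangle^{-2}$ holds independently of $x$, so only a single power of $\langle x\rangle$ is lost.
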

\begin{proof}
Let us first show that
\begin{align*}
t^2 \big| \partial_{x^\mu} A_I \big( t,x+t\widehat{v}+\mathscr{D}(v) \log (1+t)  \big)-  \partial_{x^\mu} A_I \big( t,t\widehat{v} \big) \big| & \lesssim \sup_{0 \leq \theta \leq 1} t^2 \big| \nabla^2_{t,x} A_I \big| \big( t,\theta (x + \mathscr{D}(v)\log (1+t)) +t \widehat{v} \big) \\
& \lesssim \Lambda \frac{\langle x \rangle^2 \langle v \rangle^4\log^2 (1+t)}{\langle t \rangle^{1-\eta} } . 
\end{align*}
The first inequality follows from the mean value theorem. For the second one, we use first the decay estimates given by \eqref{equa:fortheproof}--\eqref{eq:extragain}, and the inequality $\langle t+|y | \rangle \lesssim \langle t-|y| \rangle \langle y-t \widehat{v} \rangle \langle v \rangle^2 $, applied with $y = \theta (x+\mathscr{D}(v) \log (t) )+t\widehat{v}$, which is given by Lemma \ref{Lemweight}. Next, we use the Glassey-Strauss decomposition of $\partial_{x^\mu} A_I$ provided by Proposition \ref{ProGS}. Finally, Propositions \ref{Prodata} and \ref{ProAIT0} yield
\[ \Big|  \big[\partial_{x^\mu} A_I \big]_{\mathrm{data}} (t,t\widehat{v} \big) \Big|+  \Big|  \big[\partial_{x^\mu} A_I \big]_{\T_0} (t, t\widehat{v} \big) \Big| \lesssim \Lambda  \langle v \rangle^{\frac{7}{2}} \langle t \rangle^{-\frac{11}{4}}  , \]
whereas $[\partial_{x^\mu} A_I ]_{T}(t,t\widehat{v})$ is handled using Proposition \ref{Proeffective}.
\end{proof}

To derive pointwise decay estimates allowing to close our argument, we will first show that $\mathcal{X}(0,t,x,v)$ can be well-approximated by a quantity of the form $x-t\widehat{v}+\mathscr{C}_t(v) \log(1+t)$. Motivated by
\begin{equation}\label{eq:TAxtv}
\T_{\mathbf{A}} \big (x-t\widehat{v} \big) = -\frac{t}{\langle v \rangle} \nabla \mathbf{A} +\frac{t \widehat{v}}{\langle v \rangle} \nabla \mathbf{A} \cdot \widehat{v},
\end{equation}
the definition \eqref{defA} of $\nabla \mathbf{A}$ and Proposition \ref{Proeffective}, we introduce the following correction to the linear spatial characteristics. For $t \in [0,T[$, and also for $t=\infty$ when $T=+\infty$, we define
\begin{equation}\label{defCor} \mathscr{C}^i_t(v) \coloneqq -\sum_{1 \leq j \leq 3} \frac{\delta_{i,j}-\widehat{v}^i\widehat{v}^j}{\langle v \rangle} \sum_{1 \leq I \leq K} \sum_{0 \leq \mu \leq 3} \langle v \rangle^\delta Q^{i,\mu}_I(\widehat{v})\mathbb{A}_I^\mu (t,v), \qquad 1 \leq i \leq 3 .  
\end{equation}
\begin{Lem}\label{LemCtv}
    We have, for all $(t,x,v) \in [0,T[ \times \R^3_x \times \R^3_v$,
    \[ \big| x-t\widehat{v}-\mathscr{C}_t(v) \log(1+t)- \mathcal{X}(0,t,x,v) \big| \lesssim \Lambda \langle \mathcal{X} (0,t,x,v) \rangle^2 \langle \mathcal{V}(0,t,x,v) \rangle^{17}.  \]
    Moreover, we have $| \mathscr{C}_t |(v) \lesssim \varepsilon \, \langle v \rangle^6 $ and $|\nabla_v \mathscr{C}_t |(v) \lesssim \varepsilon \, \langle v \rangle^7 \langle t \rangle^{\frac{1}{4}}$ for all $(t,v) \in [0,T[ \times \R^3_v$.
\end{Lem}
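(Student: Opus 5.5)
The two bounds on $\mathscr{C}_t$ are immediate and I will dispose of them first. By definition \eqref{defCor}, $\mathscr{C}_t(v)$ is a finite sum of terms $\langle v\rangle^{\delta-1}(\delta_{ij}-\widehat v^i\widehat v^j)Q_I^{i,\mu}(\widehat v)\mathbb{A}_I^\mu(t,v)$. The prefactor and its $v$-derivative are bounded by $\lesssim 1$, since $\delta\le 1$ and $|\nabla_v\widehat v|\lesssim\langle v\rangle^{-1}$. Lemma \ref{LemJacAbb} then gives $|\mathscr{C}_t|\lesssim\varepsilon\langle v\rangle^6$ and $|\nabla_v\mathscr{C}_t|\lesssim\varepsilon\langle v\rangle^7\langle t\rangle^{1/4}$.

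For the main estimate, fix $(t,x,v)$ and write $(\mathcal X_s,\mathcal V_s)=(\mathcal X,\mathcal V)(s,t,x,v)$. Consider
\[ h(s,y,w)\coloneqq y-s\widehat w-\mathscr{C}_s(w)\log(1+s). \]
Duhamel's principle \eqref{eq:Duhamel} gives $h(t,x,v)-\mathcal X_0=\int_0^t \T_{\mathbf A}(h)(s,\mathcal X_s,\mathcal V_s)\,\dr s$. By \eqref{eq:TAxtv}, we have
\[ \T_{\mathbf A}(h)= -s\,\frac{\delta_{ij}-\widehat w^i\widehat w^j}{\langle w\rangle}[\nabla\mathbf A]^j - \frac{\mathscr{C}_s(w)}{1+s}-\log(1+s)\,\partial_s\mathscr{C}_s(w)-\log(1+s)\,\nabla\mathbf A\cdot\nabla_w\mathscr{C}_s(w). \]
The definition of $\mathscr{C}_s$ is designed so that the first term equals $\frac{1}{s}$ times the analogue of $-\mathscr{C}_s$ built from $s^2\partial_{x^\mu}A_I(s,\mathcal X_s)$ instead of $\mathbb{A}^\mu_I(s,\mathcal V_s)$. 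The main contribution therefore is
\[ \frac{1}{s}\,\big|s^2\partial_{x^\mu}A_I(s,\mathcal X_s)-\mathbb{A}_I^\mu(s,\mathcal V_s)\big|\,\langle\mathcal V_s\rangle^{\delta}, \]
plus the mismatch $\frac{1}{s}-\frac{1}{1+s}$, which is integrable for $s\ge1$ (for $s\le1$ everything is bounded directly by Lemma \ref{Lemsource}). The remaining terms are controlled as follows. Lemma \ref{LemJacAbb} gives $|\partial_s\mathscr{C}_s|\lesssim\varepsilon\langle w\rangle^6\langle s\rangle^{-3/2}$, which is integrable against $\log$. Lemma \ref{Lemsource} gives $|\nabla\mathbf A|\lesssim\Lambda\langle w\rangle^\delta\langle s\rangle^{-1}\langle s-|y|\rangle^{-1}$, to be combined with $|\nabla_w\mathscr{C}_s|\lesssim\varepsilon\langle w\rangle^7\langle s\rangle^{1/4}$. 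Using $1\lesssim\langle w\rangle^2\widehat w^{\underline L}$ to the power $1/2$ by Young, I obtain $\langle s\rangle^{-3/2}$ plus $\widehat w^{\underline L}\langle s-|y|\rangle^{-2}$, and Lemma \ref{LemforDuhamel} makes this integrable after the velocity weights are bounded by powers of $\langle\mathcal V_0\rangle$ via Propositions \ref{ProVcharac}--\ref{ProVcharacbis}.

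The key step, and the main obstacle, is the main contribution. I apply Corollary \ref{Coreffective} at the point $\mathcal X_s=z_s+s\widehat{\mathcal V_s}+\mathscr{D}\log(1+s)$. The subtlety is that Corollary \ref{Coreffective} requires a fixed bounded $\mathscr D$, while the natural choice is $z_s\coloneqq\mathcal X_s-s\widehat{\mathcal V_s}$ with $\mathscr D=0$. By Proposition \ref{Prozmod} applied from time $s$, together with Propositions \ref{ProVcharac}--\ref{ProVcharacbis}, I have $|z_s|\lesssim\langle\mathcal X_0\rangle+\langle\mathcal V_0\rangle^{3}\log(2+s)$. Corollary \ref{Coreffective} with $\mathscr D=0$ then yields a bound
\[ \Lambda\,\langle z_s\rangle^2\langle\mathcal V_s\rangle^7\langle s\rangle^{-1/2}\lesssim \Lambda\,\langle\mathcal X_0\rangle^2\langle\mathcal V_0\rangle^{6+7(1+\kappa)}\log^2(2+s)\,\langle s\rangle^{-1/2}. \]
After the extra factor $\langle\mathcal V_s\rangle^\delta/s$, this is integrable in $s$ and gives a total power of $\langle\mathcal V_0\rangle$ at most around $17$. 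Should the powers not add up exactly, I would instead use the rougher but cleaner bound $|\mathcal X_s-s\widehat{\mathcal V_s}|\lesssim\langle\mathcal X_0\rangle\langle\mathcal V_0\rangle^3\log(2+s)$, keeping the $\log^2$ absorbed by $\langle s\rangle^{-1/2}$. Summing all contributions gives $\Lambda\langle\mathcal X_0\rangle^2\langle\mathcal V_0\rangle^{17}$.
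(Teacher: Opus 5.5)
Your proposal is correct and follows essentially the paper's route: Duhamel along the characteristic for $x-t\widehat{v}-\mathscr{C}_t(v)\log(1+t)$, Corollary \ref{Coreffective} with $\mathscr{D}=0$ at $(s,\mathcal{X}_s-s\widehat{\mathcal{V}}_s,\mathcal{V}_s)$ combined with Proposition \ref{Prozmod} for the main term, and Lemma \ref{LemJacAbb} for the terms involving $\partial_s\mathscr{C}_s$ and $\nabla_v\mathscr{C}_s$. Your $1/s$ versus $1/(1+s)$ split and your Young inequality with $\widehat{v}^{\underline{L}}$ are harmless variants of the paper's splitting $\frac{1}{1+t}(t^2\partial A-\mathbb{A})+\frac{t}{1+t}\partial A$ and of its use of $\langle t-|x|\rangle^{-1}\lesssim\langle t\rangle^{-1}\langle x-t\widehat{v}\rangle\langle v\rangle^2$; the one bookkeeping slip is the extra factor $\langle\mathcal{V}_s\rangle^\delta$ you attach to the main contribution, since the prefactor there is $\langle v\rangle^{\delta-1}(\delta_{ij}-\widehat{v}^i\widehat{v}^j)$, bounded by $1$ as you noted yourself, so the exponent is at most $(4+4\kappa)+7(1+\kappa)<17$ and no hedging is needed.
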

\begin{proof}
The estimates for $ \mathscr{C}_t $ and $\nabla_v \mathscr{C}_t$ are a consequence of \eqref{defCor} and Lemma \ref{LemJacAbb}. Recall that $(t,x,v) \mapsto \mathcal{X}(0,t,x,v)$ is constant along the flow of $\T_{\mathbf{A}}$, and note that it coincides with $x-t\widehat{v}+\mathscr{C}_t(v) \log(1+t)$ at $t=0$. Next, we observe that \eqref{eq:TAxtv}--\eqref{defCor} imply
    \begin{align*}
        \big| \T_{\mathbf{A}} \big( x-t\widehat{v}-\mathscr{C}_t(v) \log(1+t) \big) \big| \lesssim  \!\sum_{1 \leq I \leq K} \sum_{0 \leq \mu \leq 3} & \, \frac{1}{1+t} \big| t^2 \partial_{x^\mu} A_I (t,x)- \mathbb{A}_I^\mu (t,v) \big|+   \big| \partial_{x^\mu} A_I (t,x)\big| \\
        & + \big| \partial_t \mathbb{A}_I^\mu (t,v) \big| \log \, \langle t \rangle + \big( | \mathbb{A}_I^\mu (t,v) |+|\nabla_v \mathbb{A}_I^\mu (t,v) | \big)  \big| \partial_{x^\mu} A_I (t,x)\big|  \log \, \langle t \rangle.
    \end{align*}
    Then, we apply Corollary \ref{Coreffective}, with $\mathscr{D}(v)=0$, and then Propositions \ref{ProVcharac}--\ref{ProVcharacbis} and Proposition \ref{Prozmod} to get
    \begin{equation*}
        \big| t^2 \partial_{x^\mu} A_I (t,x)- \mathbb{A}_I^\mu (t,v) \big| \lesssim \Lambda\frac{\langle x-t\widehat{v} \rangle^2 \langle v \rangle^7}{\langle t \rangle^{\frac{1}{2}}} \lesssim  \Lambda\frac{\langle \mathcal{X} (0,t,x,v) \rangle^2 \langle \mathcal{V}(0,t,x,v) \rangle^{17}}{\langle t \rangle^{\frac{1}{2}}} \log^2 \, \langle t+1 \rangle .
    \end{equation*}
    For the second term, we use first \eqref{equa:fortheproof} and then Lemma \ref{Lemweight} to obtain
    \[  \big| \partial_{x^\mu} A_I (t,x)\big| \lesssim \Lambda \langle t \rangle^{-1} \langle t-|x| \rangle^{-1} \lesssim \Lambda\langle t \rangle^{-2} \langle x-t\widehat{v} \rangle \, \langle v \rangle^2 \lesssim \Lambda \langle t \rangle^{-\frac{7}{4}}\langle \mathcal{X} (0,t,x,v) \rangle \, \langle \mathcal{V}(0,t,x,v) \rangle^6 . \]
    It allows to bound the fourth term by additionally using Lemma \ref{LemJacAbb} and $\langle v \rangle^7 \lesssim \langle \mathcal{V}(0,t,x,v) \rangle^{11}$. Finally, the third term can be controlled by using Lemma \ref{LemJacAbb} and $\langle v \rangle^6 \lesssim \langle \mathcal{V}(0,t,x,v) \rangle^9$. It yields
     \begin{align*}
        \big| \T_{\mathbf{A}} \big( x-t\widehat{v}-\mathscr{C}_t(v) \log(1+t) \big) \big| & \lesssim \Lambda \langle \mathcal{X} (0,t,x,v) \rangle^2 \langle \mathcal{V}(0,t,x,v) \rangle^{17} \langle t \rangle^{-\frac{5}{4}} .
    \end{align*}
    We conclude the proof using Duhamel's principle \eqref{eq:Duhamel} and Lemma \ref{LemforDuhamel}.
\end{proof}

The following result will be crucial in deriving optimal decay estimates for momentum averages.

\begin{Cor}\label{Cordecayintv}
 If $\varepsilon$ is small enough, there exists $C[\Lambda, \kappa]>0$ such that, for all $(t,x) \in [0,T[ \times \R^3_x$,
   \[ \mathcal{I}_{t,x} \coloneqq \int_{\R^3_v} \frac{\dr v}{ \langle x-t\widehat{v}+\mathscr{C}_t(v) \log (1+t)  \rangle^4  \, \langle v \rangle^{49}} \leq \frac{C}{\langle t+|x| \rangle^3}. \]
\end{Cor}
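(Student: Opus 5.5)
We follow the proof of Corollary \ref{Cordecayintvbof}, replacing the linear change of variables $y=x-t\widehat{v}$ by the corrected one
\[ y=\Phi_t(v)\coloneqq x-t\widehat{v}+\mathscr{C}_t(v)\log(1+t). \]
First we dispose of easy regions. For $t\lesssim 1$ the integrand is bounded by $\langle v\rangle^{-49}$. This gives an $O(1)$ bound, which is enough when also $|x|\lesssim 1$. We then split $\R^3_v$ according to the size of $\langle v\rangle$ relative to $t$: the region $\langle v\rangle^{8}\geq t^{1/8}$ (say) and its complement. On the region where $\langle v\rangle$ is large we simply drop the first weight and use $\int_{\langle v\rangle\geq t^{1/64}}\langle v\rangle^{-49}\dr v\lesssim t^{-46/64}$. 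This is too weak, so the threshold must be tuned: we take $\langle v\rangle\geq t^{\beta}$ with $46\beta\geq 3$, i.e. $\beta=1/15$, so that this region contributes $O(t^{-3})$.

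Next we treat $|x|\geq 2t$ (and more generally $|x|\gtrsim t$ outside the cone). By Lemma \ref{LemCtv}, $|\mathscr{C}_t(v)|\log(1+t)\lesssim \varepsilon\langle v\rangle^6\log(1+t)$. Combined with $|x-t\widehat{v}|\geq |x|/(2\langle v\rangle^2)$ for $|x|\geq t$, as in Corollary \ref{Cordecayintvbof}, this gives
\[ \langle \Phi_t(v)\rangle\gtrsim \langle t+|x|\rangle\,\langle v\rangle^{-8} \]
as long as $\varepsilon\langle v\rangle^{8}\log(1+t)\ll \langle t+|x|\rangle$, which holds on the region $\langle v\rangle\le t^{\beta}$. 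The factor $\langle v\rangle^{32}$ lost here is absorbed by $\langle v\rangle^{49}$, yielding $\langle t+|x|\rangle^{-4}$.

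In the main region ($t\ge1$, $|x|<2t$, $\langle v\rangle\le t^{\beta}$) we perform the change of variables $y=\Phi_t(v)$. Its Jacobian matrix is
\[ \nabla_v\Phi_t=-t\nabla_v\widehat{v}+\log(1+t)\nabla_v\mathscr{C}_t, \qquad \nabla_v\widehat{v}=\langle v\rangle^{-1}\big(\mathrm{Id}-\widehat{v}\otimes\widehat{v}\big). \]
Factoring out $-t\nabla_v\widehat{v}$, whose inverse has norm $\lesssim \langle v\rangle^{3}$, the perturbation has relative size
\[ \frac{\log(1+t)\,\langle v\rangle^{3}\,\varepsilon\langle v\rangle^7\langle t\rangle^{1/4}}{t}\lesssim \varepsilon\,\langle v\rangle^{10}\,t^{-3/4}\log(1+t), \]
by the bound on $\nabla_v\mathscr{C}_t$ in Lemma \ref{LemCtv}. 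With $\beta=1/15$ this is $\lesssim\varepsilon$, so for $\varepsilon$ small $\nabla_v\Phi_t$ is a small perturbation of $-t\nabla_v\widehat{v}$. Hence $|\det\nabla_v\Phi_t|\ge \tfrac12 t^3\langle v\rangle^{-5}$ by Lemma \ref{Lemcdv}, and $\Phi_t$ is a local diffeomorphism.

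The main obstacle is global injectivity, or at least bounded multiplicity, of $\Phi_t$ on the region $\{\langle v\rangle\le t^\beta\}$. We plan to handle it by composing with $\widehat{\cdot}$ and working in the variable $u=\widehat{v}$, where $\Phi_t=x-tu+\log(1+t)\,\mathscr{C}_t(\widecheck{u})$. This is $-t$ times the identity plus a map that is Lipschitz in $u$ with constant $\lesssim \varepsilon t^{1/4}\log t\,\langle v\rangle^{10}\ll t$. On the convex ball $\{|u|\le |\widehat{w}|\}$, with $\langle w\rangle=t^\beta$, such a map is injective. Alternatively, it suffices to observe that the multiplicity is bounded via a degree argument.

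Then
\[ \int_{\langle v\rangle\le t^\beta}\frac{\dr v}{\langle\Phi_t(v)\rangle^4\langle v\rangle^{49}}\lesssim t^{-3}\int_{\R^3}\frac{\dr y}{\langle y\rangle^4}\lesssim t^{-3}, \]
using $\langle v\rangle^{5}\le\langle v\rangle^{49}$ and the Jacobian bound. Since $t\sim\langle t+|x|\rangle$ in this region, this gives the claimed estimate $\mathcal{I}_{t,x}\le C\langle t+|x|\rangle^{-3}$.
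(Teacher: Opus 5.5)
\textbf{There is a gap in the exterior region $|x|\ge 2t$, for large velocities.} Outside that region your argument is essentially the paper's. In the main region ($t\ge 1$, $|x|<2t$, $\langle v\rangle\le t^{1/15}$, i.e. $\langle v\rangle^{10}\le t^{2/3}$) the paper uses the same change of variables through $x-t\widehat v$ perturbed by $\log(1+t)\,\mathscr{C}_t$, with relative error $\varepsilon\langle v\rangle^{10}t^{-3/4}\log(1+t)$. Your Lipschitz argument on the convex ball $\{|\widehat v|\le|\widehat w|\}$ makes explicit the injectivity the paper leaves implicit. The bound $t^{-46/15}\le t^{-3}$ for large velocities is also fine there, since $t\sim\langle t+|x|\rangle$.

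\textbf{Where it fails.} For $|x|\ge 2t$ you use the corrected weight only on $\{\langle v\rangle\le t^{\beta}\}$. All remaining velocities fall under the bound $\int_{\langle v\rangle\ge t^\beta}\langle v\rangle^{-49}\dr v\lesssim t^{-3}$, which is not $\lesssim\langle t+|x|\rangle^{-3}$ once $|x|\gg t$. For instance, take $t<1$ and $|x|\to+\infty$. Then $\{\langle v\rangle\le t^\beta\}$ is empty and your argument gives only $\mathcal{I}_{t,x}=O(1)$, whereas $\langle x\rangle^{-3}$ is required. The same failure occurs for any fixed $t$ as $|x|/t\to\infty$.

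\textbf{The fix.} In the exterior, cut the velocities relative to $|x|$, not $t$. For $|x|\ge 2t$ you have $|x-t\widehat v|\ge |x|-t\ge |x|/2$, and Lemma \ref{LemCtv} gives $|\mathscr{C}_t(v)|\lesssim\varepsilon\langle v\rangle^6$.
\begin{itemize}
\item On $\{C\varepsilon\langle v\rangle^6\log(1+t)\le |x|/4\}$ you get $\langle x-t\widehat v+\mathscr{C}_t(v)\log(1+t)\rangle\gtrsim\langle x\rangle$, so this part contributes $\lesssim\langle x\rangle^{-4}$.
\item On the complement, for $|x|\ge1$ and using $t\le|x|/2$, you have $\langle v\rangle^6\gtrsim |x|/\log(2+|x|)$. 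Hence $\int\langle v\rangle^{-49}\dr v$ over this set is $\lesssim (|x|/\log(2+|x|))^{-46/6}\lesssim\langle x\rangle^{-3}$.
\end{itemize}
This is exactly what the paper's first two cases do: it compares $|x|$ with $\langle v\rangle^6\langle t\rangle^{1/4}$, and in the second case spends $\langle v\rangle^{-24}$ to gain $|x|^{-3}$. With this modification of the exterior case, your proof goes through.
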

\begin{proof}
The goal is to adapt the strategy of the linear case. We consider several subdomains of $\R^3_v$.
\begin{itemize}
    \item If $|x| \geq 4t$ and $|x| \geq \langle v \rangle^6 \, \langle t \rangle^{\frac{1}{4}}$, we have $|x-t\widehat{v}+\mathscr{C}_t(v) \log (1+t)| \geq |x|/2$ by Lemma \ref{LemCtv} and the result follows.
    \item If $|x| \geq 4t$ and $|x| \leq \langle v \rangle^6 \, \langle t \rangle^{\frac{1}{4}}$, we obtain spatial decay by using the weight $\langle v \rangle^{-24}$.
      \item If $t \geq |x|/4$ and $t^{\frac{2}{3}} \geq \langle v \rangle^{10}$, we obtain time decay by using the weight $\langle v \rangle^{-45}$.
\item Finally, assume that $t \geq |x|/4$ and consider the domain
\[  \overline{\mathcal{D}} \coloneqq \big\{ v \in \R^3_v \; \big| \; \langle v \rangle^{10} \leq t^{\frac{2}{3}} \big\} . \]
Since the case $t \leq 1$ is straightforward, we assume that $t \geq 1$ and we perform the change of variables $y(v)=x-t\widehat{v}$. Using Lemma \ref{Lemcdv}, it yields
\[ t^3 \, \mathcal{I}_{t,x} =  \int_{|y-x|<t, \; \langle v(y) \rangle^{10} \leq t^{\frac{2}{3}}} \frac{\dr y}{\langle y+ \mathscr{C}_t ( v(y) )\log (1+t) \rangle^4 \langle v \rangle^5}, \qquad \qquad v(y) \coloneqq \frac{\widecheck{x-y}}{t} . \]
To conclude the proof, let us show that the map $\psi(y)=y+\mathscr{C}_t ( v(y) ) \log (1+t)$ has a Jacobian determinant bounded below by $1/2$. For this, note that the previous Lemma \ref{LemCtv} implies that, on the domain of integration
\[ \big| \nabla_y \big[ \mathscr{C}_t ( v(y) )  \big]\big| \lesssim t^{-1}  \big| \langle v (y) \rangle^3\nabla_v \mathscr{C}_t ( v(y) ) \big| \lesssim \varepsilon \, \langle v(y) \rangle^{10} t^{-\frac{3}{4}} \leq \varepsilon t^{-\frac{1}{9}}. \]
\end{itemize}
\end{proof}

We can now improve, for $t \geq |x|$, the estimates of Corollary \ref{Cordecayintvbof}.

\begin{Pro}\label{Prodecayvel}
For any $\widehat{Z} \in \Pp_S$ and all $(t,x,v) \in [0,T[ \times \R^3_x \times \R^3_v$, we have
\[ \int_{\R^3_v} \langle v \rangle^5 |f| (t,x,v) \dr v \lesssim \frac{\varepsilon}{\langle t+|x| \rangle^3} , \qquad \qquad \qquad  \int_{\R^3_v} \langle v \rangle^5 \big| \widehat{Z}f \big| (t,x,v) \dr v \lesssim \frac{\varepsilon}{\langle t+|x| \rangle^{3-\eta}}.\]
\end{Pro}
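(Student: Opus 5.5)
The plan is to run the same weighted-supremum argument as in Corollary \ref{Cordecayintvbof}, but with the modified weight $\langle x-t\widehat{v}+\mathscr{C}_t(v)\log(1+t)\rangle$, whose integral is controlled without logarithmic loss by Corollary \ref{Cordecayintv}. The case $|x| \geq t$ is already covered by the improved estimate in Corollary \ref{Cordecayintvbof}, since $\langle t+|x|\rangle^{-11/4}$ beats the required rates. So I would assume $t \geq |x|$, and also $t \geq 1$, the case $t\leq 1$ being trivial. Let $h=f$ or $h=\widehat{Z}f$ and write
\[ \int_{\R^3_v} \langle v \rangle^5 |h|(t,x,v) \dr v \leq \mathcal{I}_{t,x} \sup_{w \in \R^3_v} \big\langle x-t\widehat{w}+\mathscr{C}_t(w)\log(1+t) \big\rangle^4 \langle w \rangle^{54} |h|(t,x,w). \]
Corollary \ref{Cordecayintv} then gives the factor $\langle t+|x|\rangle^{-3}$. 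It remains to bound the supremum by $\varepsilon$ for $h=f$ and by $\varepsilon\langle t\rangle^{\eta}$ for $h=\widehat{Z}f$.

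For the supremum, I would use Lemma \ref{LemCtv}. Here the sign convention needs care: Lemma \ref{LemCtv} is stated for $x-t\widehat{v}-\mathscr{C}_t\log(1+t)$, while Corollary \ref{Cordecayintv} uses $+\mathscr{C}_t$. I would take the sign that matches the definition \eqref{defCor}. With the correct sign, Lemma \ref{LemCtv} gives
\[ \big\langle x-t\widehat{w}\pm\mathscr{C}_t(w)\log(1+t)\big\rangle \lesssim \Lambda\langle \mathcal{X}(0,t,x,w)\rangle^2\langle\mathcal{V}(0,t,x,w)\rangle^{17}, \]
up to an additive $\langle\mathcal{X}(0,t,x,w)\rangle$. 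Hence the fourth power is at most $\langle\mathcal{X}_0\rangle^8\langle\mathcal{V}_0\rangle^{68}$.

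Next, Propositions \ref{ProVcharac}--\ref{ProVcharacbis} give $\langle w\rangle\lesssim\langle\mathcal{V}_0\rangle^{1+\delta\kappa}\le\langle\mathcal{V}_0\rangle^{3/2}$. So $\langle w\rangle^{54}\lesssim\langle\mathcal{V}_0\rangle^{81}$, and the total weight is at most $\langle\mathcal{X}_0\rangle^8\langle\mathcal{V}_0\rangle^{149}$. This is exactly the weight controlled by Corollary \ref{Corboundedness}, which yields $\mathbf{D}\varepsilon$ for $f$ and $\mathbf{D}\varepsilon\langle t\rangle^\eta$ for $\widehat{Z}f$. If the exponent bookkeeping does not close, I would instead lower the $v$-weight in Corollary \ref{Cordecayintv}; its proof only needs enough $v$-decay in the three small-domain cases. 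Since $t\ge|x|$, the time gain gives $\langle t\rangle^{-3+\eta}\sim\langle t+|x|\rangle^{-3+\eta}$, which is the stated rate.

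The main obstacle is this weight bookkeeping: the powers $8$ and $149$ are tuned precisely, and the sign consistency between Lemma \ref{LemCtv} and Corollary \ref{Cordecayintv} must be checked. Note that the constant in Lemma \ref{LemCtv} is $\Lambda$, not $\varepsilon$, but this is harmless because it only enters as a multiplicative constant. The other point to verify is that Corollary \ref{Cordecayintv} holds uniformly in the region $t\ge|x|$ without hidden $\log t$ losses. This is guaranteed by the Jacobian lower bound $1/2$ for the map $\psi$, which is where the smallness of $\varepsilon$ is used.
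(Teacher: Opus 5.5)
Your core argument is the paper's proof. You bound the $v$-integral by $\mathcal{I}_{t,x}$ times the weighted supremum and control $\mathcal{I}_{t,x}$ with Corollary \ref{Cordecayintv}. You then use Propositions \ref{ProVcharac}--\ref{ProVcharacbis} and Lemma \ref{LemCtv} to get $\langle w\rangle^{54}\lesssim\langle\mathcal{V}(0,t,x,w)\rangle^{81}$ and $\langle x-t\widehat{w}-\mathscr{C}_t(w)\log(1+t)\rangle\lesssim\langle\mathcal{X}(0,t,x,w)\rangle^{2}\langle\mathcal{V}(0,t,x,w)\rangle^{17}$. Finally you close with exactly the weight $\langle\mathcal{X}_0\rangle^{8}\langle\mathcal{V}_0\rangle^{149}$ of Corollary \ref{Corboundedness}.

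Your sign remark is correct. The minus sign is the one consistent with \eqref{defCor} and Lemma \ref{LemCtv}. Corollary \ref{Cordecayintv} does not depend on the sign, because its proof only uses the bounds on $|\mathscr{C}_t|$ and $|\nabla_v\mathscr{C}_t|$.

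The one step that is wrong as written is your preliminary reduction to $t\geq|x|$. The bound $\langle t+|x|\rangle^{-11/4}$ from Corollary \ref{Cordecayintvbof} decays more slowly than both $\langle t+|x|\rangle^{-3}$ and $\langle t+|x|\rangle^{-3+\eta}$ (recall $\eta<1/8$). So it does not beat the required rates.

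The exponent $11/4$ there appears to be a misprint for $15/4$. That is what its proof actually yields ($\langle t+|x|\rangle^{-4}$ from the $v$-integral times a loss $\log^6\langle t+1\rangle\,\langle t\rangle^{\eta}$), and it is what Propositions \ref{Proeffective} and \ref{ProAIT} later use. With $15/4$ your shortcut would be valid.

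In any case the case split is unnecessary, and the paper does not make it. Corollary \ref{Cordecayintv} already holds for all $(t,x)$, since its proof treats $|x|\geq 4t$ separately. The loss for $\widehat{Z}f$ satisfies $\langle t\rangle^{\eta}\leq\langle t+|x|\rangle^{\eta}$. Your main argument therefore applies verbatim everywhere, and the restriction $t\geq|x|$ is never needed.
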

\begin{proof}
Note that for a distribution function $h$, we have
 \[ \int_{\R^3_v} \! \langle v \rangle^5 | h|(t,x,v) \dr v \leq \mathcal{I}_{t,x}  \sup_{w \in \R^3_v} \langle x-t\widehat{w}+ \mathscr{C}_t(w) \log (1+t) \rangle^{4}  \langle w \rangle^{54}  |h|(t,x,w). \]
The first factor $\mathcal{I}_{t,x}$ is bounded by Corollary \ref{Cordecayintv}. Using Propositions \ref{ProVcharac}--\ref{ProVcharacbis} and Lemma \ref{LemCtv}, we obtain the bounds 
 \[\langle w \rangle^{54} \lesssim \langle \mathcal{V}(0,t,x,w) \rangle^{81}, \qquad \qquad  \langle x-t\widehat{w}+ \mathscr{C}_t(w) \log (1+t) \rangle \lesssim \langle \mathcal{X}(0,t,x,w) \rangle^2 \,  \langle \mathcal{V}(0,t,x,w) \rangle^{17}. \]
We then control the second factor, with $h =  f$ and $h = \widehat{Z}f$, by applying Corollary \ref{Corboundedness}.
\end{proof}

We are now able to control sufficiently well the tangential term of the scalar fields to improve \eqref{eq:BA1}--\eqref{eq:BA2}.

\begin{Pro}\label{ProAIT}
    Let $I \in \llbracket 1 , K \rrbracket$ and $Z \in \mathbb{K}$. Then, for all $(t,x) \in [0,T[ \times \R^3$, we have
    \[ \big| \big[ \nabla_{t,x} A_I \big]_T \big|(t,x)+\langle t+|x| \rangle^{-\eta}\big| \big[ \nabla_{t,x} Z A_I \big]_T \big|(t,x) \lesssim \varepsilon \, \langle t+|x| \rangle^{-\frac{7}{4}} \langle t-|x| \rangle^{-\frac{1}{4}}. \]
    If $|x| \geq t$, then $\big| \big[ \nabla_{t,x} A_I \big]_T \big|(t,x) \lesssim \varepsilon \, \langle t+|x| \rangle^{-\frac{7}{4}} \langle t-|x| \rangle^{-1}$.
\end{Pro}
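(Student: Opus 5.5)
We estimate the tangential term of the Glassey--Strauss decomposition of Proposition \ref{ProGS} directly, feeding in the optimal decay of momentum averages from Proposition \ref{Prodecayvel}. For $p\in\{0,1\}$, the wave equation satisfied by $Z^pA_I$ (Proposition \ref{ProCom}) has source $\pmb{\rho}_I[\widehat{Z}^p f]$, plus, when $p=1$, a term $\int_v\langle v\rangle^\delta P_I^Z(\widehat v) f\,\dr v$. By the formula for $[\partial_{x^\mu}Z^pA_I]_T$ and Lemma \ref{Lemkernel}, which gives $|\mathbf{a}^\mu(\widehat v,\omega)|\lesssim\langle v\rangle^{4+\delta}\le\langle v\rangle^5$, we have
\[ \big|[\nabla_{t,x}Z^pA_I]_T\big|(t,x)\lesssim \sum_{q\le p}\int_{|y-x|\le t}\int_{\R^3_v}\langle v\rangle^5\big|\widehat{Z}^q f\big|(t-|y-x|,y,v)\,\dr v\,\frac{\dr y}{|y-x|^2}. \]
Proposition \ref{Prodecayvel} bounds the inner integral by $\varepsilon\langle t-|y-x|+|y|\rangle^{-3+p\eta}$. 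Hence the right hand side is controlled by $\varepsilon\,\mathbf{Y}^2_{3-p\eta}(t,x)$.

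For $p=0$, Lemma \ref{Lemint} with $b=3$ gives $\mathbf{Y}^2_3(t,x)\lesssim\langle t+|x|\rangle^{-7/4}\langle t-|x|\rangle^{-1/4}$, which is the claim. For $p=1$, Lemma \ref{Lemint} requires $b\ge3$, so we cannot apply it directly to $b=3-\eta$. Instead we use $\langle t-|y-x|+|y|\rangle^{\eta}\lesssim\langle t+|x|\rangle^{\eta}$, which holds because $|y|\le|x|+|y-x|\le|x|+t$ on the domain of integration. This pulls out the factor $\langle t+|x|\rangle^{\eta}$ and reduces to $\mathbf{Y}^2_3$ again, which matches the weight $\langle t+|x|\rangle^{-\eta}$ in the statement.

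The main obstacle is the improved exterior bound for $|x|\ge t$, because $\mathbf{Y}^2_3$ only provides $\langle t-|x|\rangle^{-1/4}$. Here we would use the improved exterior decay and split the domain according to whether the source point $(s,y)$, with $s=t-|y-x|$, lies outside the cone ($|y|\ge s$) or inside. Outside, Corollary \ref{Cordecayintvbof} gives the integrand decay $\langle s+|y|\rangle^{-11/4}$, and since $\langle v\rangle^5\le\langle v\rangle^6$ it applies; this is too weak to conclude directly with Lemma \ref{Lemint}. We therefore combine it with the rough bound $\langle s+|y|\rangle^{-3}$ and exploit the geometry of the region. If $|x|\ge t$ then $s+|y|\ge |x|-|y-x|+t-|y-x|+\dots$; more simply, $s+|y|\ge t-|y-x|+|x|-|y-x|$, while also $s+|y|\ge |x|-t+... $. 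Concretely, the triangle inequality gives $s+|y|\ge |x|-t+2s\ge|x|-t$. So in the exterior region of $(t,x)$ every source point satisfies $s+|y|\gtrsim \langle |x|-t\rangle + s$.

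We would then integrate $\langle |x|-t+s\rangle^{-3}|y-x|^{-2}$ over the ball $|y-x|\le t$ in polar coordinates around $x$, with $r=|y-x|$ and $s=t-r$. The angular integral is trivial and the radial integral gives $\int_0^t\langle|x|-t+t-r\rangle^{-3}\,\dr r\lesssim\langle|x|-t\rangle^{-2}$, up to the case distinction $|x|\le 2t$ versus $|x|\ge 2t$. Interpolating this with the $\mathbf{Y}^2_3$ bound $\langle t+|x|\rangle^{-7/4}\langle t-|x|\rangle^{-1/4}$ (and using $\langle t+|x|\rangle\lesssim\langle t-|x|\rangle$ when $|x|\ge2t$) should yield $\langle t+|x|\rangle^{-7/4}\langle t-|x|\rangle^{-1}$. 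If this interpolation falls short, we would refine the radial integral by using the extra angular smallness from $s+|y|\gtrsim t+|x|$ on most of the sphere, as in the proof of Lemma \ref{Lemint}.
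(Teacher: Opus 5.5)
Your handling of the first estimate, for both $p=0$ and $p=1$, matches the paper's argument: the kernel bound from Lemma \ref{Lemkernel}, then Proposition \ref{Prodecayvel}, then extracting $\langle t+|x|\rangle^{\eta}$, then Lemma \ref{Lemint} with $b=3$.

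The exterior estimate has a genuine gap: the interpolation you propose cannot produce the claimed rate. Take $|x|=2t$. On the whole ball $|y-x|\le t$ one has $t\le s+|y|\le 3t$. So a source bound of the form $\langle s+|y|\rangle^{-b}$ gives at best $\int_{|y-x|\le t}\langle s+|y|\rangle^{-b}|y-x|^{-2}\,\dr y\sim t^{1-b}$, while the target is $\langle t+|x|\rangle^{-7/4}\langle t-|x|\rangle^{-1}\sim t^{-11/4}$. Your two bounds, via $\mathbf{Y}^2_3$ and via the radial integral, are both of size $t^{-2}$ there. Interpolating between them cannot reach $t^{-11/4}$. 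Neither can an angular refinement, since $s+|y|\sim t+|x|$ on the entire ball. With the $11/4$ rate you get only $t^{-7/4}$, which is worse. The claim forces exterior source decay of order at least $\langle s+|y|\rangle^{-15/4}$, and this is the missing input.

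That decay is available from Corollary \ref{Corboundednessbof}, rather than from the exponent displayed in Corollary \ref{Cordecayintvbof}; the proof of the latter actually gives more than its statement. If $|y|\ge s$, then $|y-s\widehat{v}|\ge|y|(1-|\widehat{v}|)\ge|y|/(2\langle v\rangle^2)$. Hence $\langle y-s\widehat v\rangle^{-4}\lesssim\langle v\rangle^8\langle y\rangle^{-4}$, and
\[ \int_{\R^3_v}\langle v\rangle^5|f|(s,y,v)\,\dr v\lesssim\varepsilon\,\frac{\log^6\langle s+1\rangle}{\langle y\rangle^{4}}\int_{\R^3_v}\frac{\dr v}{\langle v\rangle^{5}}\lesssim\frac{\varepsilon}{\langle s+|y|\rangle^{15/4}}. \]
No splitting of the domain is needed either. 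When $|x|\ge t$, every point of the backward cone satisfies $|y|\ge|x|-|y-x|\ge t-|y-x|=s$; your own inequality $s+|y|\ge|x|-t+2s$ already contains this. Therefore $|[\nabla_{t,x}A_I]_T|(t,x)\lesssim\varepsilon\,\mathbf{Y}^2_{15/4}(t,x)$. Lemma \ref{Lemint} with $b=15/4$ then gives exactly $\langle t+|x|\rangle^{-7/4}\langle t-|x|\rangle^{-1}$, which is how the paper concludes.
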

\begin{proof}
 To treat $A_I$ and $Z A_I$ simultaneously, let $ p \in \{0,1\}$ and estimate $Z^p A_I$. According to Proposition \ref{ProGS}, the bound $|\mathbf{a}^\mu (\widehat{v},\omega) | \lesssim \langle v \rangle^5$ given by Lemma \ref{Lemkernel} and, if $p=1$, the commutation formula of Proposition \ref{ProCom}, we have
 \[  \big| \big[ \nabla_{t,x} Z^p A_I \big]_T  \big|(t,x) \lesssim \int_{|y-x| \leq t } \int_{\R^3_v} \langle v \rangle^5 \big| f \big| \big( t-|y-x|,y,v \big) \dr v+\int_{\R^3_v} \langle v \rangle^5 \big| \widehat{Z}^p f \big| \big( t-|y-x|,y,v \big) \dr v \frac{\dr y}{|y-x|^2}. \]
 Using the decay estimates of the previous Proposition \ref{Prodecayvel}, we obtain
 \[  \big| \big[ \nabla_{t,x}  A_I \big]_T  \big|(t,x) \lesssim \varepsilon \mathbf{Y}^2_3(t,x), \qquad \qquad   \big| \big[ \nabla_{t,x} Z A_I \big]_T  \big|(t,x) \lesssim \varepsilon \, \langle t +|x| \rangle^{\eta} \mathbf{Y}^2_3 (t,x) , \]
 where $\mathbf{Y}^2_3(t,x)$ is defined and estimated in Lemma \ref{Lemint}. To derive the improved decay in the exterior of the light cone, we note that $|y| \geq |x|-|y-x| \geq t-|y-x|$ if $|x| \geq t$. Then, using the decay estimates of Corollary \ref{Corboundednessbof}, we obtain
 \[ \big| \big[ \nabla_{t,x} A_I \big]_T \big|(t,x) \lesssim \varepsilon \mathbf{Y}^2_{15/4} \lesssim \varepsilon \, \langle t+|x| \rangle^{-\frac{7}{4}} \langle t-|x| \rangle^{-1} .\]
\end{proof}

Combining Propositions \ref{ProAIT0} and \ref{ProAIT}, and recalling from Remark \ref{Rqconst} that their implicit constants are independent of $C_{\mathrm{boot}}$, we improve the bootstrap assumptions \eqref{eq:BA1}--\eqref{eq:BA2} by choosing $C_{\mathrm{boot}}$ sufficiently large.

\section{Modified scattering dynamics}\label{SEcscatt}

\subsection{Modified scattering for the distribution function}

The strategy for proving such a result is now standard (see, for instance, \cite[Section~1.3]{scatmap}). Moreover, most of the required analysis has already been carried out in the proof of global existence. Indeed, rewriting $\nabla_v f$ using \eqref{dvforscatt}, we have
\begin{align}
 \nonumber   \partial_t \big[ f(t,x+t\widehat{v},v) \big] & = -\big[ \nabla \mathbf{A} \cdot \nabla_v f \big](t,x+t\widehat{v},v) \\
    & = \frac{t}{\langle v \rangle} \big[ \nabla \mathbf{A} \cdot \big(  \nabla_x  f - \widehat{v}  \big(\widehat{v} \cdot \nabla_x f\big) \big) \big](t,x+t\widehat{v},v)-\frac{1}{\langle v \rangle} \big[ \nabla \mathbf{A} \cdot \widehat{\mathbf{Z}}_{\mathrm{good}} f \big](t,x+t\widehat{v},v), \label{eq:nolinscat}
\end{align}
where the quantity $\widehat{\mathbf{Z}}_{\mathrm{good}} f (t,y,v) \in \R^3$ is defined as
$$ \big[\widehat{\mathbf{Z}}_{\mathrm{good}} f \big]^j(t,y,v) \coloneqq  \big[ \widehat{\Omega}_{0j}f -(y^j-\widehat{v}^jt) \partial_tf-\widehat{v}^jSf+\widehat{v}^j(x-t\widehat{v}) \cdot \nabla_xf \big] (t,y,v).$$
Estimating $\nabla \mathbf{A}$ through Lemma \ref{Lemsource} and using Lemma \ref{Lemweight}, we get
\begin{equation}\label{eq:part1}
  \frac{1}{\langle v \rangle}  \Big|  \nabla \mathbf{A} \cdot \widehat{\mathbf{Z}}_{\mathrm{good}} f   \Big| (t,y,v) \lesssim \frac{\Lambda}{\langle t \rangle^{2}} \sup_{\widehat{Z} \in \Pp_S} \langle y - t \widehat{v} \rangle^2 \langle v \rangle^2 \big| \widehat{Z} f \big|(t,y,v) \lesssim \Lambda \varepsilon \frac{ \log^6(t)}{\langle t \rangle^{2-\eta}},
\end{equation}
where, in the last step, we applied Corollary \ref{Corboundednessbof}. However, according to Corollary \ref{Coreffective}, the first term on the right hand side of \eqref{eq:nolinscat} is not expected to be integrable in time (see \cite{Emile2} for more details in the context of the Vlasov-Maxwell system). To obtain convergence of the distribution function, we then need to consider a better approximation of the nonlinear spatial characteristics, namely $x+t\widehat{v}+\mathscr{C}_\infty(v) \log (t)$, where $\mathscr{C}_\infty (v)$ is defined in \eqref{defCor}. Then, from \eqref{eq:nolinscat} and \eqref{eq:part1}, we get
\begin{align}
 \nonumber \Big|  \partial_t \big[ f(t,x+t\widehat{v}+\mathscr{C}_\infty(v) \log(t),v) \big] \Big| & \lesssim \bigg| \frac{t}{\langle v \rangle} \nabla \mathbf{A}(t,x+t\widehat{v}+\mathscr{C}(v) \log(t),v)+\frac{\mathscr{C}(v)}{t}\bigg| \cdot \big|\nabla_x f(t,x+t\widehat{v},v) \big|+\frac{\varepsilon \log^6(t)}{\langle t \rangle^{2-\eta}}.
\end{align}
In view of the definition \eqref{defA} of $\nabla \mathbf{A}$, we can estimate the first factor using Corollary \ref{Coreffective}, with $\mathscr{D}=\mathscr{C}_\infty$. Then, using again Corollary \ref{Corboundednessbof}, we finally obtain
\[  \big|  \partial_t \big[ f(t,x+t\widehat{v}+\mathscr{C}_\infty(v) \log(t),v) \big] \big|  \lesssim \varepsilon \, \langle t \rangle^{-\frac{3}{2}} \log^6(t), \]
which allows to derive modified scattering for $f$.

\subsection{Linear scattering for the scalar fields}

Let $1 \leq I \leq K$. Denoting by $\slashed{\Delta}$ the Laplacian on $\mathbb{S}^2$, we write
\[ \Box A_I = -\frac{1}{r} \big(\partial_t - \partial_r \big)\big( \partial_t + \partial_r \big)(rA_I)+ \frac{1}{r^2} \slashed{\Delta} A_I. \]
Since we do not propagate norms strong enough of the solutions, the last term on the right hand side is not controlled sufficiently well to show strong convergence for $r A_I$. It would require to commute twice by $\Omega_{ij}$. This is why we work in a weak topology. 

We then consider a test function $\Phi \in C_c^\infty (\mathbb{S}^2)$ and let us denote by $\langle \cdot , \cdot \rangle_{L^2(\mathbb{S}^2)}$ the scalar product of $L^2(\mathbb{S}^2)$. We will use the coordinates $(\underline{u},u)=(t+|x|,t-|x|)$. Performing integration by parts, we get
\begin{align}\label{eq:machindelafin}
 4 \Big| \partial_u \partial_{\underline{u}} \big\langle r A_I , \Phi \big\rangle_{L^2(\mathbb{S}^2)}  \Big| (u,\underline{u}) & \leq \frac{1}{r} \Big|  \big\langle  A_I , \slashed{\Delta}\Phi \big\rangle_{L^2(\mathbb{S}^2)} \Big| (u,\underline{u})  + \Big|  \big\langle \pmb{\rho}_I [f] , \Phi \big\rangle_{L^2(\mathbb{S}^2)} \Big| (u,\underline{u}) .
 \end{align}
The next step consists in proving an estimate for $A_I$.
\begin{Lem}\label{LemAIesti}
We have, for all $(t,x) \in \R_+ \times \R^3$,
\[ \big| A_I \big| (t,x) \lesssim \left\{
    \begin{array}{ll}
        \Lambda \langle t+|x| \rangle^{-1} & \mbox{if } t >|x|, \\
        \Lambda \langle t+|x| \rangle^{-1} \langle t -|x| \rangle^{-\frac{3}{4}} & \mbox{if }  t \leq |x| .
    \end{array}
\right. \]
\end{Lem}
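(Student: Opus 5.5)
We split $A_I = A_I^{\mathrm{hom}} + A_I^{\mathrm{inh}}$. Here $A_I^{\mathrm{hom}}$ solves $\Box A_I^{\mathrm{hom}}=0$ with the data $(A_I^0,\partial_t A_I^0)$, and $A_I^{\mathrm{inh}}$ solves $\Box A_I^{\mathrm{inh}}=\pmb{\rho}_I[f]$ with vanishing data. The two pieces are estimated separately, using Kirchhoff's formula in both cases.

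\textbf{Homogeneous part.} Kirchhoff's formula (as recalled in the proof of Proposition \ref{Prodata}) gives
\[ A_I^{\mathrm{hom}}(t,x)=\frac{1}{4\pi t^2}\int_{|y-x|=t}A_I^0(y)\,\dr y+\frac{1}{4\pi t}\int_{|y-x|=t}\Big(\omega\cdot\nabla A_I^0+\partial_tA_I^0\Big)(y)\,\dr y . \]
The assumptions \eqref{eq:assumpA}--\eqref{eq:assumpB} with $|\gamma|\le 1$ give $|A_I^0|\lesssim\Lambda\langle y\rangle^{-2}$ and $|\nabla A_I^0|+|\partial_tA_I^0|\lesssim\Lambda\langle y\rangle^{-3}$.

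For the derivative terms we apply \eqref{decayhYang} with $p=3$. This yields $t^{-1}\min(t,t^2\langle t-|x|\rangle^{-1})\langle t+|x|\rangle^{-1}\langle t-|x|\rangle^{-1}\lesssim\langle t+|x|\rangle^{-1}$ uniformly.

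For the zeroth-order term the decay $\langle y\rangle^{-2}$ is too weak for \eqref{decayhYang}, so we compute the spherical mean directly. We have $\int_{|y-x|=t}\langle y\rangle^{-2}\dr y\lesssim t|x|^{-1}\log\big(\langle t+|x|\rangle/\langle t-|x|\rangle\big)$, which gives a bound $\lesssim\Lambda\langle t+|x|\rangle^{-1}$ whenever $t\gtrsim 1$; the range $t\lesssim1$ is immediate. The resulting bound, $\Lambda\langle t+|x|\rangle^{-1}$, is the interior estimate.

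In the exterior $|x|\ge t$ this homogeneous contribution is not better than $\langle t+|x|\rangle^{-1}$ in general. However, the claimed bound $\langle t+|x|\rangle^{-1}\langle t-|x|\rangle^{-3/4}$ is only needed where $\langle t-|x|\rangle$ is large. There the sphere $|y-x|=t$ stays at distance $\ge |x|-t$ from the origin, so $|y|\ge |x|-t$ on it. Splitting the decay weights as $\langle y\rangle^{-2}\le\langle y\rangle^{-5/4}\langle |x|-t\rangle^{-3/4}$, and analogously for the derivative terms, produces the extra factor $\langle t-|x|\rangle^{-3/4}$ while leaving an integrable tail. Here the extra $\langle x\rangle^{-|\gamma|}$ decay of the first derivatives helps. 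We expect this exterior bookkeeping for the undifferentiated data to be the main obstacle, since $\langle x\rangle^{-2}$ is borderline. If the direct split fails, the fallback is to use that $|A_I^0|$ decays while its derivative decays faster, and to integrate $\partial_r$ along the sphere.

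\textbf{Inhomogeneous part.} Duhamel's formula gives
\[ A_I^{\mathrm{inh}}(t,x)=\frac1{4\pi}\int_{|y-x|\le t}\pmb{\rho}_I[f](t-|y-x|,y)\frac{\dr y}{|y-x|}. \]
Proposition \ref{Prodecayvel} (with $\delta'\le\delta\le1$, so $\langle v\rangle^{\delta'}\le\langle v\rangle^5$) provides $|\pmb{\rho}_I[f]|(s,y)\lesssim\varepsilon\langle s+|y|\rangle^{-3}$.

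In the interior we then use the standard estimate for $\Box^{-1}$ of an $\langle s+|y|\rangle^{-3}$ source. Passing to null coordinates, it is bounded by $\varepsilon\langle t+|x|\rangle^{-1}\log\langle t+|x|\rangle$. To remove the logarithm we use instead the improved exterior bound $\langle s+|y|\rangle^{-11/4}$ times $\langle s-|y|\rangle^{-1/4}$-type decay from Corollary \ref{Cordecayintvbof}, or equivalently split the source between interior and exterior. Since $\varepsilon\le\Lambda$, this term is in any case absorbed into $\Lambda\langle t+|x|\rangle^{-1}$.

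In the exterior $|x|\ge t$, every point of integration satisfies $|y|\ge t-|y-x|$, as in the proof of Proposition \ref{ProAIT}. Corollary \ref{Cordecayintvbof} then gives the source decay $\langle s+|y|\rangle^{-11/4}$. The same computation as for $\mathbf{Y}^1$ in Lemma \ref{Lemint} yields $\varepsilon\langle t+|x|\rangle^{-1}\langle t-|x|\rangle^{-3/4}$, which completes the exterior estimate.
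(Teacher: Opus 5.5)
\textbf{Verdict: genuine gap.} Your route, estimating $A_I$ directly through Kirchhoff's and Duhamel's formulas, differs from the paper's. It does not close as written, because the inhomogeneous part relies on source bounds that are too weak for the conclusions you draw from them.

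\textbf{Homogeneous part.} This is fine. Your spherical mean $\frac{1}{t|x|}\log\frac{\langle t+|x|\rangle}{\langle t-|x|\rangle}$ and \eqref{decayhYang} with $p=3$ already give $\Lambda\langle t+|x|\rangle^{-1}\langle t-|x|\rangle^{-1}$ in both regions. The exterior obstacle you anticipate there therefore does not exist.

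\textbf{Inhomogeneous part: the gap.} In null coordinates $\alpha=s+|y|$, $\beta=|y|-s$, a source bounded by $\langle s+|y|\rangle^{-a}$ on $\{|y|\ge s\}$ contributes to $A_I^{\mathrm{inh}}(t,x)$, for $|x|\ge t$, at most
\[ \frac{C}{|x|}\int_{|x|-t}^{t+|x|}\langle\alpha\rangle^{-a}\,\alpha\,\big(\alpha-|x|+t\big)\,\dr\alpha . \]
For $a>3$ this is $\lesssim\langle t+|x|\rangle^{-1}\langle t-|x|\rangle^{3-a}$, so reaching $\langle t-|x|\rangle^{-3/4}$ requires $a\ge 15/4$.
\begin{itemize}
\item With the exponent $11/4$ you take from Corollary~\ref{Cordecayintvbof}, this bound is only of size $t^{-3/4}$ at points where $|x|-t\sim 1$. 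Your exterior estimate therefore fails.
\item Lemma~\ref{Lemint} cannot be invoked either: $\mathbf{Y}^1_a$ needs $a>3$ and an extra factor $\langle t-|y-x|-|y|\rangle^{-1}$ that your source does not have.
\item The interior has the same defect. The logarithm produced by $\langle s+|y|\rangle^{-3}$ comes precisely from the part of the backward cone lying in $\{|y|\ge s\}$, which is most of it when $(t,x)$ is near $\{t=|x|\}$. Splitting the source removes the logarithm only if the exterior decay is strictly better than $\langle s+|y|\rangle^{-3}$.
\item With $11/4$ the split gives $t^{-3/4}$ instead of $t^{-1}\log t$. The factor $\langle s-|y|\rangle^{-1/4}$ you mention is not provided by any result in the paper.
\item The assumption $\varepsilon\le\Lambda$ cannot absorb $\varepsilon\langle t+|x|\rangle^{-1}\log\langle t+|x|\rangle$.
\end{itemize}

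\textbf{The missing ingredient} is the true exterior decay of the source. Redo the proof of Corollary~\ref{Cordecayintvbof}, using Corollary~\ref{Corboundednessbof} and $|x-t\widehat v|\ge|x|/(2\langle v\rangle^2)$ for $|x|\ge t$. In that region this gives
\[ \int_{\R^3_v}\langle v\rangle^6|f|\,\dr v\lesssim\varepsilon\log^6\langle t+1\rangle\,\langle t+|x|\rangle^{-4}\lesssim\varepsilon\langle t+|x|\rangle^{-15/4}. \]
This is the exponent the paper actually uses in Propositions~\ref{Proeffective} and~\ref{ProAIT}; the $11/4$ in the statement of the corollary is evidently a misprint. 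Now use $a=15/4$ on $\{|y|\ge s\}$ and Proposition~\ref{Prodecayvel} on $\{|y|<s\}$; the latter piece contributes $\lesssim\langle t+|x|\rangle^{-1}$ with no logarithm. With these inputs your representation-formula argument closes in both regions and is a legitimate alternative.

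\textbf{Comparison with the paper.} The paper never writes a Duhamel formula for $A_I$. It integrates $|\nabla_{t,x}A_I|$ along $\{t+|x|=\text{const},\ x/|x|=\text{const}\}$, starting from $t=0$ in the exterior and from the light cone in the interior. The derivative bounds come from Propositions~\ref{Prodata}, \ref{ProAIT0} and~\ref{ProAIT}. No logarithm appears, because the interior bound $\varepsilon\langle t+|x|\rangle^{-7/4}\langle t-|x|\rangle^{-1/4}$ integrates in $t-|x|$ to $\lesssim\varepsilon\langle t+|x|\rangle^{-1}$. Its exterior step, however, relies on the same $15/4$ decay, through $\mathbf{Y}^2_{15/4}$ in Proposition~\ref{ProAIT}.
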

\begin{proof}
    Let $(t,r\omega) \in \R_+ \times \R^3$, with $\omega \in \mathbb{S}^2$. To derive the stated estimate, we will integrate in $u$, for fixed $\underline{u}=t+r$ and $\omega$. We note that $2\partial_{\underline{u}}=\partial_t + \partial_r$ and $2 \partial_u = \partial_t - \partial_r$. If $r \geq t$, we have
    \[ A_I(t,r \omega) = A_I \big(0,(t+r)\omega \big) + \int_{u=-t-r}^{t-r} \frac{1}{2} \big[ \partial_t A_I-\partial_r A_I \big] \Big( \frac{t+r+u}{2}, \frac{t+r-u}{2} \omega \Big) \dr u. \]
    Let us justify that
    \[ |A_I|(t,r\omega) \lesssim \frac{\Lambda}{\langle t+r \rangle^2}+ \int_{u=-t-r}^{t-r} \frac{\Lambda}{\langle t+r \rangle \, \langle u \rangle^{\frac{7}{4}}} \dr u \lesssim \frac{\Lambda}{\langle t + r \rangle \, \langle t-r \rangle^{\frac{3}{4}}} . \]
   For this, we use first the Glassey-Strauss decomposition of $\nabla_{t,x} A_I$ given by Proposition \ref{ProGS}. Then we use Propositions \ref{Prodata} and \ref{ProAIT0} to control $[\nabla_{t,x} A_I]_{\mathrm{data}}$ and $[\nabla_{t,x} A_I]_{\T_0}$, respectively, and bound $[\nabla_{t,x} A_I]_{T}$ using the decay estimates of Proposition \ref{ProAIT} in the exterior of the light cone.
   
    Assume now that $t >r$. We integrate again in $u$, from the point $\big( \frac{t+r}{2},\frac{t-r}{2}\omega \big)$ to $(t,r\omega)$. Since $\nabla_{t,x} A_I$ decays at a weaker rate in the interior of the light cone according to Propositions \ref{Prodata}, \ref{ProAIT0} and \ref{ProAIT}, it yields
    \[ |A_I|(t,r\omega) \lesssim \frac{\Lambda}{\langle t+r \rangle}+ \int_{u=0}^{t-r} \frac{\Lambda}{\langle t+r \rangle \langle u \rangle^{\frac{7}{4}}}+\frac{\varepsilon}{\langle t+r \rangle^{\frac{7}{4}}  \langle u \rangle^{\frac{1}{4}}} \dr u \lesssim \frac{\Lambda}{\langle t + r \rangle } . \]
\end{proof}

To estimate the left hand side in \eqref{eq:machindelafin}, we use the previous Lemma \ref{LemAIesti} as well as Proposition \ref{Prodecayvel} to control $\pmb{\rho}_I[f]$. Moreover, to deal with the region where $r \leq 1+t/2$, we observe that
\[ \frac{1}{r} \Big|  \big\langle  A_I , \slashed{\Delta}\Phi \big\rangle_{L^2(\mathbb{S}^2)} \Big| (u,\underline{u})  = \frac{1}{r} \Big|  \big\langle \slashed{\nabla} A_I , \slashed{\nabla}\Phi \big\rangle_{L^2(\mathbb{S}^2)} \Big| (u,\underline{u}) \lesssim  \big\| \nabla_{t,x} A_I  \big\|_{L^2(\mathbb{S}^2)}(u,\underline{u}) \lesssim \frac{\Lambda}{ \langle \underline{u} \rangle \, \langle u \rangle} \lesssim \frac{\Lambda}{\langle \underline{u} \rangle^2}, \]
where, in the last step, we used \eqref{equa:fortheproof}. Since $r \gtrsim 1+t+r$ if $r \geq 1+t/2$, it yields
\begin{align*}
 \Big| \partial_u \partial_{\underline{u}} \big\langle r A_I , \Phi \big\rangle_{L^2(\mathbb{S}^2)}  \Big| (u,\underline{u}) \lesssim  \left\{
    \begin{array}{ll}
        \Lambda \langle \underline{u} \rangle^{-2} & \mbox{if } u>0, \\
        \Lambda  \langle \underline{u} \rangle^{-2} \langle u \rangle^{-\frac{3}{4}} & \mbox{if } u \leq 0 .
    \end{array}
\right. 
 \end{align*}
The following computations are similar to the ones carried out in the proof of Lemma \ref{LemAIesti}. For all $\underline{u} \geq 0$ and $- \underline{u} \leq u \leq 0$, we have
\[ \Big| \partial_{\underline{u}} \big\langle r A_I , \Phi \big\rangle_{L^2(\mathbb{S}^2)}  \Big| (u,\underline{u}) \lesssim  \Lambda \langle \underline{u} \rangle^{-\frac{7}{4}}.\]
We then deduce, that for $\underline{u} \geq  u \geq 0$,
 \[ \Big| \partial_{\underline{u}} \big\langle r A_I , \Phi \big\rangle_{L^2(\mathbb{S}^2)}  \Big| (u,\underline{u}) \lesssim  \frac{\Lambda}{ \langle \underline{u} \rangle^{\frac{7}{4}}}+ \int_{u'=0}^{u} \frac{\Lambda}{\langle \underline{u} \rangle^2} \lesssim \Lambda \frac{\langle u \rangle^{\frac{3}{4}}}{ \langle \underline{u} \rangle^{\frac{7}{4}}}.\]
As a consequence, there exists $A_I^\infty \in C^0 \in L^\infty (\R , L^2 (\mathbb{S}^2))$ such that 
\[ \Big| \big\langle r A_I (u, \underline{u},\omega) - A_I^\infty(u,\omega) , \Phi (\omega)\big\rangle_{L^2(\mathbb{S}^2_\omega)}  \Big| \lesssim \Lambda \frac{\langle u \rangle^{\frac{3}{4}}}{ \langle \underline{u} \rangle^{\frac{3}{4}}}.\]

\section{Instabilities}

The purpose of this section is to prove Propositions \ref{ProInstaLambdaLarge} and \ref{ProInstaInidecay}. We set $K=1$ and we denote the scalar field $A_1$ simply by $A$. We assume further that $\nabla \mathbf{A} \cdot \nabla_v f= \langle v \rangle^\delta \partial_{x^1} A \partial_{v^1}f$, so that the characteristics associated with $\T_{\mathbf{A}}$ satisfy
\begin{equation}\label{characInsta}
    \dot{\mathcal{X}} = \widehat{\mathcal{V}}, \qquad \qquad \dot{\mathcal{V}}^1 = \langle \mathcal{V} \rangle^\delta \partial_{x^1} A \big( \cdot , \mathcal{X} \big), \quad \dot{\mathcal{V}}^2=0, \quad \dot{\mathcal{V}}^3 = 0.
\end{equation} 

\subsection{Preliminary estimates}

The homogeneous part of the scalar field will be given by $\Lambda \mathcal{A}_{\mathrm{hom}}$, introduced in Definition \ref{Definsta} and that we bound in certain regions in the next result.

\begin{Lem}\label{LemInsta0}
There exists $c >0$ such that, for all $(t,x) \in \R_+ \times \R^3$ satisfying $1 \leq |x| -t \leq 10$ and $x^1/|x| \geq 1/4$, we have
\[  \partial_{x^1} \mathcal{A}_{\mathrm{hom}}(t,x) \geq 4c (1+t)^{-1} . \]
\end{Lem}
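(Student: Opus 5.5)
The plan is to compute $\mathcal{A}_{\mathrm{hom}}$ explicitly, since its data are spherically symmetric. Write $\phi(t,x)=\psi(t,|x|)/|x|$. Then $\Box\phi=0$ is the 1D wave equation for $\psi(t,r)$ on $r>0$, with $\psi(t,0)=0$ by smoothness at the origin. The data are $\psi(0,r)=\partial_t\psi(0,r)=\mathcal{A}(-r)$, extended oddly to $r<0$. By d'Alembert,
\[ \psi(t,r)=\tfrac12\big(\mathcal{A}(t-r)+\mathcal{A}(-t-r)\big)+\tfrac12\int_{r-t}^{r+t}\mathcal{A}(-s)\,\dr s, \]
where the odd extension is used if $r-t<0$. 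In the region $1\le r-t\le 10$ we have $r-t\ge1$, so no odd extension is needed. Since $\mathcal{A}$ vanishes on $[0,\infty)$, only arguments $-s\le 0$ contribute.

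Set $u=t-r\in[-10,-1]$ and $\underline{u}=-(t+r)\le u$. Then
\[ \psi=\tfrac12\mathcal{A}(u)+\tfrac12\mathcal{A}(\underline{u})+\tfrac12\int_{\underline{u}}^{u}\mathcal{A}(\sigma)\,\dr\sigma, \]
after the substitution $\sigma=-s$. Differentiating in $r$ gives
\[ \partial_r\psi=-\tfrac12\mathcal{A}'(u)-\tfrac12\mathcal{A}'(\underline{u})+\tfrac12\big(-\mathcal{A}(u)+\mathcal{A}(\underline{u})\big). \]
On $[-10,-1]$ we have $\mathcal{A}(u)=e^{-2u}$ and $\mathcal{A}'(u)=-2e^{-2u}$. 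Hence the $u$-contributions equal $e^{-2u}-\tfrac12e^{-2u}=\tfrac12e^{-2u}\ge \tfrac12e^{2}$.

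The $\underline{u}$-terms must be handled. Since $\mathcal{A}$ is $C^3_c$, the terms $|\mathcal{A}'(\underline{u})|$ and $|\mathcal{A}(\underline{u})|$ are bounded, but not small in general. I expect this to be the main obstacle, and there are two cases. If $t$ is large, say $t\ge t_0$, then $\underline{u}\le -2t-1$ lies outside $\mathrm{supp}\,\mathcal{A}$, so these terms vanish. If $t\le t_0$, they need not vanish, because $\underline{u}\in[-2t-10,-2t-1]$ may lie in $[-10,-1]$. For instance, when $t=0$ we have $\underline{u}=u$, and then $\partial_r\psi=-\mathcal{A}'(u)=2e^{-2u}>0$. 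More generally, when $\underline{u}\in[-10,-1]$ the sum of the $\underline{u}$-terms is $e^{-2\underline{u}}+\tfrac12 e^{-2\underline{u}}>0$, which only helps. The issue is when $\underline{u}<-10$, where $\mathcal{A}$ is unspecified on $\mathrm{supp}\,\mathcal{A}\cap(-\infty,-10)$. There I would use $\underline{u}\le u-2t$ together with the compactness of $[0,t_0]$ and continuity. Alternatively, I would use the freedom in choosing $\mathcal{A}$ (taking $\mathrm{supp}\,\mathcal{A}\subset[-11,0]$ with $\mathcal{A}'$ controlled there) so that these terms are dominated by $\tfrac12e^{-2u}\ge\tfrac12e^{2}$. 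If the definition does not allow this, I would restrict to showing that $\partial_r\psi\ge c_0>0$ holds uniformly on the region via the explicit formula, adjusting $c$.

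Finally, I convert to $\partial_{x^1}\phi$:
\[ \partial_{x^1}\phi=\frac{x^1}{|x|}\Big(\frac{\partial_r\psi}{r}-\frac{\psi}{r^2}\Big). \]
Here $x^1/|x|\ge1/4$, and $r\in[t+1,t+10]$, so $r\sim 1+t$. Moreover $|\psi|\lesssim 1$, since $\mathcal{A}$ is bounded and the integral runs over a bounded support. Thus $\partial_{x^1}\phi\ge \tfrac14 c_0/(t+10)-C/(t+1)^2$. This is $\ge 4c(1+t)^{-1}$ for $t$ large. For bounded $t$ the $\psi/r^2$ term needs care. There I would use the sign information: $\psi$ is comparable to $\tfrac12 e^{-2u}$ plus an integral which, for small $t$, is bounded by the gain in $\partial_r\psi$. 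Failing that, I would simply shrink $c$ after checking positivity on the compact set $\{t\le t_0\}$ by the explicit formula.
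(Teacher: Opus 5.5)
There is a genuine gap, and it is exactly where you placed the ``main obstacle''. None of your bounded-$t$ fallbacks can work, because with the data you used the inequality is false. Your d'Alembert formula is correct. Evaluate it along $|x|=t+1$ (so $u=-1$), at times for which $\underline{u}=-(2t+1)$ lies below $\mathrm{supp}\,\mathcal{A}$ (e.g.\ $t\geq 5$ if $\mathrm{supp}\,\mathcal{A}\subset[-11,0]$, as you propose). Then $\partial_r\psi=\tfrac12 e^{2}$, while, since $\mathcal{A}\geq 0$,
\[ \psi=\tfrac12 e^{2}+\tfrac12\int_{-\infty}^{-1}\mathcal{A}(\sigma)\,\dr\sigma\geq \tfrac12 e^{2}+\tfrac12\int_{-10}^{-1}e^{-2\sigma}\,\dr\sigma=\tfrac14\big(e^{20}+e^{2}\big). \]
Hence $r^2\partial_r\phi=r\partial_r\psi-\psi<0$ whenever $t+1<\tfrac12 e^{18}$, so $\partial_{x^1}\phi<0$ there for $x^1>0$. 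Your large-$t$ argument is fine, but it only takes over for $t\gtrsim e^{18}$.

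Your claim that for small $t$ the integral term is ``bounded by the gain in $\partial_r\psi$'' is therefore wrong. The term $\tfrac12\int_{\underline{u}}^{u}\mathcal{A}$ comes from the $\partial_t$-datum and is of size $e^{20}$ as soon as $t\geq 9/2$. Compactness cannot produce a sign, and shrinking $c$ cannot fix a negative quantity. Redesigning $\mathcal{A}$ outside $[-10,-1]$ does not help either. Setting $B(s)=\mathcal{A}(-s)$, $G(s)=\int_1^sB$ and $H=G+B$, positivity along $u=-1$ is equivalent to $\frac{\dr}{\dr s}\big(H/(1+s)^2\big)>-(s-1)e^{2}/(1+s)^3$. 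Integrating from $s=10$ to $\infty$ forces $H(10)\leq 11e^{2}$, which is false for every admissible $\mathcal{A}$.

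The missing idea is that $\mathcal{A}_{\mathrm{hom}}$ is the purely outgoing wave $\mathcal{A}(t-|x|)/|x|$, which is what the paper's proof uses. That solution has $\partial_t\mathcal{A}_{\mathrm{hom}}(0,x)=\mathcal{A}'(-|x|)/|x|$, so the printed datum $\mathcal{A}(-|x|)/|x|$ is a slip in the Definition, which you reproduced faithfully. With the outgoing formula there is no incoming term and no integral term. In the region $u=t-r\in[-10,-1]$, where $r\geq 1$, $\omega^1\geq 1/4$ and $r\leq t+10$, one gets
\[ \partial_{x^1}\mathcal{A}_{\mathrm{hom}}=\omega^1\Big(-\frac{\mathcal{A}'(u)}{r}-\frac{\mathcal{A}(u)}{r^2}\Big)=\omega^1e^{-2u}\,\frac{2r-1}{r^2}\geq\frac{e^{-2u}}{4r}\geq\frac{e^{2}}{40(1+t)}. \]
Only the values of $\mathcal{A}$ on $[-10,-1]$ enter. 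The $\psi/r^2$ term is harmless because $\psi=\mathcal{A}(u)$ is comparable to $\partial_r\psi$, rather than of size $\int\mathcal{A}$.
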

\begin{proof}
Recall that $\mathcal{A}_{\mathrm{hom}}$ is spherically symmetric and satisfies $|x|\mathcal{A}_{\mathrm{hom}}= \mathcal{A}(-|x|)$. Moreover $\mathcal{A}$ is supported in $\R_-$. Thus, using the notations $r=|x|$ and $\omega^i = x^i/|x|$, we have
\[  \mathcal{A}_{\mathrm{hom}} (t,x) = \frac{ \mathcal{A}(t-r)}{r},  \qquad \qquad   \partial_{x^1} \mathcal{A}_{\mathrm{hom}}(t,x) = -\omega^1 \frac{\mathcal{A}'(t-r)}{r}-\omega^1 \frac{\mathcal{A}(t-r)}{r^2} .  \]
Recall that $\mathcal{A}(u)=e^{-2 u}$ for all $-10 \leq u \leq -1$. Hence, for all $(t,x) \in \R_+ \times \R^3$ such that  $-10 \leq t-r \leq -1$ and $\omega^1 \geq 1/4$, we have
\begin{equation*}
 \partial_{x^1} \mathcal{A}_{\mathrm{hom}}(t,x) \geq \frac{e^{2 (t-r)}}{4r} \geq \frac{e^{-10 }}{4(1+t)}.
 \end{equation*}
\end{proof}

In all the settings considered below, the scalar field $A$ will satisfy the following estimate. Let $\varepsilon , \, \Lambda , \, C_{\mathrm{stab}} >0$ and assume that for all $1 \leq |x| - t \leq 10$,
\begin{align}
  \langle t \rangle  \big| \partial_{x^1} A - \Lambda \partial_{x^1} \mathcal{A}_{\mathrm{hom}}\big|(t,x)  & \leq C_{\mathrm{stab}} \varepsilon . \label{eq:decayinsta} 
\end{align}
We now estimate the scalar field under this assumption.
\begin{Lem}\label{LemdecayAinsta}
   If $C_{\mathrm{stab}} \varepsilon / \Lambda$ is small enough, we have, for all $(t,x) \in \R_+ \times \R^3$ such that $1 \leq |x|-t \leq 10$ and $x^1/|x| \geq 1/4$,
    \[   \partial_{x^1} A(t,x) \geq 2c\Lambda (1+t)^{-1}.  \]
\end{Lem}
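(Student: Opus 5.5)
The plan is to write $\partial_{x^1} A$ as $\Lambda \partial_{x^1}\mathcal{A}_{\mathrm{hom}}$ plus an error, bound the main part from below using Lemma \ref{LemInsta0}, and absorb the error using \eqref{eq:decayinsta}. In the region $1 \leq |x|-t \leq 10$ with $x^1/|x| \geq 1/4$ we have
\[ \partial_{x^1} A(t,x) \;\geq\; \Lambda \, \partial_{x^1}\mathcal{A}_{\mathrm{hom}}(t,x) - \big| \partial_{x^1} A - \Lambda \partial_{x^1}\mathcal{A}_{\mathrm{hom}} \big|(t,x). \]
Lemma \ref{LemInsta0} gives $\Lambda \partial_{x^1}\mathcal{A}_{\mathrm{hom}}(t,x) \geq 4c\Lambda(1+t)^{-1}$ there.

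The assumption \eqref{eq:decayinsta} holds on the same region, since it only requires $1 \leq |x|-t\leq 10$. It bounds the error term by $C_{\mathrm{stab}}\varepsilon \langle t\rangle^{-1}$. Since $\langle t\rangle^{-1} \leq \sqrt{2}(1+t)^{-1}$, we obtain
\[ \partial_{x^1} A(t,x) \geq \big(4c\Lambda - \sqrt{2}\, C_{\mathrm{stab}}\varepsilon\big)(1+t)^{-1}. \]
If $C_{\mathrm{stab}}\varepsilon/\Lambda \leq \sqrt{2}\,c$, the bracket is at least $2c\Lambda$, which is the claim.

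There is no real obstacle here. The only point to check is that the constant $c$ from Lemma \ref{LemInsta0} is absolute, which it is ($4c = e^{-10}/4$). The smallness threshold on $C_{\mathrm{stab}}\varepsilon/\Lambda$ is therefore also absolute.
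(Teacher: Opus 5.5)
Your proposal is correct and is exactly the paper's argument: combine the lower bound $\Lambda\partial_{x^1}\mathcal{A}_{\mathrm{hom}} \geq 4c\Lambda(1+t)^{-1}$ from Lemma \ref{LemInsta0} with the error bound \eqref{eq:decayinsta}, using $\langle t\rangle^{-1}\le\sqrt{2}(1+t)^{-1}$, which gives the absolute threshold $C_{\mathrm{stab}}\varepsilon/\Lambda\le\sqrt{2}\,c$. Only the fact that $c$ is absolute matters here, so you do not need the specific numerical value of $c$ quoted from the proof of Lemma \ref{LemInsta0}.
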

\begin{proof}
    This follows from Lemma \ref{LemInsta0} and \eqref{eq:decayinsta}.
\end{proof}

Next, we establish several properties of certain characteristics associated with the operator $\T_{\mathbf{A}}$. We will be interested in the characteristics $t \mapsto (\mathcal{X},\mathcal{V})(t,0,y,w)$, for $|y-(7,0,0)| \leq 1$ and $w \in \R^3_v$ satisfying $w^1 \geq 1$ and $0 \leq w^2 \leq w^3 \leq 1$. We lighten the notation by denoting the trajectory by $(\mathcal{X}_t, \mathcal{V}_t)$ and we introduce $\mathcal{T}_{y,w} \in \R_+ \cup \{ + \infty \}$ the maximal time such that 
\[ \forall \, 0 \leq \tau \leq \mathcal{T}_{y,w}, \qquad \qquad 1 \leq |\mathcal{X}_\tau| - \tau \leq 10, \qquad \frac{\mathcal{X}_\tau^1}{|\mathcal{X}_t|} \geq \frac{1}{4} . \]
In the next two results, we assume that $A$ satisfies the estimate of Lemma \ref{LemdecayAinsta}. We first derive a lower bound for $\mathcal{T}_{y,w}$.
\begin{Lem}\label{LemVprop}
The following properties hold for all $t \in [0,\mathcal{T}_{y,w}]$.
\begin{itemize}
    \item For all $t \in [0,\mathcal{T}_{y,w}]$, we have $\mathcal{V}_{t}^1 \geq w^1$, $(\mathcal{V}^2_t , \mathcal{V}_t^3)=(w^2,w^3)$ and $\frac{\mathcal{X}_\tau^1}{|\mathcal{X}_t|} \geq  1/2$.
    \item We have $\mathcal{T}_{y,w}=+\infty$ or $|\mathcal{X}_{\mathcal{T}_{y,w}}| - \mathcal{T}_{y,w} = 1$. 
    \item We have $\mathcal{T}_{y,w} \geq  \langle w \rangle^2$.
\end{itemize}
\end{Lem}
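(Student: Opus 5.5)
The plan is a continuity argument on $[0,\mathcal{T}_{y,w}]$. On this interval the characteristic stays in the region where Lemma \ref{LemdecayAinsta} applies, so $\partial_{x^1}A(\tau,\mathcal{X}_\tau) \geq 2c\Lambda(1+\tau)^{-1}>0$. The three items then follow from the sign of the force and from elementary geometric bounds on $\widehat{\mathcal{V}}$.

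\emph{First item.} By \eqref{characInsta}, $\dot{\mathcal{V}}^2=\dot{\mathcal{V}}^3=0$, so $(\mathcal{V}^2_\tau,\mathcal{V}^3_\tau)=(w^2,w^3)\in[0,1]^2$. Also $\dot{\mathcal{V}}^1=\langle\mathcal{V}\rangle\partial_{x^1}A(\tau,\mathcal{X}_\tau)>0$, hence $\mathcal{V}^1_\tau\geq w^1\geq 1$ and $|\mathcal{V}_\tau|\geq |w|$. For the angular bound:
\begin{itemize}
\item Since $0\leq \mathcal{V}^j_\tau\leq 1\leq \mathcal{V}^1_\tau$ for $j=2,3$, we have $0\leq \widehat{\mathcal{V}}^j_\tau\leq \widehat{\mathcal{V}}^1_\tau$.
\item Integrating, and using $|y-(7,0,0)|\leq 1$, gives $\mathcal{X}^1_\tau\geq 6+\int_0^\tau\widehat{\mathcal{V}}^1$ and $|\mathcal{X}^j_\tau|\leq 1+\int_0^\tau\widehat{\mathcal{V}}^1\leq \mathcal{X}^1_\tau-5$.
\item Therefore $|\mathcal{X}_\tau|^2\leq |\mathcal{X}^1_\tau|^2+2(\mathcal{X}^1_\tau)^2=3|\mathcal{X}^1_\tau|^2$, so $\mathcal{X}^1_\tau/|\mathcal{X}_\tau|\geq 1/\sqrt3\geq 1/2$.
\end{itemize}

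\emph{Second item.} At $\tau=0$ we have $|\mathcal{X}_0|=|y|\in[6,8]$, so all defining constraints of $\mathcal{T}_{y,w}$ hold strictly and $\mathcal{T}_{y,w}>0$. Moreover
\[ \frac{\dr}{\dr\tau}\big(|\mathcal{X}_\tau|-\tau\big)=\frac{\mathcal{X}_\tau}{|\mathcal{X}_\tau|}\cdot\widehat{\mathcal{V}}_\tau-1<0, \]
so $|\mathcal{X}_\tau|-\tau\leq 8<10$ for all times and the upper constraint is never active. By the first item, the angular constraint holds with the strict margin $1/2>1/4$. If $\mathcal{T}_{y,w}<\infty$, continuity and maximality force some constraint to be saturated at $\mathcal{T}_{y,w}$, and the only possibility is $|\mathcal{X}_{\mathcal{T}_{y,w}}|-\mathcal{T}_{y,w}=1$.

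\emph{Third item.} This is the only quantitative step and the main obstacle. The difficulty is that one must bound the rate of decrease of $|\mathcal{X}_\tau|-\tau$ from above, and the angle between $\mathcal{X}_\tau$ and $\mathcal{V}_\tau$ is not small when $w^1\sim1$. I would avoid that angle by comparing with the fixed direction $e_1$: $|\mathcal{X}_\tau|-\tau\geq \mathcal{X}^1_\tau-\tau\geq 6-\int_0^\tau(1-\widehat{\mathcal{V}}^1_s)\,\dr s$. Then I split
\[ 1-\widehat{\mathcal{V}}^1=\big(1-|\widehat{\mathcal{V}}|\big)+\frac{|\mathcal{V}|-\mathcal{V}^1}{\langle\mathcal{V}\rangle}. \]
The first term equals $\big(\langle\mathcal{V}\rangle(\langle\mathcal{V}\rangle+|\mathcal{V}|)\big)^{-1}\leq\langle w\rangle^{-2}$, using $|\mathcal{V}_\tau|\geq|w|$. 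For the second, $|\mathcal{V}|-\mathcal{V}^1=\frac{(w^2)^2+(w^3)^2}{|\mathcal{V}|+\mathcal{V}^1}\leq \frac{2}{|\mathcal{V}|}$, so the term is at most $\frac{2}{\langle\mathcal{V}\rangle|\mathcal{V}|}\leq\frac{2\sqrt2}{\langle w\rangle^2}$, because $|w|\geq 1$ gives $\langle w\rangle|w|\geq\langle w\rangle^2/\sqrt2$. Altogether $1-\widehat{\mathcal{V}}^1_s\leq 4\langle w\rangle^{-2}$, hence $|\mathcal{X}_\tau|-\tau\geq 6-4\tau\langle w\rangle^{-2}>1$ for $\tau<\tfrac54\langle w\rangle^2$. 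By the second item this rules out $\mathcal{T}_{y,w}<\langle w\rangle^2$, which proves $\mathcal{T}_{y,w}\geq\langle w\rangle^2$.
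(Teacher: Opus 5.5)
Your proof is correct and follows the paper's route. For the first item you use the sign of $\partial_{x^1}A$ together with conservation of $(\mathcal V^2,\mathcal V^3)$, for the second the monotonicity of $\tau\mapsto|\mathcal X_\tau|-\tau$, and for the third a comparison with motion along $e_1$.

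The only difference is in the last step. The paper solves a quadratic for the time at which the line $y+t\widehat w^1e_1$ reaches $\{|x|-t=1\}$. You instead bound $1-\widehat{\mathcal V}^1\le 4\langle w\rangle^{-2}$ pointwise and use $|\mathcal X_\tau|\ge\mathcal X^1_\tau$. Your version is more direct, and it avoids the comparison $|\mathcal X_t|\ge|y+t\widehat w^1e_1|$, which the paper's argument relies on and which is not automatic when $y^2$ or $y^3$ is negative.
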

\begin{proof}
   For the first property, we use \eqref{characInsta} and Lemma \ref{LemdecayAinsta}. It allows to deduce that $\mathcal{X}^1_t \geq y^1+t\widehat{w}^1$, $y^2 \leq \mathcal{X}^2_t \leq y^2+t \widehat{w}^2$ and $y^3 \leq \mathcal{X}_t^3 \leq y^3+ t \widehat{w}^3$. Since $y^1/|y| \geq 1/2$ and $\widehat{w}^1 \geq 1/2$, we obtain $\mathcal{X}_\tau^1/|\mathcal{X}_t| \geq  1/2$. For the second one, we use that $t \mapsto |\mathcal{X}_t|-t$ decreases. Finally, $\mathcal{T}_{y,w} \geq T$, where $T$ satisfies
   \[    |y+T(\widehat{w}^1,0,0)|^2=T^2+2T+1 \; \Longleftrightarrow \; \big( 1- |\widehat{w}^1|^2 \big)T^2+2  \big(1-y^1 \cdot \widehat{w}^1 \big)T+ 1-|y|^2 =0.   \]
   We conclude the proof by observing that, in view of the assumptions on $(y,w)$, we have
   \[  2  \big(y^1 \cdot \widehat{w}^1 -1\big) \geq y^1 \cdot \widehat{w}^1  \geq 3 , \qquad |y|^2-1 \geq 35, \qquad 1-|\widehat{w}^1|^2 \leq 3\langle w \rangle^{-2}.\]
\end{proof}

In the case $\delta =1$, we show a stronger lower bound on the momentum characteristic.

\begin{Lem}\label{LemlowerboundV}
If $\delta =1$, we have $\mathcal{V}^1_t \geq  w^1(1+t)^{c \Lambda}$ for $0 \leq t \leq \mathcal{T}_{y,w}$.
\end{Lem}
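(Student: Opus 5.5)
Within the range $0 \leq t \leq \mathcal{T}_{y,w}$, the trajectory remains in the region where Lemma \ref{LemdecayAinsta} holds. By Lemma \ref{LemVprop}, we also have $1 \leq |\mathcal{X}_t|-t \leq 10$ and $\mathcal{X}_t^1/|\mathcal{X}_t| \geq 1/2 \geq 1/4$ on that interval. The plan is to turn the characteristic equation \eqref{characInsta} with $\delta=1$ into a differential inequality for $\mathcal{V}^1_t$ and then integrate it.

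\textbf{Step 1: the differential inequality.} Lemma \ref{LemdecayAinsta} gives $\partial_{x^1}A(t,\mathcal{X}_t) \geq 2c\Lambda(1+t)^{-1}$, hence
\[ \dot{\mathcal{V}}^1_t = \langle \mathcal{V}_t \rangle \, \partial_{x^1} A(t,\mathcal{X}_t) \geq \frac{2c\Lambda}{1+t} \langle \mathcal{V}_t \rangle . \]
We always have $\langle \mathcal{V}_t \rangle \geq |\mathcal{V}^1_t|$. Moreover, $\mathcal{V}^1_t \geq w^1 \geq 1 > 0$ by the first item of Lemma \ref{LemVprop}, so $\langle \mathcal{V}_t \rangle \geq \mathcal{V}^1_t$. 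This yields $\dot{\mathcal{V}}^1_t \geq \frac{2c\Lambda}{1+t}\mathcal{V}^1_t$.

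\textbf{Step 2: integration.} Since $\mathcal{V}^1_t>0$, we may divide and obtain $\frac{\dr}{\dr t}\log \mathcal{V}^1_t \geq 2c\Lambda/(1+t)$. Integrating from $0$ to $t$ and using $\mathcal{V}^1_0 = w^1$ gives $\mathcal{V}^1_t \geq w^1(1+t)^{2c\Lambda} \geq w^1 (1+t)^{c\Lambda}$. This is the claim; we even gain the factor $2$, which leaves room if the constant in Lemma \ref{LemdecayAinsta} is to be used less sharply.

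\textbf{Main point to check.} The only delicate step is positivity: we must ensure that $\langle\mathcal{V}_t\rangle$ is bounded below by $\mathcal{V}^1_t$ and that this quantity is positive along the entire interval. This follows from the monotonicity $\mathcal{V}^1_t \geq w^1 \geq 1$ in Lemma \ref{LemVprop}, which in turn uses the positivity of $\partial_{x^1}A$ from Lemma \ref{LemdecayAinsta}. If $\mathcal{T}_{y,w}=+\infty$, the argument applies on every bounded interval $[0,t]$, so no further care is needed.
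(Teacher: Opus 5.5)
Your proof is correct and follows the paper's argument: you combine the characteristic equation with the lower bound $\partial_{x^1}A \geq 2c\Lambda(1+t)^{-1}$ from Lemma~\ref{LemdecayAinsta}, then apply Grönwall. The only difference is that the paper bounds $\langle \mathcal{V}_t \rangle$ below by $\frac{1}{2}(1+|w^2|+|w^3|+|\mathcal{V}^1_t|)$, which loses a factor $2$ in the exponent, whereas your direct bound $\langle \mathcal{V}_t \rangle \geq \mathcal{V}^1_t$ gives the harmlessly stronger $(1+t)^{2c\Lambda}$.
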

\begin{proof}
Using \eqref{characInsta} and Lemma \ref{LemdecayAinsta}, we have
\[ \forall \, t \in [0,\mathcal{T}_{y,w}], \qquad \qquad  \dot{V}^1_t \geq \frac{c \Lambda}{1+t} \big(1+|w^2|+|w^3|+|\mathcal{V}^1_t| \big) . \]
Grönwall's inequality then yields the result.
\end{proof}

\subsection{Proof of Proposition \ref{ProInstaLambdaLarge}}\label{Subsec51}

Let $\varepsilon, \, \Lambda >0$, and let $(f,A) \coloneqq (f^\varepsilon , A^\Lambda)$ be as in the statement of Proposition \ref{ProInstaLambdaLarge}. Assume that $(f,A)$ is a global-in-time solution to \eqref{VWspeci}, with $\delta = 1$, and that there exists a constant $C_{\mathrm{stab}} >0$ such that \eqref{eq:decayinsta} holds. We note in particular that $(y,w)$ is in the support of $f(0,\cdot , \cdot)$. Let us show that $\mathcal{V}_t \to +\infty$ as $t \to +\infty$ if $\Lambda$ is large enough.

\begin{Cor}\label{CorlowerboundV}
In the case $\delta =1$, if $2c\Lambda  \geq 1+ 3/10|w^1|^{-2}$ and if $C_{\mathrm{stab}}\varepsilon$ is small enough, then $\mathcal{T}_{y,w}=+\infty$.
\end{Cor}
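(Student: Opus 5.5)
The plan is a continuity argument on the quantity $t \mapsto |\mathcal{X}_t|-t$. Throughout $[0,\mathcal{T}_{y,w}]$, Lemma \ref{LemdecayAinsta} applies, since it only requires $C_{\mathrm{stab}}\varepsilon/\Lambda$ small. Then Lemmata \ref{LemVprop} and \ref{LemlowerboundV} give
\[ \mathcal{V}^1_t \geq w^1 (1+t)^{c\Lambda}, \qquad (\mathcal{V}^2_t,\mathcal{V}^3_t)=(w^2,w^3), \qquad \mathcal{X}^1_t/|\mathcal{X}_t| \geq 1/2. \]
By Lemma \ref{LemVprop}, if $\mathcal{T}_{y,w}<+\infty$, then necessarily $|\mathcal{X}_{\mathcal{T}}|-\mathcal{T}=1$. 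The angular condition and the upper bound $|\mathcal{X}_t|-t\leq 10$ are automatically preserved, because $|\mathcal{X}_t|-t$ is nonincreasing and the angle is at least $1/2>1/4$. So it suffices to show that $|\mathcal{X}_t|-t$ stays strictly above $1$ on $[0,\mathcal{T}_{y,w}]$, which gives a contradiction.

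To do so, I would integrate
\[ \frac{\dr}{\dr t}\big(t-|\mathcal{X}_t|\big)=1-\widehat{\mathcal{V}}_t\cdot \frac{\mathcal{X}_t}{|\mathcal{X}_t|}. \]
Write $\widehat{\mathcal{V}}\cdot\omega \geq |\widehat{\mathcal{V}}|-\frac{1}{2}|\widehat{\mathcal{V}}|\,|\omega-\mathcal{V}/|\mathcal{V}||^2$. The first part contributes $1-|\widehat{\mathcal{V}}_t| \leq \langle \mathcal{V}_t\rangle^{-2} \leq |w^1|^{-2}(1+t)^{-2c\Lambda}$. Under the hypothesis $2c\Lambda \geq 1+\frac{3}{10}|w^1|^{-2}$, in particular $2c\Lambda>1$, this is integrable, with
\[ \int_0^\infty |w^1|^{-2}(1+t)^{-2c\Lambda}\,\dr t=\frac{|w^1|^{-2}}{2c\Lambda-1}\leq \frac{10}{3}. \]
The threshold is presumably tuned exactly so that this bound, together with the initial value $|y|-0\geq 6$, keeps $|\mathcal{X}_t|-t\geq 6-10/3-(\text{angular error})>1$. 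I would therefore aim to show that the angular error term is small, for example bounded by $1$.

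The angular error $1-\frac{\mathcal{V}}{|\mathcal{V}|}\cdot\omega$ is the main obstacle. The transverse components $\mathcal{X}^2,\mathcal{X}^3$ grow like $t\widehat{w}^{2,3}$, while $\mathcal{X}^1 \gtrsim 6+t\widehat{w}^1$. So the directions of $\mathcal{X}_t$ and $\mathcal{V}_t$ differ by $O\big((|\mathcal{X}^{2,3}|/\mathcal{X}^1) - w^{2,3}/\mathcal{V}^1\big)$. Since $\mathcal{V}^1$ grows polynomially while $w^{2,3}\leq 1$ stay fixed, the velocity direction tends to $e_1$. The position direction also tends to $e_1$, because $\mathcal{X}^1$ grows at least like $\int \widehat{\mathcal{V}}^1$ and $\mathcal{X}^{2,3}\leq y^{2,3}+\int \mathcal{V}^{2,3}/\langle\mathcal{V}\rangle$, where the latter integrand decays like $(1+t)^{-c\Lambda}$.

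I would bound $|\omega-\mathcal{V}/|\mathcal{V}||^2 \lesssim \big(|y^{2,3}|+\int_0^t \langle \mathcal{V}_s\rangle^{-1}\dr s\big)^2/(1+t)^2 + \langle\mathcal{V}_t\rangle^{-2}$. With $\langle\mathcal{V}_s\rangle^{-1}\leq (1+s)^{-c\Lambda}$ and $c\Lambda>1/2$, this is $O\big((1+t)^{-2c\Lambda}\big)+O\big((1+t)^{-2}\big)$, hence integrable. Making its total contribution smaller than the remaining margin (about $6-\frac{10}{3}-1$) is where care and possibly enlarging $\Lambda_0$ is needed. If the constants are tight, I would alternatively use the exact quantity $\frac{\dr}{\dr t}(|\mathcal{X}|^2-t^2)/2=\mathcal{X}\cdot\widehat{\mathcal{V}}-t$ and compare coordinatewise. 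Once $|\mathcal{X}_t|-t>1$ on the whole interval, continuity contradicts $|\mathcal{X}_{\mathcal{T}}|-\mathcal{T}=1$, hence $\mathcal{T}_{y,w}=+\infty$.
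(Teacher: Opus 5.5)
Your continuity set-up is sound: by Lemma \ref{LemVprop}, the trajectory can only leave the region through $|\mathcal{X}_t|-t=1$. The quantitative step, however, fails at the stated threshold. You split $1-\widehat{\mathcal{V}}\cdot\omega=(1-|\widehat{\mathcal{V}}|)+\frac12|\widehat{\mathcal{V}}|\,|\omega-\nu|^2$, with $\nu=\mathcal{V}/|\mathcal{V}|$, and hope the angular term contributes at most about $1$. It does not. It is of the same order $|w^1|^{-2}(2c\Lambda-1)^{-1}$ as the radial term, and that quantity is $O(1)$, not small, precisely at the threshold.

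To see this using only the inputs you use (notably $\mathcal{V}^1_t\geq w^1(1+t)^{c\Lambda}$), take $y=(6,0,0)$, $w^2=w^3=1$, $w^1$ large, $2c\Lambda-1=\frac{3}{10}|w^1|^{-2}$, and $\mathcal{V}^1$ growing at exactly this rate. Then $\mathcal{X}^2_t=\mathcal{X}^3_t\approx\frac{(1+t)^{1-c\Lambda}}{w^1(1-c\Lambda)}$, so $\omega^\perp\approx\nu^\perp/(1-c\Lambda)\approx 2\nu^\perp$. Hence $\frac12|\omega-\nu|^2\approx|w^1|^{-2}(1+t)^{-2c\Lambda}$, which integrates to about $\frac{10}{3}$. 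The radial term contributes about $\frac53$ (your bound gives $\frac{10}{3}$), and $|\mathcal{X}_t|-t$ indeed comes within $O(|w^1|^{-2})$ of $1$.

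So the bound $|\mathcal{X}_t|-t\geq 6-\frac{10}{3}-1$ you are aiming for is false. Even with the sharp radial bound, the two contributions use up essentially the whole margin $|y|-1=5$, so no lossy estimate of the angle can close the argument. Your sketch also drops the factor $|w^1|^{-1}$ in $\langle\mathcal{V}_s\rangle^{-1}\leq(1+s)^{-c\Lambda}$, which makes your bound non-uniform in $w^1$, since $(2c\Lambda-1)^{-1}$ may be as large as $\frac{10}{3}|w^1|^2$. Enlarging $\Lambda_0$ is not available either, because the corollary has an explicit fixed threshold. Your fallback via $\frac{\dr}{\dr t}\frac12(|\mathcal{X}|^2-t^2)$ is left unspecified.

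The missing idea is to avoid the angle altogether by projecting onto the fixed direction $e_1$. Since $|\mathcal{X}_t|\geq\mathcal{X}^1_t$ and $(\mathcal{V}^2,\mathcal{V}^3)\equiv(w^2,w^3)$ with $0\leq w^2,w^3\leq 1$, Lemma \ref{LemlowerboundV} gives, on $[0,\mathcal{T}_{y,w}]$,
\[ \dot{\mathcal{X}}^1_t=\frac{\mathcal{V}^1_t}{\sqrt{1+|w^2|^2+|w^3|^2+|\mathcal{V}^1_t|^2}}\geq 1-\frac{3}{2|\mathcal{V}^1_t|^2}\geq 1-\frac{3}{2|w^1|^2(1+t)^{2c\Lambda}}. \]
Integrating and using $y^1\geq 6$, for finite $t>0$,
\[ \mathcal{X}^1_t> y^1+t-\frac{3}{2|w^1|^2(2c\Lambda-1)}\geq y^1+t-5\geq t+1. \]
This contradicts $|\mathcal{X}_{\mathcal{T}_{y,w}}|-\mathcal{T}_{y,w}=1$ if $\mathcal{T}_{y,w}<\infty$.

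This is where the constant $\frac{3}{10}$ comes from: $\frac32\cdot\frac{10}{3}=5=6-1$. The transverse momenta enter only through $1+|w^2|^2+|w^3|^2\leq 3$. The transverse drift of $\mathcal{X}$ can only increase $|\mathcal{X}|$, so it never has to be estimated.
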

\begin{Rq}
    Proceeding similarly, one can show that if $\Lambda$ is small enough, we have $\mathcal{T}_{y,w}+1 \geq  \langle w \rangle^{2/(1-2c \Lambda)} $.
\end{Rq}
\begin{proof}
 Recall that $(\mathcal{V}_t^2,\mathcal{V}_t^3) \equiv (w^2,w^3)$ and $0 \leq w^2, \, w^3 \leq 1$. Hence, using Lemma \ref{LemlowerboundV}, we have
   \[ \dot{\mathcal{X}}_t^1 = \frac{\mathcal{V}_t^1}{\sqrt{1+|w^2|^2+|w^3|^2+|\mathcal{V}_t^1|^2}} \geq  1- \frac{1+|w^2|^2+|w^3|^2}{2|\mathcal{V}_t^1|^2} \geq  1- \frac{3}{2|w^1|^2(1+t)^{2c \Lambda}}   , \]
for all $0 \leq t \leq \mathcal{T}_{y,w}$. We then deduce
\[ \mathcal{X}_t^1 \geq y^1+t-\frac{3}{2|w^1|^2(2c\Lambda -1)}   ,   \]
which implies the result since $|\mathcal{X}_t| \geq \mathcal{X}^1_t$ and $y^1 \geq 6$.
\end{proof}

In view of the support of $f(0,\cdot , \cdot)$, Lemma \ref{LemVprop} and Corollary \ref{CorlowerboundV}, if $f(t,x,v) \neq 0$, then we have $1 \leq |x|-t \leq 10$ and $0 \leq v^2, \, v^3 \leq 1 \leq v^1$, so that $\widehat{v}^1 \geq 1/2$. This allows to conclude the proof of Proposition \ref{ProInstaLambdaLarge}. Indeed, note that by controlling $\partial_{x^1}A$ through Lemma \ref{LemdecayAinsta}, we have
\begin{align*}
    \partial_t \int_{\R^3_x} \int_{\R^3_v} f(t,x,v) \dr v \dr x & = - \int_{\R^3_x} \int_{\R^3_v} \langle v \rangle \, \partial_{x^1} A(t,x) \partial_{v^1} f(t,x,v) \dr v \dr x = \int_{\R^3_x} \int_{\R^3_v} \widehat{v}^1 \partial_{x^1} A(t,x)  f(t,x,v) \dr v \dr x \\
    & \geq \frac{c \Lambda}{1+t} \int_{\R^3_x} \int_{\R^3_v} f(t,x,v) \dr v \dr x.
\end{align*}
Then, the Grönwall inequality finally provides $\| f(t,\cdot , \cdot) \|_{L^1_{x,v}} \geq \| f(0,\cdot , \cdot) \|_{L^1_{x,v}}(1+t)^{c \Lambda}$.

\subsection{Proof of Proposition \ref{ProInstaInidecay}}

Let $\varepsilon, \, \Lambda >0$ and $(f,A)$ be a global-in-time solution to \eqref{VWspeci}, with $\delta =1$, such that $A(0,x)=\partial_t A(0,x)=\Lambda \mathcal{A}(-|x|)/|x|$ and \eqref{eq:decayinsta} holds. We assume further that $C_{\mathrm{stab}}\varepsilon/\Lambda$ is sufficiently small so that the first estimate in Lemma \ref{LemdecayAinsta} holds. As a consequence Lemmata \ref{LemVprop}--\ref{LemlowerboundV} are satisfied. 

Let us show that the original decay space for $f$ is not asymptotically stable for a large class of natural decay rates, thereby demonstrating that some loss of decay is necessary. Consider now a weight function $W \in C^1 (\R_+, [1,+\infty[)$ such that $W(s) \to +\infty$ as $s \to + \infty$ and $sW'(s)/W(s) \geq b$, for some constant $b >0$. Then, we have
\[ \T_{\mathbf{A}} \big( W |f| \big) =   \partial_{x^1} A v^1 W'(\langle v \rangle) |f|.  \]
We consider a characteristic $t \mapsto (\mathcal{X}_t,\mathcal{V}_t)$ satisfying the same assumptions as those imposed in Section \ref{Subsec51}. As $2V^1_t \geq \langle \mathcal{V}_t \rangle$ on $[0,\mathcal{T}_{y,w}]$, we then obtain from Lemma \ref{LemdecayAinsta}, that
\[ \forall \, t \in \big[ 0 ,\mathcal{T}_{y,w} \big], \qquad \qquad \T_{\mathbf{A}} \big( W |f| \big) (t, \mathcal{X}_t, \mathcal{V}_t) \geq \frac{c \Lambda}{1+t} \frac{ \langle \mathcal{V}_t \rangle W'(\langle v \rangle)}{W(\langle v \rangle)} W(\langle \mathcal{V}_t \rangle) \cdot \big|f (t,\mathcal{X}_t,\mathcal{V}_t ) \big|.  \]
The Grönwall inequality and the assumption on $W$ provides
\[  \forall \, t \in \big[ 0 , \mathcal{T}_{y,w} \big], \qquad W \big( \langle \mathcal{V}_t \rangle \big) \big| f (t,\mathcal{X}_t,\mathcal{V}_t )  \big| \geq  W ( \langle w \rangle) \big|f(0,y,w) \big| (1+t)^{bc \Lambda } . \]
According to Lemma \ref{LemVprop}, we then obtain, for all $\tau=w^1 \geq 1$ and $0 \leq w^2, \, w^3 \leq 1$,  
\[ W \big( \langle \mathcal{V}_\tau \rangle \big) \big| f (\tau,\mathcal{X}_\tau,\mathcal{V}_\tau ) \big| \geq \langle \tau \rangle^{2bc \Lambda } W ( \langle \tau \rangle) \big|f(0,y,\tau,w^2,w^3) \big| . \]

\renewcommand{\refname}{References}
\bibliographystyle{abbrv}
\bibliography{biblio}

\end{document}